\newtheorem{theorem}{Theorem}[section]
\newtheorem{lemma}[theorem]{Lemma}
\newtheorem{proposition}[theorem]{Proposition}
\newtheorem{corollary}[theorem]{Corollary}
\newtheorem{definition}[theorem]{Definition}
\newtheorem*{note}{Note}
\newtheorem{Remark}[theorem]{Remark}
\newcommand{\C}{\mathcal{C}}%Categoria C
\newcommand{\Mod}{\mathrm{Mod}}
\begin{document}
\title{Triangular Matrix Categories I: Dualizing Varieties and generalized one-point extension}

\author{Alicia Le\'on-Galeana, Mart\'in Ortiz-Morales, Valente  Santiago Vargas}
%\address{Facultad de Ciencias, Universidad Aut\'onoma del Estado de M\'exico, M\'exico.}
%\email{mortizmo@uaemex.mx}
%\urladdr{http://www.uaemex.mx/fciencias/}
\thanks{The authors thank  the Project PAPIIT-Universidad Nacional Aut\'onoma de M\'exico IA105317. }
\date{March 3, 2019}
\subjclass{2000]{Primary 18A25, 18E05; Secondary 16D90,16G10}}
%18E10 abelian categories, exact categories
%18E05 additive categories
%18A25 functor categories, comma categories
%18E15 Grothendieck categories
%18E20 Embedding theorems [See also 18B15]
%18E25 Derived functors and satellites
%18E40 Torsion theories, radicals [See also 13D30,
%16S90]
%18E99 None of the above, but in this section
%16Gxx Representation theory of rings and algebras
%16G10 Representations of artin rings
%16G20 Representation of quivers and partially ordered sets
%16G50 Cohen-Macaulay modules
%16G60 Representation type (finite, tame, wild, etc.)
%16G70 Auslander-Reiten sequences and AR quivers
%16D90 Module categories
%16G99 None of above but in this section
%13D05 Homological dimensions
%13D07 Homological funtors
%18G25 Relative homological algebra
%18E30 Derived categories, triangulated categories
%18E35 Localization of categories
%13Exx Chain conditions, finiteness conditions
%13E05 Noetherian rings and modules
%13E10 Artinian rings and modules, finite-dimensional algebras
%13E15 Rings and modules of finite generation or presentation
%13E99 None of the above, but in this section
%16P20 Artinian rings and modules
\keywords{Dualizing varieties, Functor categories, matrix categories, comma category}
\dedicatory{}
%\thanks{This paper is in its final form, and no version of it will be submitted for publication elsewhere.}
\maketitle

\begin{abstract}
Following Mitchell's philosophy, in this paper we define the analogous of the triangular matrix algebra to the context of rings with several objects. Given two additive categories $\mathcal{U}$ and $\mathcal{T}$ and $M\in \mathsf{Mod}(\mathcal{U}\otimes \mathcal{T}^{op})$ we construct the triangular matrix category $\mathbf{\Lambda}:=\left[\begin{smallmatrix}
\mathcal{T} & 0 \\ 
M & \mathcal{U}
\end{smallmatrix}\right]$. First, we prove that there is an equivalence 
$\Big( \mathsf{Mod}(\mathcal{T}), \mathbb{G}\mathsf{Mod}(\mathcal{U})\Big) \simeq \mathrm{Mod}(\mathbf{\Lambda})$. One of our main results is that if $\mathcal{U}$ and $\mathcal{T}$ are dualizing $K$-varieties and $M\in \mathsf{Mod}(\mathcal{U}\otimes \mathcal{T}^{op})$ satisfies certain conditions then $\mathbf{\Lambda}:=\left[\begin{smallmatrix}
\mathcal{T} & 0 \\ 
M & \mathcal{U}
\end{smallmatrix}\right]$ is a dualizing variety (see theorem \ref{matricesdualizan}). In particular, $\mathrm{mod}(\mathbf{\Lambda})$ has Auslander-Reiten sequences. Finally, we apply the theory developed in this paper to quivers and give a  generalization of the so called one-point extension algebra.
\end{abstract}

\section{introduction}

%%%%%%%%%%%%%%%%%%%%%%%%%%%%%%%%%%
 %radical of an additive category, was defined by G. M. Kelly in \cite{Kelly}), and has been amply studied for several authors
 
 %In \cite{Bin}, consider the triangular matrix algebra  $\Lambda:=\left[\begin{smallmatrix}
%T & 0 \\ 
%M & U
%\end{smallmatrix}\right]$ with $T$ and $U$ quasi-hereditary algberas. He proved that under suitable conditions on $M$, $\Lambda$ is quasi-hereditary.\\

%In the paper \cite{Zhang},  the triangular matrix algebra of rank two was extended to the one of rank $n$ and obtained that there is a relation between the morphism category and the module category of the corresponding matrix algebra.\\

%\cite{BarotLen}, Barot and Lenzing studied when derived equivalence of algebras can be extended to a derived equivalence between its one point extensions.\\

The idea that additive categories are rings with several objects was developed convincingly by Barry Mitchell (see \cite{Mitchelring})  who showed that a substantial amount of noncommutative ring theory is still true in this generality. Here we would like to emphazise that sometimes  clarity in concepts, statements, and proofs are  gained by dealing with additive categories, and that familiar theorems for rings come out of the natural development of category theory. For instance, the notions of radical of an additive category,  perfect  and semisimple rings, global dimensions etc, have been amply  studied in the context of rings with several objects (see \cite{Rune}, \cite{HaradaI},  \cite{HaradaII},  \cite{HaradaIII},  \cite{HaradaIV}, \cite{Hart}, \cite{Holm}, \cite{Hu},  \cite{Kelly},  \cite{Oberst}, \cite{Schroer}, \cite{Simson}, \cite{Bo}, \cite{Stovi}, \cite{Roos}).\\
%Ver \cite{Oberst} para un analogo del teorema de Lazard para funtores.
%GTambien ver \cite{Rune} para ootras ncoiones d planaridad.
As an example of the power of this point of view is the approach that M. Auslander and I. Reiten gave to the study of representation theory (see for example \cite{AusM1}, \cite{AusM2}, \cite{AusVarietyI}, \cite{AusVarietyII},\cite{AusVarietyIII}, \cite{AusVarietyIV}, \cite{AusVarietyV}, \cite{AusM3}, \cite{AusM4}, \cite{AusM5}, \cite{AusM6}, \cite{AusFunapproac}), which gave birth to the concept of almost split sequence. There were two different approaches to the existence of almost split sequences. One was inspired by \cite{AusCoheren} and focused on showing that simple functors are finitely presented. An essential ingredient in this proof as to establish a duality between finitely presented contravariant and finitely presented covariant functors. This led to the notion of dualizing $R$-varieties, introduced and investigated in \cite{AusVarietyI}. Therefore the existence of almost split sequences is proved in the context of dualizing $R$-varieties. 
Dualizing $R$-varieties have appeared  in the context of the locally bounded $k$-categories over a field $k$,  categories of graded modules over artin algebras and also in connection with covering theory. M. Auslander and I. Reiten continued a systematic study of  $R$-dualizing varieties in \cite{AusVarietyII}, \cite{AusVarietyIII}, \cite{AusVarietyIV}.  
%A slight generalization of the concept of dualizing $R$-variety was made in \cite{AusDTR} in order to include stable categories for (maximal) Cohen-Macaulay modules. 
%play an important role in representation theory
One of the advantage of notion of a dualizing $R$-variety defined in  \cite{AusVarietyI} is that it provides a common setting for the category $\mathrm{proj}(A)$ of finitely generated projective $A$-modules, $\mathrm{mod}(A)$ and $\mathrm{mod}(\mathrm{mod}(A))$, which all play an important role in the study of an artin algebra $A$. \\
%The class of dualizing $R$-varieties have the nice property of being closed under certain constructions, as for example the following %%%%%%%%%%%%%%%%%%%%%%%%%%%%%%%%%%%%%%%%%%%%%%%%%%%%%%%%%%%%%%%%%%%%
On the other hand, rings of the form $\left[\begin{smallmatrix}
T & 0 \\ 
M & U
\end{smallmatrix}\right]$ where $T$ and $U$ are rings and $M$ is a $T$-$U$-bimodule have appeared often in the study of the representation theory of artin rings and algebras (see for example \cite{AusPlatRei},\cite{DlabRingel}, \cite{Gordon}, \cite{Green1}, \cite{Green2}). Such a rings, called {\it{triangular matrix rings}}, appear naturally in the study of homomorphic images of hereditary  artin algebras and in the study of the decomposition of algebras and direct sum of two rings. Triangular matrix rings and their homological properties have been widely studied,
(see for example \cite{Chase}, \cite{Fields}, \cite{Haghany}, \cite{Michelena},\cite{Palmer}, \cite{tameskow}, \cite{Hong}). 
%In the case $U$ and $T$ are algebras, the triangular matrix ring becomes an algebra.
The so-called one-point extension is a special case of the triangular matrix algebra and this types of algebras have been studied in several contexts. 
For example, in \cite{Happel}, D. Happel showed how to compute the Coxeter polynomial for $\Gamma$ from the Coxeter polynomial of $\Lambda$ and homological invariantes of $M$ for a given one-point extension algebra $\Gamma=\Lambda[M]$.  In \cite{Bin}, Zhu 
consider the triangular matrix algebra  $\Lambda:=\left[\begin{smallmatrix}
T & 0 \\ 
M & U
\end{smallmatrix}\right]$ where $T$ and $U$ are quasi-hereditary algebras and he proved that under suitable conditions on $M$, $\Lambda$ is quasi-hereditary algebra.\\
%Let $\Lambda:=\left[\begin{smallmatrix}
%T & 0 \\ 
%M & U
%\end{smallmatrix}\right]$ be a triangular ring, 
It is well known that the category $\mathrm{Mod}\Big(\left[\begin{smallmatrix}
T & 0 \\ 
M & U
\end{smallmatrix}\right]\Big)$ is equivalent to the category whose objects are triples $(A,B,f)$ where $f:M\otimes_{T}A\longrightarrow B$ is a morphism of $U$-modules and the morphisms are pairs $(\alpha,\beta)$ with $\alpha:A\longrightarrow A'$ and $\beta:B\longrightarrow B'$ such that the following diagram commutes
$$\xymatrix{M\otimes_{T}A\ar[r]^{1_{M}\otimes\alpha}\ar[d]^{f} & 
M\otimes_{T}A'\ar[d]^{f'}\\
B\ar[r]^{\beta} & B'.}$$
In particular,  given a ring $T$ we can consider $\Lambda:=\left[\begin{smallmatrix}
T & 0 \\ 
T & T
\end{smallmatrix}\right]$. Then the category of $\Lambda$-modules is equivalent to the category  whose objects are triples $(A,B,f)$ where $f:A\longrightarrow B$ is a morphism of $T$-modules.\\
In this context, in \cite{RingScmid} Ringel and Schmidmeier studied the category of monomorphisms and epimorphisms and they proved that if $\Gamma$ is an artin algebra, the category of all the embeddings ($A\subseteq B$) where $B$ is a finitely generated  $\Gamma$-module and $A$ is a submodule of $B$, is a Krull-Schmidt category which has Auslander-Reiten sequences. Also in this direction, R.M. Villa and M. Ortiz studied the Auslander-Reiten sequences in the category  of maps and also studied some contravariantly finite subcategories (see \cite{MVOM}).\\
%We can think a dualizing $R$-variety as a generalization of the notion of artin $R$-algebra.\\
Following Mitchell's philosophy, in this paper we define the analogous of the triangular matrix algebra to the context of rings with several objects. Given two additive categories $\mathcal{U}$ and $\mathcal{T}$ and $M\in \mathsf{Mod}(\mathcal{U}\otimes \mathcal{T}^{op})$ we construct the triangular matrix category $\mathbf{\Lambda}:=\left[\begin{smallmatrix}
\mathcal{T} & 0 \\ 
M & \mathcal{U}
\end{smallmatrix}\right]$  and several properties of $\mathrm{Mod}(\mathbf{\Lambda})$ are studied and we proved that under certain conditions  $\mathbf{\Lambda}:=\left[\begin{smallmatrix}
\mathcal{T} & 0 \\ 
M & \mathcal{U}
\end{smallmatrix}\right]$ is a dualizing variety, this result is the analogous of the following one: $\Lambda=\left[\begin{smallmatrix}
T & 0 \\ 
M & U
\end{smallmatrix}\right]$ is an artin algebra if and only if there is a commutative ring $R$ such that $T$ and $U$ are artin $R$-algebras and $M$ is finitely generated over $R$ which acts centrally on $M$ (see \cite[Theorem 2.1]{AusBook} in page 72). We give some applications to path categories given by infinite quivers,
%In this contexts, we are able to prove that there exists and equivalence of categories  $\Big( \mathsf{Mod}(\mathcal{T}), \mathbb{G}\mathsf{Mod}(\mathcal{U})\Big) \simeq
%\mathrm{Mod}(\mathbf{\Lambda})$ (see \ref{equivalence1}) and we  compute the radical of $\mathbf{\Lambda}$ in terms of the radical of $\mathcal{U}$ and $\mathcal{T}$. One of our main results is to prove that if $\mathcal{U}$ and $\mathcal{T}$ are dualizing $K$-varieties and $M\in \mathsf{Mod}(\mathcal{U}\otimes \mathcal{T}^{op})$ satisfies certain conditions ($M_{T}\in \mathsf{mod}(\mathcal{U})$ and  $M_{U}\in \mathsf{mod}(\mathcal{T}^{op})$ for all $T\in \mathcal{T}$ and $U\in \mathcal{U}^{op}$) then $\mathbf{\Lambda}:=\left[\begin{smallmatrix}
%\mathcal{T} & 0 \\ 
%M & \mathcal{U}
%\end{smallmatrix}\right]$ is a dualizing variety (see theorem \ref{matricesdualizan}). In particular, $\mathrm{mod}(\mathbf{\Lambda})$ has Auslander-Reiten sequences (see \ref{modtieneseq}).
other applications (as the construction of recollements and the study of functorially finite subcategories of $\mathrm{mod}(\mathbf{\Lambda})$) of the theory developed in this paper will be in a forthcoming paper.\\
We now give a brief description of the contents on this paper.\\
In section 2, we recall basic results of $\mathrm{Mod}(\mathcal{C})$ that will be used throughout this paper.\\
In section 3,  we construct the category of matrices $\mathbf{\Lambda}:=\left[\begin{smallmatrix}
\mathcal{T} & 0 \\ 
M & \mathcal{U}
\end{smallmatrix}\right]$ and we proved that there is an equivalence between the comma category $\big( \mathsf{Mod}(\mathcal{T}), \mathbb{G}\mathsf{Mod}(\mathcal{U})\big)$ and $\mathrm{Mod}(\mathbf{\Lambda})$ and we compute $\mathrm{rad}_{\mathbf{\Lambda}}\left (\left[ \begin{smallmatrix}
T & 0 \\
M & U
\end{smallmatrix} \right] ,  \left[ \begin{smallmatrix}
T' & 0 \\
M & U'
\end{smallmatrix} \right]  \right)$ in terms of the radical of $\mathcal{U}$ and $\mathcal{T}$  (see \ref{radtriangular}).\\
In section 4, we consider $K$-categories and define a functor $\mathbb{D}_{\mathbf{\Lambda}}:\mathrm{Mod}(\mathbf{\Lambda})\longrightarrow \mathrm{Mod}(\mathbf{\Lambda}^{op})$ and we describe it how it acts, when we identify $\big( \mathsf{Mod}(\mathcal{T}), \mathbb{G}\mathsf{Mod}(\mathcal{U})\big)$ with $\mathrm{Mod}(\mathbf{\Lambda})$  (see theorem 	\ref{cuadrocasicon}).\\
In section 5, we show that there exists an adjoint pair $(\mathbb{F},\mathbb{G})$ and we describe the finitely generated projectives in $\big( \mathsf{Mod}(\mathcal{T}), \mathbb{G}\mathsf{Mod}(\mathcal{U})\big)$ (see \ref{projecinmaps}). We also prove that
there exists an isomorphism between the comma categories
$\big( \mathbb{F}(\mathsf{Mod}(\mathcal{T})),\mathsf{Mod}(\mathcal{U})\big) \simeq\big( \mathsf{Mod}(\mathcal{T}), \mathbb{G}(\mathsf{Mod}(\mathcal{U}))\big),$ (see \ref{equicoma2}) and in section 6 we restrict that isomorphism to the category of finitely presented modules.\\
In section 6, we prove that if  $\mathcal{T}$ and $\mathcal{U}$ are categories with splitting idempotents, then $\mathbf{\Lambda}$ is with splitting idempotents (see \ref{spliidempomat}). We consider the case in which $\mathcal{U}$ and $\mathcal{T}$ are Hom-finite Krull-Schmidt $K$-categories and we show that under this conditions $\mathbf{\Lambda}$ is Hom-finite and Krull-Schmidt (see \ref{homfinikrulsc}). Finally, we prove that if $\mathcal{U}$ and $\mathcal{T}$ are dualizing $K$-varieties and $M\in \mathsf{Mod}(\mathcal{U}\otimes \mathcal{T}^{op})$ satisfies  that $M_{T}\in \mathsf{mod}(\mathcal{U})$ and  $M_{U}\in \mathsf{mod}(\mathcal{T}^{op})$ for all $T\in \mathcal{T}$ and $U\in \mathcal{U}^{op}$ then $\mathbf{\Lambda}:=\left[\begin{smallmatrix}
\mathcal{T} & 0 \\ 
M & \mathcal{U}
\end{smallmatrix}\right]$ is a dualizing variety (see \ref{matricesdualizan}). In particular, $\mathrm{mod}(\mathbf{\Lambda})$ has Auslander-Reiten sequences (see \ref{modtieneseq}).\\
In section 7, we make some applications to splitting torsion pairs which are in relation with tilting theory and path categories which are studied in \cite{RingelTame}. In particular, we prove that given a splitting torsion pair  $(\mathcal{U}, \mathcal{T})$  in a Krull-Schmidt category $\mathcal{C}$ we have an equivalence
$\mathcal{C} \cong \left[ \begin{matrix}
\mathcal{T} & 0 \\
\widehat{\mathbbm{Hom}} & \mathcal{U}
\end{matrix} \right]$ and this result is kind of generalization of the one-point extension algebra.

%The question of derived equivalences between different such rings was explored in the special case of one-point extensions of algebras  (see \cite{BarotLen}).
%A triangular matrix ring  is defined by a triplet (R, S, M) where R and S are
%rings and RMS is an S-R-bimodule. The category of (right) -modules can be viewed as a certain gluing of the categories of R-modules and S-modules, specified by four
%exact functors. This gluing naturally extends to the bounded derived categories. \\

\section{Preliminaries}
We recall that a category $\C$ together with an abelian group structure on each of the sets of morphisms $\C(C_{1},C_{2})$ is called  \textbf{preadditive category}  provided all the composition maps
$\C(C,C')\times \C(C',C'')\longrightarrow \C(C,C'')$
in $ \C $ are bilinear maps of abelian groups. A covariant functor $ F:\C_{1}\longrightarrow \C_{2} $ between  preadditive categories $ \C_{1} $ and $ \C_{2} $ is said to be \textbf{additive} if for each pair of objects $ C $ and $ C' $ in $ \C_{1}$, the map $ F:\C_{1}(C,C')\longrightarrow \C_{2}(F(C),F(C')) $ is a morphism of abelian groups. Let $\mathcal C$ and $\mathcal D$  be preadditive categories and $\mathbf{Ab}$ the category of abelian groups. A functor $F: \mathcal C\times \mathcal D\rightarrow \mathbf{Ab}$ is called \textbf{biadditive} if
$F:\mathcal C(C,C')\times \mathcal D(D,D')\rightarrow \mathbf{Ab} (F(C,D),F(C',D'))$ is bi additive, that is,  $F(f+f',g)=F(f,g)+F(f',g)$ and $F(f,g+g')=F(f,g)+F(f,g')$.\\
If $ \C $ is a  preadditive category we always considerer its opposite  category $ \C^{op}$ as a preadditive category by letting $\C^{op} (C',C)= \C(C,C') $. We follow the usual convention of identifying each contravariant  functor $F$ from a category $ \C $ to $ \mathcal{D} $ with the covariant functor $F$ from  $ \C^{op} $ to $ \mathcal{D}$.

\subsection{The category $\mathrm{Mod}(\mathcal{C})$}
Throughout this section $\mathcal{C}$ will be an arbitrary skeletally small preadditive category, and $\mathrm{Mod}(\mathcal{C})$ will denote the \textit{category of covariant functors} from $\mathcal{C}$ to  the category of abelian groups $ \mathbf{Ab}$, called the category of $\mathcal{C}$-modules. This category has as objects  the functors from $\mathcal C$ to $\mathbf{Ab}$, and  and a morphism $ f:M_{1}\longrightarrow M_{2} $ of $ \C $-modules is a natural transformation,  that is, the set of morphisms $\mathrm{Hom}_\mathcal C(M_1,M_2)$ from $M_1$ to $M_2$  is given by $\mathrm{Nat} (M_{1}, M_{2} )$.  We sometimes we will write for short, $\mathcal{C}(-,?)$
instead of $\mathrm{Hom}_{\mathcal{C}}(-,?)$ and when it is clear from the context we will use just $(-,?).$\\
We now recall some of properties of the category $ \Mod(\C) $, for more details consult  \cite{AusM1}. The category $\Mod(\C) $ is an abelian with the following properties:
\begin{enumerate}
\item A sequence
\[
\begin{diagram}
\node{M_{1}}\arrow{e,t}{f}
 \node{M_{2}}\arrow{e,t}{g}
  \node{M_{3}}
\end{diagram}
\]
is exact in $ \Mod(\C) $ if and only if
\[
\begin{diagram}
\node{M_{1}(C)}\arrow{e,t}{f_{C}}
 \node{M_{2}(C)}\arrow{e,t}{g_{C}}
  \node{M_{3}(C)}
\end{diagram}
\]
is an exact sequence of abelian groups for each $ C$ in $\C $.

\item Let $ \lbrace M_{i}\rbrace_{i\in I} $ be a family of $ \C $-modules indexed by the set $ I $.
The $ \C $-module $ \underset{\i\in I}\amalg M_{i}$ defined by $  (\underset{i\in I}\amalg M_{i})\ (C)=\underset{i\in I}\amalg \ M_{i}(C)$ for all $ C $ in $ \C $, is a direct sum for the family $ \lbrace M_{i}\rbrace_{i\in I} $ in $ \Mod(\C) $, where $\underset{i\in I} \amalg M_{i}(C)  $ is the direct sum in $ \mathbf{Ab} $ of the family of abelian groups $ \lbrace M_{i}(C)\rbrace_{i\in I} $.  The $ \C $-module $ \underset{\i\in I}\prod M_{i}$ defined by $(\underset{i\in I}\prod M_{i})\ (C)=\underset{i\in I}\prod M_{i}(C)   $ for all $C$ in $\C$, is a product for the family $ \lbrace M_{i}\rbrace_{i\in I} $ in $\Mod(\C)$, where $ \underset{i\in I}\prod M_{i}(C)  $ is the product in $\mathbf{Ab}$.

\item  For each $C$ in $\C $, the $\C$-module $(C,-)$ given by $(C,-)(X)=\C(C,X)$ for each $X$ in $\C$, has the property that for each $\C$-module $M$, the map $\left( (C,-),M\right)\longrightarrow M(C)$ given by $f\mapsto f_{C}(1_{C})$ for each $\C$-morphism $f:(C,-)\longrightarrow M$ is an isomorphism of abelian groups. We will often consider this isomorphism an identification.
Hence
\begin{enumerate}
\item The functor $ P:\C\longrightarrow \Mod(\C) $ given by $ P(C)=(C,-) $ is fully faithful.
\item For each family $\lbrace  C_{i}\rbrace _{i\in I}$ of objects in $ \C $, the $ \C $-module $ \underset{i\in I}\amalg P(C_{i}) $ is a projective $ \C $-module.
\item Given a $ \C $-module $ M $, there is a family $ \lbrace C_{i}\rbrace_{i\in I} $ of objects in $ \C $ such that there is an epimorphism $ \underset{i\in I}\amalg P(C_{i})\longrightarrow M\longrightarrow 0 $.
\end{enumerate}
\end{enumerate}

%%%%%%%%%%%%%%%%%%%%%%%%%%%%%%%%%%%%%%%%%%%%%%%%%%%%%%%%%%%%%%%%%%%%%%%%%%%%%%%%%%%%%%%%%%%%%%%%%%%%%%%%%%%%%%%%%%%%%%%%%%%%%%%%%%%%%%%%%%%%%%%%%%%%%%%%%%%%%%%%%%%%%%%%%%
\subsection{Dualizing varietes and Krull-Schmidt Categories}

The subcategory of $\mathrm{Mod}(\mathcal{C})$ consisting of
all finitely generated projective objects, $\mathfrak{p}(\mathcal{C})$, is a skeletally small additive category in which idempotents split, the functor $P:\mathcal{C}\rightarrow \mathfrak{p}(\mathcal{C})$, $P(C)=\mathcal{C}(C,-)$, is fully faithful and induces by restriction $\mathrm{res}:\mathrm{Mod}(\mathfrak{p}(\mathcal{C}))\rightarrow \mathrm{Mod}(\mathcal{C})$, an equivalence of categories. For this reason, we may assume that our categories are skeletally small, additive categories, such that idempotents split. Such categories were called \textbf{annuli varieties} in \cite{AusVarietyI}, for
short, \textbf{varieties}.\\
To fix the notation, we recall known results on functors and categories that we use through the paper, referring for the proofs to the papers by Auslander and Reiten \cite{AusQueen}, \cite{AusM1}, \cite{AusVarietyI}.

\begin{definition}
Let  $\mathcal{C}$ be a variety. We say $\mathcal{C}$ has \textbf{pseudokernels}; if given a map $f:C_1\rightarrow C_0$, there exists a map $g:C_2 \rightarrow C_1$ such that the sequence of morphisms  $\mathcal{C}(-, C_2 )\xrightarrow{(-,g)}\mathcal{C}( -,C_1 )\xrightarrow{(-,f)}\mathcal{C}(-, C_0 )$ is exact in $\mathrm{Mod}(\mathcal{C}^{op})$.
\end{definition}
Now, we recall some results from \cite{AusVarietyI}.

%Let $K$ be a field, we recall that $\mathcal{C}$ is a $\textbf{K-category}$ if $\mathrm{Hom}_{\mathcal{C}}(A,B)$ is a $K$-vector space for every $A,B\in \mathcal{C}$ and the composition is $K$-bilineal.\\
%It is said that $\mathcal{C}$ is $\textbf{Hom-finite}$ if $\mathrm{Hom}_{\mathcal{C}}(A,B)$ is finite dimensional $K$-vector space for every $A,B\in \mathcal{C}$.

\begin{definition}
Let $R$ be a commutative artin ring. An $R$-variety $\mathcal{C}$, is a variety such that $\mathcal{C}(C_{1},C_{2})$ is an $R$-module, and the composition is $R$-bilinear. An $R$-variety $\mathcal{C}$ is $\mathbf{Hom}$-\textbf{finite}, if for each pair of objects $C_{1},C_{2}$ in $\mathcal{C},$ the $R$-module $\mathcal{C}(C_{1},C_{2})$ is finitely generated. We denote by $(\mathcal{C},\mathrm{mod}(R))$, the full subcategory of $(\mathcal{C},\mathrm{
\mathrm{Mod}}(R))$ consisting of the $\mathcal{C}$-modules such that; for
every $C$ in $\mathcal{C}$ the $R$-module $M(C)$ is finitely generated. 
\end{definition}

Suppose $\mathcal{C}$ is a Hom-finite $R$-variety. If $M:\mathcal{C}\longrightarrow \mathbf{Ab}$ is a $\mathcal{C}$-module, then for each $C\in \mathcal{C}$ the abelian group $M(C)$ has a structure of $\mathrm{End}_{\mathcal{C}}(C)^{op}$-module and hence as an $R$-module since $\mathrm{End}_{\mathcal{C}}(C)$ is an $R$-algebra. Further if $f:M\longrightarrow M'$ is a morphism of $\mathcal{C}$-modules it is easy to show that $f_{C}:M(C)\longrightarrow M'(C)$ is a morphism of $R$-modules for each $C\in \mathcal{C}$. Then, $\mathrm{\mathrm{Mod}}(\mathcal{C})$ is an $R$-variety, which we identify with the category of
covariant functors $(\mathcal{C},\mathrm{Mod}(R))$. Moreover, the
category $(\mathcal{C},\mathrm{mod}(R))$ is abelian and the inclusion $(\mathcal{C},\mathrm{mod}(R))\rightarrow (\mathcal{C},\mathrm{\mathrm{Mod}}(R))$ is exact.

\begin{definition}
Let $\mathcal{C}$ be a Hom-finite $R$-variety. We denote by $\mathrm{mod}(\mathcal{C})$ the full subcategory of $\mathrm{Mod}(\mathcal{C})$ whose objects are the  $\textbf{finitely presented functors}$.
That is, $M\in \mathrm{mod}(\mathcal{C})$ if and only if, there exists an exact sequence in $\mathsf{Mod}(\mathcal{C})$
$$\xymatrix{P_{1}\ar[r] & P_{0}\ar[r] & M\ar[r] & 0,}$$
where $P_{1}$ and $P_{0}$ are finitely generated projective $\mathcal{C}$-modules. 
\end{definition}
It is easy to see that if $\mathcal{C}$ has finite coproducts, then
a functor $M$ is finitely presented if there exists an exact
sequence
\begin{equation*}
\mathcal{C}( -,C_1 )\rightarrow \mathcal{C}(-, C_0 )\rightarrow M\rightarrow 0
\end{equation*}
It was proved in \cite{AusVarietyI},  that $\mathrm{mod}(C)$ is abelian if and only if $\mathcal{C}$ has pseudokernels.\\

Consider the functors $D:(\mathcal{C}^{op},\mathrm{mod}
(R))\rightarrow (\mathcal{C},\mathrm{mod}(R))$, and $D:(\mathcal{C},\mathrm{mod}(R))\rightarrow (\mathcal{C}^{op},\mathrm{mod}(R))$, which are defined as
follows: for any object $C$ in $\mathcal{C}$, $D(M)(C)=\mathrm{Hom}
_{R}(M(C),I(R/r)) $, with $r$ the Jacobson radical of $R$, and $I(R/r)$ is
the injective envelope of $R/r$. The functor $D$ defines a duality between $(
\mathcal{C},\mathrm{mod}(R))$ and $(\mathcal{C}^{op},\mathrm{mod}(R))$. We know that  since $\mathcal{C}$ is Hom-finite, $\mathrm{mod}(\mathcal{C})$ is a subcategory of $(\mathcal{C},\mathrm{mod}(R))$. Then we have the following definition due to Auslander and Reiten (see \cite{AusVarietyI}.).

\begin{definition}\label{dualizinvar}
An $\mathrm{Hom}$-finite $R$-variety $\mathcal{C}$ is \textbf{dualizing}, if
the functor
\begin{equation}\label{duality}
D:(\mathcal{C}^{op},\mathrm{mod}(R))\rightarrow (\mathcal{C},\mathrm{mod}(R))
\end{equation}
induces a duality between the categories $\mathrm{mod}(\mathcal{C})$ and $%
\mathrm{mod}(\mathcal{C}^{op}).$
\end{definition}

It is clear from the definition that for dualizing categories $\mathcal{C}$
the category $\mathrm{mod}(\mathcal{C})$ has enough injectives. To finish, we recall the following definition:

\begin{definition}
An additive category $\mathcal{C}$ is \textbf{Krull-Schmidt}, if every
object in $\mathcal{C}$ decomposes in a finite sum of objects whose
endomorphism ring is local.
\end{definition}

Asumme that $R$ is a commutative ring and $R$ is a dualizing $R$-variety.
Since the endomorphism ring of each object in $\mathcal C$ is an artin algebra, it follows that $\mathcal C$ is a 
Krull-Schmidt category \cite[p.337]{AusVarietyI} moreover,  we have that for a dualizing  variety the
finitely presented functors have projective covers \cite[Cor. 4.13]{AusM1}, \cite[Cor. 4.4]{Krause}.
%%%%%%%%%%%%%%%%%%%%%%%%%%%%%%
%%%%%%%%%%%%%%%%%%%%%%%%%%%%%%
%%%%%%%%%%%%%%%%%%%%%%%%%%%%%%%
%%%%%%%%%%%%%%%%%%%%%%%%%%%%%

%%%%%%%%%%%%%%%%%%%%%%%%%%%%%%%%%%%%%%%%%%%%%%%%%%%%%%%%%%%%%%%%%%%%%%%%%%%%%%%%%%%%%%%%%%%%%%%%%%%%%
\subsection{Tensor Product of Categories}
If $\mathcal{C}$ and $\mathcal{D}$ are additive categories, B. Mitchell defined in \cite{Mitchelring} the  $\textbf{tensor product}$  $\mathcal{C}\otimes\mathcal{D} $  of two additive categories, whose objects are those of $\mathcal{C}\times \mathcal{D}$ and the abelian group of morphism from $(C,D)$ to $(C',D')$ is the ordinary tensor product of abelian groups $\mathcal{C}(C,C')\otimes_{\mathbb{Z}}\mathcal{D}(D,D')$. Since that the tensor product of abelian groups is associative and commutative and the composition in $\mathcal{C}$ and $\mathcal{D}$ is $\mathbb{Z}$-bilinear then the bilinear composition in $\mathcal{C}\otimes \mathcal{D}$ is given as follows:
\begin{equation*}
  (f_{2}\otimes g_{2})\circ (f_{1}\otimes g_{1}):=(f_{2}\circ f_{1})\otimes(g_{2}\circ g_{1})
\end{equation*}
for all $f_{1}\otimes g_1\in \mathcal{C}(C,C')\otimes \mathcal{D}(D,D')$ and  $f_{2}\otimes g_2\in\mathcal{C}(C',C'')\otimes \mathcal{D}(D',D'')$.

\begin{Remark}\label{productensorR}
If $\mathcal{C}$ and $\mathcal{D}$ are $R$-categories. The tensor product $\mathcal{C}\otimes_{R}\mathcal{D} $  of two $R$-categories, is the $R$-category whose objects are those of $\mathcal{C}\times \mathcal{D}$ and the abelian group of morphism from $(C,D)$ to $(C',D')$ is the ordinary tensor product of $R$-modules $\mathcal{C}(C,C')\otimes_{R}\mathcal{D}(D,D')$ and the composition is defined as above.
\end{Remark}

Assume that $\mathcal{C}$ and $\mathcal{D}$ are additive categories. Then, there exists a canonical functor
$T:\mathcal C\times \mathcal D\rightarrow \mathcal C\otimes \mathcal D $, given by $T((C,D))=(C,D)$, and $T:\mathcal C(C,C')\times \mathcal D(D,D')\rightarrow \mathcal C(C,C')\otimes \mathcal D(D,D')$, $(f,g)\mapsto f\otimes g$.

\begin{proposition}\label{tensorr}
Let $F:\mathcal C\times \mathcal D\rightarrow \mathbf{Ab}$ a biadditive bifunctor. Then there exists a functor $\widehat F:\mathcal C\otimes \mathcal D \rightarrow \mathbf{Ab}$
such that  $F=\widehat{F} T$.
\end{proposition}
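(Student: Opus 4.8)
The plan is to define $\widehat F$ on objects by $\widehat F(C,D):=F(C,D)$ and to use the universal property of the tensor product of abelian groups to define it on morphisms. Fix objects $(C,D)$ and $(C',D')$ of $\mathcal{C}\otimes\mathcal{D}$. Since $F$ is biadditive, the assignment
\[
\mathcal{C}(C,C')\times\mathcal{D}(D,D')\longrightarrow \Hom_{\mathbf{Ab}}\bigl(F(C,D),F(C',D')\bigr),\qquad (f,g)\longmapsto F(f,g)
\]
is $\mathbb{Z}$-bilinear, hence it factors uniquely through the canonical bilinear map $\mathcal{C}(C,C')\times\mathcal{D}(D,D')\to \mathcal{C}(C,C')\otimes_{\mathbb{Z}}\mathcal{D}(D,D')=(\mathcal{C}\otimes\mathcal{D})\bigl((C,D),(C',D')\bigr)$, yielding a group homomorphism $\widehat F_{(C,D),(C',D')}$ that sends the simple tensor $f\otimes g$ to $F(f,g)$. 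This defines $\widehat F$ on every hom-group, and by construction each hom-map is additive, so $\widehat F$ will automatically be an additive functor once functoriality is established.

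Next I would check functoriality. For identities, the identity of $(C,D)$ in $\mathcal{C}\otimes\mathcal{D}$ is $1_{C}\otimes 1_{D}$, and $\widehat F(1_{C}\otimes 1_{D})=F(1_{C},1_{D})=F(1_{(C,D)})=1_{F(C,D)}$ because $F$ is a functor on $\mathcal{C}\times\mathcal{D}$. For composition, one first verifies it on simple tensors, using the composition rule $(f_{2}\otimes g_{2})\circ(f_{1}\otimes g_{1})=(f_{2}\circ f_{1})\otimes(g_{2}\circ g_{1})$ recalled above:
\[
\widehat F\bigl((f_{2}\otimes g_{2})\circ(f_{1}\otimes g_{1})\bigr)=F(f_{2}\circ f_{1},g_{2}\circ g_{1})=F(f_{2},g_{2})\circ F(f_{1},g_{1})=\widehat F(f_{2}\otimes g_{2})\circ\widehat F(f_{1}\otimes g_{1}),
\]
again by functoriality of $F$. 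A general morphism in $\mathcal{C}\otimes\mathcal{D}$ is a finite sum of simple tensors, and since composition in $\mathcal{C}\otimes\mathcal{D}$ is biadditive, composition in $\mathbf{Ab}$ is biadditive, and each $\widehat F_{(C,D),(C',D')}$ is additive, the identity extends from simple tensors to arbitrary morphisms by linearity. Hence $\widehat F$ is an additive functor.

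Finally, $F=\widehat F T$ holds on objects by definition, and on a morphism $(f,g)$ of $\mathcal{C}\times\mathcal{D}$ one has $\widehat F(T(f,g))=\widehat F(f\otimes g)=F(f,g)$. I do not expect a genuine obstacle here: the only points needing care are the well-definedness of $\widehat F$ on hom-groups — which is precisely what the universal property of $\otimes_{\mathbb{Z}}$ provides once the biadditivity of $F$ is invoked — and the passage from simple tensors to arbitrary morphisms when checking compatibility with composition. If one wishes, uniqueness of $\widehat F$ also follows at once, since its values on objects and on simple tensors are forced by the relation $F=\widehat F T$ together with the fact that simple tensors generate the hom-groups of $\mathcal{C}\otimes\mathcal{D}$.
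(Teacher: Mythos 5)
Your proof is correct and takes exactly the approach the paper indicates (the paper merely states that the result is "an easy consequence of the universal property which characterizes the tensor product of abelian groups" without giving details). You supply precisely those details: defining $\widehat F$ on objects as $F$, factoring the bilinear maps on hom-sets through $\otimes_{\mathbb{Z}}$, and checking functoriality on simple tensors before extending by additivity.
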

\begin{proof}
The proof is an easy consequence of  the universal property which characterizes the tensor product of abelian groups.
\end{proof}
Let  $\widehat F:\mathcal C\otimes \mathcal D \rightarrow \mathbf{Ab}$. For all $X\in \mathcal C$ we have a functor 
$\widehat F_X:\mathcal D \rightarrow \mathbf{Ab}$  associated to $\widehat F$, given by $\widehat F_X(Y)= \widehat F(X,Y)$ and 
$\widehat F_X(g)=\widehat F(1_X\otimes g)$ for all $Y\in \mathcal D$ and $g\in \mathcal D(Y,Y')$. Let $\mathcal C$ be a additive category. Then, for each  functor $F(-,-):\mathcal C^{op}\times\mathcal C\rightarrow \mathbf{Ab}$, we have  a functor
$\mathbbm F(-,-):  \mathcal C\times \mathcal C^{op}\rightarrow \mathbf{Ab}$  defined by  $\mathbbm F(X,Y):=F(Y,X)$ and $\mathbbm F(f,g):=F(g,f)$
for all $(X,Y)\in  \mathcal C\times \mathcal C^{op}$ and $(f,g)\in \mathcal C(X,X')\times \mathcal C^{op}(Y,Y')$.\\
Let $Y\in \mathcal C^{op}$, then it is clear that  $\mathbbm F(-,Y)=F(Y,-)$  as $\mathcal C$-modules. By using Proposition \ref{tensorr}, we summarize  the above  observations in the next proposition.

\begin{proposition}\label{funhomext}
\begin{itemize}
\item[(i)] Let $\mathcal{C}$ be a preadditive category and $F:\mathcal C^{op}\times \mathcal C\rightarrow \mathbf{Ab}$ a biadditive functor. Then we have a  functor
$$\widehat{\mathbbm F}:\mathcal C\otimes \mathcal C^{op}\rightarrow \mathbf{Ab}$$
for which  $\widehat{\mathbbm F}(X,Y)=F(Y,X)$,  for all $(X,Y)\in C \otimes \mathcal C ^{op}$, and the $\mathcal C$-modules $\mathbbm F(-,Y)$, $F(Y,-)$ and  
$\widehat{\mathbbm F}_Y$ are isomorphic, for all $Y\in\mathcal C^{op}$.

\item[(ii)] Let $\mathcal C$ be a preadditive category and consider the  bifunctors
$$\mathrm{Hom}_{\mathcal C}(-,-):\mathcal C^{op}\times \mathcal C\rightarrow \mathbf{Ab},\quad \mathrm{Ext}_{\mathcal C}^n(-,-):\mathcal C^{op}\times \mathcal C\rightarrow \mathbf{Ab},\,\, n >1.$$
Then, there exist   functors
$$\widehat{\mathbbm{{Hom}}}:\mathcal C\otimes \mathcal C^{op}\rightarrow \mathbf{Ab},\quad \widehat{{\mathbbm{Ext}}^n}:\mathcal C\otimes \mathcal C^{op}\rightarrow \mathbf{Ab},\,\, n >1.$$
for which  $\widehat{\mathbbm{Hom}}(X,Y)=\mathrm{Hom}_{\mathcal C}(Y,X)$,   $\widehat{\mathbbm{Ext}^n}(X,Y)=\mathrm{Ext}_{\mathcal C}^n(Y,X)$ for all $(X,Y)\in C \otimes \mathcal C ^{op}$, and we have isomorphisms of $\mathcal C$-modules 
$$\widehat{\mathbbm{Hom}}_Y\cong \mathrm{Hom}_{\mathcal C}(Y,-):\mathcal C\rightarrow\mathbf{Ab}\quad\text{and}\quad \widehat{\mathbbm{Ext}^n}_Y\cong \mathrm{Ext}_{\mathcal C}^n(Y,-):\mathcal C\rightarrow\mathbf{Ab},$$ 
 for all $Y\in\mathcal C^{op}$.
\end{itemize}
\end{proposition}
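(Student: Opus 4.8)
The plan is to deduce the whole statement from Proposition \ref{tensorr} by first reflecting the bifunctor across the diagonal, turning it into an honestly biadditive bifunctor on a product of two categories to which Proposition \ref{tensorr} applies directly.

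First I would treat (i). Given the biadditive $F\colon\mathcal C^{op}\times\mathcal C\to\mathbf{Ab}$, form $\mathbbm F\colon\mathcal C\times\mathcal C^{op}\to\mathbf{Ab}$ as before the statement, $\mathbbm F(X,Y):=F(Y,X)$, $\mathbbm F(f,g):=F(g,f)$. The only point deserving care is that $\mathbbm F$ is a well-defined biadditive bifunctor: functoriality is immediate once one remembers that composition in $\mathcal C^{op}$ is composition in $\mathcal C$ read in the opposite order, which is precisely what converts ``$F$ is a functor on $\mathcal C^{op}\times\mathcal C$'' into ``$\mathbbm F$ is a functor on $\mathcal C\times\mathcal C^{op}$''; and biadditivity is inherited from that of $F$, since $\mathbbm F(f+f',g)=F(g,f+f')=F(g,f)+F(g,f')=\mathbbm F(f,g)+\mathbbm F(f',g)$, and symmetrically in the second slot. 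Then apply Proposition \ref{tensorr} to $\mathbbm F$, with the roles of $\mathcal C$ and $\mathcal D$ there played by $\mathcal C$ and $\mathcal C^{op}$; this produces $\widehat{\mathbbm F}\colon\mathcal C\otimes\mathcal C^{op}\to\mathbf{Ab}$ with $\mathbbm F=\widehat{\mathbbm F}\,T$, $T\colon\mathcal C\times\mathcal C^{op}\to\mathcal C\otimes\mathcal C^{op}$ the canonical functor. Evaluating on objects gives $\widehat{\mathbbm F}(X,Y)=\mathbbm F(X,Y)=F(Y,X)$. For the claimed isomorphisms of $\mathcal C$-modules I would just unwind definitions: $\mathbbm F(-,Y)$ sends $X\mapsto F(Y,X)$ and $f\mapsto\mathbbm F(f,1_Y)=F(1_Y,f)$, i.e.\ it is exactly $F(Y,-)$ (the observation already recorded before the statement), whereas $\widehat{\mathbbm F}_Y$ sends $X\mapsto\widehat{\mathbbm F}(X,Y)$ and $f\mapsto\widehat{\mathbbm F}(f\otimes 1_Y)=\widehat{\mathbbm F}(T(f,1_Y))=\mathbbm F(f,1_Y)$, so that $\widehat{\mathbbm F}_Y=\mathbbm F(-,Y)$ (or at worst canonically isomorphic to it, according to the normalization of $T$); hence all three $\mathcal C$-modules agree up to canonical isomorphism.

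Then (ii) is just two applications of (i). I would note that $\mathrm{Hom}_{\mathcal C}(-,-)\colon\mathcal C^{op}\times\mathcal C\to\mathbf{Ab}$ is biadditive — immediate from $\mathcal C$ being preadditive, so that the maps induced on Hom-groups by composition are $\mathbb Z$-bilinear — and likewise that $\mathrm{Ext}^n_{\mathcal C}(-,-)\colon\mathcal C^{op}\times\mathcal C\to\mathbf{Ab}$ is biadditive, since $\mathrm{Ext}^n$ in the abelian category $\mathrm{Mod}(\mathcal C)$ is additive in each variable and restricting along the additive Yoneda embedding to representables preserves biadditivity. Applying part (i) to $F=\mathrm{Hom}_{\mathcal C}$ and to $F=\mathrm{Ext}^n_{\mathcal C}$ then yields $\widehat{\mathbbm{Hom}}$ and $\widehat{\mathbbm{Ext}^n}$ with $\widehat{\mathbbm{Hom}}(X,Y)=\mathrm{Hom}_{\mathcal C}(Y,X)$ and $\widehat{\mathbbm{Ext}^n}(X,Y)=\mathrm{Ext}^n_{\mathcal C}(Y,X)$, together with the isomorphisms $\widehat{\mathbbm{Hom}}_Y\cong\mathrm{Hom}_{\mathcal C}(Y,-)$ and $\widehat{\mathbbm{Ext}^n}_Y\cong\mathrm{Ext}^n_{\mathcal C}(Y,-)$. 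I do not expect any real obstacle: the statement is essentially a bookkeeping corollary of Proposition \ref{tensorr}, and the only subtlety — itself routine — is checking that passing to opposite categories turns $F$ into a genuine functor $\mathbbm F$.
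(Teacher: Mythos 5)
Your proposal is correct and takes exactly the route the paper does: form $\mathbbm F$ by swapping the variables, check it is a biadditive functor on $\mathcal C\times\mathcal C^{op}$, invoke Proposition \ref{tensorr} to factor it through $T\colon\mathcal C\times\mathcal C^{op}\to\mathcal C\otimes\mathcal C^{op}$, and unwind the identification $\widehat{\mathbbm F}_Y=\mathbbm F(-,Y)=F(Y,-)$. The paper itself gives no proof and simply packages these preceding observations as the proposition, so your write-up supplies essentially the same argument with the routine biadditivity and functoriality checks spelled out.
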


\subsection{Quotient and comma category and radical of a category}

A  $\textbf{two}$ $\textbf{sided}$ $\textbf{ideal}$  $I(-,?)$ is an additive subfunctor of the two variable functor $\mathcal{C}(-,?):\mathcal{C}^{op}\otimes\mathcal{C}\rightarrow\mathbf{Ab}$ such that: (a) if $f\in I(X,Y)$ and $g\in\mathcal C(Y,Z)$, then  $gf\in I(X,Z)$; and (b)
if $f\in I(X,Y)$ and $h\in\mathcal C(U,X)$, then  $fh\in I(U,Z)$. If $I$ is a two-sided ideal, then we can form the $\textbf{quotient category}$  $\mathcal{C}/I$ whose objects are those of $\mathcal{C}$, and where $(\mathcal{C}/I)(X,Y):=\mathcal{C}(X,Y)/I(X,Y)$. Finally the composition is induced by that of $\mathcal{C}$ (see \cite{Mitchelring}). There is a canonical projection functor $\pi:\mathcal{C}\rightarrow \mathcal{C}/I$ such that:
\begin{enumerate}

\item $\pi(X)=X$, for all $X\in \mathcal{C}$.

\item For all $f\in \mathcal{C}(X,Y)$, $\pi(f)=f+I(X,Y):=\bar{f}.$
\end{enumerate}

Based on the Jacobson radical of a ring, we introduce the radical of an additive category. This concept goes back to work of Kelly (see \cite{Kelly}).

\begin{definition}
The (Jacobson) $\textbf{radical}$ of an additive category $\mathcal{C}$ is the two-sided ideal $\mathrm{rad}_{\mathcal C}$ in $\mathcal{C}$ defined by the formula
$$\mathrm{rad}_{\mathcal {C}}(X,Y)=\{h\in\mathcal{C}(X,Y)\mid 1_X-gh\text{ is invertible for any } g\in\mathcal{C}(Y,X)\} $$
for all objects $X$ and $Y$ of $\mathcal{C}$.
\end{definition}

If $\mathcal{A}$ and $\mathcal{B}$ are abelian categories and $F:\mathcal{A}\rightarrow\mathcal{B}$ is an additive functor. The $\textbf{comma category}$ $(\mathcal{B},F \mathcal{A})$ is the category whose objects are triples $(B,f,A)$ where $f:B\rightarrow FA$; and whose morphisms between the objects $(B,f,A)$ and $(B',f',A')$ are pair $(\beta,\alpha)$ of morphisms in $\mathcal{B}\times\mathcal{A}$ such that the diagram

\[
\begin{diagram}
\node{B}\arrow{e,t}{\beta}\arrow{s,l}{f}
 \node{B'}\arrow{s,r}{f'}\\
\node{FA}\arrow{e,t}{F\alpha}
 \node{FA'}
\end{diagram}
\]
is commutative in $\mathcal{B}$ (see \cite{Robert}).

\section{Triangular Matrix Categories}\label{section2}
In all what follows $\mathcal{U}$ and $\mathcal{T}$ will be additive categories.

\begin{proposition}\label{dosfuntores}
Let $ M\in \mathsf{Mod}(\mathcal{U}\otimes \mathcal{T}^{op})$ be. Then, there exists two covariant functors
$$E:\mathcal{T}\longrightarrow \mathsf{Mod}(\mathcal{U})^{op}$$
$$E':\mathcal{U}\longrightarrow  \mathsf{Mod}(\mathcal{T}^{op}).$$
\end{proposition}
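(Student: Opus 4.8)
The plan is to extract the two functors directly from the bimodule structure of $M \in \mathsf{Mod}(\mathcal{U}\otimes \mathcal{T}^{op})$ by ``currying'' in each of the two variables. By Proposition~\ref{tensorr}, having $M$ as a module over $\mathcal{U}\otimes\mathcal{T}^{op}$ is the same as having a biadditive bifunctor $\mathcal{U}\times\mathcal{T}^{op}\to\mathbf{Ab}$, which I will also denote $M$; so for each object $U\in\mathcal{U}$ and $T\in\mathcal{T}$ we have an abelian group $M(U,T)$, functorial (covariantly) in $U$ and contravariantly in $T$. The idea is that fixing the $\mathcal{T}$-variable produces a $\mathcal{U}$-module, and fixing the $\mathcal{U}$-variable produces a $\mathcal{T}^{op}$-module, and that these assignments are themselves functorial in the remaining variable, with a variance twist that lands $E$ in $\mathsf{Mod}(\mathcal{U})^{op}$ and $E'$ in $\mathsf{Mod}(\mathcal{T}^{op})$.

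Concretely, for the first functor: given $T\in\mathcal{T}$, define $E(T):=M(-,T):\mathcal{U}\to\mathbf{Ab}$, i.e. the partial functor obtained from the bifunctor $M$ as in the discussion following Proposition~\ref{tensorr} (the $\widehat{F}_X$ construction). This is an object of $\mathsf{Mod}(\mathcal{U})$. For a morphism $t:T\to T'$ in $\mathcal{T}$, i.e. $t\in\mathcal{T}(T,T')=\mathcal{T}^{op}(T',T)$, the contravariance of $M$ in its second slot gives a natural transformation $M(-,t):M(-,T')\to M(-,T)$, that is, a morphism $E(T')\to E(T)$ in $\mathsf{Mod}(\mathcal{U})$, equivalently a morphism $E(T)\to E(T')$ in $\mathsf{Mod}(\mathcal{U})^{op}$. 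One checks that naturality of $M$ in each variable and bifunctoriality give $E(1_T)=1_{E(T)}$ and $E(t'\circ t)=E(t')\circ E(t)$ in $\mathsf{Mod}(\mathcal{U})^{op}$, and additivity of $E$ on hom-groups follows from biadditivity of $M$. Symmetrically, define $E':\mathcal{U}\to\mathsf{Mod}(\mathcal{T}^{op})$ by $E'(U):=M(U,-):\mathcal{T}^{op}\to\mathbf{Ab}$ on objects, and for $u:U\to U'$ in $\mathcal{U}$ let $E'(u):=M(u,-):M(U,-)\to M(U',-)$, which is covariant in $U$ this time, so no opposite is needed on the target.

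The verifications are entirely routine once the partial-functor formalism of Proposition~\ref{tensorr} and the surrounding paragraph is in place, so there is no real obstacle; the only point deserving care is bookkeeping of variances — making sure the contravariance of $M$ in the $\mathcal{T}$-slot is what forces the codomain of $E$ to be $\mathsf{Mod}(\mathcal{U})^{op}$ rather than $\mathsf{Mod}(\mathcal{U})$, and confirming that composition of natural transformations is compatible with composition in $\mathcal{T}^{op}$. I would also remark that $E$ and $E'$ are essentially two avatars of the same datum $M$, re-expressed via the adjunction-free ``hom-tensor'' currying, which is why both exist simultaneously; this observation will presumably be what makes them useful for assembling the triangular matrix category $\mathbf{\Lambda}$ in the sequel.
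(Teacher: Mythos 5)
Your proposal is correct and follows essentially the same route as the paper: both define $E(T)$ and $E'(U)$ as the partial functors obtained by fixing one variable of the bimodule $M$, and both obtain the action on morphisms from the bifunctoriality of $M$ in the other variable, with the variance bookkeeping placing $E$ in $\mathsf{Mod}(\mathcal{U})^{op}$. The paper simply spells out the formulas $M_T(u)=M(u\otimes 1_T)$ and $[\bar t]_U=M(1_U\otimes t^{op})$ explicitly rather than invoking the currying discussion around Proposition~\ref{tensorr}.
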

\begin{proof}
${}$
\begin{enumerate}
\item [(a)]For $T\in \mathcal{T}$, we define a covariant functor $E(T):=M_{T}:\mathcal{U} \longrightarrow \mathbf{Ab}$  as follows
\begin{enumerate}
\item [(i)] $ M_{T}(U):=M(U,T) $, for all $ U\in \mathcal{U} $.
\item [(ii)] $ M_{T}(u):=M(u\otimes 1_{T}) $, for all $ u\in  \mathsf{Hom}_{\mathcal{U}}(U,U').$
\end{enumerate}
Now, given a morphism $t:T\longrightarrow T'$ in $\mathcal{T}$ we set $E(t):=\bar{t}:M_{T'}\longrightarrow M_{T}$  where $ \bar{t}=\lbrace [\bar{t}]_{U}:M_{T'}(U)\longrightarrow M_{T}(U)\rbrace_{ U\in \mathcal{U}}$ with $ [\bar{t}]_{U}=M(1_{U}\otimes t^{op}):M(U,T')\longrightarrow M(U,T)$.
It is easy to show that $E$ is a contravariant functor $E:\mathcal{T}\longrightarrow \mathsf{Mod}(\mathcal{U})$.

\item [(b)]
Similarly for $ U\in \mathcal{U} $ we define a contravariant the functor $ E'(U):=M_{U}:\mathcal{T}\longrightarrow \mathbf{Ab}$ (or a covariant functor $M_{U}:\mathcal{T}^{op} \longrightarrow \mathbf{Ab}$) as follows:

\begin{enumerate}
\item [(i)] $ M_{U}(T):=M(U,T) $, for all $ T\in \mathcal{T} $.
\item [(ii)] $ M_{U}(t):=M(1_{U}\otimes t^{op}) $, for all $ t\in  \mathsf{Hom}_{\mathcal{T}}(T,T').$ 
\end{enumerate}
Now, given $u\in  \mathsf{Hom}_{\mathcal{U}}(U,U')$ we set $E'(u):=\bar{u}:M_{U}\longrightarrow M_{U'} $
where $\bar{u}=\lbrace [\bar{u}]_{T}:M_{U}(T)\longrightarrow M_{U'}(T)\rbrace_{ T\in \mathcal{T}^{op}} $ (we are seeing $M_{U}:\mathcal{T}^{op} \longrightarrow \mathbf{Ab}$ as a covariant functor ) with $ [\bar{u}]_{T} =M(u\otimes 1_{T}):M(U,T)\longrightarrow M(U',T).$
\end{enumerate}
\end{proof}

\begin{definition}
We define a covariant functor $\mathbb{G}:\mathsf{Mod}(\mathcal{U})\longrightarrow \mathsf{Mod}(\mathcal{T})$ as follows.
Let $Y:\mathsf{Mod}(\mathcal{U})\longrightarrow \mathsf{Mod}\Big(\mathsf{Mod}(\mathcal{U})^{op}\Big)$ be the Yoneda functor
$Y(B):=\mathrm{Hom}_{\mathsf{Mod}(\mathcal{U})}(-,B),$\\
and consider the functor $I:\mathsf{Mod}\Big(\mathsf{Mod}(\mathcal{U})^{op}\Big)\longrightarrow \mathsf{Mod}(\mathcal{T})$,
induced by $E:T\longrightarrow \mathsf{Mod}(\mathcal{U})^{op}$. We set
$$\mathbb{G}:=I\circ Y: \mathsf{Mod}(\mathcal{U})\longrightarrow \mathsf{Mod}(\mathcal{T}).$$

\end{definition}

\begin{note}
In detail, we have that the following holds.
\begin{enumerate}
\item  For $B\in \mathsf{Mod}(\mathcal{U}) $ , $\mathbb{G}(B)(T):= \mathsf{Hom}_{\mathsf{Mod}(\mathcal{U})}(M_{T},B)$ for all $ T\in \mathcal{T} $.\\
Moreover, for all
$t\in \mathsf{Hom}_{\mathcal{T}}(T,T') $ we have that $\mathbb{G}(B)(t):=\mathrm{Hom}_{\mathsf{Mod}(\mathcal{U})}(\bar{t},B) $.
\item If $\eta:B\rightarrow B' $ is a morphism of $\mathcal{U} $-modules we have that
$\mathbb{G}(\eta):\mathbb{G}(B)\longrightarrow \mathbb{G}(B')$  is such that 
$$\mathbb{G}(\eta)= \Big\{  [ \mathbb{G}(\eta)]_{{T}}:=\mathsf{Hom}_{\mathsf{Mod}(\mathcal{U})}(M_{T},\eta):\mathbb{G}(B)(T)\longrightarrow
\mathbb{G}(B')(T) \Big \} _{T\in \mathcal{T}} .$$
\end{enumerate}
\end{note}

%%%%CORREGIR DE ACA EN ADELANTE LA PARTE DONDE
%%%DICE FORMALMENTE, ES DECIR, CAMBIAR POR 
%%$M(1_{U}\otimes t^{op})% en lugar de $M(1_{U}\otimes t)$
%%%%%%%%%%%%%%%%%%%%%%%%%%%%%%%%%
%%%%%%%%%%%%%%%%%%%%%%%%%%%%%%%%
%%%%%%%%%%%%%%%%%%%%%%%%%%%%%%%

%%%%%%%%%%%%%%%%%%%%%%%%%%%%%%%%%%%%%%%%%%%%%%%%%%%%%%%%%%%%%%%%%%%%%%%%%%%%%%%%%%%%%%%%%%%%%%%%%%%%%%%%%%%%%%%%%%%%%%%%%%%%%%%%%%%%%%%%%%%%%%%%%%%%%%%

Hence we have the comma category $ \Big ( \mathsf{Mod}(\mathcal{T}),\mathbb{G}\mathsf{Mod}(\mathcal{U})\Big )$
whose objects are the triples $ (A,f,B)$ with $A\in \mathsf{Mod}(\mathcal{T}), B\in \mathsf{Mod}(\mathcal{U}), $
and $ f:A\longrightarrow \mathbb{G}(B) $ a morphism of $ \mathcal{T} $-modules.
A morphism between two objects $ (A,f,B) $ and $ (A',f',B') $ is a pairs of morphism $(\alpha,\beta) $ where
$\alpha:A\longrightarrow A'$ is a morphism of $\mathcal{T}$-modules and $\beta:B\longrightarrow B'$ is a
 morphism of $\mathcal{U}$-modules such that the diagram
\[
\begin{diagram}
\node{A} \arrow{e,t}{\alpha}\arrow{s,l}{f}\node{ A'}\arrow{s,r}{f'}\\
\node{\mathbb{G}(B)} \arrow{e,b}{\mathbb{G}(\beta)}\node{\mathbb{G}(B')}
\end{diagram}
\]
commutes.\\
Note that, since $ f:A\longrightarrow \mathbb{G}(B) $ is a morphism of $ \mathcal{T} $-modules, for each
$ t\in \mathsf{Hom}_{\mathcal{T}}(T,T') $ the following diagram
\[
\begin{diagram}
\node{A(T)} \arrow{e,t}{f_{T}}\arrow{s,l}{A(t)}\node{ \mathbb{G}(B)(T)}\arrow{s,r}{\mathbb{G}(B)(t)}\\
\node{A(T')} \arrow{e,b}{f_{T'}}\node{\mathbb{G}(B)(T')}
\end{diagram}
\]
commutes in $\mathbf{Ab}$. Then, for all $ x\in A(T) $, $ f_{T}(x)\in \mathrm{Hom}_{\mathsf{Mod}(\mathcal{U})}(M_{T},B) $,
that is $ f_{T}(x) $ is a morphism of $ \mathcal{U} $-modules. We denote it by
$$f_{T}(x)=\Big \{[f_{T}(x)]_{U}:M_{T}(U)\longrightarrow B(U)\Big \}_{U\in \mathcal{U}}.$$

\begin{proposition}\label{Tlemma1}
Let $M\in \mathsf{Mod}(\mathcal{U}\otimes \mathcal{T}^{op})$ be and
$f:A\longrightarrow \mathbb{G}(B)$ a morphism in $\mathsf{Mod}(\mathcal{T})$.
Let $U\in \mathcal{U}$ and $T\in \mathcal{T}$, for $m\in M(U,T)$ and $x\in A(T)$ we set
$$m\cdot x:=[f_{T}(x)]_{U}(m)\in B(U) $$
(this product can be defined for each $f:A\longrightarrow \mathbb{G}(B)$).
\begin{enumerate}
\item [(a)] For $m\in M(U,T')$, $t\in \mathsf{Hom}_{\mathcal{T}}(T,T')$  and $x\in A(T)$ we set
$$m\bullet t :=M(1_{U}\otimes t^{op})(m),\quad \quad   t\ast x:=A(t)(x).$$
Then we have that  $(m\bullet t)\cdot x=m\cdot (t\ast x).$

\item [(b)] For $m\in M(U,T)$, $u\in Hom_{\mathcal{U}}(U,U')$ and $z\in B(U)$
we set
$$u\bullet m=M(u\otimes 1_{T})(m),\quad \quad u\diamond z:=B(u)(z).$$
Then for $x\in A(t)$ we have that $(u\bullet m)\cdot x= u\diamond (m\cdot x).$

\item [(c)] Let $m_{1}\in M(U',T')$, $m_{2}\in M(U,T)$, $t\in \mathrm{Hom}_{\mathcal{T}}(T,T')$ and $u\in \mathrm{Hom}_{\mathcal{U}}(U,U')$.
For  $x\in A(T)$ we have that
$$(m_{1}\bullet t+u\bullet m_{2})\cdot x=(m_{1}\bullet t)\cdot x+(u\bullet m_{2}) \cdot x=m_{1}\cdot (t\ast x)+u\diamond (m_{2}\cdot x).$$
\end{enumerate}
\end{proposition}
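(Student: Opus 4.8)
The plan is to unwind the definitions and reduce each of (a), (b) and (c) to the naturality of an appropriate natural transformation, (c) then being a purely formal consequence of additivity. The datum $f\colon A\to\mathbb{G}(B)$ carries two layers of naturality that do all the work: first, $f$ is a morphism of $\mathcal{T}$-modules, so for every $t\in\mathsf{Hom}_{\mathcal{T}}(T,T')$ the square relating $f_{T}$, $f_{T'}$, $A(t)$ and $\mathbb{G}(B)(t)$ commutes; second, for each fixed $x\in A(T)$ the element $f_{T}(x)\in\mathbb{G}(B)(T)=\mathrm{Hom}_{\mathsf{Mod}(\mathcal{U})}(M_{T},B)$ is itself a morphism of $\mathcal{U}$-modules, so its components $[f_{T}(x)]_{U}$ satisfy their own naturality squares with respect to maps $u\in\mathsf{Hom}_{\mathcal{U}}(U,U')$.

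For (a), I would begin from the naturality square of $f$ at $t$ applied to $x\in A(T)$, which says $f_{T'}(A(t)(x))=\mathbb{G}(B)(t)(f_{T}(x))$. By the description of $\mathbb{G}$ recorded in the Note, $\mathbb{G}(B)(t)=\mathrm{Hom}_{\mathsf{Mod}(\mathcal{U})}(\bar{t},B)$, so the right-hand side is the composite $f_{T}(x)\circ\bar{t}$, where $\bar{t}\colon M_{T'}\to M_{T}$ is the morphism of $\mathcal{U}$-modules of Proposition \ref{dosfuntores}. Evaluating both natural transformations at an object $U\in\mathcal{U}$ and using $[\bar{t}]_{U}=M(1_{U}\otimes t^{op})$ gives $[f_{T'}(t\ast x)]_{U}=[f_{T}(x)]_{U}\circ M(1_{U}\otimes t^{op})$, and applying both sides to $m\in M(U,T')$ produces, after reading off the definitions of $m\bullet t$ and of the product, exactly the identity $m\cdot(t\ast x)=(m\bullet t)\cdot x$.

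For (b), I would fix $x\in A(T)$ and exploit that $f_{T}(x)\colon M_{T}\to B$ is a morphism of $\mathcal{U}$-modules; its naturality square at $u\in\mathsf{Hom}_{\mathcal{U}}(U,U')$ reads $[f_{T}(x)]_{U'}\circ M_{T}(u)=B(u)\circ[f_{T}(x)]_{U}$. Since $M_{T}(u)=M(u\otimes 1_{T})$ by Proposition \ref{dosfuntores}, evaluating at $m\in M(U,T)$ yields $[f_{T}(x)]_{U'}(u\bullet m)=B(u)\big([f_{T}(x)]_{U}(m)\big)$, which is precisely $(u\bullet m)\cdot x=u\diamond(m\cdot x)$. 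Part (c) is then immediate: $m_{1}\bullet t$ and $u\bullet m_{2}$ both lie in $M(U',T)$, and since $[f_{T}(x)]_{U'}$ is a homomorphism of abelian groups, the product $(-)\cdot x$ is additive in its first argument; splitting the sum and invoking (a) on the first summand (with $U'$ and $m_{1}$ in the roles of $U$ and $m$) together with (b) on the second summand gives the asserted chain of equalities.

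I do not expect any genuine obstacle: the proposition is in essence a dictionary translating the two naturality conditions that define a morphism into the comma category $\big(\mathsf{Mod}(\mathcal{T}),\mathbb{G}\mathsf{Mod}(\mathcal{U})\big)$ into the ``bimodule-like'' operations $m\cdot x$, $m\bullet t$, $u\bullet m$. The only point requiring care is the bookkeeping of variances — checking that $\bar{t}$ runs from $M_{T'}$ to $M_{T}$ and sits on the correct side of the composite, and that $[\bar{t}]_{U}$ and $M_{T}(u)$ are the maps induced by $M$ that were fixed in Proposition \ref{dosfuntores} — but once those conventions are pinned down, each of (a), (b), (c) is a one-line verification.
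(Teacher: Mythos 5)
Your proposal is correct and follows essentially the same route as the paper's own argument: part (a) from the naturality square of $f$ at $t$ combined with the identification $\mathbb{G}(B)(t)=\mathrm{Hom}_{\mathsf{Mod}(\mathcal{U})}(\bar{t},B)$, part (b) from the naturality of $f_{T}(x)\colon M_{T}\to B$ at $u$, and part (c) by additivity of $[f_{T}(x)]_{U'}$ together with (a) and (b). The bookkeeping of variances you flag is exactly the point the paper also tracks, and nothing further is needed.
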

\begin{proof}

\item [(a)] Since $ f:A\longrightarrow \mathbb{G}(B) $ is a morphism of $\mathcal{T}$-modules, for each
$ t\in \mathsf{Hom}_{\mathcal{T}}(T,T') $ the following
diagram
\[
\begin{diagram}
\node{A(T)} \arrow{e,t}{f_{T}}\arrow{s,l}{A(t)}\node{ \mathbb{G}(B)(T)}\arrow{s,r}{\mathbb{G}(B)(t)}\\
\node{A(T')} \arrow{e,b}{f_{T'}}\node{\mathbb{G}(B)(T')}
\end{diagram}
\]
commutes. That is, for $ x\in A(T) $ we have that
$\Big(\mathbb{G}(B)(t)\circ f_{T}\Big)(x)=\Big(f_{T'}\circ A(t)\Big)(x)\in \mathrm{Hom}_{\mathsf{Mod}(\mathcal{U})}(M_{T'},B)$.
Then, we have that
\begin{eqnarray*}
f_{T'}\Big( A(t)(x)\Big)&=& \Big(\mathbb{G}(B)(t)\circ f_{T}\Big)(x)\\
&=& \Big(\mathsf{Hom}_{\mathsf{Mod}(\mathcal{U})}(\bar{t},B) \circ f_{T}\Big)(x)\\
&=& f_{T}(x)\circ \bar{t}.
\end{eqnarray*}
Hence, for $U\in \mathcal{U}$ we have that
\[
\left [f_{T'}\Big(A(t)(x) \Big) \right ]_{U}= [f_{T}(x)]_{U}\circ[\bar{t}]_{U}
=[f_{T}(x)]_{U}\circ M(1_{U}\otimes t^{op}).
\]

It follows that 
\begin{eqnarray}\label{natural1}
 [f_{T'}(A(t)(x) )]_{U}(m)=
[f_{T}(x)]_{U}(M(1_{U}\otimes t^{op})(m)), \text{   for all } m\in M(U,T').
\end{eqnarray}
This means that
$(m\bullet t)\cdot x=m\cdot (t\ast x).$

\item [(b)] Since $ f_{T}(x)\in \mathrm{Hom}_{\mathsf{Mod}(\mathcal{U})}(M_{T},B)$
with $f_{T}(x)=\Big \{[f_{T}(x)]_{U}:M(U,T)\longrightarrow B(U) \Big \}_{U\in \mathcal{U}}$ is a morphism of
$\mathcal{U}$-modules, for $u:U\longrightarrow U'$ we have the following commutative diagram
\[
\begin{diagram}
\node{M_{T}(U)} \arrow{e,t}{\left[f_{T}(x)\right]_{U}}\arrow{s,l}{M_{T}(u)}\node{ B(U)}\arrow{s,r}{B(u)}\\
\node{M_{T}(U')} \arrow{e,b}{\left[f_{T}(x)\right]_{U'}}\node{B(U').}
\end{diagram}
\]
Then, for $m\in M_{T}(U)=M(U,T) $ we have that
\begin{eqnarray*}
\Big(B(u)\circ\left[f_{T}(x)\right]_{U}\Big)(m)&=& \Big(\left[f_{T}(x)\right]_{U'}\circ M_{T}(u)\Big)(m)\\
&=& \Big(\left[f_{T}(x)\right]_{U'}\circ M(u\otimes 1_{T})\Big)(m)\\
&=& \left[f_{T}(x)\right]_{U'}(M(u\otimes 1_{T})(m))
\end{eqnarray*}
This means that $u\diamond (m\cdot x)=(u\bullet m)\cdot x$.

\item [(c)] Since $m_{1}\in M(U',T')$ and $m_{2}\in M(U,T)$   we have that $m_{1}\bullet t=M(1_{U'}\otimes t^{op})(m_{1})\in M(U',T)$ and  $u\bullet m_{2}=M(u\otimes 1_{T})(m_{2})\in M(U',T)$.
Now, since $M(U',T)$ is an abelian group we can consider
the element $m':=m_{1}\bullet  t+u\bullet m_{2}\in M(U',T)$.
Then by definition, for $x\in A(T)$ we have that
$$m'\cdot x=[f_{T}(x)]_{U'}(m').$$
Since $[f_{T}(x)]_{U'}:M(U',T)\longrightarrow B(U')$ is a morphism of abelian groups we have that
$[f_{T}(x)]_{U'}(m')=[f_{T}(x)]_{U'}(m_{1}\bullet t)+[f_{T}(x)]_{U'}(u\bullet m_{2})=(m_{1}\bullet t)\cdot x+
(u\bullet m_{2})\cdot x$. Then, the result follows from $(a)$ and $(b)$.
\end{proof}

\begin{definition}\label{defitrinagularmat}
We define the \textbf{triangular matrix category}
$\mathbf{\Lambda}=\left[ \begin{smallmatrix}
\mathcal{T} & 0 \\ M & \mathcal{U}
\end{smallmatrix}\right]$ as follows.
\begin{enumerate}
\item [(a)] The class of objects of this category are matrices $ \left[
\begin{smallmatrix}
T & 0 \\ M & U
\end{smallmatrix}\right]  $ with $ T\in \mathrm{obj} \ \mathcal{T} $ and $ U\in \mathrm{obj} \ \mathcal{U} $.

\item [(b)] Given a pair of objects in
$\left[ \begin{smallmatrix}
T & 0 \\
M & U
\end{smallmatrix} \right] ,  \left[ \begin{smallmatrix}
T' & 0 \\
M & U'
\end{smallmatrix} \right]$ in
$\mathbf{\Lambda}$ we define

$$\mathsf{ Hom}_{\mathbf{\Lambda}}\left (\left[ \begin{smallmatrix}
T & 0 \\
M & U
\end{smallmatrix} \right] ,  \left[ \begin{smallmatrix}
T' & 0 \\
M & U'
\end{smallmatrix} \right]  \right)  := \left[ \begin{smallmatrix}
\mathsf{Hom}_{\mathcal{T}}(T,T') & 0 \\
M(U',T) & \mathsf{Hom}_{\mathcal{U}}(U,U')
\end{smallmatrix} \right]$$.
\end{enumerate}
The composition is given by
\begin{eqnarray*}
\circ&:&\left[  \begin{smallmatrix}
{\mathcal{T}}(T',T'') & 0 \\
M(U'',T') & {\mathcal{U}}(U',U'')
\end{smallmatrix}  \right] \times \left[
\begin{smallmatrix}
{\mathcal{T}}(T,T') & 0 \\
M(U',T) & {\mathcal{U}}(U,U')
\end{smallmatrix} \right]\longrightarrow\left[
\begin{smallmatrix}
{\mathcal{T}}(T,T'') & 0 \\
M(U'',T) & {\mathcal{U}}(U,U'')\end{smallmatrix} \right] \\
&& \left( \left[ \begin{smallmatrix}
t_{2} & 0 \\
m_{2} & u_{2}
\end{smallmatrix} \right], \left[
\begin{smallmatrix}
t_{1} & 0 \\
m_{1} & u_{1}
\end{smallmatrix} \right]\right)\longmapsto\left[
\begin{smallmatrix}
t_{2}\circ t_{1} & 0 \\
m_{2}\bullet t_{1}+u_{2}\bullet m_{1} & u_{2}\circ u_{1}
\end{smallmatrix} \right].
\end{eqnarray*}
\end{definition}
We recall that $ m_{2}\bullet t_{1}:=M(1_{U''}\otimes t_{1}^{op})(m_{2})$ and
$u_{2}\bullet m_{1}=M(u_{2}\otimes 1_{T})(m_{1})$.

\begin{lemma}\label{asociatmat}
The composition defined above is associative and given 
and object $\left[
\begin{smallmatrix}
T & 0 \\
M & U
\end{smallmatrix} \right]\in \mathbf{\Lambda}$, the identity morphism is given by $1_{\left[
\begin{smallmatrix}
T & 0 \\
M & U
\end{smallmatrix} \right]}:=\left[
\begin{smallmatrix}
1_{T} & 0 \\
0 & 1_{U}
\end{smallmatrix} \right].$
\end{lemma}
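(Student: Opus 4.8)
Lemma \ref{asociatmat} asserts that the composition law in $\mathbf{\Lambda}$ is associative and that $\left[\begin{smallmatrix} 1_T & 0 \\ 0 & 1_U \end{smallmatrix}\right]$ is a two-sided identity. The plan is to verify both claims by direct computation on matrix entries, where the only nontrivial bookkeeping occurs in the lower-left corner, and where every identity needed is already packaged in Proposition \ref{Tlemma1}(a)--(c) together with the functoriality of $M$.

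First I would handle the identity axiom, since it is the quicker of the two. Given a morphism $\left[\begin{smallmatrix} t & 0 \\ m & u \end{smallmatrix}\right] : \left[\begin{smallmatrix} T & 0 \\ M & U \end{smallmatrix}\right] \to \left[\begin{smallmatrix} T' & 0 \\ M & U' \end{smallmatrix}\right]$, composing on the right with $\left[\begin{smallmatrix} 1_T & 0 \\ 0 & 1_U \end{smallmatrix}\right]$ produces, by the composition formula, the lower-left entry $m \bullet 1_T + u \bullet 0$. Now $m \bullet 1_T = M(1_{U'} \otimes 1_T^{op})(m) = M(1_{(U',T)})(m) = m$ because $M$ is a functor and $1_{U'} \otimes 1_T^{op} = 1_{(U',T)}$ in $\mathcal{U} \otimes \mathcal{T}^{op}$; and $u \bullet 0 = M(u \otimes 1_T)(0) = 0$ since $M(u \otimes 1_T)$ is a group homomorphism. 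Hence the lower-left entry is $m$, and the diagonal entries are obviously $t \circ 1_T = t$ and $u \circ 1_U = u$, so right composition with the proposed identity is the identity on morphisms; the left-composition case is symmetric, using $0 \bullet t + 1_{U'} \bullet m = 0 + M(1_{U'} \otimes 1_T)(m) = m$.

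Next I would verify associativity. Fix three composable morphisms $\left[\begin{smallmatrix} t_1 & 0 \\ m_1 & u_1 \end{smallmatrix}\right]$, $\left[\begin{smallmatrix} t_2 & 0 \\ m_2 & u_2 \end{smallmatrix}\right]$, $\left[\begin{smallmatrix} t_3 & 0 \\ m_3 & u_3 \end{smallmatrix}\right]$ through objects indexed by $T,T',T'',T'''$ and $U,U',U'',U'''$. The diagonal entries associate because composition in $\mathcal{T}$ and in $\mathcal{U}$ is associative. For the lower-left corner, computing $\left(\left[\begin{smallmatrix} t_3 & 0 \\ m_3 & u_3 \end{smallmatrix}\right] \circ \left[\begin{smallmatrix} t_2 & 0 \\ m_2 & u_2 \end{smallmatrix}\right]\right) \circ \left[\begin{smallmatrix} t_1 & 0 \\ m_1 & u_1 \end{smallmatrix}\right]$ gives
$$(m_3 \bullet t_2 + u_3 \bullet m_2) \bullet t_1 + (u_3 \circ u_2) \bullet m_1,$$
while the other bracketing gives
$$m_3 \bullet (t_2 \circ t_1) + u_3 \bullet (u_2 \bullet m_1 + m_2 \bullet t_1).$$
I would expand both using that $- \bullet t_1 = M(1 \otimes t_1^{op})$ and $u_3 \bullet - = M(u_3 \otimes 1)$ are additive, then reduce each of the four resulting terms to the common form $M(u_3 \otimes t_2^{op}\cdots)$-type expressions via the functoriality of $M$: the key identities are $(m_3 \bullet t_2) \bullet t_1 = m_3 \bullet (t_2 \circ t_1)$ (i.e. $M(1\otimes t_1^{op})M(1\otimes t_2^{op}) = M(1 \otimes (t_2 t_1)^{op})$), $u_3 \bullet (u_2 \bullet m_1) = (u_3 \circ u_2) \bullet m_1$, and the crucial commutation $(u_3 \bullet m_2) \bullet t_1 = u_3 \bullet (m_2 \bullet t_1)$, which holds because $(u_3 \otimes 1_{T'})\circ(1_{U''}\otimes t_1^{op}) = u_3 \otimes t_1^{op} = (1_{U'''}\otimes t_1^{op})\circ(u_3 \otimes 1_{T''})$ in $\mathcal{U}\otimes\mathcal{T}^{op}$ and $M$ respects composition. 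After these substitutions both sides collapse to $m_3 \bullet (t_2 t_1) + u_3 \bullet (m_2 \bullet t_1) + (u_3 u_2) \bullet m_1$, proving equality.

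The main obstacle — really the only place requiring care — is keeping the bimodule actions $\bullet$ on the correct side and tracking which object indices $U,T$ label each instance; once one observes that every $\bullet$-manipulation is an instance of $M$ being a functor on $\mathcal{U}\otimes\mathcal{T}^{op}$ (so that mixed products $u \otimes t^{op}$ can be factored in either order), the computation is mechanical. Alternatively, one could avoid the corner computation altogether by invoking the anticipated equivalence with the comma category $\big(\mathsf{Mod}(\mathcal{T}), \mathbb{G}\mathsf{Mod}(\mathcal{U})\big)$, but since that equivalence is proved later and presumably relies on $\mathbf{\Lambda}$ already being a category, the direct verification above is the appropriate route here.
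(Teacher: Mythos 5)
Your proposal is correct and follows essentially the same route as the paper: both verify associativity by expanding the lower-left corner of each bracketing, then reduce via the additivity of $M(u_3\otimes 1)$ and $M(1\otimes t_1^{op})$ together with functoriality identities $(m_3\bullet t_2)\bullet t_1 = m_3\bullet(t_2 t_1)$, $u_3\bullet(u_2\bullet m_1)=(u_3 u_2)\bullet m_1$, and the mixed commutation $(u_3\bullet m_2)\bullet t_1 = u_3\bullet(m_2\bullet t_1)$, which the paper labels (a)--(e). You merely spell out the identity-axiom verification a bit more explicitly than the paper's ``it is easy to see.''
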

\begin{proof}
Let
$$\left[
\begin{smallmatrix}
t_{1} & 0 \\
m_{1} & u_{1}
\end{smallmatrix}\right]\in \mathsf{ Hom}_{\mathbf{\Lambda}}\left (\left[ \begin{smallmatrix}
T & 0 \\
M & U
\end{smallmatrix} \right] ,  \left[ \begin{smallmatrix}
T' & 0 \\
M & U'
\end{smallmatrix} \right]  \right)  = \left[ \begin{smallmatrix}
\mathsf{Hom}_{\mathcal{T}}(T,T') & 0 \\
M(U',T) & \mathsf{Hom}_{\mathcal{U}}(U,U')
\end{smallmatrix} \right]$$

$$\left[
\begin{smallmatrix}
t_{2} & 0 \\
m_{2} & u_{2}
\end{smallmatrix}\right]\in \mathsf{ Hom}_{\mathbf{\Lambda}}\left (\left[ \begin{smallmatrix}
T' & 0 \\
M & U'
\end{smallmatrix} \right] ,  \left[ \begin{smallmatrix}
T'' & 0 \\
M & U''
\end{smallmatrix} \right]  \right)  = \left[ \begin{smallmatrix}
\mathsf{Hom}_{\mathcal{T}}(T',T'') & 0 \\
M(U'',T') & \mathsf{Hom}_{\mathcal{U}}(U',U'')
\end{smallmatrix} \right]$$

$$\left[
\begin{smallmatrix}
t_{3} & 0 \\
m_{3} & u_{3}
\end{smallmatrix}\right]\in \mathsf{ Hom}_{\mathbf{\Lambda}}\left (\left[ \begin{smallmatrix}
T ''& 0 \\
M & U''
\end{smallmatrix} \right] ,  \left[ \begin{smallmatrix}
T''' & 0 \\
M & U'''
\end{smallmatrix} \right]  \right)  = \left[ \begin{smallmatrix}
\mathsf{Hom}_{\mathcal{T}}(T'',T''') & 0 \\
M(U''',T'') & \mathsf{Hom}_{\mathcal{U}}(U'',U''')
\end{smallmatrix} \right]$$

On one hand, we have that

$$\left[
\begin{smallmatrix}
t_{3} & 0 \\
m_{3} & u_{3}
\end{smallmatrix} \right]
\left[
\begin{smallmatrix}
t_{2}\circ t_{1} & 0 \\
m_{2}\bullet t_{1}+u_{2}\bullet m_{1} & u_{2}\circ u_{1}
\end{smallmatrix} \right]= \left[
\begin{smallmatrix}
t_{3}(t_{2}\circ t_{1}) & 0 \\
m_{3}\bullet(t_{2}\circ t_{1})+u_{3}\bullet (m_{2}\bullet t_{1}+u_{2}\bullet m_{1}) & u_{3}(u_{2}\circ u_{1})
\end{smallmatrix} \right].$$
On the other hand
$$
\left[
\begin{smallmatrix}
t_{3}\circ t_{2} & 0 \\
m_{3}\bullet t_{2}+u_{3}\bullet m_{2} & u_{3}\circ u_{2}
\end{smallmatrix} \right]
\left[
\begin{smallmatrix}
t_{1} & 0 \\
m_{1} & u_{1}
\end{smallmatrix} \right]
= \left[
\begin{smallmatrix}
(t_{3}\circ t_{2})\circ t_{1} & 0 \\
(m_{3}\bullet t_{2}+u_{3}\bullet m_{2})\bullet t_{1}
+(u_{3}\circ u_{2})\bullet m_{1} & (u_{3}\circ u_{2})\circ u_{1}
\end{smallmatrix} \right].$$
Now, we observe that
\begin{enumerate}
\item [(a)] $m_{3}\bullet (t_{2}\circ t_{1})=(m_{3}\bullet t_{2})\bullet t_{1}$. Indeed,
$m_{3}\bullet (t_{2}\circ t_{1})=M(1_{U'''},(t_{2}t_{1})^{op})(m_{3})=M(1_{U'''},t_{1}^{op})\Big(M(1_{U'''},t_{2}^{op})(m_{3})\Big)
=(m_{3}\bullet t_{2})\bullet t_{1}\in M(U''',T)$.

\item [(b)] $u_{3}\bullet (m_{2}\bullet t_{1}+u_{2}\bullet m_{1})=u_{3}\bullet (m_{2}\bullet t_{1})+
u_{3}\bullet (u_{2}\bullet m_{1}).$ Indeed, this follows from the fact that $M(u_{3}\otimes 1_{T})$ is a morphism
of abelian groups.

\item [(c)] $(u_{3}\circ u_{2})\bullet m_{1}=u_{3}\bullet (u_{2}\bullet m_{1})$. This is similar to (a).

\item [(d)] $(m_{3}\bullet t_{2}+u_{3}\bullet m_{2})\bullet t_{1}=
(m_{3}\bullet t_{2})\bullet t_{1}+(u_{3}\bullet m_{2})\bullet t_{1}$. This is similar to (c)

\item [(e)] $u_{3}\bullet (m_{2}\bullet t_{1})=(u_{3}\bullet m_{2})\bullet t_{1}$.
Indeed, since we have the morphisms
$t_{1}:T\longrightarrow T'$ and $u_{3}:U''\longrightarrow U'''$,
there exists a morphism $M(u_{3}\otimes t_{1}^{op}):M(U'',T')\longrightarrow M(U''',T)$.
The assertion follows from the fact
$$M(u_{3}\otimes t_{1}^{op})=M(u_{3}\otimes 1_{T})\circ M(1_{U''}\otimes t_{1}^{op})=
M(1_{U'''}\otimes t_{1}^{op})\circ M(u_{3}\otimes 1_{T'}).$$
\end{enumerate}
Then the associativity follows from (a) to (d).
It is easy to see that
$1_{\left[
\begin{smallmatrix}
T & 0 \\
M & U
\end{smallmatrix} \right]}:=\left[
\begin{smallmatrix}
1_{T} & 0 \\
0 & 1_{U}
\end{smallmatrix} \right]$ is the identity.
\end{proof}

Now, for
$$\left[
\begin{smallmatrix}
t_{1} & 0 \\
m_{1} & u_{1}
\end{smallmatrix}\right],\left[
\begin{smallmatrix}
r_{1} & 0 \\
n_{1} & v_{1}
\end{smallmatrix}\right] \in \mathsf{ Hom}_{\mathbf{\Lambda}}\left (\left[ \begin{smallmatrix}
T & 0 \\
M & U
\end{smallmatrix} \right] ,  \left[ \begin{smallmatrix}
T' & 0 \\
M & U'
\end{smallmatrix} \right]  \right)  = \left[ \begin{smallmatrix}
\mathsf{Hom}_{\mathcal{T}}(T,T') & 0 \\
M(U',T) & \mathsf{Hom}_{\mathcal{U}}(U,U')
\end{smallmatrix} \right]$$
we define
$$\left[
\begin{smallmatrix}
t_{1} & 0 \\
m_{1} & u_{1}
\end{smallmatrix}\right]+ \left[
\begin{smallmatrix}
r_{1} & 0 \\
n_{1} & v_{1}
\end{smallmatrix}\right]:=\left[
\begin{smallmatrix}
t_{1}+r_{1} & 0 \\
m_{1}+n_{1} & u_{1}+v_{1}
\end{smallmatrix}\right]$$
Then, it is clear that $\mathbf{\Lambda} $ is a preadditive category since
$\mathcal{T} $ and $\mathcal{U}$ are preadditive categories and $M(U',T)$ is an abelian group.\\

\begin{proposition}\label{fincopromat}
If  $\mathcal{U}$ and $\mathcal{T}$ have finite coproducts, then
$\mathbf{\Lambda}$ has finite coproducts.
\end{proposition}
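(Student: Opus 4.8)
The plan is to show that finite coproducts in $\mathbf{\Lambda}$ are formed entrywise. Concretely, I claim that the coproduct of $\left[\begin{smallmatrix} T_1 & 0 \\ M & U_1\end{smallmatrix}\right]$ and $\left[\begin{smallmatrix} T_2 & 0 \\ M & U_2\end{smallmatrix}\right]$ in $\mathbf{\Lambda}$ is $\left[\begin{smallmatrix} T_1\oplus T_2 & 0 \\ M & U_1\oplus U_2\end{smallmatrix}\right]$, where $T_1\oplus T_2$ and $U_1\oplus U_2$ denote the coproducts (hence biproducts) of $T_1,T_2$ in $\mathcal{T}$ and of $U_1,U_2$ in $\mathcal{U}$. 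First I would fix the canonical injections $\iota^{\mathcal{T}}_i\colon T_i\to T_1\oplus T_2$ and $\iota^{\mathcal{U}}_i\colon U_i\to U_1\oplus U_2$ and take as structure morphisms of the candidate coproduct the matrices $\left[\begin{smallmatrix} \iota^{\mathcal{T}}_i & 0 \\ 0 & \iota^{\mathcal{U}}_i\end{smallmatrix}\right]$, $i=1,2$, whose $M$-entry is $0\in M(U_1\oplus U_2,T_i)$; it then remains to verify the universal property.

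The one step that goes beyond bookkeeping with the composition rule of Definition \ref{defitrinagularmat} concerns the $M$-entry. For a fixed $U'\in\mathcal{U}$ the functor $M_{U'}=M(U',-)\colon\mathcal{T}^{op}\to\mathbf{Ab}$ of Proposition \ref{dosfuntores} is additive --- this is immediate from $M$ being a module over $\mathcal{U}\otimes\mathcal{T}^{op}$ together with the bilinearity of $\otimes$ --- and an additive functor preserves finite biproducts. Hence the maps $m\mapsto m\bullet\iota^{\mathcal{T}}_i=M(1_{U'}\otimes(\iota^{\mathcal{T}}_i)^{op})(m)$ assemble into an isomorphism of abelian groups
\[
M(U',T_1\oplus T_2)\;\xrightarrow{\sim}\;M(U',T_1)\oplus M(U',T_2),
\]
so that for any $m_i\in M(U',T_i)$ there is a unique $m\in M(U',T_1\oplus T_2)$ with $m\bullet\iota^{\mathcal{T}}_i=m_i$ for $i=1,2$. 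This is the heart of the matter; the rest is routine.

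To conclude, given an object $\left[\begin{smallmatrix} T' & 0 \\ M & U'\end{smallmatrix}\right]$ of $\mathbf{\Lambda}$ and morphisms $\left[\begin{smallmatrix} t_i & 0 \\ m_i & u_i\end{smallmatrix}\right]\colon\left[\begin{smallmatrix} T_i & 0 \\ M & U_i\end{smallmatrix}\right]\to\left[\begin{smallmatrix} T' & 0 \\ M & U'\end{smallmatrix}\right]$, let $t\colon T_1\oplus T_2\to T'$ and $u\colon U_1\oplus U_2\to U'$ be the morphisms induced by $t_1,t_2$ and by $u_1,u_2$ via the universal properties in $\mathcal{T}$ and $\mathcal{U}$, and let $m\in M(U',T_1\oplus T_2)$ be the unique element with $m\bullet\iota^{\mathcal{T}}_i=m_i$. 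Using the composition rule of Definition \ref{defitrinagularmat} (and $u\bullet 0=0$) one gets
\[
\left[\begin{smallmatrix} t & 0 \\ m & u\end{smallmatrix}\right]\circ\left[\begin{smallmatrix} \iota^{\mathcal{T}}_i & 0 \\ 0 & \iota^{\mathcal{U}}_i\end{smallmatrix}\right]=\left[\begin{smallmatrix} t\circ\iota^{\mathcal{T}}_i & 0 \\ m\bullet\iota^{\mathcal{T}}_i & u\circ\iota^{\mathcal{U}}_i\end{smallmatrix}\right]=\left[\begin{smallmatrix} t_i & 0 \\ m_i & u_i\end{smallmatrix}\right],
\]
so $\left[\begin{smallmatrix} t & 0 \\ m & u\end{smallmatrix}\right]$ provides the required factorization; any morphism doing so must have $\mathcal{T}$- and $\mathcal{U}$-entries equal to $t$ and $u$ by uniqueness in $\mathcal{T}$ and $\mathcal{U}$, and $M$-entry equal to $m$ by uniqueness in the displayed isomorphism, so it is unique. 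The empty coproduct is $\left[\begin{smallmatrix} 0 & 0 \\ M & 0\end{smallmatrix}\right]$ with $0$ a zero object of $\mathcal{T}$ and of $\mathcal{U}$ (a zero object of $\mathbf{\Lambda}$, since $M(U',0)=0=M(0,T)$ by additivity of $M$ in each variable), and an evident induction yields coproducts of all finite families. I do not anticipate a serious obstacle: all the content lies in the additivity of $M$ in its $\mathcal{T}^{op}$-variable, and the universal property reduces to a direct computation with the composition formula.
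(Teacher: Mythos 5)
Your argument is correct, and since the paper dismisses this proposition as ``straightforward'' without recording a proof, it is the natural and presumably intended one: construct the candidate biproduct $\left[\begin{smallmatrix} T_1\oplus T_2 & 0 \\ M & U_1\oplus U_2\end{smallmatrix}\right]$ with injections having zero $M$-entry, and observe that the universal property reduces to the universal property in $\mathcal{T}$ and $\mathcal{U}$ together with the fact that the contravariant partial functor $M_{U'}=M(U',-)$ is additive (so that $M(U',T_1\oplus T_2)\cong M(U',T_1)\oplus M(U',T_2)$ via $m\mapsto (m\bullet\iota_1^{\mathcal{T}},m\bullet\iota_2^{\mathcal{T}})$). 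You correctly isolate the additivity of $M_{U'}$ as the only non-bookkeeping ingredient, derive it from $M$ being a module over $\mathcal{U}\otimes\mathcal{T}^{op}$ and bilinearity of the tensor, handle the uniqueness and the empty coproduct, and invoke induction for larger finite families.
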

\begin{proof}
It is straightforward.
\end{proof}
Now, we compute the radical in $\mathbf{\Lambda}$.

\begin{proposition}\label{radtriangular}
$\mathrm{rad}_{\mathbf{\Lambda}}\left (\left[ \begin{smallmatrix}
T & 0 \\
M & U
\end{smallmatrix} \right] ,  \left[ \begin{smallmatrix}
T' & 0 \\
M & U'
\end{smallmatrix} \right]  \right)  = \left[ \begin{smallmatrix}
\mathsf{rad}_{\mathcal{T}}(T,T') & 0 \\
M(U',T) & \mathsf{rad}_{\mathcal{U}}(U,U')
\end{smallmatrix} \right]$

\end{proposition}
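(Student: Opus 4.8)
The plan is to first pin down the isomorphisms of $\mathbf{\Lambda}$ and then unwind the definition of the radical.

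The key step is a description of the invertible endomorphisms of a fixed object: a morphism $\left[\begin{smallmatrix} a & 0 \\ m & b \end{smallmatrix}\right]$ in $\mathsf{Hom}_{\mathbf{\Lambda}}\big(\left[\begin{smallmatrix} T & 0 \\ M & U \end{smallmatrix}\right],\left[\begin{smallmatrix} T & 0 \\ M & U \end{smallmatrix}\right]\big)$ is an isomorphism in $\mathbf{\Lambda}$ if and only if $a\in\mathsf{Hom}_{\mathcal{T}}(T,T)$ and $b\in\mathsf{Hom}_{\mathcal{U}}(U,U)$ are isomorphisms, the value of $m\in M(U,T)$ being irrelevant. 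For the implication $\Rightarrow$, if $\left[\begin{smallmatrix} a' & 0 \\ m' & b' \end{smallmatrix}\right]$ is a two-sided inverse, then comparing the diagonal entries of the two products with $1_{\left[\begin{smallmatrix} T & 0 \\ M & U \end{smallmatrix}\right]}=\left[\begin{smallmatrix} 1_T & 0 \\ 0 & 1_U \end{smallmatrix}\right]$, via the composition rule of Definition \ref{defitrinagularmat}, forces $a'a=1_T=aa'$ and $b'b=1_U=bb'$. For $\Leftarrow$, assuming $a$ and $b$ invertible I would verify directly that $\left[\begin{smallmatrix} a^{-1} & 0 \\ -(b^{-1}\bullet m)\bullet a^{-1} & b^{-1} \end{smallmatrix}\right]$ is a two-sided inverse; this is a short calculation relying only on the bullet-action identities $m\bullet(t\circ t')=(m\bullet t)\bullet t'$, $(u\circ u')\bullet m=u\bullet(u'\bullet m)$ and $u\bullet(m\bullet t)=(u\bullet m)\bullet t$ recorded in the proof of Lemma \ref{asociatmat}, together with the cancellation identities $b\bullet(b^{-1}\bullet m)=m$ and $(b^{-1}\bullet m)\bullet a^{-1}\bullet a=b^{-1}\bullet m$.

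Next I would apply the definition of $\mathrm{rad}_{\mathbf{\Lambda}}$: an element $\left[\begin{smallmatrix} t_1 & 0 \\ m_1 & u_1 \end{smallmatrix}\right]$ of $\mathsf{Hom}_{\mathbf{\Lambda}}\big(\left[\begin{smallmatrix} T & 0 \\ M & U \end{smallmatrix}\right],\left[\begin{smallmatrix} T' & 0 \\ M & U' \end{smallmatrix}\right]\big)$ belongs to $\mathrm{rad}_{\mathbf{\Lambda}}$ precisely when, for every $\left[\begin{smallmatrix} s & 0 \\ n & v \end{smallmatrix}\right]\in \mathsf{Hom}_{\mathbf{\Lambda}}\big(\left[\begin{smallmatrix} T' & 0 \\ M & U' \end{smallmatrix}\right],\left[\begin{smallmatrix} T & 0 \\ M & U \end{smallmatrix}\right]\big)$, the endomorphism
$$1_{\left[\begin{smallmatrix} T & 0 \\ M & U \end{smallmatrix}\right]}-\left[\begin{smallmatrix} s & 0 \\ n & v \end{smallmatrix}\right]\left[\begin{smallmatrix} t_1 & 0 \\ m_1 & u_1 \end{smallmatrix}\right]=\left[\begin{smallmatrix} 1_T-s\circ t_1 & 0 \\ -(n\bullet t_1+v\bullet m_1) & 1_U-v\circ u_1 \end{smallmatrix}\right]$$
is invertible. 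By the key step this happens if and only if $1_T-s\circ t_1$ is invertible in $\mathsf{Hom}_{\mathcal{T}}(T,T)$ and $1_U-v\circ u_1$ is invertible in $\mathsf{Hom}_{\mathcal{U}}(U,U)$.

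To conclude, I would use that the Hom-set $\mathsf{Hom}_{\mathbf{\Lambda}}\big(\left[\begin{smallmatrix} T' & 0 \\ M & U' \end{smallmatrix}\right],\left[\begin{smallmatrix} T & 0 \\ M & U \end{smallmatrix}\right]\big)$ is by definition the direct product of the abelian groups $\mathsf{Hom}_{\mathcal{T}}(T',T)$, $M(U,T')$ and $\mathsf{Hom}_{\mathcal{U}}(U',U)$, so that $s$, $n$ and $v$ vary independently; since $n$ occurs in neither invertibility condition, the quantifier over all $\left[\begin{smallmatrix} s & 0 \\ n & v \end{smallmatrix}\right]$ decouples, and the condition becomes: $1_T-s\circ t_1$ is invertible for all $s\in\mathsf{Hom}_{\mathcal{T}}(T',T)$ and $1_U-v\circ u_1$ is invertible for all $v\in\mathsf{Hom}_{\mathcal{U}}(U',U)$. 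By the definition of the radical of an additive category this says exactly $t_1\in\mathsf{rad}_{\mathcal{T}}(T,T')$ and $u_1\in\mathsf{rad}_{\mathcal{U}}(U,U')$, while $m_1\in M(U',T)$ is arbitrary; this is the asserted matrix. I expect the only genuinely delicate point to be the verification of the inverse in the implication $\Leftarrow$ of the key step, where the $\bullet$-actions and opposite morphisms must be tracked with care; the rest is a direct unwinding of definitions.
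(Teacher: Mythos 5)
Your proposal is correct and takes essentially the same approach as the paper's proof. The paper proves the two inclusions using the same underlying facts that you isolate as a key lemma (invertibility of a lower-triangular endomorphism of a fixed object in $\mathbf{\Lambda}$ is equivalent to invertibility of its diagonal entries, with an explicit two-sided inverse in the converse direction), but absorbs them into the two-sided computation rather than stating and proving the characterization of isomorphisms separately; your packaging is a modest organizational improvement, not a different argument.
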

\begin{proof}

Let
$\left[
\begin{smallmatrix}
t & 0 \\
m & u
\end{smallmatrix}\right]\in \mathsf{rad}_{\mathbf{\Lambda}}\left (\left[ \begin{smallmatrix}
T & 0 \\
M & U
\end{smallmatrix} \right] ,  \left[ \begin{smallmatrix}
T' & 0 \\
M & U'
\end{smallmatrix} \right]  \right)$ and $t':T'\longrightarrow  T$ and $u':U'\longrightarrow U$ morphisms in $\mathcal{T}$  and $\mathcal{U}$ respectively. Consider  $\left[
\begin{smallmatrix}
t' & 0 \\
0 & u'
\end{smallmatrix}\right]\in \mathsf{Hom}_{\mathbf{\Lambda}}\left (\left[ \begin{smallmatrix}
T' & 0 \\
M & U'
\end{smallmatrix} \right] ,  \left[ \begin{smallmatrix}
T & 0 \\
M & U
\end{smallmatrix} \right]  \right),$
%$1_{\left[
%\begin{smallmatrix}
%T & 0 \\
%M & U
%\end{smallmatrix} \right]}$
then  $\left[
\begin{smallmatrix}
1_{T} & 0 \\
0 & 1_{U}
\end{smallmatrix} \right]-\left[
\begin{smallmatrix}
t' & 0 \\
0 & u'
\end{smallmatrix}\right]\left[
\begin{smallmatrix}
t & 0 \\
m & u
\end{smallmatrix}\right]=\left[
\begin{smallmatrix}
1_{T}-t't & 0 \\
-u'\bullet m & 1_{U}-u'u
\end{smallmatrix}\right]$ is invertible in $\mathbf{\Lambda}$. It follows from this that $1_{T}-t't $ and $1_{U}-u'u$ are invertibles in $\mathcal{T}$  and $\mathcal{U}$ respectively. Then $t\in \mathrm{rad}_{\mathcal{T}}(T,T')$ and $u\in \mathrm{rad}_{\mathcal{U}}(U,U')$.\\
Conversely, let $t\in \mathrm{rad}_{\mathcal{T}}(T,T')$, $u\in \mathrm{rad}_{\mathcal{U}}(U,U')$ and $m\in M(U',T)$. We assert that  $\left[
\begin{smallmatrix}
t & 0 \\
m & u
\end{smallmatrix}\right]\in \mathsf{rad}_{\mathbf{\Lambda}}\left (\left[ \begin{smallmatrix}
T & 0 \\
M & U
\end{smallmatrix} \right] ,  \left[ \begin{smallmatrix}
T' & 0 \\
M & U'
\end{smallmatrix} \right]  \right).$ Indeed, let $\left[
\begin{smallmatrix}
t' & 0 \\
m' & u'
\end{smallmatrix}\right]\in \mathsf{Hom}_{\mathbf{\Lambda}}\left (\left[ \begin{smallmatrix}
T' & 0 \\
M & U'
\end{smallmatrix} \right] ,  \left[ \begin{smallmatrix}
T & 0 \\
M & U
\end{smallmatrix} \right]  \right).$
Since $1_{T}-t't$ and $1_{U}-u'u$ are invertibles, there exists $t''\in \mathrm{Hom}_{\mathcal{T}}(T,T)$ and $u''\in \mathrm{Hom}_{\mathcal{U}}(U,U)$ such that 
$$(1_{T}-t't)t''=t''(1_{T}-t't)=1_{T},\quad (1_{U}-u'u)u''=u''(1_{U}-u'u)=1_{U}.$$ 
Let $m'':=u''\bullet(m'\bullet t+u'\bullet m)\bullet t''\in M(U,T)$. 
Using that $(1_{U}-u'u)u''=1_{U}$ and that $t''(1_{T}-t't)=1_{T}$ we see that
 
\begin{align*}
\Big(\left[
\begin{smallmatrix}
1_{T} & 0 \\
0 & 1_{U}
\end{smallmatrix} \right]-\left[
\begin{smallmatrix}
t' & 0 \\
m' & u'
\end{smallmatrix}\right]\left[
\begin{smallmatrix}
t & 0 \\
m & u
\end{smallmatrix}\right]\Big)\Big(\left[
\begin{smallmatrix}
t'' & 0 \\
m'' & u''
\end{smallmatrix}\right]\Big) & =\Big(\left[
\begin{smallmatrix}
1_{T} & 0 \\
0 & 1_{U}
\end{smallmatrix} \right]-\left[
\begin{smallmatrix}
t' t & 0 \\
m'\bullet t+u'\bullet m & u'u 
\end{smallmatrix}\right]
\Big)\Big(\left[
\begin{smallmatrix}
t'' & 0 \\
m'' & u''
\end{smallmatrix}\right]\Big)\\
& =\Big(\left[
\begin{smallmatrix}
1_{T}-t' t & 0 \\
-(m'\bullet t+u'\bullet m) & 1_{U}- u'u 
\end{smallmatrix}\right]
\Big)\Big(\left[
\begin{smallmatrix}
t'' & 0 \\
m'' & u''
\end{smallmatrix}\right]\Big)\\
& = \left[
\begin{smallmatrix}
(1_{T}-t' t )t''& 0 \\
-(m'\bullet t+u'\bullet m)\bullet t''+(1_{U}-u'u)\bullet m'' & (1_{U}- u'u)u''
\end{smallmatrix}\right]\\
& = \left[
\begin{smallmatrix}
1_{T} & 0 \\
-(m'\bullet t+u'\bullet m)\bullet t''+(1_{U}-u'u)\bullet m'' & 1_{U}
\end{smallmatrix}\right]\\
& = \left[
\begin{smallmatrix}
1_{T} & 0 \\
0  & 1_{U}
\end{smallmatrix}\right],
\end{align*}
where the last equality is because $(1_{U}-u'u)\bullet m'' =(1_{U}-u'u)\bullet \Big(u''\bullet(m'\bullet t+u'\bullet m)\bullet t''\Big)=\Big((1_{U}-u'u)u''\Big)\bullet\Big( (m'\bullet t+u'\bullet m)\bullet t''\Big)=1_{U}\bullet\Big( (m'\bullet t+u'\bullet m)\bullet t''\Big)=(m'\bullet t+u'\bullet m)\bullet t''.$
%%%%%%%%%%%%%%%%%%%%%%%%%%%%%%%%
%Just recall that $m'\bullet t=M(1_{U}\otimes t^{op})(m')$ where
%$M(1_{U}\otimes t^{op}):M(U,T')\longrightarrow M(U,T)$. We also %have that $u'\bullet m=M(u'\otimes 1_{T})(m)$ where
%$M(u'\otimes 1_{T}):M(U',T)\longrightarrow M(U,T)$. Then %%%%$m'\bullet t+u'\bullet m\in M(U,T)$ and therefore $u''\bullet %%%%(m'\bullet t+u'\bullet m)\in M(U,T)$ since $u''\bullet (m'\bullet %t+u'\bullet m)=M(u''\otimes 1_{T})(m'\bullet t+u'\bullet m)$ where
%$M(u''\otimes 1_{T}):M(U,T)\longrightarrow M(U,T)$. And finally, %%%$u''\bullet (m'\bullet t+u'\bullet m)\bullet t''\in M(U,T)$ since %%%$u''\bullet (m'\bullet t+u'\bullet m)t''=M(1_{U}\otimes (t'')^{op})%%%(u''\bullet (m'\bullet t+u'\bullet m))$ where
%$M(1_{U}\otimes (t'')^{op}):M(U,T)\longrightarrow M(U,T)$.
%%%%%%%%%%%%%%%%%%%%%%%%%%%%%%%%%
Similarly, 
\begin{align*}
\Big(\left[
\begin{smallmatrix}
t'' & 0 \\
m'' & u''
\end{smallmatrix}\right]\Big)
\Big(\left[
\begin{smallmatrix}
1_{T} & 0 \\
0 & 1_{U}
\end{smallmatrix} \right]-\left[
\begin{smallmatrix}
t' & 0 \\
m' & u'
\end{smallmatrix}\right]\left[
\begin{smallmatrix}
t & 0 \\
m & u
\end{smallmatrix}\right]\Big) & =\Big(\left[
\begin{smallmatrix}
t'' & 0 \\
m'' & u''
\end{smallmatrix}\right]\Big)\Big(\left[
\begin{smallmatrix}
1_{T} & 0 \\
0 & 1_{U}
\end{smallmatrix} \right]-\left[
\begin{smallmatrix}
t' t & 0 \\
m'\bullet t+u'\bullet m & u'u 
\end{smallmatrix}\right]
\Big)\\
& =\Big(\left[
\begin{smallmatrix}
t'' & 0 \\
m'' & u''
\end{smallmatrix}\right]\Big)\Big(\left[
\begin{smallmatrix}
1_{T}-t' t & 0 \\
-(m'\bullet t+u'\bullet m) & 1_{U}- u'u 
\end{smallmatrix}\right]
\Big)\\
& = \left[
\begin{smallmatrix}
t''(1_{T}-t' t ) & 0 \\
m''\bullet (1_{T}-t't) -u''\bullet (m'\bullet t+u'\bullet m) & u''(1_{U}- u'u)
\end{smallmatrix}\right]\\
& = \left[
\begin{smallmatrix}
1_{T}& 0 \\
m''\bullet (1_{T}-t't) -u''\bullet (m'\bullet t+u'\bullet m)  & 1_{U}
\end{smallmatrix}\right]\\
& = \left[
\begin{smallmatrix}
1_{T} & 0 \\
0  & 1_{U}
\end{smallmatrix}\right],
\end{align*}
where the last equality is because $m''\bullet (1_{T}-t't)=\Big(u''\bullet(m'\bullet t+u'\bullet m)\bullet t''\Big)\bullet (1_{T}-t't)=\Big(u''\bullet (m'\bullet t+u'\bullet m)\Big)\bullet \Big(t''(1_{T}-t't)\Big)=\Big(u''\bullet  (m'\bullet t+u'\bullet m)\Big)\bullet 1_{T}=u''\bullet (m'\bullet t+u'\bullet m).$ Proving that $\left[
\begin{smallmatrix}
t & 0 \\
m & u
\end{smallmatrix}\right]\in \mathsf{rad}_{\mathbf{\Lambda}}\left (\left[ \begin{smallmatrix}
T & 0 \\
M & U
\end{smallmatrix} \right] ,  \left[ \begin{smallmatrix}
T' & 0 \\
M & U'
\end{smallmatrix} \right]  \right).$ 
\end{proof}

Our main pourpose in this part is to show that we have an equivalence of categories 
$$\Big(\mathsf{Mod}(\mathcal{T}),\mathbb{G}\mathsf{Mod}(\mathcal{U})\Big)\simeq \mathsf{Mod}(\mathbf{\Lambda}).$$
Let $(A,f,B) \in \Big(\mathsf{Mod}(\mathcal{T}),\mathbb{G}\mathsf{Mod}(\mathcal{U})\Big)$ be, that is, we have a morphism of $\mathcal{T}$-modules
$f:A\longrightarrow \mathbb{G}(B)$. We can construct a functor
$$A\amalg _f B: \mathbf{\Lambda} \rightarrow\mathbf {Ab}$$
as follows.
\begin{enumerate}
\item [(a)] For
$\left[\begin{smallmatrix}
T& 0 \\
M & U \\
\end{smallmatrix} \right]\in\mathbf{\Lambda}$ we set  $\Big(A\amalg_f B\Big)(\left[\begin{smallmatrix}
T& 0 \\
M & U \\
\end{smallmatrix} \right]) :=A (T)\amalg  B(U)\in \mathbf{Ab}.$

\item [(b)] If $\left[\begin{smallmatrix}
t& 0 \\
m& u \\
\end{smallmatrix} \right]\in \mathsf{Hom}_{\mathbf{\Lambda}}(\left[\begin{smallmatrix}
T& 0 \\
M & U \\
\end{smallmatrix} \right], \left[\begin{smallmatrix}
T'& 0 \\
M & U' \\
\end{smallmatrix} \right])=\left[ \begin{smallmatrix}
\mathsf{Hom}_{\mathcal{T}}(T,T') & 0 \\
M(U',T) & \mathsf{Hom}_{\mathcal{U}}(U,U')
\end{smallmatrix} \right]$ we define the map
$$\Big(A\amalg_f B\Big)(\left[\begin{smallmatrix}
t& 0 \\
m& u \\
\end{smallmatrix} \right]):=\left [\begin{smallmatrix}
A(t)& 0 \\
m & B(u) \\
\end{smallmatrix} \right]: A(T)\amalg B(U)\rightarrow A(T')\amalg B(U')$$
given by $\left [\begin{smallmatrix}
A(t)& 0 \\
m& B(u) \\
\end{smallmatrix} \right]\left [\begin{smallmatrix}
x \\
\\
y \\
\end{smallmatrix} \right]=\left [\begin{smallmatrix}
A(t)(x) \\
m\cdot x+ B(u)(y) \\
\end{smallmatrix} \right]$
for $(x,y)\in A(T)\amalg B(U)$, where $m\cdot x:=[f_{T}(x)]_{U'}(m)\in B(U')$ (see  \ref{Tlemma1}).
\end{enumerate}
\begin{note}
In terms of \ref{Tlemma1}, we have that
$$\left [\begin{smallmatrix}
A(t)& 0 \\
m& B(u) \\
\end{smallmatrix} \right]\left [\begin{smallmatrix}
x \\
\\
y \\
\end{smallmatrix} \right]=\left [\begin{smallmatrix}
t\ast x \\
\\
m\cdot x + u\diamond y \\
\end{smallmatrix} \right]\quad \forall (x,y)\in A(T)\amalg B(U)$$
\end{note}

The following lemma tell us that $A\amalg_{f}B$ is a functor.

\begin{lemma}
Let $(A,f,B) \in \Big(\mathsf{Mod}(\mathcal{T}),\mathbb{G}\mathsf{Mod}(\mathcal{U})\Big)$ be, then $A\amalg_{f}B:\mathbf{\Lambda} \rightarrow\mathbf {Ab}$ is a functor.
\end{lemma}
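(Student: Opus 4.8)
The plan is to verify the two functoriality axioms for $A\amalg_f B$: that it preserves identities and that it preserves composition. Both reduce to matrix bookkeeping together with the compatibility identities already established in Proposition \ref{Tlemma1}. First I would check identities: for an object $\left[\begin{smallmatrix} T & 0 \\ M & U \end{smallmatrix}\right]$ we have $1_{\left[\begin{smallmatrix} T & 0 \\ M & U \end{smallmatrix}\right]}=\left[\begin{smallmatrix} 1_T & 0 \\ 0 & 1_U \end{smallmatrix}\right]$ by Lemma \ref{asociatmat}, and applying the defining formula gives $\left[\begin{smallmatrix} A(1_T) & 0 \\ 0 & B(1_U)\end{smallmatrix}\right]$ acting on $(x,y)$ as $(A(1_T)(x),\; 0\cdot x + B(1_U)(y)) = (x,y)$, since $A$ and $B$ are functors and $0\cdot x = [f_T(x)]_U(0) = 0$ because $[f_T(x)]_U$ is a morphism of abelian groups. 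Hence $\big(A\amalg_f B\big)(1) = 1$.

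The substantive step is preservation of composition. Given composable morphisms $\left[\begin{smallmatrix} t_1 & 0 \\ m_1 & u_1\end{smallmatrix}\right]\colon \left[\begin{smallmatrix} T & 0 \\ M & U\end{smallmatrix}\right]\to \left[\begin{smallmatrix} T' & 0 \\ M & U'\end{smallmatrix}\right]$ and $\left[\begin{smallmatrix} t_2 & 0 \\ m_2 & u_2\end{smallmatrix}\right]\colon \left[\begin{smallmatrix} T' & 0 \\ M & U'\end{smallmatrix}\right]\to \left[\begin{smallmatrix} T'' & 0 \\ M & U''\end{smallmatrix}\right]$, I would compute both sides of
$$\big(A\amalg_f B\big)\!\left(\left[\begin{smallmatrix} t_2 & 0 \\ m_2 & u_2\end{smallmatrix}\right]\circ\left[\begin{smallmatrix} t_1 & 0 \\ m_1 & u_1\end{smallmatrix}\right]\right) = \big(A\amalg_f B\big)\!\left(\left[\begin{smallmatrix} t_2 & 0 \\ m_2 & u_2\end{smallmatrix}\right]\right)\circ \big(A\amalg_f B\big)\!\left(\left[\begin{smallmatrix} t_1 & 0 \\ m_1 & u_1\end{smallmatrix}\right]\right)$$
on an element $(x,y)\in A(T)\amalg B(U)$. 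By Definition \ref{defitrinagularmat} the left-hand composite is $\left[\begin{smallmatrix} A(t_2\circ t_1) & 0 \\ m_2\bullet t_1 + u_2\bullet m_1 & B(u_2\circ u_1)\end{smallmatrix}\right]$, which sends $(x,y)$ to $\big(A(t_2 t_1)(x),\; (m_2\bullet t_1 + u_2\bullet m_1)\cdot x + B(u_2 u_1)(y)\big)$. The right-hand side sends $(x,y)$ first to $\big(A(t_1)(x),\; m_1\cdot x + B(u_1)(y)\big)$ and then to $\big(A(t_2)(A(t_1)(x)),\; m_2\cdot(A(t_1)(x)) + B(u_2)(m_1\cdot x + B(u_1)(y))\big)$. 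The first coordinates agree because $A$ is a functor. For the second coordinate I would use: functoriality of $B$ to get $B(u_2)(B(u_1)(y)) = B(u_2 u_1)(y)$; part (b) of Proposition \ref{Tlemma1}, i.e. $u_2\diamond(m_1\cdot x) = (u_2\bullet m_1)\cdot x$, to rewrite $B(u_2)(m_1\cdot x)$; part (a) of Proposition \ref{Tlemma1}, i.e. $m_2\cdot(t_1\ast x) = (m_2\bullet t_1)\cdot x$, to rewrite $m_2\cdot(A(t_1)(x))$; and finally part (c) of Proposition \ref{Tlemma1} to combine $(m_2\bullet t_1)\cdot x + (u_2\bullet m_1)\cdot x = (m_2\bullet t_1 + u_2\bullet m_1)\cdot x$. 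Assembling these gives exactly the second coordinate of the left-hand side.

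I do not anticipate a genuine obstacle here: the whole content of the lemma is that the three identities of Proposition \ref{Tlemma1} were precisely engineered to make this computation work, so the proof is essentially a transcription of that proposition. The one point requiring a little care is making sure the module actions $m\cdot x$, $t\ast x$, $u\diamond y$ are being evaluated over the correct objects of $\mathcal U$ and $\mathcal T$ as the indices shift along the composite — in particular that $m_2\bullet t_1$ and $u_2\bullet m_1$ both land in $M(U'',T)$, which is exactly the abelian group appearing in $\mathsf{Hom}_{\mathbf\Lambda}$ of the composite — so that the sum in part (c) makes sense. Once the indexing is tracked correctly, the verification is routine.
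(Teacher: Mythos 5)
Your proposal is correct and follows essentially the same argument as the paper: verify preservation of composition by unwinding the definition of $A\amalg_f B$ on each side and invoke the identities of Proposition \ref{Tlemma1} (the paper cites part (c), which already packages parts (a) and (b)), then check preservation of identities using $0\cdot x = 0$. The only difference is the order of the two checks, which is immaterial.
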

\begin{proof}
Let $\left[ \begin{smallmatrix}
t_{1} & 0 \\
m_{1} & u_{1}
\end{smallmatrix} \right]: \left[ \begin{smallmatrix}
T & 0 \\
M & U
\end{smallmatrix} \right] \longrightarrow \left[ \begin{smallmatrix}
T' & 0 \\
M & U'
\end{smallmatrix} \right]$, $\left[ \begin{smallmatrix}
t_{2} & 0 \\
m_{2} & u_{2}
\end{smallmatrix} \right]: \left[ \begin{smallmatrix}
T' & 0 \\
M & U'
\end{smallmatrix} \right]\longrightarrow \left[ \begin{smallmatrix}
T'' & 0 \\
M & U''
\end{smallmatrix} \right]$ and  $(x,y)\in A(T)\amalg B(U)$.\\
We have that
\begin{eqnarray*}
\Big((A\amalg_{f} B) \Big(\left[ \begin{smallmatrix}
t_{2} & 0 \\
m_{2} & u_{2}
\end{smallmatrix} \right]  \circ \left[
\begin{smallmatrix}
t_{1} & 0 \\
m_{1} & u_{1}
\end{smallmatrix} \right] \Big)\Big)
\left[ \begin{smallmatrix}
x
\\
\\
y
\end{smallmatrix} \right]
& =&(A\amalg_{f} B) \Big(\left[ \begin{smallmatrix}
t_{2}t_{1} & 0 \\
m_{2}\bullet t_{1}+u_{2}\bullet m_{1} & u_{2}u_{1}
\end{smallmatrix} \right]\Big)\left[ \begin{smallmatrix}
x
\\
\\
y
\end{smallmatrix} \right]\\
&=&\left[
\begin{smallmatrix}
 A(t_{2}\circ t_{1}) & 0 \\
m_{2}\bullet t_{1}+u_{2}\bullet m_{1} & B( u_{2}\circ u_{1})
\end{smallmatrix} \right]
\left[ \begin{smallmatrix}
x
\\
\\
y
\end{smallmatrix} \right]\\
&=&\left[\begin{smallmatrix}
A(t_{2}\circ t_{1}) (x) \\
(m_{2}\bullet t_{1}+u_{2}\bullet m_{1})\cdot x+B( u_{2}\circ u_{1})(y)
\end{smallmatrix} \right]
\end{eqnarray*}
On the other hand,
\begin{align*}
 \Big( \!(A\amalg_{f} B) \!\left( \!\left[
\begin{smallmatrix}
t_{2} & 0 \\
m_{2} & u_{2}
\end{smallmatrix} \right] \! \right) \!\!\Big)\!\!\circ \!\! \Big( \! (A\amalg_{f} B) \!\left( \!  \left[
 \begin{smallmatrix}
t_{1} & 0 \\
m_{1} & u_{1}
\end{smallmatrix} \right] \!\right) \! \!\Big)
\!\!\left[
\begin{smallmatrix}
x \\
\\
y
\end{smallmatrix} \right] &  \!\!= \!\!\Big(\!\left[
\begin{smallmatrix}
A(t_{2}) & 0 \\
m_{2} & B( u_{2})
\end{smallmatrix} \right] \left[
\begin{smallmatrix}
A(t_{1}) & 0 \\
m_{1} & B( u_{1})
\end{smallmatrix} \right] \! \Big)\left[
\begin{smallmatrix}
x \\
\\
y
\end{smallmatrix} \right] \\
& \!\!= \!\! \left[ \begin{smallmatrix}
A(t_{2}) & 0 \\
m_{2} & B( u_{2})
\end{smallmatrix} \right]\left[ \begin{smallmatrix}
A(t_{1})(x) \\
m_{1}\cdot x+  B( u_{1})(y)
\end{smallmatrix} \right]\\
 & \!\!= \!\!  \left[\!\begin{smallmatrix}
A(t_{2})(A(t_{1})(x)) \\
m_{2}\cdot (A(t_{1})(x))+B(u_{2})(m_{1}\cdot x+B(u_{1})(y))
\end{smallmatrix} \!\right]\\
& \!\!= \!\!  \left[
\begin{smallmatrix}
A(t_{2}t_{1})(x) \\
m_{2}\cdot (t_{1}\ast x)+u_{2}\diamond(m_{1}\cdot x)+B(u_{2}u_{1})(y)
\end{smallmatrix} \right]
\end{align*}
By \ref{Tlemma1}(c), we conclude that $(m_{2}\bullet t_{1}+u_{2}\bullet m_{1})\cdot x=
m_{2}\cdot (t_{1}\ast x)+u_{2}\diamond(m_{1}\cdot x)$. Proving that
$A\amalg_{f}B$ preserves compositions. Now, consider $\left[ \begin{smallmatrix}
1_{T} & 0 \\
0 & 1_{U}
\end{smallmatrix} \right]: \left[ \begin{smallmatrix}
T & 0 \\
M & U
\end{smallmatrix} \right] \longrightarrow \left[ \begin{smallmatrix}
T & 0 \\
M & U
\end{smallmatrix} \right]$. We have that
$\Big(A\amalg_f B\Big)(\left[\begin{smallmatrix}
1_{T}& 0 \\
0& 1_{U} \\
\end{smallmatrix} \right]):=\left [\begin{smallmatrix}
A(1_{T})& 0 \\
0 & B(1_{U}) \\
\end{smallmatrix}\right]$ is such that $\left [\begin{smallmatrix}
A(1_{T})& 0 \\
0 & B(1_{U}) \\
\end{smallmatrix}\right]\left[
\begin{smallmatrix}
x \\
\\
y
\end{smallmatrix} \right]=\left [\begin{smallmatrix}
A(1_{T})(x)\\
0\cdot x + B(1_{U})(y) \\
\end{smallmatrix}\right]=\left [\begin{smallmatrix}
x\\
\\
y\\
\end{smallmatrix}\right]$ for $(x,y)\in A(T)\amalg B(U)$,  since $0\cdot x=0$. Then $(A\amalg_{f}B)\big (1_{_{\left[
\begin{smallmatrix}
T & 0 \\
M & U
\end{smallmatrix} \right]}}\big )=1_{_{(A\amalg _{f} B)\left[
\begin{smallmatrix}
T & 0 \\
M & U
\end{smallmatrix} \right]}}=1_{A(T)\amalg B(U)}.$ Proving that $A\amalg_{f}B$ is a functor.
\end{proof}
In this way we can construct a functor
$$\textswab{F}:\Big( \mathsf{Mod}(\mathcal{T}),\mathbb{G}\mathsf{Mod}(\mathcal{U})\Big)\rightarrow \mathrm{Mod}(\mathbf\Lambda)$$
which is defined as follows.
\begin{enumerate}
\item [(a)] For $(A,f,B)\in \Big ( \mathsf{Mod}(\mathcal{T}),\mathbb{G}\mathsf{Mod}(\mathcal{U})\Big ) $ we  define $\textswab{F}((A,f,B)):=A\amalg_f B$.

\item [(b)] If we have
$(\alpha,\beta):(A,f,B)\rightarrow (A',f',B')$ in $\Big(\mathsf{Mod}(\mathcal{T}),\mathbb{G}\mathsf{Mod}(\mathcal{U})\Big)$ then $\textswab{F}(\alpha,\beta)=\alpha\amalg \beta$
is the natural transformation
$$\alpha\amalg \beta\! =\!\left\{\!\!
(\alpha\!\amalg\!\beta)_{{}_{\left[
\begin{smallmatrix}
T& 0 \\
M& U\\
\end{smallmatrix}
\right] }}\!\!:=\!\alpha_{T}\!\amalg \!\beta_{U}\!\!:\!\! (A\amalg_{f} B) (\left [\begin{smallmatrix}
T& 0 \\
M& U\\
\end{smallmatrix} \right])\rightarrow (A'\amalg_{f'}\!\!B')(\left [\begin{smallmatrix}
T& 0 \\
M& U\\
\end{smallmatrix} \right])\!\!\right\}_{\!\!\left[\begin{smallmatrix}
T& 0 \\
M& U\\
\end{smallmatrix} \right ]\in \mathbf{\Lambda}}$$
\end{enumerate}

\begin{lemma}
Let $(\alpha,\beta):(A,f,B)\rightarrow (A',f',B')$  be in $\Big(\mathsf{Mod}(\mathcal{T}),\mathbb{G}\mathsf{Mod}(\mathcal{U})\Big)$, then
$\alpha\amalg\beta$ is a  natural transformation. 
\end{lemma}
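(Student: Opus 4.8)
The plan is to check the naturality squares one by one. Fix a morphism $\left[\begin{smallmatrix} t & 0 \\ m & u\end{smallmatrix}\right]\colon \left[\begin{smallmatrix} T & 0 \\ M & U\end{smallmatrix}\right]\to \left[\begin{smallmatrix} T' & 0 \\ M & U'\end{smallmatrix}\right]$ in $\mathbf{\Lambda}$; I must show that
\[
(A'\amalg_{f'}B')\!\left(\left[\begin{smallmatrix} t & 0 \\ m & u\end{smallmatrix}\right]\right)\circ(\alpha_T\amalg\beta_U)=(\alpha_{T'}\amalg\beta_{U'})\circ (A\amalg_{f}B)\!\left(\left[\begin{smallmatrix} t & 0 \\ m & u\end{smallmatrix}\right]\right)
\]
as maps $A(T)\amalg B(U)\to A'(T')\amalg B'(U')$. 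I would evaluate both composites on an arbitrary $(x,y)\in A(T)\amalg B(U)$. Using the explicit formula for $A\amalg_fB$ on morphisms, the right-hand composite sends $(x,y)$ to $\big(\alpha_{T'}(A(t)(x)),\ \beta_{U'}(m\cdot x+B(u)(y))\big)$, while the left-hand composite sends it to $\big(A'(t)(\alpha_T(x)),\ m\cdot'\alpha_T(x)+B'(u)(\beta_U(y))\big)$, where $m\cdot'\alpha_T(x):=[f'_{T}(\alpha_T(x))]_{U'}(m)$ denotes the product of Proposition~\ref{Tlemma1} formed with $f'$ instead of $f$.

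It then remains to match the two coordinates. The first coordinates agree because $\alpha\colon A\to A'$ is a morphism of $\mathcal{T}$-modules, so $\alpha_{T'}\circ A(t)=A'(t)\circ\alpha_T$. For the second coordinates, additivity of $\beta_{U'}$ reduces the claim to the two identities $\beta_{U'}(B(u)(y))=B'(u)(\beta_U(y))$ and $\beta_{U'}(m\cdot x)=m\cdot'\alpha_T(x)$. The first is just the naturality of $\beta\colon B\to B'$ at $u$. The second is the only place where the comma-category datum is used: the square defining the morphism $(\alpha,\beta)$, namely $\mathbb{G}(\beta)\circ f=f'\circ\alpha$, evaluated at $T$ and applied to $x\in A(T)$, reads $[\mathbb{G}(\beta)]_T(f_T(x))=f'_T(\alpha_T(x))$; since $[\mathbb{G}(\beta)]_T=\mathrm{Hom}_{\mathsf{Mod}(\mathcal{U})}(M_T,\beta)$ this is precisely $\beta\circ f_T(x)=f'_T(\alpha_T(x))$ as morphisms of $\mathcal{U}$-modules. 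Evaluating this equality of natural transformations at $U'$ and applying it to $m\in M_T(U')=M(U',T)$ gives $\beta_{U'}([f_T(x)]_{U'}(m))=[f'_T(\alpha_T(x))]_{U'}(m)$, i.e.\ $\beta_{U'}(m\cdot x)=m\cdot'\alpha_T(x)$, as wanted.

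Combining the three identities shows that the naturality square commutes for every morphism of $\mathbf{\Lambda}$, so $\alpha\amalg\beta$ is a natural transformation from $A\amalg_f B$ to $A'\amalg_{f'}B'$. The argument is essentially bookkeeping once one unwinds the definitions; the only conceptual point — and hence the only place one could get stuck — is identifying the ``off-diagonal'' term $m\cdot x$ with $m\cdot'\alpha_T(x)$ through the commutativity of $(\alpha,\beta)$ together with the explicit description of $\mathbb{G}(\beta)$ recalled in the Note following the definition of $\mathbb{G}$.
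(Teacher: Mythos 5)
Your proof is correct and follows essentially the same route as the paper's: unwinding the naturality square on an arbitrary $(x,y)$, matching the first coordinate via naturality of $\alpha$, and matching the second coordinate by combining naturality of $\beta$ at $u$ with the comma-square identity $\mathbb{G}(\beta)\circ f=f'\circ\alpha$ evaluated at $T$, then at $U'$, then at $m$. The only cosmetic difference is that you explicitly name the ``$m\cdot'$'' product to distinguish the two comma data, which the paper leaves implicit.
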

\begin{proof}
Let $\left[ \begin{smallmatrix}
t_{1} & 0 \\
m_{1} & u_{1}
\end{smallmatrix} \right]: \left[ \begin{smallmatrix}
T & 0 \\
M & U
\end{smallmatrix} \right] \longrightarrow \left[ \begin{smallmatrix}
T' & 0 \\
M & U'
\end{smallmatrix} \right]$ be a morphism in $\mathbf{\Lambda}=\left[ \begin{smallmatrix}
\mathcal{T} & 0 \\ M & \mathcal{U}
\end{smallmatrix}\right]$. We have to check that the following diagram commutes in $\mathbf{Ab}$

$$\xymatrix{A(T)\amalg B(U)\ar[rr]^{\alpha_{T}\amalg \beta_{U}}\ar[d]_{{\left[ \begin{smallmatrix}
A(t_{1}) & 0 \\
m_{1} & B(u_{1})\\
\end{smallmatrix} \right]}}  & & A'(T)\amalg B'(U)\ar[d]^{{\left[ \begin{smallmatrix}
A'(t_{1}) & 0 \\
m_{1} & B'(u_{1})\\
\end{smallmatrix} \right]}}\\
A(T')\amalg B(U')\ar[rr]^{\alpha_{T'}\amalg \beta_{U'}}
 & & A'(T')\amalg B'(U').}$$
Indeed, for $(x,y)\in A(T)\amalg B(U)$ we have that
\begin{eqnarray*}
\Big((\alpha_{T'}\amalg \beta_{U'})\circ \left[ \begin{smallmatrix}
A(t_{1}) & 0 \\
m_{1} & B(u_{1})\\
\end{smallmatrix} \right]\Big)\left[
\begin{smallmatrix}
x \\
\\
y
\end{smallmatrix} \right] &\!\!\!\!\!\!= &\!\!\!\!\!(\alpha_{T'}\amalg \beta_{U'})
\Big (A(t_{1})(x),m_{1}\cdot x+B(u_{1})(y)\Big )\\
&\!\!\!\!\!\!= & \!\!\!\!\!\! \Big( \alpha_{T'}(A(t_{1}(x))),\beta_{U'}\big(
m_{1}\cdot x+B(u_{1})(y)\big)\Big)
\end{eqnarray*}
We also have that
\begin{align*}
\Big(\!\left[ \begin{smallmatrix}
A'(t_{1}) & 0 \\
m_{1} & B'(u_{1})\\
\end{smallmatrix} \right]\!\circ\! (\alpha_{T}\amalg \beta_{U}) \!\Big)\left[
\begin{smallmatrix}
x \\
\\
y
\end{smallmatrix} \right] &\!\!=\!\!
\left[ \begin{smallmatrix}
A'(t_{1}) & 0 \\
m_{1} & B'(u_{1})\\
\end{smallmatrix} \right]\left[
\begin{smallmatrix}
\alpha_{T}(x)\\
\beta_{U}(y)
\end{smallmatrix} \right]
\\
&\!\!= \!\!\Big( A'(t_{1})\big(\alpha_{T}(x)\big),
m_{1}\cdot (\alpha_{T}(x))+B'(u_{1})\big(\beta_{U}(y)\big)\Big)
\end{align*}
Let us see that $\beta_{U'}(m_{1}\cdot x)=m_{1}\cdot (\alpha_{T}(x))$.\\
Since $m_{1}\in M(U',T)$, $x\in A(T)$ we have that $m_{1}\cdot x=[f_{T}(x)]_{U'}(m_{1})\in B(U')$ (see \ref{Tlemma1}). Then $\beta_{U'}(m_{1}\cdot x)=\beta_{U'}\Big([f_{T}(x)]_{U'}(m_{1})\Big)\in B'(U').$\\
On the other hand, since $m_{1}\in M(U',T)$, $\alpha_{T}(x)\in A'(T)$ we have that $m_{1}\cdot (\alpha_{T}(x))=[f'_{T}(\alpha_{T}(x))]_{U'}(m_{1})$ (see \ref{Tlemma1}).
Consider the following commutative diagram in $\mathsf{Mod}(\mathcal{T})$ (this is because  $(\alpha,\beta):(A,f,B)\rightarrow (A',f',B')$)

\[
\begin{diagram}
\node{A} \arrow{e,t}{\alpha}\arrow{s,l}{f}\node{ A'}\arrow{s,r}{f'}\\
\node{\mathbb{G}(B)} \arrow{e,b}{\mathbb{G}(\beta)}\node{\mathbb{G}(B').}
\end{diagram}
\]
Then,  for  each $T\in \mathcal{T}$, the following diagram commutes in $\mathbf{Ab}$
\[
\begin{diagram}
\node{A(T)} \arrow{e,t}{\alpha_{T}}\arrow{s,l}{f_{T}}\node{A'(T)}\arrow{s,r}{f'_{T}}\\
\node{\mathbb{G}(B)(T)} \arrow{e,b}{\mathbb{G}(\beta)_{T}}\node{\mathbb{G}(B')(T).}
\end{diagram}
\]
Then, we have that $f'_{T}(\alpha_{T}(x)):M_{T}\longrightarrow B'$ coincides with
$(\mathbb{G}(\beta)_{T})(f_{T}(x))=\mathrm{Hom}_{\mathsf{Mod}(\mathcal{U})}(M_{T},\beta)(f_{T}(x))=
\beta\circ f_{T}(x)$. In particular for $U'$ we have that
$$[\beta]_{U'}\circ [f_{T}(x)]_{U'}=[f'_{T}(\alpha_{T}(x))]_{U'}:M(U',T)\longrightarrow B'(U').$$
Hence, for $m_{1}\in M(U',T)$ we have that
$$\Big([\beta]_{U'}\circ [f_{T}(x)]_{U'}\Big)(m_{1})=[f'_{T}(\alpha_{T}(x))]_{U'}(m_{1}).$$
This means precisely that $\beta_{U'}(m_{1}\cdot x)=m_{1}\cdot (\alpha_{T}(x))$.\\
On the other hand, since $\alpha:A\longrightarrow A'$ and $\beta:B\longrightarrow B'$ are
natural transformations, for $T\in \mathcal{T}$ and $U\in \mathcal{U}$, we have that $\alpha_{T'}(A(t_{1})(x))=A'(t_{1})(\alpha_{T}(x))$ and
$(B'(u_{1}))(\beta_{U}(y))=\beta_{U'}(B(u_{1})(y))$. Proving that
$$\Big (\!\alpha_{T'}(A(t_{1}(x))),\beta_{U'}\Big(
m_{1}\cdot x+B(u_{1})(y)\!\Big)\!\Big)\!\!=\!\!\Big ( \!A'(t_{1})(\alpha_{T}(x)),
m_{1}\cdot (\alpha_{T}(x))+B'(u_{1})\big(\beta_{U}(y)\!\big)\!\Big).$$
This proves that $\alpha\amalg \beta:A\amalg_{f}B\longrightarrow A'\amalg_{f'}B'$
is a natural transformation.
\end{proof}

\begin{proposition}
The assignment
$\textswab{F}:\Big( \mathsf{Mod}(\mathcal{T}),\mathbb{G}\mathsf{Mod}(\mathcal{U})\Big)\longrightarrow \mathrm{Mod}(\mathbf\Lambda)$
is a functor.
\end{proposition}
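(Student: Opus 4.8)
The plan is to verify the two functoriality axioms for $\textswab{F}$. The two preceding lemmas already do the work of showing that $\textswab{F}$ is well-defined: the first shows that $A\amalg_f B$ is genuinely a functor $\mathbf{\Lambda}\to\mathbf{Ab}$, hence an object of $\mathrm{Mod}(\mathbf{\Lambda})$, and the second shows that for a morphism $(\alpha,\beta)$ in the comma category, $\alpha\amalg\beta$ is genuinely a natural transformation $A\amalg_f B\to A'\amalg_{f'}B'$. So the only thing left is to check that $\textswab{F}$ preserves identities and composition, and for this we compare natural transformations componentwise, i.e. at each object $\left[\begin{smallmatrix}T&0\\M&U\end{smallmatrix}\right]$ of $\mathbf{\Lambda}$.

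For identities I would take $(A,f,B)$ and evaluate $\textswab{F}(1_{(A,f,B)})=\textswab{F}(1_A,1_B)=1_A\amalg 1_B$ at $\left[\begin{smallmatrix}T&0\\M&U\end{smallmatrix}\right]$. By the definition of $\amalg$ on morphisms, its component there is $(1_A)_T\amalg(1_B)_U=1_{A(T)}\amalg 1_{B(U)}=1_{A(T)\amalg B(U)}$, which is exactly the identity of $(A\amalg_f B)\left[\begin{smallmatrix}T&0\\M&U\end{smallmatrix}\right]$. Since this holds for every object of $\mathbf{\Lambda}$, we conclude $\textswab{F}(1_{(A,f,B)})=1_{A\amalg_f B}$.

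For composition, take $(\alpha,\beta)\colon(A,f,B)\to(A',f',B')$ and $(\alpha',\beta')\colon(A',f',B')\to(A'',f'',B'')$ in $\big(\mathsf{Mod}(\mathcal{T}),\mathbb{G}\mathsf{Mod}(\mathcal{U})\big)$. First recall that composition in a comma category is computed componentwise, so $(\alpha',\beta')\circ(\alpha,\beta)=(\alpha'\circ\alpha,\,\beta'\circ\beta)$. Evaluating $\textswab{F}$ of this at $\left[\begin{smallmatrix}T&0\\M&U\end{smallmatrix}\right]$ gives the component $(\alpha'\circ\alpha)_T\amalg(\beta'\circ\beta)_U=(\alpha'_T\circ\alpha_T)\amalg(\beta'_U\circ\beta_U)$, whereas the component of $\textswab{F}(\alpha',\beta')\circ\textswab{F}(\alpha,\beta)$ at the same object is $(\alpha'_T\amalg\beta'_U)\circ(\alpha_T\amalg\beta_U)$. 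These coincide by the elementary identity $(g'\amalg h')\circ(g\amalg h)=(g'\circ g)\amalg(h'\circ h)$ valid for direct sums of morphisms in $\mathbf{Ab}$. Agreement at every object of $\mathbf{\Lambda}$ gives $\textswab{F}((\alpha',\beta')\circ(\alpha,\beta))=\textswab{F}(\alpha',\beta')\circ\textswab{F}(\alpha,\beta)$, finishing the proof.

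There is no genuine obstacle: the whole argument reduces to the functoriality of the biproduct $\amalg$ in $\mathbf{Ab}$ together with the componentwise descriptions of identities and composition in the comma category. The only mild point of care is bookkeeping — keeping track of which $\amalg$ is meant at each step (direct sum of abelian groups, of $\mathcal{T}$- or $\mathcal{U}$-modules, or of natural transformations) — but all of these are the evident pointwise constructions and are mutually compatible, so the calculation runs through without incident.
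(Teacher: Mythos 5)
Your proof is correct and is exactly the verification the paper declares to be ``straightforward'': preservation of identities and composition follows componentwise from the biproduct identities in $\mathbf{Ab}$, with the two preceding lemmas guaranteeing well-definedness. This matches the paper's (implicit) approach.
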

\begin{proof}
It is straightforward.
\end{proof}

\begin{lemma}\label{Tlemma3}
Let $(A,f,B),(A',f',B')\in \Big( \mathsf{Mod}(\mathcal{T}),\mathbb{G}\mathsf{Mod}(\mathcal{U})\Big)$ be.
The map 
$$\textswab{F}:\mathrm{Hom}\Big((A,f,B),(A',f',B')\Big)\rightarrow \mathrm{Hom}_{\mathsf{Mod}(\mathbf{\Lambda})}(A\amalg _f B, A'\amalg _{f'} B')$$
is bijective. That is, $\textswab{F}$ is full and  faithful.
\end{lemma}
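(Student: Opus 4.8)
The plan is to prove that $\textswab{F}$ is faithful and full on Hom-sets separately, using throughout that $\mathcal{U},\mathcal{T}$ (and hence $\mathcal{U}\otimes\mathcal{T}^{op}$) are additive, so have a zero object $0$, and that the additive functors $A,B,M$ annihilate it; in particular $A(0)=0$, $B(0)=0$ and $M(0,-)=M(-,0)=0$. For faithfulness, suppose $\textswab{F}(\alpha,\beta)=\textswab{F}(\alpha',\beta')$, i.e.\ $\alpha\amalg\beta=\alpha'\amalg\beta'$ as natural transformations $A\amalg_fB\to A'\amalg_{f'}B'$; evaluating at $\left[\begin{smallmatrix}T&0\\M&U\end{smallmatrix}\right]$ gives $\alpha_T\amalg\beta_U=\alpha'_T\amalg\beta'_U\colon A(T)\amalg B(U)\to A'(T)\amalg B'(U)$, and composing with the biproduct inclusions and projections in $\mathbf{Ab}$ recovers $\alpha_T=\alpha'_T$ and $\beta_U=\beta'_U$; since $T,U$ are arbitrary, $(\alpha,\beta)=(\alpha',\beta')$.

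For fullness, let $\eta\colon A\amalg_fB\to A'\amalg_{f'}B'$ be an arbitrary morphism in $\mathsf{Mod}(\mathbf{\Lambda})$. From the definition of $A\amalg_fB$ we have $(A\amalg_fB)\left[\begin{smallmatrix}T&0\\M&0\end{smallmatrix}\right]=A(T)$ and $(A\amalg_fB)\left[\begin{smallmatrix}0&0\\M&U\end{smallmatrix}\right]=B(U)$, and likewise for $A'\amalg_{f'}B'$; so I would set $\alpha_T:=\eta_{\left[\begin{smallmatrix}T&0\\M&0\end{smallmatrix}\right]}\colon A(T)\to A'(T)$ and $\beta_U:=\eta_{\left[\begin{smallmatrix}0&0\\M&U\end{smallmatrix}\right]}\colon B(U)\to B'(U)$. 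Since $A\amalg_fB$ sends $\left[\begin{smallmatrix}t&0\\0&0\end{smallmatrix}\right]$ to $A(t)$ (for $t\in\mathcal{T}(T,T')$) and $\left[\begin{smallmatrix}0&0\\0&u\end{smallmatrix}\right]$ to $B(u)$ (for $u\in\mathcal{U}(U,U')$), naturality of $\eta$ against these arrows shows that $\alpha=\{\alpha_T\}_{T}$ and $\beta=\{\beta_U\}_{U}$ are natural transformations $A\to A'$ and $B\to B'$ respectively.

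The step I expect to be the main obstacle is checking that $(\alpha,\beta)$ is a morphism in the comma category, i.e.\ $\mathbb{G}(\beta)\circ f=f'\circ\alpha$. For this I would use the off-diagonal arrows: given $m\in M(U,T)$, the morphism $\left[\begin{smallmatrix}0&0\\m&0\end{smallmatrix}\right]\colon\left[\begin{smallmatrix}T&0\\M&0\end{smallmatrix}\right]\to\left[\begin{smallmatrix}0&0\\M&U\end{smallmatrix}\right]$ is carried by $A\amalg_fB$ to the map $A(T)\to B(U)$, $x\mapsto m\cdot x=[f_T(x)]_U(m)$, and by $A'\amalg_{f'}B'$ to $x'\mapsto[f'_T(x')]_U(m)$. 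Naturality of $\eta$ on this arrow then gives $\beta_U(m\cdot x)=[f'_T(\alpha_T(x))]_U(m)$ for all $x\in A(T)$ and all $m\in M(U,T)$, i.e.\ $\beta_U\circ[f_T(x)]_U=[f'_T(\alpha_T(x))]_U$ for every $U\in\mathcal{U}$; hence $\beta\circ f_T(x)=f'_T(\alpha_T(x))$ in $\mathsf{Mod}(\mathcal{U})$. By the description of $\mathbb{G}$, $\beta\circ f_T(x)=\mathrm{Hom}_{\mathsf{Mod}(\mathcal{U})}(M_T,\beta)\big(f_T(x)\big)=\big(\mathbb{G}(\beta)_T\circ f_T\big)(x)$, so $\mathbb{G}(\beta)_T\circ f_T=f'_T\circ\alpha_T$ for every $T$, which is exactly $\mathbb{G}(\beta)\circ f=f'\circ\alpha$; thus $(\alpha,\beta)$ is a morphism $(A,f,B)\to(A',f',B')$ in the comma category.

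It remains to check $\textswab{F}(\alpha,\beta)=\eta$. The morphisms $\left[\begin{smallmatrix}1_T&0\\0&0\end{smallmatrix}\right]\colon\left[\begin{smallmatrix}T&0\\M&0\end{smallmatrix}\right]\to\left[\begin{smallmatrix}T&0\\M&U\end{smallmatrix}\right]$ and $\left[\begin{smallmatrix}0&0\\0&1_U\end{smallmatrix}\right]\colon\left[\begin{smallmatrix}0&0\\M&U\end{smallmatrix}\right]\to\left[\begin{smallmatrix}T&0\\M&U\end{smallmatrix}\right]$ are carried by $A\amalg_fB$ to the canonical inclusions $A(T)\hookrightarrow A(T)\amalg B(U)$ and $B(U)\hookrightarrow A(T)\amalg B(U)$; naturality of $\eta$ against them yields $\eta_{\left[\begin{smallmatrix}T&0\\M&U\end{smallmatrix}\right]}(x,0)=(\alpha_T(x),0)$ and $\eta_{\left[\begin{smallmatrix}T&0\\M&U\end{smallmatrix}\right]}(0,y)=(0,\beta_U(y))$. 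Adding these and using that $\eta_{\left[\begin{smallmatrix}T&0\\M&U\end{smallmatrix}\right]}$ is a group homomorphism, we get $\eta_{\left[\begin{smallmatrix}T&0\\M&U\end{smallmatrix}\right]}=\alpha_T\amalg\beta_U=(\alpha\amalg\beta)_{\left[\begin{smallmatrix}T&0\\M&U\end{smallmatrix}\right]}$ for every object of $\mathbf{\Lambda}$, so $\eta=\alpha\amalg\beta=\textswab{F}(\alpha,\beta)$. Hence $\textswab{F}$ is full and faithful.
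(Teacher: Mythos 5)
Your proof is correct and follows essentially the same strategy as the paper's: extract $\alpha_T,\beta_U$ from $\eta$ evaluated at $\left[\begin{smallmatrix}T&0\\M&0\end{smallmatrix}\right]$ and $\left[\begin{smallmatrix}0&0\\M&U\end{smallmatrix}\right]$, then use naturality of $\eta$ against particular arrows of $\mathbf{\Lambda}$ to verify both the comma-category compatibility and $\textswab{F}(\alpha,\beta)=\eta$. The one cosmetic difference is that you read off $\beta_U(m\cdot x)=m\cdot\alpha_T(x)$ directly from the off-diagonal arrow $\left[\begin{smallmatrix}0&0\\m&0\end{smallmatrix}\right]\colon\left[\begin{smallmatrix}T&0\\M&0\end{smallmatrix}\right]\to\left[\begin{smallmatrix}0&0\\M&U\end{smallmatrix}\right]$, whereas the paper uses the endomorphism $\left[\begin{smallmatrix}1_T&0\\m&1_U\end{smallmatrix}\right]$ of $\left[\begin{smallmatrix}T&0\\M&U\end{smallmatrix}\right]$ and cancels the extra $\beta_U(y)$ term; your choice is slightly cleaner.
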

\begin{proof}
Firstly, we will see that $\textswab{F}$ is surjective.\\
Let $(A,f,B),(A',f',B')\in \Big( \mathsf{Mod}(\mathcal{T}),\mathbb{G}\mathsf{Mod}(\mathcal{U})\Big)$ and let $S:A\amalg_{f} B\longrightarrow A'\amalg_{f'} B'$
be a morphism in $\mathrm{Mod}(\mathbf{\Lambda})$ whose components are:
$$S=\left\{S_{\left[\begin{smallmatrix}
T& 0 \\
M & U \\
  \end{smallmatrix} \right]}:A(T)\amalg B(U)\rightarrow A'(T)\amalg B'(U)\right\}_{\left[
\begin{smallmatrix}
T&0\\
M&U
\end{smallmatrix}
\right] \in \mathbf{\Lambda}.}$$
For $T\in \mathcal{T}$, consider the object $\left[\begin{smallmatrix}
T& 0 \\
M & 0 \\
\end{smallmatrix} \right]\in \mathbf{\Lambda}$. Then we have the
morphism
$$S_{\left[\begin{smallmatrix}
T& 0 \\
M & 0 \\
\end{smallmatrix} \right]}:A(T)\amalg 0\rightarrow A'(T)\amalg 0.$$
Therefore,  for $(x,0)\in A(T)\amalg 0$ we have that
$S_{\left[\begin{smallmatrix}
T& 0 \\
M & 0 \\
\end{smallmatrix} \right]}(x,0)=(x',0),$ for some $x'\in A'(T)$.\\
Hence, we define $\alpha_{T}:A(T)\longrightarrow A'(T)$ as
$\alpha_{T}(x):=x'$ for $x\in A(T)$.
It is straightforward  to show that $\alpha=\{\alpha_{T}:A(T)\longrightarrow A'(T)\}_{T\in \mathcal{T}}$ is a morphism of $\mathcal{T}$-modules.\\
Now, for $U\in \mathcal{U}$ we consider the morphism
$$S_{\left[\begin{smallmatrix}
0& 0 \\
M & U \\
\end{smallmatrix} \right]}:0\amalg B(U)\rightarrow 0 \amalg B'(U).$$
Then, we define $\beta_{U}:B(U)\longrightarrow B'(U)$ as follows:
for $y\in B(U)$ we have that $\beta_{U}(y):=y'\in B'(U)$ is such that $S_{\left[\begin{smallmatrix}
0& 0 \\
M & U \\
\end{smallmatrix} \right]}(0,y)=(0,y')$.
Similarly, it can be proved that $\beta=\{\beta_{U}:B(U)\longrightarrow B'(U)\}$ is a
morphism of $\mathcal{U}$-modules.\\
Therefore, for $T\in \mathcal{T}$ and $U\in \mathcal{U}$ we define
$\alpha_{T}\amalg \beta_{U}:A(T)\amalg B(U)\longrightarrow A'(T)\amalg B'(U)$
as $(\alpha_{T}\amalg \beta_{U})(x,y):=(\alpha_{T}(x),\beta_{U}(y))$.
In this way, we obtain a family of morphisms in $\mathbf{Ab}$ as follows
$$\alpha\amalg \beta:=\Big \{(\alpha_{T}\amalg \beta_{U}):A(T)\amalg B(U)\rightarrow A'(T)\amalg B'(U)\Big \}_{\left[
\begin{smallmatrix}
T&0\\
M&U
\end{smallmatrix}
\right] \in \mathbf{\Lambda}.}$$
We assert that
$S_{\left[\begin{smallmatrix}
T& 0 \\
M & U\\
\end{smallmatrix} \right]}=\alpha_{T}\amalg \beta_{U}$. Indeed,
for $1_{T}:T\longrightarrow T$ in $\mathcal{T}$ we have the morphism
$$\left[\begin{smallmatrix}
1_{T}& 0 \\
M & 0\\
\end{smallmatrix} \right]:\left[\begin{smallmatrix}
T& 0 \\
M & U\\
\end{smallmatrix} \right]\longrightarrow
\left[\begin{smallmatrix}
T& 0 \\
M & 0\\
\end{smallmatrix} \right] \quad \text{in}\,\, \mathbf{\Lambda}.$$
Then we have the following commutative diagram in $\mathbf{Ab}$

\[
\begin{diagram}
\node{A(T)\amalg B(U)}\arrow{e,t}{S_{{}_{\left[
\begin{smallmatrix}
T&0\\
M&U
\end{smallmatrix}
  \right]}}}\arrow{s,l}{ \left[
\begin{smallmatrix}
A(1_{T})&0\\
0&0
\end{smallmatrix}
  \right]}
  \node{A'(T)\amalg B'(U)}\arrow{s,r}{ \left[
\begin{smallmatrix}
A'(1_{T})&0\\
0&0
\end{smallmatrix}
  \right]}\\
   \node{A(T)\amalg 0}\arrow{e,b}{S_{{}_{\left[
\begin{smallmatrix}
T&0\\
M&0
\end{smallmatrix}
  \right]}}}\node{A'(T)\amalg 0.}
\end{diagram}
\]
\\
\\
For $(x,y)\in A(T)\amalg B(U)$ we consider
$(x',y')=S_{\left[\begin{smallmatrix}
T& 0 \\
M & U \\
\end{smallmatrix} \right]}(x,y)$ then we have that

$$\left[\begin{smallmatrix}
1_{A'(T)}& 0 \\
0 & 0 \\
\end{smallmatrix} \right]S_{\left[\begin{smallmatrix}
T& 0 \\
M & U \\
\end{smallmatrix} \right]}(x,y)=S_{\left[\begin{smallmatrix}
T& 0 \\
M & 0 \\
\end{smallmatrix} \right]}\left[\begin{smallmatrix}
1_{A(T)} & 0 \\
0 & 0 \\
\end{smallmatrix} \right](x,y).$$
It follows that $(x',0)=(\alpha_{T}(x),0)$. Similarly, we have that $(0,y')=(0,\beta_{U}(y))$ and then, we conclude that
$$S_{\left[\begin{smallmatrix}
T& 0 \\
M & U \\
\end{smallmatrix} \right]}(x,y)=(x',y')=(\alpha_{T}(x),\beta_{U}(y))=(\alpha_{T}\amalg \beta_{U})(x,y).$$
Proving that $S_{\left[\begin{smallmatrix}
T& 0 \\
M & U \\
\end{smallmatrix} \right]}=(\alpha_{T}\amalg \beta_{U})$. Therefore, we have that
$S=\alpha\amalg \beta$.
\\
Now, let us check that $(\alpha,\beta)$ is a morphism from $(A,f,B)$ to $(A',f',B')$. We have to show that for $T\in \mathcal{T}$ the following diagram commutes
\[
\begin{diagram}
\node{A(T)} \arrow{e,t}{\alpha_{T}}\arrow{s,l}{f_{T}}\node{A'(T)}\arrow{s,r}{f'_{T}}\\
\node{\mathbb{G}(B)(T)} \arrow{e,b}{\mathbb{G}(\beta)_{T}}\node{\mathbb{G}(B')(T).}
\end{diagram}
\]
Then, for $x\in A(T)$ we have to show  that $f'_{T}(\alpha_{T}(x)):M_{T}\longrightarrow B'$ coincides with
$(\mathbb{G}(\beta)_{T})(f_{T}(x))=\mathrm{Hom}_{\mathsf{Mod}(\mathcal{U})}(M_{T},\beta)(f_{T}(x))=
\beta\circ f_{T}(x):M_{T}\longrightarrow B'$. In particular, we have to show that for $U\in \mathcal{U}$, the following equality holds
$$[\beta]_{U}\circ [f_{T}(x)]_{U}=[f'_{T}(\alpha_{T}(x))]_{U}.$$
Since $S:A\amalg_{f}B\longrightarrow A'\amalg_{f'}B'$ is a morphism in  $\mathsf{Mod}(\mathbf{\Lambda})$,
for every morphism $\left[ \begin{smallmatrix}
1_{T} & 0 \\
m & 1_{U}
\end{smallmatrix} \right]: \left[ \begin{smallmatrix}
T & 0 \\
M & U
\end{smallmatrix} \right] \longrightarrow \left[ \begin{smallmatrix}
T & 0 \\
M & U
\end{smallmatrix} \right]$ in $\mathbf{\Lambda}$ with $m\in M(U,T)$ we have the following commutative diagram in $\mathbf{Ab}$
$$\xymatrix{A(T)\amalg B(U)\ar[rr]^{\alpha_{T}\amalg \beta_{U}}\ar[d]_{{\left[ \begin{smallmatrix}
1_{A(T)} & 0 \\
m & 1_{B(U)}\\
\end{smallmatrix} \right]}}  & & A'(T)\amalg B'(U)\ar[d]^{{\left[ \begin{smallmatrix}
1_{A'(T)} & 0 \\
m & 1_{B'(U)}\\
\end{smallmatrix} \right]}}\\
A(T)\amalg B(U)\ar[rr]^{\alpha_{T}\amalg \beta_{U}}
 & & A'(T)\amalg B'(U).}$$
Hence, for each $(x,y)\in A(T)\amalg B(U)$ we obtain that
\begin{eqnarray*}
\Big((\alpha_{T}\amalg \beta_{U})\circ \left[ \begin{smallmatrix}
1_{A(T)} & 0 \\
m & 1_{B(U)}\\
\end{smallmatrix} \right]\Big)\left[
\begin{smallmatrix}
x \\
\\
y
\end{smallmatrix} \right] &= &(\alpha_{T}\amalg \beta_{U})
\Big(1_{A(T)}(x),m\cdot x+(1_{B(U)})(y)\Big)\\
&= &\Big(\alpha_{T}(x),\beta_{U}(
m\cdot x+ y)\Big )
\end{eqnarray*}
On the other hand, we have that
\begin{eqnarray*}
\Big(\left[ \begin{smallmatrix}
1_{A'(T)} & 0 \\
m & 1_{B'(U)}\\
\end{smallmatrix} \right]\circ (\alpha_{T}\amalg \beta_{U})\Big)\left[
\begin{smallmatrix}
x \\
\\
y
\end{smallmatrix} \right] &=&
\left[ \begin{smallmatrix}
1_{A'(T)} & 0 \\
m & 1_{B'(U)}\\
\end{smallmatrix} \right]\left[
\begin{smallmatrix}
\alpha_{T}(x)\\
\beta_{U}(y)
\end{smallmatrix} \right]
\\
&= &\Big ( \alpha_{T}(x),
m \cdot (\alpha_{T}(x))+\beta_{U}(y)\Big).
\end{eqnarray*}
Then, we have that
$\Big(\alpha_{T}(x),\beta_{U}(
m\cdot x+ y)\Big )=\Big ( \alpha_{T}(x),
m\cdot (\alpha_{T}(x))+\beta_{U}(y)\Big).$
Therefore, since $\beta_{U}$ is a morphism of abelian groups, we have that
$$\beta_{U}(m \cdot x)=m \cdot (\alpha_{T}(x))$$
for $m\in M(U,T)$. This means that
$$\Big(\beta_{U}\circ [f_{T}(x)]_{U}\Big)(m)=[f'_{T}(\alpha_{T}(x))]_{U}(m) \,\,\,\,\,\, \forall m\in M(U,T).$$ That is, we have that
$$\beta_{U}\circ [f_{T}(x)]_{U}=[f'_{T}(\alpha_{T}(x))]_{U}:M_{T}(U)\longrightarrow B'(U)$$
for each $U\in \mathcal{U}$. Therefore, we have that
$$f'_{T}(\alpha_{T}(x))=\beta\circ f_{T}(x)=\mathrm{Hom}_{\mathsf{Mod}(\mathcal{U})}(M_{T},\beta)(f_{T}(x))
=(\mathbb{G}(\beta)_{T})(f_{T}(x))\quad \forall x\in A(T).$$
Therefore $(\alpha,\beta):(A,f,B)\longrightarrow (A',f',B')$ is a morphism in $\Big( \mathsf{Mod}(\mathcal{T}),\mathbb{G}\mathsf{Mod}(\mathcal{U})\Big)$
and $\textswab{F}(\alpha,\beta)=S$, proving that $\textswab{F}$ is surjective.\\
Now, let suppose that $(\alpha',\beta'),(\alpha,\beta):(A,f,B)\longrightarrow (A',f',B')$  are morphisms
such that and $\textswab{F}(\alpha,\beta)=\textswab{F}(\alpha',\beta')$. Then for each $T\in \mathcal{T}$, $U\in \mathcal{U}$
we have that $\alpha_{T}\amalg \beta_{U}=\alpha'_{T}\amalg \beta'_{U}$. This implies, that
$\alpha_{T}=\alpha'_{T}$ and $\beta_{U}=\beta'_{U}$ and therefore $\alpha=\alpha'$ and
$\beta=\beta'$. Proving that $\textswab{F}$ is injective and therefore $\textswab{F}$ is full and faithful.
\end{proof}

We can define a functor
$I_{1}:\mathcal{T} \longrightarrow \mathbf{\Lambda}$ defined as follows
$I_{1}(T):=\left[
\begin{smallmatrix}
T & 0 \\
M & 0 \\
\end{smallmatrix}
\right]$ and for a morphism $t:T\longrightarrow T'$ in $\mathcal{T}$ de define
$$I_{1}(t)=\left[
\begin{smallmatrix}
t & 0 \\
0 & 0 \\
\end{smallmatrix}
\right]:
\left[
\begin{smallmatrix}
T& 0 \\
M & 0 \\
\end{smallmatrix}
\right]\rightarrow \left[
\begin{smallmatrix}
T'& 0 \\
M & 0 \\
\end{smallmatrix}
\right]$$
In the same way, we define a functor $I_{2}:\mathcal{U}\longrightarrow \mathbf{\Lambda}$.
Then we have the induced morphisms
$$\mathbb{I}_{1}:\mathsf{Mod}(\mathbf{\Lambda})\longrightarrow \mathsf{Mod}(\mathcal{T})$$
$$\mathbb{I}_{2}:\mathsf{Mod}(\mathbf{\Lambda})\longrightarrow \mathsf{Mod}(\mathcal{U})$$
Let $C$ be a $\mathbf{\Lambda}$-module, we denote by
$C_{1}=\mathbb{I}_{1}(C)=C\circ I_{1}:\mathcal{T}\longrightarrow \mathbf{Ab}$ and $C_{2}=
\mathbb{I}_{2}(C)=C\circ I_{2}:\mathcal{U}\longrightarrow \mathbf{Ab}.$

\begin{lemma}\label{lemmorcom}
Let $C$ be a $\mathbf{\Lambda}$-module. Then, there exists a
morphism of $\mathcal{T}$-modules
$$f:C_{1}\longrightarrow \mathbb{G}(C_{2}).$$
\end{lemma}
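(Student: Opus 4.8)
The plan is to define $f$ explicitly on elements, using the morphisms of $\mathbf{\Lambda}$ of the shape $\left[\begin{smallmatrix} 0 & 0 \\ m & 0 \end{smallmatrix}\right]$, and then to check the two compatibilities that make $f$ a morphism of $\mathcal{T}$-modules. Recall that $C_{1}(T)=C(\left[\begin{smallmatrix} T & 0 \\ M & 0 \end{smallmatrix}\right])$, $C_{2}(U)=C(\left[\begin{smallmatrix} 0 & 0 \\ M & U \end{smallmatrix}\right])$, and that $\mathbb{G}(C_{2})(T)=\mathrm{Hom}_{\mathsf{Mod}(\mathcal{U})}(M_{T},C_{2})$. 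For $U\in\mathcal{U}$ and $m\in M(U,T)=M_{T}(U)$, the matrix $\left[\begin{smallmatrix} 0 & 0 \\ m & 0 \end{smallmatrix}\right]$ belongs to $\mathsf{Hom}_{\mathbf{\Lambda}}(\left[\begin{smallmatrix} T & 0 \\ M & 0 \end{smallmatrix}\right],\left[\begin{smallmatrix} 0 & 0 \\ M & U \end{smallmatrix}\right])=\left[\begin{smallmatrix} 0 & 0 \\ M(U,T) & 0 \end{smallmatrix}\right]$, so applying $C$ gives a homomorphism $C(\left[\begin{smallmatrix} 0 & 0 \\ m & 0 \end{smallmatrix}\right])\colon C_{1}(T)\to C_{2}(U)$. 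For $x\in C_{1}(T)$ I therefore set
$$[f_{T}(x)]_{U}(m):=C\!\left(\left[\begin{smallmatrix} 0 & 0 \\ m & 0 \end{smallmatrix}\right]\right)(x)\in C_{2}(U).$$

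First I would verify that $f_{T}(x)\colon M_{T}\to C_{2}$ is a morphism of $\mathcal{U}$-modules. Additivity of $[f_{T}(x)]_{U}$ in $m$ is immediate, since $m\mapsto\left[\begin{smallmatrix} 0 & 0 \\ m & 0\end{smallmatrix}\right]$ is additive and $C$ is an additive functor. For naturality in $U$, given $u\colon U\to U'$ one has $M_{T}(u)(m)=u\bullet m$ and $C_{2}(u)=C(\left[\begin{smallmatrix} 0 & 0 \\ 0 & u\end{smallmatrix}\right])$, and the composition rule of Definition \ref{defitrinagularmat} gives $\left[\begin{smallmatrix} 0 & 0 \\ 0 & u\end{smallmatrix}\right]\circ\left[\begin{smallmatrix} 0 & 0 \\ m & 0\end{smallmatrix}\right]=\left[\begin{smallmatrix} 0 & 0 \\ u\bullet m & 0\end{smallmatrix}\right]$; functoriality of $C$ then yields $C_{2}(u)\circ[f_{T}(x)]_{U}=[f_{T}(x)]_{U'}\circ M_{T}(u)$, so $f_{T}(x)\in\mathbb{G}(C_{2})(T)$. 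Additivity of $f_{T}\colon C_{1}(T)\to\mathbb{G}(C_{2})(T)$ in $x$ is likewise immediate. Next I would verify that $f=\{f_{T}\}_{T\in\mathcal{T}}$ is natural, i.e. that $\mathbb{G}(C_{2})(t)\circ f_{T}=f_{T'}\circ C_{1}(t)$ for every $t\colon T\to T'$, where $C_{1}(t)=C(\left[\begin{smallmatrix} t & 0 \\ 0 & 0\end{smallmatrix}\right])$ and $\mathbb{G}(C_{2})(t)$ is precomposition with $\bar{t}\colon M_{T'}\to M_{T}$, $[\bar{t}]_{U}(m)=m\bullet t$ (Proposition \ref{dosfuntores}). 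Evaluating both sides on $x\in C_{1}(T)$, then at $U\in\mathcal{U}$, then on $m\in M(U,T')$, both reduce to $C(\left[\begin{smallmatrix} 0 & 0 \\ m\bullet t & 0\end{smallmatrix}\right])(x)$, using functoriality of $C$ and the composition $\left[\begin{smallmatrix} 0 & 0 \\ m & 0\end{smallmatrix}\right]\circ\left[\begin{smallmatrix} t & 0 \\ 0 & 0\end{smallmatrix}\right]=\left[\begin{smallmatrix} 0 & 0 \\ m\bullet t & 0\end{smallmatrix}\right]$.

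There is no genuine obstacle here; the only point demanding attention is the bookkeeping: identifying the correct Hom-sets in $\mathbf{\Lambda}$ and computing each composition $\left[\begin{smallmatrix} \ast & 0 \\ \ast & \ast\end{smallmatrix}\right]\circ\left[\begin{smallmatrix} \ast & 0 \\ \ast & \ast\end{smallmatrix}\right]$ from Definition \ref{defitrinagularmat}, recalling that $m_{2}\bullet t_{1}=M(1\otimes t_{1}^{op})(m_{2})$ and $u_{2}\bullet m_{1}=M(u_{2}\otimes 1)(m_{1})$; these are precisely the operations through which $\bar{t}$ (for naturality in $\mathcal{T}$) and $M_{T}(u)$ (for naturality in $\mathcal{U}$) enter. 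Finally, this $f$ is the expected one: the triple $(C_{1},f,C_{2})$ is the comma-category datum attached to $C$, and it is what one uses to build a quasi-inverse of the full and faithful functor $\textswab{F}$ of Lemma \ref{Tlemma3}, hence the equivalence $\big(\mathsf{Mod}(\mathcal{T}),\mathbb{G}\mathsf{Mod}(\mathcal{U})\big)\simeq\mathsf{Mod}(\mathbf{\Lambda})$.
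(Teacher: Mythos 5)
Your proof is correct and follows essentially the same route as the paper: define $[f_{T}(x)]_{U}(m)=C\bigl(\left[\begin{smallmatrix} 0 & 0 \\ m & 0\end{smallmatrix}\right]\bigr)(x)$ and verify naturality in $U$ and in $T$ by computing the relevant compositions in $\mathbf{\Lambda}$ and invoking functoriality of $C$. The only difference is that you spell out the $\mathcal{U}$-naturality that the paper flags as straightforward.
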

\begin{proof}
Let $C$ be a $\mathbf{\Lambda}$-module and consider
$T\in\mathcal T$ and $U\in\mathcal U$. For all $m\in M(U,T)$
we have a morphism $\overline{m}:=\left[
\begin{smallmatrix}
0& 0 \\
m & 0 \\
\end{smallmatrix}
\right]:
\left[
\begin{smallmatrix}
T& 0 \\
M & 0 \\
\end{smallmatrix}
\right]\rightarrow
\left[
\begin{smallmatrix}
0& 0 \\
M & U \\
\end{smallmatrix}
\right]$.
We note that $\left[
\begin{smallmatrix}
T& 0 \\
M & 0 \\
\end{smallmatrix}
\right]=I_{1}(T)$ and $\left[
\begin{smallmatrix}
0& 0 \\
M & U \\
\end{smallmatrix}
\right]=I_{2}(U)$. 
Applying the $\mathbf{\Lambda}$-module $C$  to $m$ yields a morphism of abelian groups
$$C(\overline{m}):C_{1}(T)=C(I_{1}(T))\longrightarrow C_{2}(U)=C(I_{2}(U)).$$
We assert that $C(\overline{m})$ induces a morphism of abelian groups
$$f_{T}:C_{1}(T)\longrightarrow \mathbb{G}(C_{2})(T)=\mathrm{Hom}_{\mathsf{Mod}(\mathcal{U})}(M_{T},C_{2}).$$
Indeed, for $x\in C_{1}(T) $ we define $f_{T}(x):M_{T}\longrightarrow C_{2}$ such that
$$f_{T}(x)=\left\lbrace [f_{T}(x)]_{U}:M_{T}(U)\longrightarrow C_{2}(U) \right\rbrace _{U\in \mathcal{U}}$$
where $[f_{T}(x)]_{U}(m):=C(\overline{m})(x)$ for all $m\in M_{T}(U)$. It is straightforward to check that
$f_{T}(x):M_{T}\longrightarrow C_{2}$ is a natural transformation. Now, let $ t\in \mathsf{Hom}_{\mathcal{T}}(T,T')$, we assert that the following diagram commutes
\[
\begin{diagram}
\node{C_{1}(T)}\arrow{e,t}{f_{T}}\arrow{s,l}{C_{1}(t)}
 \node{\mathbb{G}(C_{2})(T)}\arrow{s,r}{\mathbb{G}(C_{2})(t)}\\
\node{C_{1}(T')}\arrow{e,b}{f_{T'}}
 \node{\mathbb{G}(C_{2})(T')}
\end{diagram}
\]
Indeed, let $x\in C_{1}(T) $ then
\begin{eqnarray*}
\Big(\mathbb{G}(C_{2})(t)\circ f_{T}\Big)(x) =\Big(\mathsf{Hom}_{\mathsf{Mod}(\mathcal{U})}(\bar{t},C_{2}) \circ f_{T}\Big)(x)\
= f_{T}(x)\circ \bar{t}.
\end{eqnarray*}
Then, for $U\in \mathcal{U}$ and $m'\in M(U,T')$ we have that
\begin{align*}
\Big([f_{T}(x)]_{U}\circ[\bar{t}]_{U}\Big)(m')=[f_{T}(x)]_{U}(M(1_{U}\otimes t^{op})(m')) & =
[f_{T}(x)]_{U}(m'\bullet t)\\
& =C(\overline{m'\bullet t})(x)\\
& =C\Big(\left[
\begin{smallmatrix}
0&0\\
m'\bullet t &0
\end{smallmatrix}
\right]\Big)(x).
\end{align*}
On the other hand, $(f_{T'}\circ C_{1}(t))(x)=f_{T'}(C_{1}(t)(x))
=f_{T'}\Big(C\Big(\left[
\begin{smallmatrix}
t&0\\
0 &0
\end{smallmatrix}
\right]\Big)(x)\Big )$. Hence, for $U\in \mathcal{U}$ and $m'\in M(U,T')$ we have that

\begin{eqnarray*}
\Big[f_{T'}\Big(C\Big(\left[
\begin{smallmatrix}
t&0\\
0 &0
\end{smallmatrix}
\right]\Big)(x)\Big )\Big]_{U}(m') & = & C(\overline{m'})\Big(C\Big(\left[
\begin{smallmatrix}
t&0\\
0 &0
\end{smallmatrix}
\right]\Big)(x)\Big)\\
& = & \Big(C\Big(\left[
\begin{smallmatrix}
0&0\\
m' &0
\end{smallmatrix}
\right]\Big)C\Big(\left[
\begin{smallmatrix}
t&0\\
0 &0
\end{smallmatrix}
\right]\Big)\Big)(x)\\
& = & C\Big(\left[
\begin{smallmatrix}
0&0\\
m'\bullet t &0
\end{smallmatrix}
\right]\Big)(x).
\end{eqnarray*}
Therefore $f:C_{1}\longrightarrow \mathbb{G}(C_{2})$ is a morphism of $\mathcal{T}$-modules.
Hence $\left(C_{1},f,C_{2} \right)\in \Big(\mathsf{Mod}(\mathcal{T}),\mathbb{G}\mathsf{Mod}(\mathcal{U}) \Big)$.
\end{proof}

\begin{note}\label{notaC1C2}
Let $C$ be a $\mathbf{\Lambda}$-module and consider the morphism $f:C_{1}\longrightarrow \mathbb{G}(C_{2})$ constructed in \ref{lemmorcom}.  Since $\left(C_{1},f,C_{2} \right)\in \Big(\mathsf{Mod}(\mathcal{T}),\mathbb{G}\mathsf{Mod}(\mathcal{U}) \Big)$,  we can construct $C_{1}\amalg_{f}C_{2}$.
We recall that 
\begin{enumerate}
\item [(a)] For
$\left[\begin{smallmatrix}
T& 0 \\
M & U \\
\end{smallmatrix} \right]\in\mathbf{\Lambda}$ we have that $\Big(C_{1}\amalg_f C_{2}\Big)(\left[\begin{smallmatrix}
T& 0 \\
M & U \\
\end{smallmatrix} \right]) :=C_{1}(T)\amalg  C_{2}(U)\in \mathbf{Ab}.$

\item [(b)] If $\left[\begin{smallmatrix}
t& 0 \\
m& u \\
\end{smallmatrix} \right]\in \mathsf{Hom}_{\mathbf{\Lambda}}(\left[\begin{smallmatrix}
T& 0 \\
M & U \\
\end{smallmatrix} \right], \left[\begin{smallmatrix}
T'& 0 \\
M & U' \\
\end{smallmatrix} \right])=\left[ \begin{smallmatrix}
\mathsf{Hom}_{\mathcal{T}}(T,T') & 0 \\
M(U',T) & \mathsf{Hom}_{\mathcal{U}}(U,U')
\end{smallmatrix} \right]$ we have that
$$\Big(C_{1}\amalg_f C_{2}\Big)(\left[\begin{smallmatrix}
t& 0 \\
m& u \\
\end{smallmatrix} \right]):=\left [\begin{smallmatrix}
C_{1}(t)& 0 \\
m & C_{2}(u) \\
\end{smallmatrix} \right]: C_{1}(T)\amalg C_{2}(U)\rightarrow C_{1}(T')\amalg C_{2}(U')$$
 is given by $\left [\begin{smallmatrix}
C_{1}(t)& 0 \\
m& C_{2}(u) \\
\end{smallmatrix} \right]\left [\begin{smallmatrix}
x \\
\\
y \\
\end{smallmatrix} \right]=\left [\begin{smallmatrix}
C_{1}(t)(x) \\
m\cdot x+ C_{2}(u)(y) \\
\end{smallmatrix} \right]$
for $(x,y)\in C_{1}(T)\amalg C_{2}(U)$, where $m\cdot x:=[f_{T}(x)]_{U'}(m)\in C_{2}(U')$.
\end{enumerate}
By the construction of the morphism $f:C_{1}\longrightarrow \mathbb{G}(C_{2})$ in \ref{lemmorcom}, we have that 
$$m\cdot x:=[f_{T}(x)]_{U'}(m):=C(\overline{m})(x) \in C_{2}(U'),$$
where $\overline{m}:=\left[
\begin{smallmatrix}
0& 0 \\
m & 0 \\
\end{smallmatrix}
\right]:
\left[
\begin{smallmatrix}
T& 0 \\
M & 0 \\
\end{smallmatrix}
\right]\rightarrow
\left[
\begin{smallmatrix}
0& 0 \\
M & U' \\
\end{smallmatrix}
\right]$.

\end{note}

\begin{lemma}\label{lemma3}
Let $C$ be a $\mathbf{\Lambda}$-module. Then
$C_1\underset{f}\amalg C_2\cong C$.
\end{lemma}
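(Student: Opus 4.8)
The plan is to produce an explicit natural isomorphism $\Phi\colon C_{1}\amalg_{f} C_{2}\longrightarrow C$ in $\mathrm{Mod}(\mathbf\Lambda)$. The structural fact I would start from is that in $\mathbf\Lambda$ every object $\left[\begin{smallmatrix}T&0\\M&U\end{smallmatrix}\right]$ is a biproduct of $I_{1}(T)=\left[\begin{smallmatrix}T&0\\M&0\end{smallmatrix}\right]$ and $I_{2}(U)=\left[\begin{smallmatrix}0&0\\M&U\end{smallmatrix}\right]$. Indeed, reading the Hom-groups of Definition~\ref{defitrinagularmat} and using that $M(0,T)=0=M(U,0)$ and $\mathrm{Hom}_{\mathcal T}(0,T)=0=\mathrm{Hom}_{\mathcal U}(U,0)$, the matrices $\iota_{1}=\left[\begin{smallmatrix}1_{T}&0\\0&0\end{smallmatrix}\right]$, $\iota_{2}=\left[\begin{smallmatrix}0&0\\0&1_{U}\end{smallmatrix}\right]$ (viewed as maps into $\left[\begin{smallmatrix}T&0\\M&U\end{smallmatrix}\right]$) and $\pi_{1}=\left[\begin{smallmatrix}1_{T}&0\\0&0\end{smallmatrix}\right]$, $\pi_{2}=\left[\begin{smallmatrix}0&0\\0&1_{U}\end{smallmatrix}\right]$ (viewed as maps out of it) satisfy $\pi_{i}\iota_{i}=1$, $\pi_{i}\iota_{j}=0$ for $i\neq j$, and $\iota_{1}\pi_{1}+\iota_{2}\pi_{2}=1_{\left[\begin{smallmatrix}T&0\\M&U\end{smallmatrix}\right]}$, all immediate from the composition rule. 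Since $C_{1}(T)=C(I_{1}(T))$ and $C_{2}(U)=C(I_{2}(U))$ by the definitions of $C_{1}$ and $C_{2}$, and $C$ is additive, applying $C$ gives mutually inverse isomorphisms between $C_{1}(T)\amalg C_{2}(U)$ and $C\!\left(\left[\begin{smallmatrix}T&0\\M&U\end{smallmatrix}\right]\right)$, namely $(x,y)\mapsto C(\iota_{1})(x)+C(\iota_{2})(y)$ and $z\mapsto\big(C(\pi_{1})(z),C(\pi_{2})(z)\big)$; I would define $\Phi_{\left[\begin{smallmatrix}T&0\\M&U\end{smallmatrix}\right]}$ to be the first one. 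As $(C_{1}\amalg_{f} C_{2})\!\left(\left[\begin{smallmatrix}T&0\\M&U\end{smallmatrix}\right]\right)=C_{1}(T)\amalg C_{2}(U)$ by construction, $\Phi$ is then bijective in each component.

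The substantive step is naturality. Given a morphism $\left[\begin{smallmatrix}t&0\\m&u\end{smallmatrix}\right]\colon\left[\begin{smallmatrix}T&0\\M&U\end{smallmatrix}\right]\to\left[\begin{smallmatrix}T'&0\\M&U'\end{smallmatrix}\right]$, I must verify that $C\!\left(\left[\begin{smallmatrix}t&0\\m&u\end{smallmatrix}\right]\right)\circ\Phi_{\left[\begin{smallmatrix}T&0\\M&U\end{smallmatrix}\right]}$ equals $\Phi_{\left[\begin{smallmatrix}T'&0\\M&U'\end{smallmatrix}\right]}\circ(C_{1}\amalg_{f} C_{2})\!\left(\left[\begin{smallmatrix}t&0\\m&u\end{smallmatrix}\right]\right)$. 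Since all four maps are group homomorphisms and $C_{1}(T)\amalg C_{2}(U)$ is generated by the elements $(x,0)$ and $(0,y)$, it suffices to test the equality on those. For this I would first establish, directly from the composition formula of Definition~\ref{defitrinagularmat}, the identities in $\mathbf\Lambda$
\[
\left[\begin{smallmatrix}t&0\\m&u\end{smallmatrix}\right]\circ\iota_{1}=\iota_{1}'\circ I_{1}(t)+\iota_{2}'\circ\overline{m},\qquad \left[\begin{smallmatrix}t&0\\m&u\end{smallmatrix}\right]\circ\iota_{2}=\iota_{2}'\circ I_{2}(u),
\]
where $\iota_{1}'$ and $\iota_{2}'$ are the inclusions into $\left[\begin{smallmatrix}T'&0\\M&U'\end{smallmatrix}\right]$ and $\overline{m}=\left[\begin{smallmatrix}0&0\\m&0\end{smallmatrix}\right]\colon I_{1}(T)\to I_{2}(U')$ is the morphism used in Lemma~\ref{lemmorcom}. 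Applying $C$ (using additivity for the first identity), and invoking $C(\overline{m})(x)=m\cdot x$ from Note~\ref{notaC1C2} together with $C(I_{1}(t))=C_{1}(t)$ and $C(I_{2}(u))=C_{2}(u)$, the left-hand side of the naturality square evaluated at $(x,0)$ becomes $C(\iota_{1}')(C_{1}(t)(x))+C(\iota_{2}')(m\cdot x)$ and at $(0,y)$ becomes $C(\iota_{2}')(C_{2}(u)(y))$; by the explicit description of $(C_{1}\amalg_{f} C_{2})\!\left(\left[\begin{smallmatrix}t&0\\m&u\end{smallmatrix}\right]\right)$ recalled in Note~\ref{notaC1C2} (and the fact that $m\cdot 0=0$ and $C_{1}(t)(0)=0$), the right-hand side yields exactly the same values.

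Thus $\Phi$ is a natural transformation that is bijective in each component, hence an isomorphism of $\mathbf\Lambda$-modules, and $C_{1}\amalg_{f} C_{2}\cong C$. The only real obstacle is the bookkeeping in the two displayed identities: keeping track of which ``$0$'' lives in which Hom-group or in $M$ when computing the composites, and recognizing the off-diagonal entry of $\left[\begin{smallmatrix}t&0\\m&u\end{smallmatrix}\right]\circ\iota_{1}$ as precisely the morphism $\overline{m}$ of Lemma~\ref{lemmorcom} --- this is what links the abstract biproduct decomposition to the ``$m\cdot x$'' action built into $C_{1}\amalg_{f} C_{2}$; the remaining verifications are formal. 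Together with Lemma~\ref{Tlemma3} this gives the promised equivalence $\big(\mathsf{Mod}(\mathcal T),\mathbb{G}\mathsf{Mod}(\mathcal U)\big)\simeq\mathrm{Mod}(\mathbf\Lambda)$.
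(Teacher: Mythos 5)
Your proof is correct and takes essentially the same route as the paper: your $\iota_{1},\iota_{2},\pi_{1},\pi_{2}$ are precisely the paper's $\lambda_{T},\lambda_{U},\rho_{T},\rho_{U}$, and your $\Phi$ is the paper's $\phi$. The only differences are stylistic --- you package the componentwise bijectivity via the biproduct observation (where the paper instead writes out $\rho_{T}\lambda_{T}=1$, $\rho_{U}\lambda_{U}=1$, $\rho_{T}\lambda_{U}=0$, $\rho_{U}\lambda_{T}=0$, and $\lambda_{T}\rho_{T}+\lambda_{U}\rho_{U}=1$ and checks $\psi\phi=1$, $\phi\psi=1$ directly), and you actually carry out the naturality verification that the paper dismisses as routine.
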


\begin{proof}
(i) Let $T\in\mathcal T$ and $U\in\mathcal U$.  We have sequences of morphism in $\mathbf{\Lambda}$.
$$\left[ {\begin{array}{cc}
T& 0 \\
M& 0 \\
\end{array} } \right]\xrightarrow{\lambda_T:=\left[ {\begin{smallmatrix}
1_T& 0 \\
0& 0 \\
\end{smallmatrix} } \right]} \left[ {\begin{array}{cc}
T& 0 \\
M& U \\
\end{array} } \right]\xrightarrow{\rho_T:=\left[ {\begin{smallmatrix}
1_T& 0 \\
0& 0 \\
\end{smallmatrix} } \right]} \left[ {\begin{array}{cc}
T& 0 \\
M& 0 \\
\end{array} } \right]$$

$$\left[ {\begin{array}{cc}
0& 0 \\
M& U \\
\end{array} } \right]\xrightarrow{\lambda_U:=\left[ {\begin{smallmatrix}
0& 0 \\
0& 1_U\\
\end{smallmatrix} } \right]} \left[ {\begin{array}{cc}
T& 0 \\
M& U \\
\end{array} } \right]\xrightarrow{\rho_U:=\left[ {\begin{smallmatrix}
0& 0 \\
0& 1_U \\
\end{smallmatrix} } \right]} \left[ {\begin{array}{cc}
0& 0 \\
M& U \\
\end{array} } \right]$$

We consider the maps
$$C(\lambda_{T}):=C\Big(\left[
\begin{smallmatrix}
1_{T}&0\\
0 &0
\end{smallmatrix}
\right]\Big): C\Big(\left[
\begin{smallmatrix}
T&0\\
M & 0
\end{smallmatrix}
\right]\Big)=C_{1}(T)\longrightarrow
C\Big(\left[
\begin{smallmatrix}
T &0\\
M & U
\end{smallmatrix}
\right]\Big)$$

$$C(\lambda_{U}):=C\Big(\left[
\begin{smallmatrix}
0 &0\\
0 & 1_{U}
\end{smallmatrix}
\right]\Big): C\Big(\left[
\begin{smallmatrix}
0&0\\
M & U
\end{smallmatrix}
\right]\Big)=C_{2}(U)\longrightarrow
C\Big(\left[
\begin{smallmatrix}
T &0\\
M & U
\end{smallmatrix}
\right]\Big).$$
This produces a map 
$$\phi_{T,U}:C_{1}(T)\amalg C_{2}(U)=\Big(C_{1}\amalg_{f}C_{2}\Big)\Big(\left[
\begin{smallmatrix}
T &0\\
M & U
\end{smallmatrix}
\right]\Big)\longrightarrow
C\Big(\left[
\begin{smallmatrix}
T &0\\
M & U
\end{smallmatrix}
\right]\Big)$$
defined as $\phi_{T,U}(x,y)=C(\lambda_{T})(x)+C(\lambda_{U})(y)$ $\forall (x,y)\in C_{1}(T)\amalg C_{2}(U)$.\\
It is routine to check that
$$\phi=\!\left\{\!\!
\phi_{{}_{\left[
\begin{smallmatrix}
T& 0 \\
M& U\\
\end{smallmatrix}
\right] }}\!\!:=\!\phi_{T,U}\!\!:\!\! \Big(C_{1}\amalg_{f} C_{2}\Big) (\left [\begin{smallmatrix}
T& 0 \\
M& U\\
\end{smallmatrix} \right])\rightarrow  C\Big(\left [\begin{smallmatrix}
T& 0 \\
M& U\\
\end{smallmatrix} \right]\Big)\!\!\right\}_{\!\!\left[\begin{smallmatrix}
T& 0 \\
M& U\\
\end{smallmatrix} \right ]\in \mathbf{\Lambda}}$$
 is a morphism of $\mathbf{\Lambda}$-modules $\phi:C_{1}\amalg_{f} C_{2}\longrightarrow C$.
Now, we consider the maps
$$C(\rho_{T}):=C\Big(\left[
\begin{smallmatrix}
1_{T}&0\\
0 &0
\end{smallmatrix}
\right]\Big): C\Big(\left[
\begin{smallmatrix}
T&0\\
M & U
\end{smallmatrix}
\right]\Big)\longrightarrow
C_{1}(T)=C\Big(\left[
\begin{smallmatrix}
T &0\\
M & 0
\end{smallmatrix}
\right]\Big)$$

$$C(\rho_{U}):=C\Big(\left[
\begin{smallmatrix}
0 &0\\
0 & 1_{U}
\end{smallmatrix}
\right]\Big): C\Big(\left[
\begin{smallmatrix}
T &0\\
M & U
\end{smallmatrix}
\right]\Big)\longrightarrow
C_{2}(U)=C\Big(\left[
\begin{smallmatrix}
0 &0\\
M & U
\end{smallmatrix}
\right]\Big).$$
This produces a map 
$$\psi_{T,U}:C\Big(\left[
\begin{smallmatrix}
T &0\\
M & U
\end{smallmatrix}
\right]\Big)\longrightarrow 
C_{1}(T)\amalg C_{2}(U)=\Big(C_{1}\amalg_{f}C_{2}\Big)\Big(\left[
\begin{smallmatrix}
T &0\\
M & U
\end{smallmatrix}
\right]\Big)$$
such that
$$\psi_{T,U}(z)=\Big(C(\rho_{T})(z),C(\rho_{U})(z)\Big).$$
It is straightforward to check that
$$\psi=\!\left\{\!\!
\psi_{{}_{\left[
\begin{smallmatrix}
T& 0 \\
M& U\\
\end{smallmatrix}
\right] }}\!\!:=\!\psi_{T,U}:
 C\Big(\left [\begin{smallmatrix}
T& 0 \\
M& U\\
\end{smallmatrix} \right]\Big)\rightarrow 
\Big(C_{1}\amalg_{f} C_{2}\Big) (\left [\begin{smallmatrix}
T& 0 \\
M& U\\
\end{smallmatrix} \right]) \!\!\right\}_{\!\!\left[\begin{smallmatrix}
T& 0 \\
M& U\\
\end{smallmatrix} \right ]\in \mathbf{\Lambda}}$$
is a natural transformation
$\psi:C\longrightarrow C_{1}\amalg_{f}C_{2}$.\\
Finally we will see that $\phi$ is invertible with inverse $\psi$.
Indeed, let $T\in \mathcal{T}$ and $U\in \mathcal{U}$. For $(x,y)\in C_{1}(T)\amalg C_{2}(U)$ we have
 
\begin{eqnarray*}
& & \!\!\!\!\!\!\!\!\!\!\!\!\!\!\!\!\!\! \Big(\psi_{T,U}\circ\phi_{T,U}\Big)(x,y)=\\
& = &\!\!\!\!\! \psi_{T,U}\Big(C(\lambda_{T})(x)+C(\lambda_{U})(y)\Big)\\
& = & \!\!\!\!\! \Big(C(\rho_{T})\Big(C(\lambda_{T})(x)+C(\lambda_{U})(y)\Big),\,\,\,\,
C(\rho_{U})\Big(C(\lambda_{T})(x)+C(\lambda_{U})(y)\Big)\Big)\\
& = & \!\!\!\!\! \Big(C(\rho_{T})C(\lambda_{T})(x)+C(\rho_{T})C(\lambda_{U})(y),\,\,\,\,
C(\rho_{U})C(\lambda_{T})(x)+C(\rho_{U})C(\lambda_{U})(y)\Big)\\
& = & \!\!\!\!\! \Big(C(\rho_{T}\lambda_{T})(x)+C(\rho_{T}\lambda_{U})(y),\,\,\,\, C(\rho_{U}\lambda_{T})(x)+C(\rho_{U}\lambda_{U})(y)\Big)\\
& = & \!\!\!\!\! (x,y),
\end{eqnarray*}
since $\rho_{T}\lambda_{T}=1_{\left[\begin{smallmatrix}
T &0\\
M & 0
\end{smallmatrix}
\right]}$, $\rho_{U}\lambda_{U}=1_{\left[\begin{smallmatrix}
0 &0\\
M & U
\end{smallmatrix}
\right]}$, $\rho_{U}\lambda_{T}=0$ and $\rho_{T}\lambda_{U}=0$.
Therefore $\psi\circ \phi=1_{C_{1}\amalg_{f} C_{2}}$.
\\
On the other side,  for $z\in C\Big({\left[\begin{smallmatrix}
T &0\\
M & U
\end{smallmatrix}
\right]}\Big)$ we have that
\begin{eqnarray*}
\Big(\phi_{T,U}\circ\psi_{T,U}\Big)(z)& =& \phi_{T,U}\Big(C(\rho_{T})(z),C(\rho_{U})(z)\Big)\\
& = & C(\lambda_{T})\Big(C(\rho_{T})(z)\Big)+C(\lambda_{U})\Big(C(\rho_{U})(z)\Big)\\
& = & C(\lambda_{T}\rho_{T}+\lambda_{U}\rho_{U})(z)
\end{eqnarray*}
$$\text{But},\quad\,\,\, \lambda_{T}\rho_{T}=\left[\begin{smallmatrix}
1_{T} &0\\
0 & 0
\end{smallmatrix}
\right]: \left[\begin{smallmatrix}
T &0\\
M & U
\end{smallmatrix}
\right]\longrightarrow \left[\begin{smallmatrix}
0 &0\\
M & U
\end{smallmatrix}
\right],\quad \lambda_{U}\rho_{U}=\left[\begin{smallmatrix}
0 &0\\
0 & 1_{U}
\end{smallmatrix}
\right]: \left[\begin{smallmatrix}
T &0\\
M & U
\end{smallmatrix}
\right]\longrightarrow \left[\begin{smallmatrix}
0 &0\\
M & U
\end{smallmatrix}
\right]$$
satisfies that $\lambda_{T}\rho_{T}+\lambda_{U}\rho_{U}= \left[\begin{smallmatrix}
1_{T} &0\\
0 & 1_{U}
\end{smallmatrix}
\right]= 1_{\left[\begin{smallmatrix}
T &0\\
M & U
\end{smallmatrix}
\right]}$. Therefore,  
$C(\lambda_{T}\rho_{T}+\lambda_{U}\rho_{U})(z)=C\Big (1_{\left[\begin{smallmatrix}
T &0\\
M & U
\end{smallmatrix}
\right]}\Big)(z)=z$.
Therefore $\phi\circ \psi=1_{C}$. Then $\psi:C\longrightarrow C_{1}\amalg_{f} C_{2},$ is an isomorphism.
\end{proof}

\begin{theorem}\label{equivalence1}
The functor $\textswab{F}:\Big( \mathsf{Mod}(\mathcal{T}), \mathbb{G}\mathsf{Mod}(\mathcal{U})\Big) \longrightarrow 
\mathrm{Mod}(\mathbf{\Lambda})$ is an equivalence of categories
\end{theorem}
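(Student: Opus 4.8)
The plan is to invoke the standard criterion that a functor is an equivalence of categories precisely when it is full, faithful, and dense (essentially surjective). Two of these three properties have already been secured: Lemma~\ref{Tlemma3} shows that for every pair of objects $(A,f,B)$ and $(A',f',B')$ of the comma category the map
$$\textswab{F}:\mathrm{Hom}\big((A,f,B),(A',f',B')\big)\longrightarrow \mathrm{Hom}_{\mathsf{Mod}(\mathbf{\Lambda})}(A\amalg_f B,\, A'\amalg_{f'}B')$$
is bijective, so $\textswab{F}$ is full and faithful. Hence the whole argument reduces to verifying that $\textswab{F}$ is dense.

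For density I would argue as follows. Let $C\in\mathrm{Mod}(\mathbf{\Lambda})$ be arbitrary. Restricting $C$ along the embeddings $I_1:\mathcal{T}\to\mathbf{\Lambda}$ and $I_2:\mathcal{U}\to\mathbf{\Lambda}$ produces the $\mathcal{T}$-module $C_1=\mathbb{I}_1(C)$ and the $\mathcal{U}$-module $C_2=\mathbb{I}_2(C)$. By Lemma~\ref{lemmorcom} the action of $C$ on the off-diagonal morphisms $\overline m=\left[\begin{smallmatrix}0&0\\ m&0\end{smallmatrix}\right]$ assembles into a morphism of $\mathcal{T}$-modules $f:C_1\to\mathbb{G}(C_2)$, so the triple $(C_1,f,C_2)$ is a genuine object of $\big(\mathsf{Mod}(\mathcal{T}),\mathbb{G}\mathsf{Mod}(\mathcal{U})\big)$. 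Then Lemma~\ref{lemma3} exhibits an explicit isomorphism $\psi:C\xrightarrow{\sim}C_1\amalg_f C_2=\textswab{F}\big((C_1,f,C_2)\big)$ of $\mathbf{\Lambda}$-modules, built from the structural maps $C(\lambda_T),\,C(\lambda_U),\,C(\rho_T),\,C(\rho_U)$ together with the identities $\rho_T\lambda_T=1$, $\rho_U\lambda_U=1$, $\rho_U\lambda_T=0=\rho_T\lambda_U$ and $\lambda_T\rho_T+\lambda_U\rho_U=1$. Consequently every $\mathbf{\Lambda}$-module lies in the essential image of $\textswab{F}$.

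Combining the two paragraphs, $\textswab{F}$ is full, faithful, and dense, hence an equivalence of categories, which is the claim. The only genuinely laborious points are already behind us: the compatibility checks in Lemmas~\ref{lemmorcom} and~\ref{lemma3} that the assignment $C\mapsto(C_1,f,C_2)$ really lands in the comma category and that the comparison maps $\phi,\psi$ are natural in $\left[\begin{smallmatrix}T&0\\ M&U\end{smallmatrix}\right]$; granting those, the proof of the theorem is a one-line invocation of the equivalence criterion together with Lemma~\ref{Tlemma3}, and I would simply cite them rather than repeat the computations.
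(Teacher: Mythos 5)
Your proof is correct and takes essentially the same route as the paper: the paper's own proof simply cites Lemma~\ref{Tlemma3} (full and faithful) and Lemma~\ref{lemma3} (density), and you have spelled out exactly that argument, including the intermediate use of Lemma~\ref{lemmorcom} to produce the object $(C_1,f,C_2)$ of the comma category.
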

\begin{proof}
It follows from Lemmas \ref{Tlemma3} and \ref{lemma3}.
\end{proof}

\section{Duality functor $\mathbb{D}:\mathrm{mod}(\mathbf{\Lambda})\longrightarrow \mathrm{mod}(\mathbf{\Lambda}^{op})$ }
In this section, we are going to construct a functor $\mathbb{D}:\mathrm{Mod}(\mathbf{\Lambda})\longrightarrow \mathrm{Mod}(\mathbf{\Lambda}^{op})$ and we will describe it when we identify 
$\mathrm{Mod}(\mathbf{\Lambda})$ with $\Big( \mathsf{Mod}(\mathcal{T}), \mathbb{G}\mathsf{Mod}(\mathcal{U})\Big)$. This functor will be useful in the case of dualizing varieties, because we will show that under certain conditions we get a duality
$$\mathbb{D}:\mathrm{mod}(\mathbf{\Lambda})\longrightarrow \mathrm{mod}(\mathbf{\Lambda}^{op}).$$
 %%%%%%%%%%%%%%%%%%%%%%%%%%%%%%%%%%%%%%%%%%%%%%%%%%%%%%%%%%%%%%%%%%%%%%%%%%%%%%%%%%%%%%%%%%%%%%%%%%%%%%%%%%%%%%%%%%%%%%%%%%%%%%%%%%%%%%%%%%%%%%%%%%%%%%%%%%%%%%%%%%%%%%%%%%%%%%%%%%%%%%%%%%%%%%%%%%%%%%%%%%%%%%%%%%%%%%%%%%%%%%%%%%%%%%%%%%%%%%%%%%%%%%%%%%%%%%%%%%%%%%%%%%%%%%%%%%%%%%%%%%%%%%%%%%%%%%%%%%%%%%%%%
Let $\mathcal{U}$ and $\mathcal{T}$ additive categories and 
$M\in \mathsf{Mod}(\mathcal{U}\otimes \mathcal{T}^{op})$. We can define $\overline{M}\in \mathsf{Mod}(\mathcal{T}^{op}\otimes \mathcal{U})$ as follows:
\begin{enumerate}
\item [(a)] $\overline{M}(T,U):=M(U,T)$ for all $(T,U)\in \mathcal{T}^{op}\otimes \mathcal{U}.$

\item [(b)] $\overline{M}(\alpha^{op}\otimes \beta):=M(\beta\otimes\alpha^{op})$ for all $\alpha^{op}\otimes \beta:(T', U) \longrightarrow  (T, U')$ where $\alpha:T\longrightarrow T'$ in $\mathcal{T}$ and $\beta:U\longrightarrow U'$ in $\mathcal{U}$.
\end{enumerate}

\begin{proposition}\label{otrosdosfun}
Let $ \overline{M}\in \mathsf{Mod}(\mathcal{T}^{op}\otimes \mathcal{U})$ be. Then, there exists two covariant functors
$$\overline{E}:\mathcal{U}^{op}\longrightarrow \mathsf{Mod}(\mathcal{T}^{op})^{op},\quad \overline{E'}:\mathcal{T}^{op}\longrightarrow  \mathsf{Mod}(\mathcal{U}).$$
\end{proposition}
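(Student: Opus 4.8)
The plan is to recognize that this statement is the formal mirror of Proposition~\ref{dosfuntores}. Since $\overline{M}\in\mathsf{Mod}(\mathcal{T}^{op}\otimes\mathcal{U})=\mathsf{Mod}\big(\mathcal{T}^{op}\otimes(\mathcal{U}^{op})^{op}\big)$, I would obtain the two asserted functors by applying Proposition~\ref{dosfuntores} verbatim, with the category called $\mathcal{U}$ there replaced by $\mathcal{T}^{op}$ and the category called $\mathcal{T}$ there replaced by $\mathcal{U}^{op}$. Under that substitution the conclusion of Proposition~\ref{dosfuntores} becomes the existence of covariant functors $\mathcal{U}^{op}\to\mathsf{Mod}(\mathcal{T}^{op})^{op}$ and $\mathcal{T}^{op}\to\mathsf{Mod}\big((\mathcal{U}^{op})^{op}\big)=\mathsf{Mod}(\mathcal{U})$, which are exactly $\overline{E}$ and $\overline{E'}$.

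For completeness I would also spell out the constructions, mirroring the proof of Proposition~\ref{dosfuntores}. For $\overline{E}$: to each $U\in\mathcal{U}^{op}$ I associate the functor $\overline{E}(U)\colon\mathcal{T}^{op}\to\mathbf{Ab}$ given on objects by $T\mapsto\overline{M}(T,U)=M(U,T)$ and on a morphism $t^{op}$ of $\mathcal{T}^{op}$ by $\overline{M}(t^{op}\otimes 1_U)$; functoriality in $\mathcal{T}^{op}$ is immediate from that of $\overline{M}$, so $\overline{E}(U)\in\mathsf{Mod}(\mathcal{T}^{op})$. To a morphism $u^{op}\colon U\to U'$ of $\mathcal{U}^{op}$ (that is, $u\colon U'\to U$ in $\mathcal{U}$) I associate the transformation with components $\overline{M}(1_T\otimes u)\colon\overline{E}(U)(T)\to\overline{E}(U')(T)$; that these components are natural in $T$ and respect composition and identities of $\mathcal{U}^{op}$ follows from the biadditivity and functoriality of $\overline{M}$ exactly as in Proposition~\ref{dosfuntores}. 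Reading this assignment as landing in $\mathsf{Mod}(\mathcal{T}^{op})^{op}$ gives the covariant functor $\overline{E}\colon\mathcal{U}^{op}\to\mathsf{Mod}(\mathcal{T}^{op})^{op}$. The functor $\overline{E'}$ is built dually: to $T\in\mathcal{T}^{op}$ I attach $\overline{E'}(T)\colon\mathcal{U}\to\mathbf{Ab}$, $U\mapsto\overline{M}(T,U)=M(U,T)$, $u\mapsto\overline{M}(1_T\otimes u)$, an object of $\mathsf{Mod}(\mathcal{U})$; and to a morphism $t^{op}$ of $\mathcal{T}^{op}$ the transformation whose $U$-component is $\overline{M}(t^{op}\otimes 1_U)$.

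I do not expect any essential obstacle. The only point requiring care is the bookkeeping of variances — making sure that the slot of $\overline{M}$ indexed by $\mathcal{T}^{op}$ is genuinely contravariant in $\mathcal{T}$ and that this matches the target $\mathsf{Mod}(\mathcal{T}^{op})^{op}$ rather than $\mathsf{Mod}(\mathcal{T}^{op})$, and symmetrically for the $\mathcal{U}$-slot in $\overline{E'}$. Everything else — that the prescribed components assemble into natural transformations and that composition and identities are preserved — is the same routine diagram chase as in the proof of Proposition~\ref{dosfuntores}, now fed with the defining relations $\overline{M}(T,U)=M(U,T)$ and $\overline{M}(\alpha^{op}\otimes\beta)=M(\beta\otimes\alpha^{op})$ recorded just before the statement.
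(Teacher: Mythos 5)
Your proposal is correct and is essentially the paper's proof: the paper constructs $\overline{E}$ and $\overline{E'}$ by exactly the explicit assignments you write out, and your preliminary observation that these are just Proposition~\ref{dosfuntores} applied with $\mathcal{U}\rightsquigarrow\mathcal{T}^{op}$, $\mathcal{T}\rightsquigarrow\mathcal{U}^{op}$ is a clean way of saying the same thing. One small slip to fix in your spelled-out version: for $u^{op}\colon U\to U'$ in $\mathcal{U}^{op}$ (so $u\colon U'\to U$ in $\mathcal{U}$), the component $\overline{M}(1_T\otimes u)$ maps $\overline{M}(T,U')\to\overline{M}(T,U)$, i.e.\ $\overline{E}(U')(T)\to\overline{E}(U)(T)$, not the direction you wrote; it is only after passing to $\mathsf{Mod}(\mathcal{T}^{op})^{op}$ that the arrow points from $\overline{E}(U)$ to $\overline{E}(U')$, which is what makes $\overline{E}$ covariant.
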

\begin{proof}
For $U\in \mathcal{U}^{op}$, we define  a covariant functor $\overline{E}(U):=\overline{M}_{U}:\mathcal{T}^{op} \longrightarrow \mathbf{Ab}$  (that is, $\overline{M}_{U}:\mathcal{T} \longrightarrow \mathbf{Ab}$ is a contravariant functor)  as follows.\\
$(i)$ $ \overline{M}_{U}(T):=\overline{M}(T,U) $, for all $ T\in \mathcal{T}^{op}$.\\
$(ii)$  $ \overline{M}_{U}(t^{op}):=\overline{M}(t^{op}\otimes 1_{U}) $, for all $ t^{op}\in  \mathsf{Hom}_{\mathcal{T}^{op}}(T',T).$\\
Now, given a morphism $u^{op}:U'\longrightarrow U$ in $\mathcal{U}^{op}$ we set $\overline{E}(u^{op}):=\overline{u^{op}}:\overline{M}_{U}\longrightarrow \overline{M}_{U'}$  where $ \overline{u^{op}}=\lbrace [\overline{u^{op}}\,]_{T}:\overline{M}_{U}(T)\longrightarrow \overline{M}_{U'}(T)\rbrace_{ T\in \mathcal{T}^{op}}$ with $ [\overline{u^{op}}\,]_{T}=\overline{M}(1_{T}\otimes u).$\\
Similarly for $ T\in \mathcal{T}^{op}$ we define a contravariant functor  $ \overline{E'}(T):=\overline{M}_{T}:\mathcal{U}^{op}\longrightarrow \mathbf{Ab}$ (that is, is a covariant functor  $\overline{M}_{T}:\mathcal{U}\longrightarrow \mathbf{Ab}$).
\end{proof}

We note that $\overline{M}_{U}(t^{op})=\overline{M}(t^{op}\otimes 1_{U})=M(1_{U}\otimes t^{op})$ and $\overline{M}_{T}(u^{op})=\overline{M}(1_{T}\otimes u^{})=M(1_{T}\otimes u).$

\begin{note}
We have a covariant functor $\overline{\mathbb{G}}:\mathsf{Mod}(\mathcal{T}^{op})\longrightarrow \mathsf{Mod}(\mathcal{U}^{op})$.\\
In detail, we have that the following holds.\\
$(i)$ For $B\in \mathsf{Mod}(\mathcal{T}^{op}) $ , $\overline{\mathbb{G}}(B)(U):= \mathsf{Hom}_{\mathsf{Mod}(\mathcal{T}^{op})}(\overline{M}_{U},B)$ for all $ U\in \mathcal{U}^{op} $.\\
Moreover, for all
$u^{op}\in \mathsf{Hom}_{\mathcal{U}^{op}}(U',U) $ we have that $$\overline{\mathbb{G}}(B)(u^{op}):=\mathrm{Hom}_{\mathsf{Mod}(\mathcal{T}^{op})}(\overline{u^{op}},B) .$$\\
$(ii)$ If $\eta:B\rightarrow B' $ is a morphism of $\mathcal{T}^{op} $-modules we have that
$\overline{\mathbb{G}}(\eta):\overline{\mathbb{G}}(B)\longrightarrow \overline{\mathbb{G}}(B')$  is such that 
$$\overline{\mathbb{G}}(\eta)= \Big\{  [\overline{ \mathbb{G}}(\eta)]_{{U}}:=\mathsf{Hom}_{\mathsf{Mod}(\mathcal{T}^{op})}(\overline{M}_{U},\eta):\overline{\mathbb{G}}(B)(U)\longrightarrow
\overline{\mathbb{G}}(B')(U) \Big \} _{U\in \mathcal{U}^{op}} .$$\\
$(iii)$ We note that $\overline{M}_{U}=M_{U}$.
\end{note}

Since we have that $\overline{M}\in \mathsf{Mod}(\mathcal{T}^{op}\otimes \mathcal{U})$. Following the definition \ref{defitrinagularmat}, we have the following.

\begin{definition}
We define the \textbf{triangular matrix category}
$\overline{\mathbf{\Lambda}}=\left[ \begin{smallmatrix}
\mathcal{U}^{op} & 0 \\ \overline{M} & \mathcal{T}^{op}
\end{smallmatrix}\right]$ as follows.
\begin{enumerate}
\item [(a)] The class of objects of this category are matrices $ \left[
\begin{smallmatrix}
U & 0 \\ 
\overline{M} & T
\end{smallmatrix}\right]  $ with $ U\in \mathrm{obj} (\mathcal{U}^{op}) $ and $ T\in \mathrm{obj} (\mathcal{T}^{op})$.

\item [(b)] Given a pair of objects in
$\left[ \begin{smallmatrix}
U' & 0 \\
\overline{M} & T'
\end{smallmatrix} \right] ,  \left[ \begin{smallmatrix}
U & 0 \\
\overline{M} & T
\end{smallmatrix} \right]$ in
$\overline{\mathbf{\Lambda}}$ we define

$$\mathsf{ Hom}_{\overline{\mathbf{\Lambda}}}\left (\left[ \begin{smallmatrix}
U' & 0 \\
\overline{M} & T'
\end{smallmatrix} \right] ,  \left[ \begin{smallmatrix}
U & 0 \\
\overline{M} & T
\end{smallmatrix} \right]  \right)  := \left[ \begin{smallmatrix}
\mathsf{Hom}_{\mathcal{U}^{op}}(U',U) & 0 \\
\overline{M}(T,U') & \mathsf{Hom}_{\mathcal{T}^{op}}(T',T)
\end{smallmatrix} \right]$$.
\end{enumerate}
The composition is given by
\begin{eqnarray*}
\circ&:&\left[
\begin{smallmatrix}
{\mathcal{U}^{op}}(U',U) & 0 \\
\overline{M}(T,U') & {\mathcal{T}^{op}}(T',T)
\end{smallmatrix} \right]\times 
\left[  \begin{smallmatrix}
{\mathcal{U}^{op}}(U'',U') & 0 \\
\overline{M}(T',U'') & {\mathcal{T}^{op}}(T'',T')
\end{smallmatrix}  \right]
\longrightarrow\left[
\begin{smallmatrix}
{\mathcal{U}^{op}}(U'',U) & 0 \\
\overline{M}(T,U'') & {\mathcal{T}^{op}}(T'',T)\end{smallmatrix} \right] \\
&& \left(\left[
\begin{smallmatrix}
u_{1}^{op} & 0 \\
m_{1} & t_{1}^{op}
\end{smallmatrix} \right],
 \left[ \begin{smallmatrix}
u_{2}^{op} & 0 \\
m_{2} & t_{2}^{op}
\end{smallmatrix} \right]
\right)\longmapsto\left[
\begin{smallmatrix}
u_{1}^{op}\circ u_{2}^{op} & 0 \\
m_{1}\bullet u_{2}^{op}+t_{1}^{op}\bullet m_{2} 
& t_{1}^{op}\circ t_{2}^{op}
\end{smallmatrix} \right].
\end{eqnarray*}
\end{definition}

We recall that $ m_{1}\bullet u_{2}^{op}:=\overline{M}(1_{T}\otimes u_{2}^{})(m_{1})$ and
$t_{1}^{op}\bullet m_{2}=\overline{M}(t_{1}^{op}\otimes 1_{U''})(m_{2})$.\\
Since $m_{1}\in \overline{M}(T,U')=M(U',T)$ and $m_{2}\in \overline{M}(T',U'')=M(U'',T')$.\\
Since 
$$\overline{M}(1_{T}\otimes u_{2})=M(u_{2}\otimes 1_{T}):M(U',T)\longrightarrow M(U'',T)$$

$$\overline{M}(t_{1}^{op}\otimes 1_{U''})=M(1_{U''}\otimes t_{1}^{op}):M(U'',T')\longrightarrow M(U'',T).$$
Then we have that
$$m_{1}\bullet u_{2}^{op}+t_{1}^{op}\bullet m_{2}=M(u_{2}\otimes 1_{T})(m_{1})+M(1_{U''}\otimes t_{1}^{op})(m_{2})=u_{2}\bullet m_{1}+m_{2}\bullet t_{1}.$$

\begin{proposition}\label{isotrivial}
There is an isomorphism
$$\mathbb{T}:\mathbf{\Lambda}^{op}\longrightarrow \overline{\mathbf{\Lambda}}, $$
defined as $\mathbb{T}\Big(\left[
\begin{smallmatrix}
T & 0 \\ M & U
\end{smallmatrix}\right]\Big)=\left[
\begin{smallmatrix}
U & 0 \\ 
\overline{M} & T
\end{smallmatrix}\right]$
and for  $\left[
\begin{smallmatrix}
t_{1} & 0 \\
m_{1} & u_{1}
\end{smallmatrix} \right]^{op}:\left[
\begin{smallmatrix}
T' & 0 \\ 
M & U'
\end{smallmatrix}\right]\longrightarrow \left[
\begin{smallmatrix}
T & 0 \\ M & U
\end{smallmatrix}\right]$
a morphism in $\mathbf{\Lambda}^{op}$ we set
$\mathbb{T}\Big(\left[
\begin{smallmatrix}
t_{1} & 0 \\
m_{1} & u_{1}
\end{smallmatrix} \right]^{op}\Big)= \left[
\begin{smallmatrix}
u_{1}^{op} & 0 \\
m_{1} & t_{1}^{op}
\end{smallmatrix} \right]:\left[
\begin{smallmatrix}
U' & 0 \\ 
\overline{M} & T'
\end{smallmatrix}\right]\longrightarrow \left[
\begin{smallmatrix}
U & 0 \\ 
\overline{M} & T
\end{smallmatrix}\right].$
%We note that $t_{1}\in \mathrm{Hom}_{\mathcal{T}}(T,T')$ and
%$u_{1}\in \mathrm{Hom}_{\mathcal{U}}(U,U')$ and $m_{1}\in %M(U',T)$.\\
\end{proposition}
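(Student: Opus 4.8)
The plan is to prove that $\mathbb{T}$ is an isomorphism of categories by checking that it is an additive functor which is bijective on objects and bijective on each morphism group; a functor with these properties is automatically an isomorphism, its inverse being given by the inverse bijections on objects and on morphism sets. First I would dispatch the easy bookkeeping: an object of $\mathbf{\Lambda}^{op}$ is the same datum as a pair $(T,U)$ with $T\in\mathcal T$, $U\in\mathcal U$, and an object of $\overline{\mathbf{\Lambda}}$ is the same datum as a pair $(U,T)$ with $U\in\mathcal U^{op}$, $T\in\mathcal T^{op}$, so the object assignment of $\mathbb{T}$ is visibly a bijection. On morphisms, reading $\mathrm{Hom}_{\mathbf{\Lambda}^{op}}$ backwards through Definition~\ref{defitrinagularmat} and using the identifications $\mathcal{T}^{op}(T',T)=\mathcal{T}(T,T')$, $\mathcal{U}^{op}(U',U)=\mathcal{U}(U,U')$ and $\overline{M}(T,U')=M(U',T)$, both groups $\mathrm{Hom}_{\mathbf{\Lambda}^{op}}\big(\left[\begin{smallmatrix} T' & 0 \\ M & U' \end{smallmatrix}\right],\left[\begin{smallmatrix} T & 0 \\ M & U \end{smallmatrix}\right]\big)$ and $\mathrm{Hom}_{\overline{\mathbf{\Lambda}}}\big(\left[\begin{smallmatrix} U' & 0 \\ \overline{M} & T' \end{smallmatrix}\right],\left[\begin{smallmatrix} U & 0 \\ \overline{M} & T \end{smallmatrix}\right]\big)$ become the single abelian group $\left[\begin{smallmatrix} \mathrm{Hom}_{\mathcal{U}}(U,U') & 0 \\ M(U',T) & \mathrm{Hom}_{\mathcal{T}}(T,T') \end{smallmatrix}\right]$, and under these identifications $\mathbb{T}$ is the identity on each entry; hence $\mathbb{T}$ is a bijective, additive map on every morphism set.

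The substance of the argument is functoriality. Preservation of identities is immediate: $\mathbb{T}$ carries the identity $\left[\begin{smallmatrix} 1_T & 0 \\ 0 & 1_U \end{smallmatrix}\right]^{op}$ of $\left[\begin{smallmatrix} T & 0 \\ M & U \end{smallmatrix}\right]$ in $\mathbf{\Lambda}^{op}$ to $\left[\begin{smallmatrix} 1_U^{op} & 0 \\ 0 & 1_T^{op} \end{smallmatrix}\right]$, which is the identity of $\left[\begin{smallmatrix} U & 0 \\ \overline{M} & T \end{smallmatrix}\right]$ in $\overline{\mathbf{\Lambda}}$. For composition, I would fix $f=\left[\begin{smallmatrix} t_1 & 0 \\ m_1 & u_1 \end{smallmatrix}\right]\colon\left[\begin{smallmatrix} T & 0 \\ M & U \end{smallmatrix}\right]\to\left[\begin{smallmatrix} T' & 0 \\ M & U' \end{smallmatrix}\right]$ and $g=\left[\begin{smallmatrix} t_2 & 0 \\ m_2 & u_2 \end{smallmatrix}\right]\colon\left[\begin{smallmatrix} T' & 0 \\ M & U' \end{smallmatrix}\right]\to\left[\begin{smallmatrix} T'' & 0 \\ M & U'' \end{smallmatrix}\right]$ in $\mathbf{\Lambda}$, so that $f^{op}\circ g^{op}=(g\circ f)^{op}$ in $\mathbf{\Lambda}^{op}$; then I would expand $\mathbb{T}(f^{op})\circ\mathbb{T}(g^{op})$ using the composition law of $\overline{\mathbf{\Lambda}}$, with $\mathbb{T}(f^{op})$ playing the role of the outer factor, and obtain the matrix with diagonal entries $u_1^{op}\circ u_2^{op}$ and $t_1^{op}\circ t_2^{op}$ and corner entry $m_1\bullet u_2^{op}+t_1^{op}\bullet m_2$. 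On the other side, $\mathbb{T}((g\circ f)^{op})$ has diagonal entries $(u_2u_1)^{op}$ and $(t_2t_1)^{op}$ and corner entry $m_2\bullet t_1+u_2\bullet m_1$. The diagonal entries agree by the definition of composition in $\mathcal{U}^{op}$ and $\mathcal{T}^{op}$, and the corner entries agree by the identity $m_1\bullet u_2^{op}+t_1^{op}\bullet m_2=u_2\bullet m_1+m_2\bullet t_1$ recorded just before the statement. Hence $\mathbb{T}$ respects composition, and together with the bijectivity above this makes $\mathbb{T}$ an isomorphism of categories.

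The step I expect to demand the most care is this composition check, precisely because two order-reversals are in play at once: $\mathbf{\Lambda}^{op}$ reverses the order of composition, while the composition law defining $\overline{\mathbf{\Lambda}}$ already places the ``inner'' map as its second argument. One must line these conventions up correctly — so that $\mathbb{T}(f^{op})$ really is the outer factor in $\overline{\mathbf{\Lambda}}$ — and then observe that the corner-entry identity from the preceding paragraph is exactly what reconciles the two computations. Everything else is formal.
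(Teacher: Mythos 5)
Your proposal is correct and follows essentially the same route as the paper: the paper's own proof just points to the corner-entry identity $m_1\bullet u_2^{op}+t_1^{op}\bullet m_2 = u_2\bullet m_1 + m_2\bullet t_1$ recorded before the statement and declares the rest straightforward, and your argument supplies exactly the bookkeeping (bijections on objects and Hom-groups, preservation of identities and composition) that makes it so.
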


\begin{proof}
Using the fact that $m_{1}\bullet u_{2}^{op}+t_{1}^{op}\bullet m_{2}=M(u_{2}\otimes 1_{T})(m_{1})+M(1_{U''}\otimes t_{1}^{op})(m_{2})=u_{2}\bullet m_{1}+m_{2}\bullet t_{1}$, it is straightforward to see that $\mathbb{T}$ is a functor and an isomorphism.
\end{proof}

Now, let $\mathcal{C}$ be a $K$-category, we know (see section 2.2 in this paper) that $\mathrm{\mathrm{Mod}}(\mathcal{C})$ is an $K$-variety, which we identify with the category of additive
covariant functors $(\mathcal{C},\mathrm{Mod}(K))$.\\
Now, we are going to consider the full subcategory of $K$-linear functors $M:\mathcal{C}\longrightarrow \mathsf{Mod}(K)$ with images in $\mathsf{mod}(K)$. That, is we consider 
$$(\mathcal{C},\mathsf{mod}(K)):=\{M\in (\mathcal{C},\mathsf{Mod}(K))\mid M(C)\in \mathsf{mod}(K)\,\forall C\in \mathcal{C}\}.$$
Let $\mathcal{C}$ be a Hom-finite $K$-category, then we have a duality
$$\mathbb{D}_{\mathcal{C}}:(\mathcal{C},\mathsf{mod}(K))\longrightarrow (\mathcal{C}^{op},\mathsf{mod}(K)),$$
where $\Big(\mathbb{D}_{\mathcal{C}}(M)\Big)(A):=\mathrm{Hom}_{K}(M(A),K)$ for $A\in \mathcal{C}$. If $\eta:M\longrightarrow N$ is a morphism in $(\mathcal{C},\mathsf{mod}(K))$, we have the morphism 
$$\mathbb{D}(\eta):=\Big \{ [\mathbb{D}(\eta)]_{A}:\Big(\mathbb{D}_{\mathcal{C}}(N)\Big)(A)\longrightarrow \Big(\mathbb{D}_{\mathcal{C}}(M)\Big)(A)\Big\}_{A\in \mathcal{C}}.$$
Where $[\mathbb{D}(\eta)]_{A}:=\mathrm{Hom}(\eta_{A},K):\mathrm{Hom}_{K}(N(A),K)\longrightarrow \mathrm{Hom}_{K}(M(A),K)$.\\

In all what follows we suppose that $\mathcal{U}$ and $\mathcal{T}$ are Hom-finite $K$-categories and therefore (by the previous lemma) $\mathcal{U}\otimes \mathcal{T}^{op}$ is also a Hom-finite $K$-category. We are going to consider the dualities
$$\mathbb{D}_{\mathcal{U}}:(\mathcal{U},\mathrm{mod}(K))\longrightarrow (\mathcal{U}^{op},\mathrm{mod}(K)),$$
$$\mathbb{D}_{\mathcal{T}}:(\mathcal{T},\mathrm{mod}(K))\longrightarrow (\mathcal{T}^{op},\mathrm{mod}(K)).$$
We have the following easy results.

\begin{lemma}
Let $\mathcal{U}$ and $\mathcal{T}$ be Hom-finite $K$-categories.
Then $\mathcal{U}\otimes_{K}\mathcal{T}^{op}$ is a Hom-finite $K$-category.
\end{lemma}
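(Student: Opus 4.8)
The plan is to unwind the definition of the tensor product of $K$-categories and reduce the claim to the elementary fact that a tensor product over $K$ of two finitely generated $K$-modules is finitely generated. Recall from Remark \ref{productensorR} that $\mathcal{U}\otimes_{K}\mathcal{T}^{op}$ is the $K$-category whose objects are the pairs $(U,T)$ with $U\in\mathcal{U}$, $T\in\mathcal{T}^{op}$, and whose morphism $K$-module from $(U,T)$ to $(U',T')$ is
$$(\mathcal{U}\otimes_{K}\mathcal{T}^{op})\big((U,T),(U',T')\big)=\mathcal{U}(U,U')\otimes_{K}\mathcal{T}^{op}(T,T')=\mathcal{U}(U,U')\otimes_{K}\mathcal{T}(T',T),$$
with composition induced by the ($K$-bilinear) compositions of $\mathcal{U}$ and $\mathcal{T}^{op}$ via the rule $(f_2\otimes g_2)\circ(f_1\otimes g_1)=(f_2\circ f_1)\otimes(g_2\circ g_1)$; so $\mathcal{U}\otimes_{K}\mathcal{T}^{op}$ is indeed a $K$-category, and it remains only to check Hom-finiteness.

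First I would note that $\mathcal{T}^{op}$ is itself a Hom-finite $K$-category: for $T,T'\in\mathcal{T}$ we have $\mathcal{T}^{op}(T,T')=\mathcal{T}(T',T)$, which is a finitely generated $K$-module because $\mathcal{T}$ is Hom-finite. Then, fixing objects $(U,T)$ and $(U',T')$, both $\mathcal{U}(U,U')$ and $\mathcal{T}(T',T)$ are finitely generated $K$-modules, say generated by $u_1,\dots,u_r$ and $t_1,\dots,t_s$ respectively. The simple tensors $u_i\otimes t_j$, for $1\le i\le r$ and $1\le j\le s$, generate $\mathcal{U}(U,U')\otimes_{K}\mathcal{T}(T',T)$ as a $K$-module: every element of the tensor product is a finite $K$-linear combination of simple tensors $u\otimes t$, and writing $u=\sum_i a_i u_i$, $t=\sum_j b_j t_j$ with $a_i,b_j\in K$, bilinearity gives $u\otimes t=\sum_{i,j} a_i b_j\,(u_i\otimes t_j)$. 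Hence $(\mathcal{U}\otimes_{K}\mathcal{T}^{op})\big((U,T),(U',T')\big)$ is finitely generated over $K$, which is exactly what is required.

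This argument is entirely routine; there is no real obstacle. The only point worth being slightly careful about is the identification $\mathcal{T}^{op}(T,T')=\mathcal{T}(T',T)$ of $K$-modules, so that Hom-finiteness of $\mathcal{T}$ transfers to $\mathcal{T}^{op}$, and the observation that the $K$-module structure on the morphism groups of $\mathcal{U}\otimes_{K}\mathcal{T}^{op}$ is precisely the one coming from the tensor product over $K$ (so "finitely generated over $K$" is the correct notion to verify). With those two remarks in place the statement follows immediately.
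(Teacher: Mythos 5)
Your proof is correct and is exactly the routine verification the paper alludes to when it dismisses the lemma with ``It is straightforward'': reduce to the fact that a tensor product over $K$ of two finitely generated $K$-modules is finitely generated, which you prove by taking generating sets and observing that the simple tensors of generators generate. Nothing to add.
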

\begin{proof}
It is straighforward.
\end{proof}

\begin{proposition}\label{ElmorfiGam}
Let $\mathcal{U}$ and $\mathcal{T}$ be Hom-finite $K$-categories, $B\in \mathsf{Mod}(\mathcal{U})$,  and $U\in \mathcal{U}^{op}$. Then, we have a morphism of $\mathcal{T}$-modules
$\Gamma_{U}:\mathbb{G}(B) \longrightarrow  \mathrm{Hom}_{K}(-,B(U))\circ M_{U},$
where for $T\in \mathcal{T}$ we have that
$$[\Gamma_{U}]_{T}:\mathrm{Hom}_{\mathsf{Mod}(\mathcal{U})}(M_{T},B)\longrightarrow  \mathrm{Hom}_{K}(M(U,T),B(U))$$ is defined as
$$[\Gamma_{U}]_{T}(\beta):=\beta_{U} \,\, \, \forall \beta \in \mathrm{Hom}_{\mathsf{Mod}(\mathcal{U})}(M_{T},B).$$
\end{proposition}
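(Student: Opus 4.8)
The plan is to verify directly that the prescribed family $\{[\Gamma_U]_T\}_{T\in\mathcal{T}}$ assembles into a natural transformation of $\mathcal{T}$-modules. First I would record that the target is indeed a $\mathcal{T}$-module: by Proposition \ref{dosfuntores} the functor $M_U$ is contravariant on $\mathcal{T}$ (with $M_U(t)=M(1_U\otimes t^{op})$ for $t\colon T\to T'$), so composing it with the contravariant functor $\mathrm{Hom}_K(-,B(U))$ produces a covariant functor $\mathcal{T}\to\mathbf{Ab}$ sending $T$ to $\mathrm{Hom}_K(M(U,T),B(U))$. Since $\mathbb{G}(B)$ is also a $\mathcal{T}$-module, it is meaningful to ask that $\Gamma_U$ be a morphism of $\mathcal{T}$-modules.

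Next I would check that each component is well defined and $K$-linear. An element $\beta\in\mathbb{G}(B)(T)=\mathrm{Hom}_{\mathsf{Mod}(\mathcal{U})}(M_T,B)$ is a natural transformation of $\mathcal{U}$-modules, hence its component $\beta_U\colon M_T(U)=M(U,T)\to B(U)$ is $K$-linear, i.e. lies in $\mathrm{Hom}_K(M(U,T),B(U))$; so $[\Gamma_U]_T(\beta):=\beta_U$ has the right codomain. Because sums and scalar multiples of natural transformations are formed componentwise, $(\beta+\beta')_U=\beta_U+\beta'_U$ and $(k\beta)_U=k\beta_U$, so $[\Gamma_U]_T$ is $K$-linear.

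The only genuine computation is naturality in $T$. Given $t\colon T\to T'$ in $\mathcal{T}$, I would unwind both composites $\mathbb{G}(B)(T)\to\mathrm{Hom}_K(M(U,T'),B(U))$. On one side, by the description of $\mathbb{G}$ we have $\mathbb{G}(B)(t)(\beta)=\beta\circ\bar t$ with $\bar t=E(t)\colon M_{T'}\to M_T$ and $[\bar t]_U=M(1_U\otimes t^{op})$, so $[\Gamma_U]_{T'}(\beta\circ\bar t)=(\beta\circ\bar t)_U=\beta_U\circ M(1_U\otimes t^{op})$. On the other side, $\big(\mathrm{Hom}_K(-,B(U))\circ M_U\big)(t)=\mathrm{Hom}_K\big(M(1_U\otimes t^{op}),B(U)\big)$ sends $[\Gamma_U]_T(\beta)=\beta_U$ to $\beta_U\circ M(1_U\otimes t^{op})$. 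The two legs coincide, giving the required commuting square, so $\Gamma_U\colon\mathbb{G}(B)\to\mathrm{Hom}_K(-,B(U))\circ M_U$ is a morphism in $\mathsf{Mod}(\mathcal{T})$.

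I do not expect any real obstacle here: the statement is essentially bookkeeping. The one point to keep straight is the variance of $M_U$ as a functor on $\mathcal{T}$ (versus on $\mathcal{T}^{op}$), together with that of $\mathrm{Hom}_K(-,B(U))$ and of $\mathbb{G}(B)$, so that the naturality square is drawn with the arrows in the correct directions; once that is arranged, both composites collapse to precomposition of $\beta_U$ with $M(1_U\otimes t^{op})$.
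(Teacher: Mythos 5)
Your proof is correct, and it is precisely the direct verification that the paper leaves implicit (the paper's proof reads only ``It is straightforward''). The variance bookkeeping and the naturality square you write out are exactly what one needs to check, and both composites do collapse to $\beta_U \circ M(1_U\otimes t^{op})$ as you say.
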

\begin{proof}
It is straightforward.
\end{proof}

\begin{proposition}
Let $\mathcal{U}$ and $\mathcal{T}$ be Hom-finite $K$-categories, $B\in \mathsf{Mod}(\mathcal{U})$, $U\in \mathcal{U}^{op}$ and  $s\in \mathrm{Hom}_{K}(B(U), K)$. Then we have a morphism of $\mathcal{T}^{op}$-modules
$$\mathbb{S}_{B,U}:\overline{M}_{U}\longrightarrow \mathbb{D}_{\mathcal{T}}\mathbb{G}(B).$$
That is, $\mathbb{S}_{B,U}\in \overline{\mathbb{G}}(\mathbb{D}_{\mathcal{T}}(\mathbb{G}(B)))(U)=\mathrm{Hom}_{\mathsf{Mod}(\mathcal{T}^{op})}\Big(\overline{M}_{U},\,\,\,\mathbb{D}_{\mathcal{T}}\mathbb{G}(B) \Big),$
where for each $T\in \mathcal{T}^{op}$ we have that
$[\mathbb{S}_{B,U}]_{T}:M(U,T)\longrightarrow  
\mathrm{Hom}_{K}(\mathrm{Hom}_{\mathsf{Mod}(\mathcal{U})}(M_{T},B),K) $ is such that, for $m\in M(U,T)$,
$[\mathbb{S}_{B,U}]_{T}(m):\mathrm{Hom}_{\mathsf{Mod}(\mathcal{U})}(M_{T},B) \longrightarrow K$
is defined as
$$\Big([\mathbb{S}_{B,U}]_{T}(m)\Big)(\eta):=\Big(s\circ [\eta]_{U}\Big)(m)=s\Big([\eta]_{U}(m)\Big),$$
for every $\eta\in \mathrm{Hom}_{\mathsf{Mod}(\mathcal{U})}(M_{T},B)$. 
\end{proposition}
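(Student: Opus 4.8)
The plan is to verify two things: that for each $T\in\mathcal{T}^{op}$ the prescribed rule $[\mathbb{S}_{B,U}]_T$ is a well-defined $K$-linear map into $\mathrm{Hom}_K(\mathrm{Hom}_{\mathsf{Mod}(\mathcal{U})}(M_T,B),K)=(\mathbb{D}_{\mathcal{T}}\mathbb{G}(B))(T)$, and that the family $\{[\mathbb{S}_{B,U}]_T\}_T$ is natural in $T$, i.e.\ a morphism of $\mathcal{T}^{op}$-modules. For the first point, fix $m\in M(U,T)$. The assignment $\eta\mapsto [\eta]_U$ on $\mathrm{Hom}_{\mathsf{Mod}(\mathcal{U})}(M_T,B)$ is $K$-linear because the $K$-module structure on a Hom-group of $\mathcal{U}$-modules is computed pointwise, and evaluation at $m$ together with the fixed functional $s$ are $K$-linear; hence $\eta\mapsto s([\eta]_U(m))$ is a $K$-linear functional, so it lies in $(\mathbb{D}_{\mathcal{T}}\mathbb{G}(B))(T)$. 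Since moreover each $[\eta]_U\colon M(U,T)\to B(U)$ is $K$-linear, the rule $m\mapsto [\mathbb{S}_{B,U}]_T(m)$ is itself $K$-linear, so $[\mathbb{S}_{B,U}]_T$ is a morphism of $K$-modules. Equivalently, $[\mathbb{S}_{B,U}]_T$ is the adjoint of the $K$-bilinear pairing $\langle m,\eta\rangle_T:=s([\eta]_U(m))$ on $M(U,T)\times\mathrm{Hom}_{\mathsf{Mod}(\mathcal{U})}(M_T,B)$, and it factors through the morphism $[\Gamma_U]_T$ of Proposition \ref{ElmorfiGam} followed by evaluation at $m$ and $s$.

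For naturality, let $t\colon T\to T'$ be a morphism in $\mathcal{T}$, so $t^{op}\colon T'\to T$ in $\mathcal{T}^{op}$, and recall that both $\overline{M}_U$ and $\mathbb{D}_{\mathcal{T}}\mathbb{G}(B)$ are covariant functors on $\mathcal{T}^{op}$. By construction $\overline{M}_U(t^{op})=M(1_U\otimes t^{op})\colon M(U,T')\to M(U,T)$, which sends $m'$ to $m'\bullet t$ in the notation of Proposition \ref{Tlemma1}. On the other side, $\mathbb{G}(B)(t)=\mathrm{Hom}_{\mathsf{Mod}(\mathcal{U})}(\bar{t},B)$ is the map $\eta\mapsto \eta\circ\bar{t}$, where $[\bar{t}]_U=M(1_U\otimes t^{op})$ by Proposition \ref{dosfuntores}, and hence $(\mathbb{D}_{\mathcal{T}}\mathbb{G}(B))(t^{op})=\mathrm{Hom}_K(\mathbb{G}(B)(t),K)$ sends a functional $\xi$ to $\xi\circ\mathbb{G}(B)(t)$. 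Thus for $m'\in M(U,T')$ and $\eta\in\mathrm{Hom}_{\mathsf{Mod}(\mathcal{U})}(M_T,B)$, the ``down-then-across'' composite evaluated at $\eta$ equals $s([\eta]_U(m'\bullet t))$, while the ``across-then-down'' composite evaluated at $\eta$ equals $[\mathbb{S}_{B,U}]_{T'}(m')(\eta\circ\bar{t})=s([\eta\circ\bar{t}]_U(m'))$. These agree because $[\eta\circ\bar{t}]_U=[\eta]_U\circ[\bar{t}]_U=[\eta]_U\circ M(1_U\otimes t^{op})$, so $[\eta\circ\bar{t}]_U(m')=[\eta]_U(m'\bullet t)$. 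Hence the naturality square commutes, so $\mathbb{S}_{B,U}=\{[\mathbb{S}_{B,U}]_T\}_T$ is a morphism of $\mathcal{T}^{op}$-modules $\overline{M}_U\to\mathbb{D}_{\mathcal{T}}\mathbb{G}(B)$, i.e.\ an element of $\mathrm{Hom}_{\mathsf{Mod}(\mathcal{T}^{op})}(\overline{M}_U,\mathbb{D}_{\mathcal{T}}\mathbb{G}(B))=\overline{\mathbb{G}}(\mathbb{D}_{\mathcal{T}}\mathbb{G}(B))(U)$, as claimed.

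I do not anticipate a genuine obstacle: the content is a direct unwinding of the definitions of $\overline{M}_U$, $\mathbb{G}$, $\bar{t}$ and the dualizing functor $\mathbb{D}_{\mathcal{T}}$, and the whole naturality verification collapses to the single identity $[\eta\circ\bar{t}]_U=[\eta]_U\circ M(1_U\otimes t^{op})$, which is just functoriality of the natural transformation $\eta$ at the object $U$ together with the definition of $\bar{t}$. The only point that needs care is bookkeeping of variances: keeping straight that $\overline{M}_U$ and $\mathbb{D}_{\mathcal{T}}\mathbb{G}(B)$ really are objects of $\mathsf{Mod}(\mathcal{T}^{op})$ and that the two composites in the square are therefore comparable, and correctly reading off the action of $(\mathbb{D}_{\mathcal{T}}\mathbb{G}(B))(t^{op})$ from the contravariance of the $K$-dual.
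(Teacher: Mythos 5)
Your proof is correct, and it takes a genuinely different (and more direct) route than the paper's. The paper builds $\mathbb{S}_{B,U}$ by assembling canonical natural transformations: the double-dual map $\Delta_U\colon M_U\to \mathrm{Hom}_K(-,K)\circ\mathrm{Hom}_K(-,K)\circ M_U$, a dualizing map $\Phi_U$, the Yoneda image $\mathbb{Y}(s)$ of the functional $s$, and the morphism $\Gamma_U$ of Proposition~\ref{ElmorfiGam}; composing these produces a morphism of $\mathcal{T}^{op}$-modules by construction, and the proof ends by unwinding the composite to show it reduces to the formula $\eta\mapsto s([\eta]_U(m))$. You instead take the formula as the definition and verify directly that each component $[\mathbb{S}_{B,U}]_T$ is $K$-linear and that the naturality square commutes, the whole argument collapsing to the single observation $[\eta\circ\bar{t}]_U=[\eta]_U\circ M(1_U\otimes t^{op})$. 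Both establish the same fact; the paper's version displays $\mathbb{S}_{B,U}$ as a composite of canonical morphisms (which makes its functoriality in $B$, $U$, $s$ more transparent for the next proposition), while yours is shorter and makes the mechanism visible at a glance. Your variance bookkeeping — identifying $\overline{M}_U(t^{op})$ with $m'\mapsto m'\bullet t$ and $(\mathbb{D}_{\mathcal{T}}\mathbb{G}(B))(t^{op})$ with $\xi\mapsto\xi\circ\mathrm{Hom}_{\mathsf{Mod}(\mathcal{U})}(\bar{t},B)$ — is correct and matches the paper's conventions.
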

\begin{proof}
For each $U\in \mathcal{U}^{op}$, we have a natural isomorphism of $\mathcal{T}^{op}$-modules
$$\Delta_{U}: M_{U}\longrightarrow \mathrm{Hom}_{K}(-,K)\circ\mathrm{Hom}_{K}(-,K)\circ M_{U}$$
where for  $T\in \mathcal{T}^{op}$  and $m\in M(U,T)$ the function  $[\Delta_{U}]_{T}(m):\mathrm{Hom}_{K}(M(U,T),K)\rightarrow K$ is defined as $[\Delta_{U}]_{T}(m)(f):=f(m)$ $\forall f\in\mathrm{Hom}_{K}(M(U,T),K)$.\\
Now, we consider the canonical function (recall that $\overline{M}_{U}=M_{U}$)
$$\xymatrix{
\mathrm{Hom}_{\mathsf{Mod}(\mathcal{T}^{op})}\Big(M_{U},\,\,\, \mathrm{Hom}_{K}(-,K)\circ \mathrm{Hom}_{\mathsf{Mod}(\mathcal{U})}(-,B)\circ E\Big)\\
\mathrm{Hom}_{\mathsf{Mod}(\mathcal{T}^{})}\Big(\mathrm{Hom}_{\mathsf{Mod}(\mathcal{U})}(-,B)\circ E,\,\,\,\mathrm{Hom}_{K}(-,K)\circ M_{U}\Big)\ar[u]^{\Phi_{U}},}$$
defined as follows: If $\lambda\in \mathrm{Hom}_{\mathsf{Mod}(\mathcal{T}^{})}\Big(\mathrm{Hom}_{\mathsf{Mod}(\mathcal{U})}(-,B)\circ E,\,\,\,\mathrm{Hom}_{K}(-,K)\circ M_{U}\Big)$, we have the natural transformation 
$$\mathrm{Hom}_{K}(-,K)\lambda\!: \!\mathrm{Hom}_{}(-,K)\circ\mathrm{Hom}_{}(-,K)\circ M_{U}\longrightarrow 
\mathrm{Hom}_{}(-,K)\circ \mathrm{Hom}_{\mathsf{Mod}(\mathcal{U})}(-,B)\circ E$$
Then we define $\Phi_{U}(\lambda):=(\mathrm{Hom}_{K}(-,K)\lambda)\circ \Delta_{U}.$\\
Next, we consider the Yoneda functor
$$\mathbb{Y}:\mathsf{Mod}(K) \longrightarrow \mathsf{Mod}(\mathsf{Mod}(K)),$$
defined as $\mathbb{Y}(V):=\mathrm{Hom}_{K}(-,V)$ for $V\in \mathsf{Mod}(K)$. Since $B(U)\in \mathsf{Mod}(K)$, we have a function 
$$\mathbb{Y}:\mathrm{Hom}_{K}(B(U),K)\longrightarrow \mathrm{Hom}_{\mathsf{Mod}(\mathsf{Mod}(K))}\Big(\mathrm{Hom}_{K}(-,B(U)),\mathrm{Hom}_{K}(-,K)\Big).$$ That is, for $s \in \mathrm{Hom}_{K}(B(U),K)$ we obtain $\mathbb{Y}(s): \mathrm{Hom}_{K}(-,B(U))\rightarrow \mathrm{Hom}_{K}(-,K),$
in $\mathsf{Mod}(\mathsf{Mod}(K))$.Then we have the following natural transformation in $\mathsf{Mod}(\mathcal{T})$
$$\mathbb{Y}(s)M_{U}: \mathrm{Hom}_{K}(-,B(U))\circ M_{U}\longrightarrow \mathrm{Hom}_{K}(-,K)\circ M_{U}.$$
By composing with the morphism obtained in \ref{ElmorfiGam}, we have constructed the following morphism in $\mathsf{Mod}(\mathcal{T})$
$$H_{U}:=(\mathbb{Y}(s)M_{U})\circ \Gamma_{U}:\mathrm{Hom}_{\mathsf{Mod}(\mathcal{U})}(-,B)\circ E\longrightarrow \mathrm{Hom}_{K}(-,K)\circ M_{U}.$$
By applying $\Phi_{U}$ to $H_{U}$ we obtain the following
$$\Phi_{U}(H_{U})\in \mathrm{Hom}_{\mathsf{Mod}(\mathcal{T}^{op})}\Big(M_{U},\,\,\, \mathrm{Hom}_{K}(-,K)\circ \mathrm{Hom}_{\mathsf{Mod}(\mathcal{U})}(-,B)\circ E\Big).$$
We define $\mathbb{S}_{B,U}:=\Phi_{U}(H_{U}).$
Now, for each $T\in \mathcal{T}^{op}$ we have that 

%%%%%%%%%%%%%%%%%%%%%%%%%%%%%%%%%
%%%%%%%%%%%%%%%%%%%%%%%%%%%%%%%%
%%%%%%%%%%%%%%%%%%%%%%%%%%%%%%%
\begin{align*}
\Big[\mathbb{S}_{B,U}\Big]_{T} & =
\Big[\mathrm{Hom}(-,K)\Big((\mathbb{Y}(s)M_{U})\circ (\Gamma_{U})\Big) \Big]_{T}\!\!\!\!\circ \!\![\Delta_{U}]_{T}\\
 & = \mathrm{Hom}\Big(\mathbb{Y}(s)_{M(U,T)}\circ [\Gamma_{U}]_{T}, K \Big)\circ [\Delta_{U}]_{T}.
\end{align*}
%%%%%%%%%%%%%%%%%%%%%%%%%%%%%%%%%
%%%%%%%%%%%%%%%%%%%%%%%%%%%%%%
%%%%%%%%%%%%%%%%%%%%%%%%%%%%%
Therefore, we obtain the morphism
$$\mathrm{Hom}_{K}\Big(\mathbb{Y}(s)_{M(U,T)}\circ [\Gamma_{U}]_{T},\,\,\,K \Big)\circ [\Delta_{U}]_{T}:M(U,T)\longrightarrow \mathrm{Hom}_{K}(\mathrm{Hom}_{\mathsf{Mod}(\mathcal{U})}(M_{T},B),K)$$
Hence, for $m\in M(U,T)$ and $\eta\in \mathrm{Hom}_{\mathsf{Mod}(\mathcal{U})}(M_{T},B)$ we have that
\begin{align*}
\Big([\Delta_{U}]_{T}(m)\circ \mathbb{Y}(s)_{M(U,T)}\circ [\Gamma_{U}]_{T}\Big)(\eta) 
& =[\Delta_{U}]_{T}(m)\Big(\mathbb{Y}(s)_{M(U,T)}\Big([\eta]_{U}\Big)\Big)\\
& =[\Delta_{U}]_{T}(m)\Big(s\circ [\eta]_{U}\Big)\\
& =\Big(s\circ [\eta]_{U}\Big)(m)\\
& =s\Big([\eta]_{U}(m)\Big).
\end{align*}
\end{proof}

\begin{proposition}\label{conmuotroc}
Let $\mathcal{U}$ and $\mathcal{T}$ be Hom-finite $K$-categories and let $B\in \mathsf{Mod}(\mathcal{U})$ be. Then, there exists a morphism of $\mathcal{U}^{op}$-modules
$\Psi_{B}:\mathbb{D}_{\mathcal{U}}(B)\longrightarrow \overline{\mathbb{G}}(\mathbb{D}_{\mathcal{T}}\mathbb{G}(B)),$
where for each $U\in\mathcal{U}^{op}$ we have that 
$$\xymatrix{\mathrm{Hom}_{K}(B(U),K)\ar[d]^{[\Psi_{B}]_{U}}\\
\mathrm{Hom}_{\mathsf{Mod}(\mathcal{T}^{op})}\Big(\overline{M}_{U},\,\,\, \mathrm{Hom}_{K}(-,K)\circ \mathrm{Hom}_{\mathsf{Mod}(\mathcal{U})}(-,B)\circ E\Big)}$$
is defined as: $[\Psi_{B}]_{U}(s):=\mathbb{S}_{B,U}$
for every $s\in \mathrm{Hom}_{K}(B(U),K)$.\\ 
Moreover, if $\beta:B\longrightarrow B'$ is a morphism of $\mathcal{U}$-modules, the following diagram commutes
$$\xymatrix{\mathbb{D}_{\mathcal{U}}(B')\ar[rr]^{\Psi_{B'}}\ar[d]^{\mathbb{D}_{\mathcal{U}}(\beta)}& & \overline{\mathbb{G}}(\mathbb{D}_{\mathcal{T}}\mathbb{G}(B'))\ar[d]^{\overline{\mathbb{G}}(\mathbb{D}_{\mathcal{T}}\mathbb{G}(\beta))}\\
\mathbb{D}_{\mathcal{U}}(B)\ar[rr]^{\Psi_{B}} & &\overline{\mathbb{G}}(\mathbb{D}_{\mathcal{T}}\mathbb{G}(B)).}$$
\end{proposition}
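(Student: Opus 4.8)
The plan is to work throughout with the explicit description of $\mathbb{S}_{B,U}$ established in the preceding proposition, namely that its $T$-component satisfies $\big([\mathbb{S}_{B,U}]_{T}(m)\big)(\eta)=s\big([\eta]_{U}(m)\big)$ for all $m\in M(U,T)$ and $\eta\in\mathrm{Hom}_{\mathsf{Mod}(\mathcal{U})}(M_{T},B)$, rather than reopening its construction through $\Phi_{U}$, the Yoneda functor $\mathbb{Y}$ and $\Delta_{U}$; with this formula in hand, the three things to prove — that $[\Psi_{B}]_{U}$ is a well-defined additive map, that $\Psi_{B}$ is natural in $U$, and that it is natural in $B$ — all reduce to short diagram chases.

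First I would check that, for fixed $U\in\mathcal{U}^{op}$, the assignment $s\mapsto\mathbb{S}_{B,U}$ is an additive map $\mathrm{Hom}_{K}(B(U),K)\to\overline{\mathbb{G}}(\mathbb{D}_{\mathcal{T}}\mathbb{G}(B))(U)$. The codomain is by definition $\mathrm{Hom}_{\mathsf{Mod}(\mathcal{T}^{op})}\big(\overline{M}_{U},\mathbb{D}_{\mathcal{T}}\mathbb{G}(B)\big)$, and the preceding proposition says precisely that $\mathbb{S}_{B,U}$ is a morphism of $\mathcal{T}^{op}$-modules $\overline{M}_{U}\to\mathbb{D}_{\mathcal{T}}\mathbb{G}(B)$, so $[\Psi_{B}]_{U}$ takes values where required; additivity in $s$ is immediate because the displayed formula is $K$-linear in $s$.

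Next I would prove that $\Psi_{B}=\{[\Psi_{B}]_{U}\}_{U\in\mathcal{U}^{op}}$ is a natural transformation of $\mathcal{U}^{op}$-modules. Fix $u^{op}\colon U'\to U$ in $\mathcal{U}^{op}$, that is $u\colon U\to U'$ in $\mathcal{U}$. Recall that $\overline{\mathbb{G}}(\mathbb{D}_{\mathcal{T}}\mathbb{G}(B))(u^{op})$ is precomposition with $\overline{u^{op}}\colon\overline{M}_{U}\to\overline{M}_{U'}$, whose $T$-component is $\overline{M}(1_{T}\otimes u)=M(u\otimes 1_{T})$, i.e. $m\mapsto u\bullet m$, while $\mathbb{D}_{\mathcal{U}}(B)(u^{op})=\mathrm{Hom}_{K}(B(u),K)$ sends $s$ to $s\circ B(u)$. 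Evaluating the two composites around the naturality square at $T\in\mathcal{T}^{op}$, at $m\in M(U,T)$ and at $\eta\in\mathrm{Hom}_{\mathsf{Mod}(\mathcal{U})}(M_{T},B)$, the first gives $s\big([\eta]_{U'}(M(u\otimes 1_{T})(m))\big)$ and the second gives $(s\circ B(u))\big([\eta]_{U}(m)\big)$; these coincide because naturality of $\eta\colon M_{T}\to B$ with respect to $u$ reads $[\eta]_{U'}\circ M_{T}(u)=B(u)\circ[\eta]_{U}$. This is the only spot where something slightly more than bookkeeping is invoked.

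Finally I would check commutativity of the stated square for a morphism $\beta\colon B\to B'$ of $\mathcal{U}$-modules. Taking $s\in\mathrm{Hom}_{K}(B'(U),K)$ and evaluating both composites at $T\in\mathcal{T}^{op}$, $m\in M(U,T)$ and $\eta\in\mathrm{Hom}_{\mathsf{Mod}(\mathcal{U})}(M_{T},B)$, and using that $[\mathbb{D}_{\mathcal{T}}\mathbb{G}(\beta)]_{T}$ is precomposition with $[\mathbb{G}(\beta)]_{T}\colon\eta\mapsto\beta\circ\eta$ while $[\mathbb{D}_{\mathcal{U}}(\beta)]_{U}$ sends $s$ to $s\circ\beta_{U}$, one finds that both composites equal $(s\circ\beta_{U})\big([\eta]_{U}(m)\big)$, so the square commutes. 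The whole argument is formal; the only real obstacle is keeping the variances of $\mathbb{D}_{\mathcal{T}}$, $\overline{\mathbb{G}}$ and the bar-operations consistent, together with remembering the identifications $\overline{M}_{U}=M_{U}$ and $\overline{M}(1_{T}\otimes u)=M(u\otimes 1_{T})$ recorded before the statement.
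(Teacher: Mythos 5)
Your argument is correct and follows essentially the same route as the paper's proof: both reduce the well-definedness and the two naturality checks to the explicit pointwise formula $\big([\mathbb{S}_{B,U}]_{T}(m)\big)(\eta)=s\big([\eta]_{U}(m)\big)$ from the preceding proposition, and both invoke the naturality of $\eta\colon M_{T}\to B$ with respect to $u$ as the single non-formal step. The only difference is presentational: you state up front that you will bypass the $\Phi_{U}$, $\mathbb{Y}$, $\Delta_{U}$ machinery, whereas the paper simply works directly with the formula.
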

\begin{proof}
Let $u\in \mathrm{Hom}_{\mathcal{U}}(U,U')$ be. Then $\overline{u}:M_{U}\longrightarrow M_{U'}$ is a morphism in $\mathsf{Mod}(\mathcal{T}^{op})$ and $B(u):B(U)\longrightarrow B(U')$ is a morphism of abelian groups.
We have to show that $L \circ [\Psi_{B}]_{U'}=[\Psi_{B}]_{U}\circ \mathrm{Hom}_{K}(B(u),K)$, where $L=\mathrm{Hom}_{\mathsf{Mod}(\mathcal{T}^{op})}\Big(\overline{u},\,\,\, \mathrm{Hom}_{K}(-,K)\circ \mathrm{Hom}_{\mathsf{Mod}(\mathcal{U})}(-,B)\circ E\Big)$.\\
%$$\xymatrix{\mathrm{Hom}_{K}(B(U'),K)\ar[r]^(.28){[\Psi_{B}]_{U'}}%\ar[d]^{J=\mathrm{Hom}_{K}( B(u),K)} &
%\mathrm{Hom}_{\mathsf{Mod}(\mathcal{T}^{op})}\Big(M_{U'},\,\,\, %\mathrm{Hom}_{K}(-,K)\circ \mathrm{Hom}_{\mathsf{Mod}%%%%%(\mathcal{U})}(-,B)\circ E\Big)\ar[d]^{L}\\
%\mathrm{Hom}_{K}(B(U),K)\ar[r]^(.28){[\Psi_{B}]_{U}} &
%\mathrm{Hom}_{\mathsf{Mod}(\mathcal{T}^{op})}\Big(M_{U},\,\,\, %\mathrm{Hom}_{K}(-,K)\circ \mathrm{Hom}_{\mathsf{Mod}%%%%%(\mathcal{U})}(-,B)\circ E\Big)}$$
Indeed, one one side, for $s\in \mathrm{Hom}_{K}(B(U'),K)$ we have that $(L\circ [\Psi_{B}]_{U'})(s)  =[\Psi_{B}]_{U'}(s)\circ \overline{u} = \mathbb{S}_{B,U'} \circ \overline{u}$. 
Then, for $T\in\mathcal{T}^{op}$  and $m\in M(U,T)$  the function
$$[\mathbb{S}_{B,U'}]_{T}\Big(M(u\otimes 1_{T})(m)\Big):\mathrm{Hom}_{\mathsf{Mod}(\mathcal{U})}(M_{T},B) \longrightarrow K$$
is defined as
$$\Big([\mathbb{S}_{B,U'}]_{T}\Big(M(u\otimes 1_{T})(m)\Big)\Big)(\eta):= \Big(s\circ \eta_{U'}\Big) (M(u\otimes 1_{T})(m))=s\Big([\eta]_{U'}\Big(M(u\otimes 1_{T})(m)\Big)\Big),$$
for every $\eta\in \mathrm{Hom}_{\mathsf{Mod}(\mathcal{U})}(M_{T},B)$ (recall that $[\overline{u}]_{T}=M(u\otimes 1_{T})$).\\
On the other side, $
([\Psi_{B}]_{U}\circ J)(s)= [\Psi_{B}]_{U}(J(s))=[\Psi_{B}]_{U}(s\circ B(u)).$ Let us denote $s':=s\circ B(u)$, hence $[\Psi_{B}]_{U}(s\circ B(u))=[\Psi_{B}]_{U}(s'):=\mathbb{S}'_{B,U}.$\\
Then, for $T\in\mathcal{T}^{op}$ and  $m\in M(U,T)$, the function
$$[\mathbb{S}'_{B,U}]_{T}(m):\mathrm{Hom}_{\mathsf{Mod}(\mathcal{U})}(M_{T},B) \longrightarrow K$$
is defined as
$$\Big([\mathbb{S}_{B,U}]_{T}(m)\Big)(\eta):= \Big(s'\circ \eta_{U}\Big) (m)=s'\Big([\eta]_{U}(m)\Big)=(s\circ B(u))\Big([\eta]_{U}(m)\Big),$$
for every $\eta\in \mathrm{Hom}_{\mathsf{Mod}(\mathcal{U})}(M_{T},B)$.\\
Since  $\eta\in \mathrm{Hom}_{\mathsf{Mod}(\mathcal{U})}(M_{T},B)$, we have that $B(u)\circ \eta_{U}=\eta_{U'}\circ M(u\otimes 1_{T})$.
%$$\xymatrix{M(U,T)\ar[r]^{\eta_{U}}\ar[d]_{M(u\otimes 1_{T})} & %B(U)\ar[d]^{B(u)}\\
%M(U',T)\ar[r]^{\eta_{U'}} & B(U').}$$
Therefore, for $m\in M(U,T)$ we have that $B(u)(\eta_{U}(m))=\eta_{U'}(M(u\otimes 1_{T})(m)).$ Proving that the required diagram commutes.\\
Let $\beta:B\longrightarrow B'$ be a a morphism of $\mathcal{U}$-modules, we have to show that for $U\in\mathcal{U}^{op}$ the following  equality holds: $[\overline{\mathbb{G}}(\mathbb{D}_{\mathcal{T}}\mathbb{G}(\beta))]_{U} \circ [\Psi_{B'}]_{U}=[\Psi_{B}]_{U}\circ [\mathbb{D}_{\mathcal{U}}(\beta)]_{U}.$
%diagram commutes
%$$\xymatrix{\mathbb{D}_{\mathcal{U}}(B')(U)%%%%%%%%%%%\ar[rr]^{[\Psi_{B'}]_{U}}\ar[d]^{[\mathbb{D}_{\mathcal{U}}%%%%%%(\beta)]_{U}}& & \overline{\mathbb{G}}(\mathbb{D}_{\mathcal{T}}%%%%\mathbb{G}(B'))(U)\ar[d]^{[\overline{\mathbb{G}}(\mathbb{D}%_{\mathcal{T}}\mathbb{G}(\beta))]_{U}}\\
%\mathbb{D}_{\mathcal{U}}(B)(U)\ar[rr]^{[\Psi_{B}]_{U}} & %&\overline{\mathbb{G}}(\mathbb{D}_{\mathcal{T}}\mathbb{G}(B))%(U)}$$
We note that for $U\in \mathcal{U}^{op}$
$$\Big[\overline{\mathbb{G}}(\mathbb{D}_{\mathcal{T}}\mathbb{G}(\beta))\Big]_{U}:\overline{\mathbb{G}}(\mathbb{D}_{\mathcal{T}}\mathbb{G}(B'))(U)\longrightarrow \overline{\mathbb{G}}(\mathbb{D}_{\mathcal{T}}\mathbb{G}(B))(U)$$ is defined as $\Big[\overline{\mathbb{G}}(\mathbb{D}_{\mathcal{T}}\mathbb{G}(\beta))\Big]_{U}(\delta):=\mathrm{Hom}_{\mathsf{Mod}(\mathcal{T}^{op})}\Big(\overline{M}_{U},\mathbb{D}_{\mathcal{T}}\mathbb{G}(\beta)\Big)(\delta)=\mathbb{D}_{\mathcal{T}}\mathbb{G}(\beta)\circ \delta$ for $\delta\in \overline{\mathbb{G}}(\mathbb{D}_{\mathcal{T}}(\mathbb{G}(B')))(U)$. Now, for each $T\in \mathcal{T}^{op}$ we have 
$$\Big[\mathbb{D}_{\mathcal{T}}\mathbb{G}(\beta)\Big]_{T}:\mathrm{Hom}_{K}(\mathrm{Hom}_{\mathsf{Mod}(\mathcal{U})}(M_{T},B'),K)\longrightarrow \mathrm{Hom}_{K}(\mathrm{Hom}_{\mathsf{Mod}(\mathcal{U})}(M_{T},B),K),$$
where for $\lambda\in \mathrm{Hom}_{K}(\mathrm{Hom}_{\mathsf{Mod}(\mathcal{U})}(M_{T},B'), K)$ and  $\eta \in \mathrm{Hom}_{\mathsf{Mod}(\mathcal{U})}(M_{T},B)$ we obtain that $\Big(\Big[\mathbb{D}_{\mathcal{T}}\mathbb{G}(\beta)\Big]_{T}(\lambda)\Big)(\eta):=\lambda(\beta\circ \eta).$\\
For $s:B'(U)\longrightarrow K$ we have that $[\Psi_{B'}]_{U}(s):=\mathbb{S}_{B',U}.$
Then 
\begin{align*}
\Big(\Big[\overline{\mathbb{G}}(\mathbb{D}_{\mathcal{T}}\mathbb{G}(\beta))\Big]_{U}\circ \Big[\Psi_{B'}\Big]_{U}\Big)(s)= (\mathbb{D}_{\mathcal{T}}\mathbb{G}(\beta))\circ \mathbb{S}_{B',U} 
\end{align*}
Then for $T\in\mathcal{T}^{op}$ and $m\in M(U,T)$ the function 
$$\Big[\mathbb{D}_{\mathcal{T}}\mathbb{G}(\beta)\Big]_{T}\Big([\mathbb{S}_{B',U}]_{T}(m)\Big):\mathrm{Hom}_{\mathsf{Mod}(\mathcal{U})}(M_{T},B)\longrightarrow K$$
is defined as
\begin{align*}
\Big(\Big[\mathbb{D}_{\mathcal{T}}\mathbb{G}(\beta)\Big]_{T}\Big([\mathbb{S}_{B',U}]_{T}(m)\Big)\Big)(\eta) & =\Big([\mathbb{S}_{B',U}]_{T}(m)\Big)(\beta\circ \eta)\\
& = s\Big([\beta\circ \eta]_{U}(m)\Big)\\
& =s\Big(([\beta]_{U}\circ [\eta]_{U})(m)\Big) \quad \forall \eta \in \mathrm{Hom}_{\mathsf{Mod}(\mathcal{U})}(M_{T},B)
\end{align*}
On the other hand,  we consider $s':=[\mathbb{D}_{\mathcal{U}}(\beta)]_{U}(s)=s\circ [\beta]_{U}$. In this case, we obtain thta $\Psi_{B}(s'):=\mathbb{S'}_{B,U}.$ Therefore,  for each $T\in \mathcal{T}^{op}$ and $m\in M(U,T)$ the function 
$$[\mathbb{S}'_{B,U}]_{T}(m):\mathrm{Hom}_{\mathsf{Mod}(\mathcal{U})}(M_{T},B) \longrightarrow K$$
is defined as
$$\Big([\mathbb{S}'_{B,U}]_{T}(m)\Big)(\eta):=s'\Big([\eta]_{U}(m)\Big)=s\circ [\beta]_{U}\Big([\eta]_{U}(m)\Big)=s\Big(([\beta]_{U}\circ [\eta]_{U})(m)\Big)$$
for every $\eta\in \mathrm{Hom}_{\mathsf{Mod}(\mathcal{U})}(M_{T},B)$. Proving that the required diagram commutes.
\end{proof}

\begin{proposition}\label{funPsipreli}
Let $\mathcal{U}$ and $\mathcal{T}$ be Hom-finite $K$-categories. Then, there exists a contravariant functor $\Psi: \mathsf{Mod}(\mathcal{U})\longrightarrow \Big(\mathsf{Mod}(\mathcal{U}^{op}), \overline{\mathbb{G}}(\mathsf{Mod}(\mathcal{T}^{op}))\Big).$
\end{proposition}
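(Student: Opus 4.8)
The plan is to assemble the functor $\Psi$ directly out of the data produced in Proposition \ref{conmuotroc}, so that essentially all the substance of the proof has already been verified there; what remains is bookkeeping about the comma category.

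First I would define $\Psi$ on objects. Given $B\in\mathsf{Mod}(\mathcal{U})$, set
$$\Psi(B):=\big(\mathbb{D}_{\mathcal{U}}(B),\ \Psi_{B},\ \mathbb{D}_{\mathcal{T}}\mathbb{G}(B)\big).$$
Here $\mathbb{G}(B)\in\mathsf{Mod}(\mathcal{T})$, so $\mathbb{D}_{\mathcal{T}}\mathbb{G}(B)\in\mathsf{Mod}(\mathcal{T}^{op})$, while $\mathbb{D}_{\mathcal{U}}(B)\in\mathsf{Mod}(\mathcal{U}^{op})$, and by Proposition \ref{conmuotroc} the map $\Psi_{B}\colon \mathbb{D}_{\mathcal{U}}(B)\to\overline{\mathbb{G}}\big(\mathbb{D}_{\mathcal{T}}\mathbb{G}(B)\big)$ is a morphism of $\mathcal{U}^{op}$-modules. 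Hence $\Psi(B)$ is a well-defined object of $\big(\mathsf{Mod}(\mathcal{U}^{op}),\overline{\mathbb{G}}(\mathsf{Mod}(\mathcal{T}^{op}))\big)$.

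Next I would define $\Psi$ on morphisms. For $\beta\colon B\to B'$ in $\mathsf{Mod}(\mathcal{U})$ put $\Psi(\beta):=\big(\mathbb{D}_{\mathcal{U}}(\beta),\ \mathbb{D}_{\mathcal{T}}\mathbb{G}(\beta)\big)$. Since $\mathbb{D}_{\mathcal{U}}$ and $\mathbb{D}_{\mathcal{T}}$ are contravariant and $\mathbb{G}$ is covariant, we have $\mathbb{D}_{\mathcal{U}}(\beta)\colon \mathbb{D}_{\mathcal{U}}(B')\to \mathbb{D}_{\mathcal{U}}(B)$ and $\mathbb{D}_{\mathcal{T}}\mathbb{G}(\beta)\colon \mathbb{D}_{\mathcal{T}}\mathbb{G}(B')\to \mathbb{D}_{\mathcal{T}}\mathbb{G}(B)$. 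By the definition of morphisms in a comma category (Section~2), the pair $\big(\mathbb{D}_{\mathcal{U}}(\beta),\mathbb{D}_{\mathcal{T}}\mathbb{G}(\beta)\big)$ is a morphism $\Psi(B')\to\Psi(B)$ precisely when
$$\Psi_{B}\circ \mathbb{D}_{\mathcal{U}}(\beta)=\overline{\mathbb{G}}\big(\mathbb{D}_{\mathcal{T}}\mathbb{G}(\beta)\big)\circ\Psi_{B'},$$
and this is exactly the commutative square established in the second part of Proposition \ref{conmuotroc}. So $\Psi(\beta)$ is a well-defined morphism of the comma category, pointing in the direction opposite to $\beta$.

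Finally, functoriality of $\Psi$ — namely $\Psi(1_{B})=1_{\Psi(B)}$ and $\Psi(\beta'\circ\beta)=\Psi(\beta)\circ\Psi(\beta')$ — is checked componentwise, reducing to the (contravariant) functoriality of $\mathbb{D}_{\mathcal{U}}$ and of $\mathbb{D}_{\mathcal{T}}\circ\mathbb{G}$ together with the fact that identities and composition in the comma category are formed componentwise. I expect the only mildly delicate point to be keeping the variance and the naturality square aligned with the comma-category conventions; once Proposition \ref{conmuotroc} is invoked, no further computation is needed.
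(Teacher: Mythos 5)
Your proposal is correct and follows essentially the same route as the paper: define $\Psi$ on objects by $\Psi(B)=(\mathbb{D}_{\mathcal{U}}(B),\Psi_{B},\mathbb{D}_{\mathcal{T}}\mathbb{G}(B))$, on morphisms by $\Psi(\beta)=(\mathbb{D}_{\mathcal{U}}(\beta),\mathbb{D}_{\mathcal{T}}\mathbb{G}(\beta))$, and invoke Proposition \ref{conmuotroc} for the comma-category square. The only addition you make is the (correct) explicit remark that functoriality reduces componentwise to that of $\mathbb{D}_{\mathcal{U}}$ and $\mathbb{D}_{\mathcal{T}}\circ\mathbb{G}$, which the paper leaves implicit.
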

\begin{proof} For $B\in \mathsf{Mod}(\mathcal{U})$ we define
$\Psi(B):=\Big(\mathbb{D}_{\mathcal{U}}(B),\Psi_{B},\mathbb{D}_{\mathcal{T}}\mathbb{G}(B)\Big).$ For $\beta:B\longrightarrow B'$ we have that
$$\Psi(\beta):=(\mathbb{D}_{\mathcal{U}}(\beta), \mathbb{D}_{\mathcal{T}}\mathbb{G}(\beta)):
\Big(\mathbb{D}_{\mathcal{U}}(B'),\Psi_{B'}, \mathbb{D}_{\mathcal{T}}\mathbb{G}(B')\Big)\longrightarrow 
\Big(\mathbb{D}_{\mathcal{U}}(B),\Psi_{B}, \mathbb{D}_{\mathcal{T}}\mathbb{G}(B)\Big).$$
By proposition \ref{conmuotroc},  it follows that $\Psi(\beta)$ is a morphism in $\Big(\mathsf{Mod}(\mathcal{U}^{op}), \overline{\mathbb{G}}(\mathsf{Mod}(\mathcal{T}^{op}))\Big)$.\end{proof}

\begin{proposition}\label{morpduality}
Let $\mathcal{U}$ and $\mathcal{T}$ be Hom-finite $K$-categories. Then, there exists a contravariant functor $\widehat{\Theta}:\Big(\mathsf{Mod}(\mathcal{T}), \mathbb{G}(\mathsf{Mod}(\mathcal{U})\Big) \longrightarrow \Big(\mathsf{Mod}(\mathcal{U}^{op}), \overline{\mathbb{G}}(\mathsf{Mod}(\mathcal{T}^{op}))\Big)$ which
defined as follows:
\begin{enumerate}
\item [(a)] For $f:A\longrightarrow \mathbb{G}(B)$ a morphism of $\mathcal{T}$-modules. We define
$$\overline{f}=\widehat{\Theta}(A,f,B):=\Theta(A,f,B)\circ \Psi_{B}:\mathbb{D}_{\mathcal{U}}(B)\longrightarrow \overline{\mathbb{G}}(\mathbb{D}_{\mathcal{T}}A).$$
That is, $\overline{f}$ is the following composition
$$\xymatrix{\mathbb{D}_{\mathcal{U}}(B)\ar[r]^(.4){\Psi_{B}} &  \overline{\mathbb{G}}(\mathbb{D}_{\mathcal{T}}\mathbb{G}(B))\ar[rr]^{\overline{\mathbb{G}}(\mathbb{D}_{\mathcal{T}}(f))} &  &\overline{\mathbb{G}}(\mathbb{D}_{\mathcal{T}}A).}$$

\item [(b)] If $(\alpha,\beta):(A,f,B)\longrightarrow (A',f',B')$ then
$\widehat{\Theta}(\alpha,\beta)=\Big(\mathbb{D}_{\mathcal{U}}(\beta),\mathbb{D}_{\mathcal{T}}(\alpha)\Big)
.$
\end{enumerate}
\end{proposition}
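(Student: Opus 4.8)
The plan is to verify the three things that make $\widehat{\Theta}$ a well-defined contravariant functor: that $\widehat{\Theta}(A,f,B)$ is an object of the comma category $\big(\mathsf{Mod}(\mathcal{U}^{op}), \overline{\mathbb{G}}(\mathsf{Mod}(\mathcal{T}^{op}))\big)$, that $\widehat{\Theta}(\alpha,\beta)$ is a morphism there with the prescribed source and target, and that $\widehat{\Theta}$ sends identities to identities and reverses composition. All the ingredients are already at hand: $\Psi_{B}$ is a morphism of $\mathcal{U}^{op}$-modules by Proposition \ref{conmuotroc}, the functors $\mathbb{D}_{\mathcal{T}}$ and $\overline{\mathbb{G}}$ are available, and $\mathbb{D}_{\mathcal{U}},\mathbb{D}_{\mathcal{T}}$ are the contravariant dualities recalled above.

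For the object part: since $f\colon A\to\mathbb{G}(B)$, contravariance of $\mathbb{D}_{\mathcal{T}}$ gives $\mathbb{D}_{\mathcal{T}}(f)\colon \mathbb{D}_{\mathcal{T}}\mathbb{G}(B)\to\mathbb{D}_{\mathcal{T}}(A)$, and applying the covariant functor $\overline{\mathbb{G}}$ yields $\overline{\mathbb{G}}(\mathbb{D}_{\mathcal{T}}(f))\colon \overline{\mathbb{G}}(\mathbb{D}_{\mathcal{T}}\mathbb{G}(B))\to\overline{\mathbb{G}}(\mathbb{D}_{\mathcal{T}}A)$, a morphism of $\mathcal{U}^{op}$-modules. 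Composing with $\Psi_{B}$ gives $\overline{f}\colon\mathbb{D}_{\mathcal{U}}(B)\to\overline{\mathbb{G}}(\mathbb{D}_{\mathcal{T}}A)$ in $\mathsf{Mod}(\mathcal{U}^{op})$. Since $\mathbb{D}_{\mathcal{U}}(B)\in\mathsf{Mod}(\mathcal{U}^{op})$ and $\mathbb{D}_{\mathcal{T}}(A)\in\mathsf{Mod}(\mathcal{T}^{op})$, the triple $\big(\mathbb{D}_{\mathcal{U}}(B),\overline{f},\mathbb{D}_{\mathcal{T}}A\big)$ is a legitimate object of the target comma category.

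The heart of the argument is the morphism part. Given $(\alpha,\beta)\colon(A,f,B)\to(A',f',B')$, I must show that the square with horizontal arrows $\mathbb{D}_{\mathcal{U}}(\beta)$ and $\overline{\mathbb{G}}(\mathbb{D}_{\mathcal{T}}(\alpha))$ and vertical arrows $\overline{f'}$ and $\overline{f}$ commutes, i.e.\ $\overline{f}\circ\mathbb{D}_{\mathcal{U}}(\beta)=\overline{\mathbb{G}}(\mathbb{D}_{\mathcal{T}}(\alpha))\circ\overline{f'}$. Unravelling the left-hand side by the definition of $\overline{f}$ gives $\overline{\mathbb{G}}(\mathbb{D}_{\mathcal{T}}(f))\circ\Psi_{B}\circ\mathbb{D}_{\mathcal{U}}(\beta)$; the commutative square of Proposition \ref{conmuotroc} rewrites $\Psi_{B}\circ\mathbb{D}_{\mathcal{U}}(\beta)$ as $\overline{\mathbb{G}}(\mathbb{D}_{\mathcal{T}}\mathbb{G}(\beta))\circ\Psi_{B'}$, and functoriality of $\overline{\mathbb{G}}$ together with contravariance of $\mathbb{D}_{\mathcal{T}}$ collapse $\mathbb{D}_{\mathcal{T}}(f)\circ\mathbb{D}_{\mathcal{T}}\mathbb{G}(\beta)$ into $\mathbb{D}_{\mathcal{T}}(\mathbb{G}(\beta)\circ f)$. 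Symmetrically, the right-hand side reduces to $\overline{\mathbb{G}}(\mathbb{D}_{\mathcal{T}}(f'\circ\alpha))\circ\Psi_{B'}$. Finally the comma-category relation $\mathbb{G}(\beta)\circ f=f'\circ\alpha$ forces the two expressions to agree. The only real obstacle here is bookkeeping: keeping straight which of $\mathbb{D}_{\mathcal{T}}$, $\overline{\mathbb{G}}$, $\mathbb{G}$ is covariant or contravariant so that the composites are taken in the right order — once that is under control the diagram chase is automatic.

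For functoriality, $\widehat{\Theta}(1_{A},1_{B})=(\mathbb{D}_{\mathcal{U}}(1_{B}),\mathbb{D}_{\mathcal{T}}(1_{A}))=(1_{\mathbb{D}_{\mathcal{U}}(B)},1_{\mathbb{D}_{\mathcal{T}}(A)})$ because $\mathbb{D}_{\mathcal{U}}$ and $\mathbb{D}_{\mathcal{T}}$ are functors, and for composable morphisms $(\alpha,\beta)$ and $(\alpha',\beta')$ the identity $\widehat{\Theta}\big((\alpha',\beta')\circ(\alpha,\beta)\big)=\widehat{\Theta}(\alpha,\beta)\circ\widehat{\Theta}(\alpha',\beta')$ follows immediately, since composition in both comma categories is componentwise and each of $\mathbb{D}_{\mathcal{U}}$, $\mathbb{D}_{\mathcal{T}}$ reverses composition. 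Hence $\widehat{\Theta}$ is a contravariant functor as claimed.
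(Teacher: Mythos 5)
Your proof is correct and follows essentially the same route as the paper. The paper first packages $\overline{\mathbb{G}}\circ\mathbb{D}_{\mathcal{T}}$ into an auxiliary contravariant functor $\Theta$ on comma categories and then adjusts its object component by the morphism $\Psi_{B}$ of Proposition~\ref{funPsipreli}, leaving the final verification as ``straightforward''; you carry out that verification directly, and your diagram chase for the morphism condition --- rewriting $\overline{f}\circ\mathbb{D}_{\mathcal{U}}(\beta)$ via Proposition~\ref{conmuotroc}, functoriality of $\overline{\mathbb{G}}$, contravariance of $\mathbb{D}_{\mathcal{T}}$, and the relation $\mathbb{G}(\beta)\circ f = f'\circ\alpha$ --- is exactly what the paper's terse remark is hiding.
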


\begin{proof}
Consider $\overline{\mathbb{G}}\circ \mathbb{D}_{\mathcal{T}}:\mathsf{Mod}(\mathcal{T})\longrightarrow \mathsf{Mod}(\mathcal{U}^{op})$. This induces a contravariant functor
$\Theta:\Big(\mathsf{Mod}(\mathcal{T}), \mathbb{G}(\mathsf{Mod}(\mathcal{U})\Big) \longrightarrow \Big(\mathsf{Mod}(\mathcal{U}^{op}), \overline{\mathbb{G}}(\mathsf{Mod}(\mathcal{T}^{op}))\Big)$ as follows. Let $f:A\longrightarrow \mathbb{G}(B)$ a morphism of $\mathcal{T}$-modules. Then we have a morphism of $\mathcal{U}^{op}$-modules $\overline{\mathbb{G}}(\mathbb{D}_{\mathcal{T}}(f)):\overline{\mathbb{G}}(\mathbb{D}_{\mathcal{T}}\mathbb{G}(B))\longrightarrow \overline{\mathbb{G}}(\mathbb{D}_{\mathcal{T}}A).$
Thus  we define $$\Theta(A,f,B):=\Big(\overline{\mathbb{G}}(\mathbb{D}_{\mathcal{T}}\mathbb{G}(B)), \overline{\mathbb{G}}(\mathbb{D}_{\mathcal{T}}(f)),\mathbb{D}_{\mathcal{T}}A\Big).$$
If $(\alpha,\beta):(A,f,B)\longrightarrow (A',f',B')$ is a morphism in 
$\Big(\mathsf{Mod}(\mathcal{T}), \mathbb{G}(\mathsf{Mod}(\mathcal{U})\Big)$ we set
$$\Theta(\alpha,\beta):=\Big(\overline{\mathbb{G}}\mathbb{D}_{\mathcal{T}}\mathbb{G}(\beta),\mathbb{D}_{\mathcal{T}}(\alpha)\Big).$$ It is easy to show that $\Theta$ is a contravariant functor. Now, by using the functor $\Psi$ in \ref{funPsipreli}, 
It is straightforward to see that $\widehat{\Theta}$ is a contravariant functor.
\end{proof}

\begin{note}
Consider the morphism  $\overline{f}:\mathbb{D}_{\mathcal{U}}(B)\longrightarrow \overline{\mathbb{G}}(\mathbb{D}_{\mathcal{T}}A)$ given in \ref{morpduality}.  For $U\in \mathcal{U}^{op}$, we have the morphism $\Big[\overline{f}\Big]_{U}:\mathbb{D}_{\mathcal{U}}(B)(U)=\mathrm{Hom}_{K}(B(U),K)\longrightarrow \overline{\mathbb{G}}(\mathbb{D}_{\mathcal{T}}A)(U).$ Therefore,  for $s\in \mathrm{Hom}_{K}(B(U),K)$ and  $T\in \mathcal{T}^{op}$, we have the function
$$\Big[\Big[\overline{f}\Big]_{U}(s)\Big]_{T}:\overline{M}_{U}(T)\longrightarrow  \mathbb{D}_{\mathcal{T}}A(T),$$ given as follows:
for $m\in M(U,T)$ the function $\Big[\Big[\overline{f}\Big]_{U}(s)\Big]_{T}(m):A(T)\longrightarrow K$ is defined as $\Big(\Big[\Big[\overline{f}\Big]_{U}(s)\Big]_{T}(m)\Big)(x):=
s\Big([f_{T}(x)]_{U}(m)\Big)  \quad \forall x\in A(T).$
\end{note}
\begin{proof}
Indeed, by definition we have that $\Big[\overline{f}\Big]_{U}=[\overline{\mathbb{G}}\mathbb{D}_{\mathcal{T}}(f)]_{U}\circ [\Psi_{B}]_{U}.$ Then, for $s:B(U)\longrightarrow K$ we have that $
[\overline{\mathbb{G}}\mathbb{D}_{\mathcal{T}}(f)]_{U}\circ [\Psi_{B}]_{U}(s)=[\overline{\mathbb{G}}\mathbb{D}_{\mathcal{T}}(f)]_{U}(\mathbb{S}_{B,U})$.\\
We recall that for $\alpha\in \mathrm{Hom}_{\mathsf{Mod}(\mathcal{T}^{op})}\Big(M_{U}, \mathbb{D}_{\mathcal{T}}(\mathbb{G}(B))\Big)$ if follows that 
$$[\overline{\mathbb{G}}(\eta)]_{U}(\alpha)= \mathbb{D}_{\mathcal{T}}(f) \circ \alpha$$
where $\mathbb{D}_{\mathcal{T}}(f)$ is the following morphism of $\mathcal{T}^{op}$-modules $\mathbb{D}_{\mathcal{T}}(f):\mathbb{D}_{\mathcal{T}}\mathbb{G}(B)\longrightarrow \mathbb{D}_{\mathcal{T}}(A).$
Therefore, 
\begin{align*}
[\overline{\mathbb{G}}\mathbb{D}_{\mathcal{T}}(f)]_{U}\circ [\Psi_{B}]_{U}(s)=[\overline{\mathbb{G}}\mathbb{D}_{\mathcal{T}}(f)]_{U}(\mathbb{S}_{B,U})=\mathbb{D}_{\mathcal{T}}(f)\circ \mathbb{S}_{B,U}:\overline{M}_{U}\longrightarrow \mathbb{D}_{\mathcal{T}}(A)
\end{align*}
For $T\in \mathcal{T}^{op}$, the function $[\mathbb{D}_{\mathcal{T}}(f)\circ \mathbb{S}_{B,U}]_{T}:M(U,T)\longrightarrow \mathrm{Hom}_{K}(A(T),K)$, is given as follows: $[\mathbb{D}_{\mathcal{T}}(f)\circ \mathbb{S}_{B,U}]_{T}(m)=
[\mathbb{D}_{\mathcal{T}}(f)]_{T}\Big([\mathbb{S}_{B,U}]_{T}(m)\Big)$  $\forall m\in \overline{M}_{U}(T):=M(U,T)$, where
$[\mathbb{S}_{B,U}]_{T}(m):\mathrm{Hom}_{\mathsf{Mod} (\mathcal{U})}(M_{T},B)\longrightarrow K.$\\
Then $[\mathbb{D}_{\mathcal{T}}(f)\circ \mathbb{S}_{B,U}]_{T}(m)=
[\mathbb{D}_{\mathcal{T}}(f)]_{T}\Big([\mathbb{S}_{B,U}]_{T}(m)\Big)=\Big([\mathbb{S}_{B,U}]_{T}(m)\Big)\circ f_{T}$. Hence, for $x\in A(T)$ we obtain that $[\mathbb{D}_{\mathcal{T}}(f)\circ \mathbb{S}_{B,U}]_{T}(m)(x):=
\Big([\mathbb{S}_{B,U}]_{T}(m)\Big)( f_{T}(x))=s\Big([f_{T}(x)]_{U}(m)\Big)$.
\end{proof}
%%%%%%%%%%%%%%%%%%%%%%%%%%%%%%%%%%%%%%%%%%%%%%%%%%%%%%%%%%%%%%%%%%%%%%%%%%%%%%%%%%%%%%%%%%%%%%%%%%%%%%
Let $\mathcal{U}$ and $\mathcal{T}$ be Hom-finite $K$-categories.
Now, consider the isomorphism
$\mathbb{T}:\mathbf{\Lambda}^{op}\longrightarrow \overline{\mathbf{\Lambda}},$ given in \ref{isotrivial}. Then, there exists a functor $\mathbb{T}^{\ast}: \mathsf{Mod}(\overline{\mathbf{\Lambda}})\longrightarrow \mathsf{Mod}(\mathbf{\Lambda}^{op}).$\\
Also, we have an equivalence $\overline{\textswab{F}}:\Big( \mathsf{Mod}(\mathcal{U}^{op}), \overline{\mathbb{G}}(\mathsf{Mod}(\mathcal{T}^{op}))\Big) \longrightarrow 
\mathrm{Mod}(\overline{\mathbf{\Lambda}})$ constructed in \ref{equivalence1} (for the case of $\overline{\mathbf{\Lambda}}$).\\
We will see that the following diagram is commutative up to a natural isomorphism $\nu$
$$\xymatrix{
\Big(\mathsf{Mod}(\mathcal{T}),\mathbb{G}\mathsf{Mod}(\mathcal{U})\Big)\ar[rr]^{\textswab{F}}\ar[d]_{\widehat{\Theta}} & & \mathsf{Mod}(\mathbf{\Lambda})\ar[d]^{\mathbb{D}_{\mathbf{\Lambda}}}\\
\Big(\mathsf{Mod}(\mathcal{U}^{op}),\overline{\mathbb{G}}\mathsf{Mod}(\mathcal{T}^{op})\Big)\ar@{=>}[urr]_{\nu}\ar[rr]_{\mathbb{T}^{\ast}\circ \overline{\textswab{F}}} & &   \mathsf{Mod}(\mathbf{\Lambda}^{op}).}$$
So, first we are going to describe the two contravariant  functors.\\
$$\textbf{FIRST FUNCTOR:}$$
The first functor we are going to describe is the following:
$$\mathbb{T}^{\ast}\circ \overline{\textswab{F}}\circ \widehat{\Theta}:\Big(\mathsf{Mod}(\mathcal{T}), \mathbb{G}(\mathsf{Mod}(\mathcal{U})\Big)\longrightarrow \mathsf{Mod}(\mathbf{\Lambda}^{op}).$$
$\textbf{Objects}$. Consider $(A,f,B)$ an object in $\Big(\mathsf{Mod}(\mathcal{T}), \mathbb{G}(\mathsf{Mod}(\mathcal{U})\Big)$.
Let $$\overline{f}:=\widehat{\Theta}\Big((A,f,B)\Big):\mathbb{D}_{\mathcal{U}}(B)\longrightarrow \overline{\mathbb{G}}(\mathbb{D}_{\mathcal{T}}A).$$
Then $\overline{\textswab{F}}(\overline{f}):=\mathbb{D}_{\mathcal{U}}(B)\amalg_{\overline{f}} \mathbb{D}_{\mathcal{T}}A:\overline{\mathbf{\Lambda}}\longrightarrow \mathbf{Ab}$ is defined as follows:\\
$(a)$ For
$\left[\begin{smallmatrix}
U& 0 \\
\overline{M} & T \\
\end{smallmatrix} \right]\in\overline{\mathbf{\Lambda}}$ we set  $\Big(\mathbb{D}_{\mathcal{U}}(B)\amalg_{\overline{f}} \mathbb{D}_{\mathcal{T}}A\Big)(\left[\begin{smallmatrix}
U& 0 \\
\overline{M} & T \\
\end{smallmatrix} \right]) :=\mathbb{D}_{\mathcal{U}}(B)(U)\amalg  \mathbb{D}_{\mathcal{T}}(A)(T).$\\
$(b)$ If $\left[\begin{smallmatrix}
u^{op} & 0 \\
m& t^{op} \\
\end{smallmatrix} \right]\in \mathsf{Hom}_{\overline{\mathbf{\Lambda}}}(\left[\begin{smallmatrix}
U'& 0 \\
\overline{M} & T' \\
\end{smallmatrix} \right], \left[\begin{smallmatrix}
U & 0 \\
\overline{M} & T \\
\end{smallmatrix} \right])=\left[ \begin{smallmatrix}
\mathsf{Hom}_{\mathcal{U}^{op}}(U',U) & 0 \\
\overline{M}(T,U') & \mathsf{Hom}_{\mathcal{T}^{op}}(T',T)
\end{smallmatrix} \right]$ we define the map
$$\Big(\mathbb{D}_{\mathcal{U}}(B)\amalg_{\overline{f}}  \mathbb{D}_{\mathcal{T}}A\Big)(\left[\begin{smallmatrix}
u^{op}& 0 \\
m& t^{op} \\
\end{smallmatrix} \right]):=\left [\begin{smallmatrix}
\mathbb{D}_{\mathcal{U}}(B)(u^{op})& 0 \\
m & \mathbb{D}_{\mathcal{T}}(A)(t^{op}) \\
\end{smallmatrix} \right],$$  given by $\left [\begin{smallmatrix}
\mathbb{D}_{\mathcal{U}}(B)(u^{op})& 0 \\
m& \mathbb{D}_{\mathcal{T}}(A)(t^{op}) \\
\end{smallmatrix} \right]\left [\begin{smallmatrix}
s\\
\\
w \\
\end{smallmatrix} \right]=\left [\begin{smallmatrix}
\mathbb{D}_{\mathcal{U}}(B)(u^{op})(s) \\
m\cdot s+ \mathbb{D}_{\mathcal{T}}(A)(t^{op})(w) \\
\end{smallmatrix} \right]$
for $(s,w)\in \mathbb{D}_{\mathcal{U}}(B)(U')\amalg \mathbb{D}_{\mathcal{T}}(A)(T')$, with $m\cdot s:=\Big[\Big[\overline{f}\Big]_{U'}(s)\Big]_{T}(m)\in \mathbb{D}_{\mathcal{T}}(A)(T)$ (see  \ref{Tlemma1}).\\
For $u^{op}:U'\longrightarrow U$ and  $y\in B(U)$ we get that
$\Big(\mathbb{D}_{\mathcal{U}}B(u^{op})(s)\Big)(y):=s(B(u)(y)).$\\
For  $t^{op}:T'\longrightarrow T$ and $x\in A(T)$ we get that
\begin{align*}
\Big(m\cdot s+ \mathbb{D}_{\mathcal{T}}(A)(t^{op})(w)\Big)(x)= &
\Big(\Big[\Big[\overline{f}\Big]_{U'}(s)\Big]_{T}(m)\Big)(x)+w(A(t)(x))\\
& =s\Big([f_{T}(x)]_{U'}(m)\Big)+ w(A(t)(x))\\
& = s(m\cdot x)+w(A(t)(x))
\end{align*}
Therefore,
\begin{align*}
\Big(\mathbb{T}^{\ast}\circ \overline{\textswab{F}}\circ \widehat{\Theta}\Big)(A,f,B)(\left[\begin{smallmatrix}
T & 0 \\
M & U \\
\end{smallmatrix} \right])=\mathbb{T}^{\ast}\Big(\overline{\textswab{F}}(\overline{f})\Big)(\left[\begin{smallmatrix}
T & 0 \\
M & U \\
\end{smallmatrix} \right])=
\Big(\overline{\textswab{F}}(\overline{f})\Big)(\left[\begin{smallmatrix}
U& 0 \\
\overline{M} & T \\
\end{smallmatrix} \right]) \quad \forall \left[\begin{smallmatrix}
T & 0 \\
M & U \\
\end{smallmatrix} \right]\in \mathbf{\Lambda}^{op}.
\end{align*}
and
\begin{align*}
\Big(\mathbb{T}^{\ast}\circ \overline{\textswab{F}}\circ \widehat{\Theta}\Big)(A,f,B)(\left[\begin{smallmatrix}
t & 0 \\
m & u\\
\end{smallmatrix} \right]^{op})=\Big(\overline{\textswab{F}}(\overline{f})\Big)(\left[\begin{smallmatrix}
u^{op} & 0 \\
m & t^{op} \\
\end{smallmatrix} \right]).
\end{align*}
if $\left[\begin{smallmatrix}
t& 0 \\
m & u \\
\end{smallmatrix} \right]^{op}:\left[\begin{smallmatrix}
T'& 0 \\
M & U' \\
\end{smallmatrix} \right]\longrightarrow \left[\begin{smallmatrix}
T & 0 \\
M & U \\
\end{smallmatrix} \right]$ is a morphism in $\mathbf{\Lambda}^{op}$.

$\textbf{Morphisms}$. Let us consider $(\alpha,\beta):(A,f,B)\longrightarrow (A',f',B')$ a morphism in $\Big(\mathsf{Mod}(\mathcal{T}), \mathbb{G}(\mathsf{Mod}(\mathcal{U})\Big).$
Then $
\Big(\mathbb{T}^{\ast}\circ \overline{\textswab{F}}\circ \widehat{\Theta}\Big)(\alpha,\beta)=\mathbb{T}^{\ast}\Big(\overline{\textswab{F}}\Big(\mathbb{D}_{\mathcal{U}}(\beta),\mathbb{D}_{\mathcal{T}}(\alpha)\Big)\Big).$
Hence,
$$\Big(\mathbb{T}^{\ast}\circ \overline{\textswab{F}}\circ \widehat{\Theta}\Big)(\alpha,\beta)
: \Big(\mathbb{T}\circ \overline{\textswab{F}}\circ \widehat{\Theta}\Big)(A',f',B')\longrightarrow \Big(\mathbb{T}\circ \overline{\textswab{F}}\circ \widehat{\Theta}\Big)(A,f,B)$$
is the natural transformation, where for  $\left[
\begin{smallmatrix}
T & 0 \\
M & U\\
\end{smallmatrix}
\right]\in \mathbf{\Lambda}^{op}$ 

$$\Big[\Big(\mathbb{T}^{\ast}\circ \overline{\textswab{F}}\circ \widehat{\Theta}\Big)(\alpha,\beta)\Big]_{{}_{\left[
\begin{smallmatrix}
T & 0 \\
M & U\\
\end{smallmatrix}
\right] }}:=\Big[\mathbb{D}_{\mathcal{U}}(\beta)\!\amalg\!\mathbb{D}_{\mathcal{T}}(\alpha)\Big]_{\mathbb{T}\Big(\left[
\begin{smallmatrix}
T & 0 \\
M & U\\
\end{smallmatrix}
\right]\Big)}=\Big[\mathbb{D}_{\mathcal{U}}(\beta)\!\amalg\!\mathbb{D}_{\mathcal{T}}(\alpha)\Big]_{{}_{\left[
\begin{smallmatrix}
U & 0 \\
\overline{M} & T\\
\end{smallmatrix}
\right] }}$$
with  

$$\Big[\Big(\mathbb{T}^{\ast}\circ \overline{\textswab{F}}\circ \widehat{\Theta}\Big)(\alpha,\beta)\Big]_{{}_{\left[
\begin{smallmatrix}
T & 0 \\
M & U\\
\end{smallmatrix}
\right] }}
\!\!:\!\! \Big(\mathbb{D}_{\mathcal{U}}(B')\amalg_{\overline{f'}} \mathbb{D}_{\mathcal{T}}(A')\Big) (\left [\begin{smallmatrix}
U& 0 \\
\overline{M}& T\\
\end{smallmatrix} \right])\rightarrow \Big(\mathbb{D}_{\mathcal{U}}(B)\amalg_{\overline{f}}\mathbb{D}_{\mathcal{T}}(A)\Big)(\left [\begin{smallmatrix}
U& 0 \\
\overline{M} & T\\
\end{smallmatrix} \right]).$$
For simplicity, for  $\left[\begin{smallmatrix}
T & 0 \\
 M & U\\
\end{smallmatrix} \right ]\in \mathbf{\Lambda}^{op}$ we set 
$\Big[\zeta \Big]_{{}_{\left[
\begin{smallmatrix}
T & 0 \\
M & U\\
\end{smallmatrix}
\right] }}:=\Big[\Big(\mathbb{T}^{\ast}\circ \overline{\textswab{F}}\circ \widehat{\Theta}\Big)(\alpha,\beta)\Big]_{{}_{\left[
\begin{smallmatrix}
T & 0 \\
M & U\\
\end{smallmatrix}
\right] }}$
where $\Big[\zeta \Big]_{{}_{\left[
\begin{smallmatrix}
T & 0 \\
M & U\\
\end{smallmatrix}
\right] }}(s,w):=(s\circ \beta_{U}, w\circ \alpha_{T})$ $\,\,\,$ $\forall$ $(s,w)\in \mathrm{Hom}_{K}(B'(U),K)\amalg \mathrm{Hom}_{K}(A'(T),K)$.\\

$$\textbf{SECOND FUNCTOR:}$$\\
Now, we are going to describe the contravariant functor
$$\mathbb{D}_{\mathbf{\Lambda}}\circ \textswab{F}:\Big(\mathsf{Mod}(\mathcal{T}), \mathbb{G}(\mathsf{Mod}(\mathcal{U})\Big)\longrightarrow \mathsf{Mod}(\mathbf{\Lambda}^{op}).$$
$\textbf{Objects}$. Consider $(A,f,B)$ an object in $\Big(\mathsf{Mod}(\mathcal{T}), \mathbb{G}(\mathsf{Mod}(\mathcal{U})\Big)$. Then we have that
$$\mathbb{D}_{\mathbf{\Lambda}}(A\amalg_{f}B):\mathbf{\Lambda}^{op} \longrightarrow \mathbf{Ab}$$ is defined as follows:\\
$(a)$ For $\left[\begin{smallmatrix}
T& 0 \\
M & U \\
\end{smallmatrix} \right]\in \mathbf{\Lambda}^{op}$, we get
$\mathbb{D}_{\mathbf{\Lambda}}(A\amalg_{f}B)\Big(\left[\begin{smallmatrix}
T& 0 \\
M & U \\
\end{smallmatrix} \right]\Big):=\mathrm{Hom}_{K}(A(T)\amalg B(U),K).$\\
$(b)$ If $\left[\begin{smallmatrix}
t& 0 \\
m & u \\
\end{smallmatrix} \right]:\left[\begin{smallmatrix}
T& 0 \\
M & U \\
\end{smallmatrix} \right]\longrightarrow \left[\begin{smallmatrix}
T'& 0 \\
M & U' \\
\end{smallmatrix} \right]$
is a morphism in $\mathbf{\Lambda}$, we have the morphism
$$(A\amalg_{f}B)\Big(\left[\begin{smallmatrix}
 t & 0 \\
m & u \\
\end{smallmatrix} \right]\Big)=\left[\begin{smallmatrix}
 A(t )& 0 \\
m & B(u) \\
\end{smallmatrix} \right]:A(T)\amalg B(U)\longrightarrow  A(T')\amalg B(U').$$ 
Therefore, by definition $\mathbb{D}_{\mathbf{\Lambda}}(A\amalg_{f} B)\Big(\left[\begin{smallmatrix}
 t & 0 \\
m & u \\
\end{smallmatrix} \right]^{op}\Big):=\mathrm{Hom}_{K}\Big(\left[\begin{smallmatrix}
 A(t )& 0 \\
m & B(u) \\
\end{smallmatrix} \right],K\Big),$ where
$$\mathrm{Hom}_{K}\Big(\left[\begin{smallmatrix}
 A(t )& 0 \\
m & B(u) \\
\end{smallmatrix} \right],K\Big):\mathrm{Hom}_{K}(A(T')\amalg B(U'),K)\longrightarrow \mathrm{Hom}_{K}(A(T)\amalg B(U),K)$$ 
is defined as follows:
$\Big(\mathrm{Hom}_{K}\Big(\left[\begin{smallmatrix}
 A(t )& 0 \\
m & B(u) \\
\end{smallmatrix} \right],K\Big)\Big)((w,s))=(w,s)\circ \left[\begin{smallmatrix}
 A(t )& 0 \\
m & B(u) \\
\end{smallmatrix} \right]$ $\forall $ $(w, s):A(T')\amalg B(U')\longrightarrow K$. Then for $(x,y)\in A(T)\amalg B(U)$, we have that

\begin{align*}
\Big((w,s)\circ \left[\begin{smallmatrix}
 A(t )& 0 \\
m & B(u) \\
\end{smallmatrix} \right]\Big)\Big(\left[\begin{smallmatrix}
 x \\
 \\
y \\
\end{smallmatrix} \right]\Big) & =(w,s)\Big(\left[\begin{smallmatrix}
A(t)(x) \\
 \\
m\cdot x+ B(u)(y )\\
\end{smallmatrix} \right]\Big)\\
& =w(A(t)(x))+s(m\cdot x+B(u)(y))\\
& =w(A(t)(x))+s(m\cdot x)+s(B(u)(y)).
\end{align*}
$\textbf{Morphisms}$. Now, consider $(\alpha,\beta):(A,f,B)\longrightarrow (A',f',B')$ 
in $\Big(\mathsf{Mod}(\mathcal{T}), \mathbb{G}(\mathsf{Mod}(\mathcal{U})\Big).$
Then, we have that $\textswab{F}(\alpha,\beta)=\alpha\amalg \beta$ is the following natural transformation
$$\alpha\amalg \beta\! =\!\left\{\!\!
(\alpha\!\amalg\!\beta)_{{}_{\left[
\begin{smallmatrix}
T& 0 \\
M& U\\
\end{smallmatrix}
\right] }}\!\!:=\!\alpha_{T}\!\amalg \!\beta_{U}\!\!:\!\! (A\amalg_{f} B) (\left [\begin{smallmatrix}
T& 0 \\
M& U\\
\end{smallmatrix} \right])\rightarrow (A'\amalg_{f'}\!\!B')(\left [\begin{smallmatrix}
T& 0 \\
M& U\\
\end{smallmatrix} \right])\!\!\right\}_{\!\!\left[\begin{smallmatrix}
T& 0 \\
M& U\\
\end{smallmatrix} \right ]\in \mathbf{\Lambda}}$$
Then $\mathbb{D}_{\mathbf{\Lambda}}(\alpha\amalg \beta):\mathbb{D}_\mathbf{\Lambda}(A'\amalg_{f'} B')\longrightarrow \mathbb{D}_\mathbf{\Lambda}(A \amalg_{f} B)$ is the natural transformation 
such that for  $\left[\begin{smallmatrix}
T& 0 \\
M& U\\
\end{smallmatrix} \right ]\in \mathbf{\Lambda}^{op}$ the correspoding morphism is
$$\Big[\mathbb{D}_{\mathbf{\Lambda}}(\alpha\amalg \beta)\Big]
_{{}_{\left[
\begin{smallmatrix}
T & 0 \\
M & U\\
\end{smallmatrix}
\right] }}:\mathrm{Hom}_{K}(A'(T)\amalg B'(U),K)\longrightarrow \mathrm{Hom}_{K}(A(T)\amalg B(U),K).$$
For simplicity, we write $\Big[\delta\Big]_{{}_{\left[
\begin{smallmatrix}
T & 0 \\
M & U\\
\end{smallmatrix}
\right] }}:=\Big[\Big(\mathbb{D}_{\mathbf{\Lambda}}\circ \textswab{F}\Big)(\alpha,\beta)\Big]_{{}_{\left[
\begin{smallmatrix}
T & 0 \\
M & U\\
\end{smallmatrix}
\right] }}=\Big[\mathbb{D}_{\mathbf{\Lambda}}(\alpha\amalg \beta)\Big]
_{{}_{\left[
\begin{smallmatrix}
T & 0 \\
M & U\\
\end{smallmatrix}
\right] }}.$
We note that, for $h\in \mathrm{Hom}_{K}(A'(T)\amalg B'(U),K)$, we have that 
$$\Big[\delta\Big]_{{}_{\left[
\begin{smallmatrix}
T & 0 \\
M & U\\
\end{smallmatrix}
\right] }}(h):=h \circ (\alpha_{T}\amalg \beta_{U}).$$

\begin{note}
For $\left[\begin{smallmatrix}
T& 0 \\
M& U\\
\end{smallmatrix} \right]\in \mathbf{\Lambda}^{op}$  we have an isomorphism in $\mathsf{Mod}(K)$
$$\Big[\nu_{(A,f,B)}\Big]_{\left[\begin{smallmatrix}
T& 0 \\
M& U\\
\end{smallmatrix} \right]}:(\mathbb{D}_{\mathcal{U}}B)(U)\amalg (\mathbb{D}_{\mathcal{T}}A)(T)\longrightarrow \mathrm{Hom}_{K}(A(T)\amalg B(U),K)$$
given as follows: let $(s',w')\in (\mathbb{D}_{\mathcal{U}}B)(U)\amalg (\mathbb{D}_{\mathcal{T}}A)(T)$ be, then $\Big[\nu_{(A,f,B)}\Big]\!_{\left[\begin{smallmatrix}
T& 0 \\
M& U\\
\end{smallmatrix} \right]}\!\Big(\!s',w'\!\Big)\!=(w',s'):A(T)\amalg B(U)\longrightarrow K$ is defined by $\Big((w',s')\Big)(x,y):=w'(x)+s'(y),$ $\forall $ $(x,y)\in A(T)\amalg B(U)$.
\end{note}
%%%%%%%%%%%%%%%%%%%%%%%%%%%%%%%%%
%%%%%%%%%%%%%%%%%%%%%%%%%%%%%%
The following proposition give us a relation between the two functors described above. It tell us that they are the same up to a natural equivalence.

\begin{proposition}\label{cuadrocasicon}
Let $\mathcal{U}$ and $\mathcal{T}$ be Hom-finite $K$-categories.
Consider the contravariant functors 
$$\mathbb{T}^{\ast}\circ \overline{\textswab{F}}\circ \widehat{\Theta},\,\,\, \mathbb{D}_{\mathbf{\Lambda}}\circ \textswab{F}:\Big(\mathsf{Mod}(\mathcal{T}), \mathbb{G}(\mathsf{Mod}(\mathcal{U})\Big)\longrightarrow \mathsf{Mod}(\mathbf{\Lambda}^{op}).$$
Then, there exists an isomorphism $\nu:\mathbb{T}^{\ast}\circ \overline{\textswab{F}}\circ \widehat{\Theta}\longrightarrow \mathbb{D}_{\mathbf{\Lambda}}\circ \textswab{F}.$ That is, the following diagram is commutative up to  the isomorphism $\nu$
$$\xymatrix{
\Big(\mathsf{Mod}(\mathcal{T}),\mathbb{G}\mathsf{Mod}(\mathcal{U})\Big)\ar[rr]^{\textswab{F}}\ar[d]_{\widehat{\Theta}} & & \mathsf{Mod}(\mathbf{\Lambda})\ar[d]^{\mathbb{D}_{\mathbf{\Lambda}}}\\
\Big(\mathsf{Mod}(\mathcal{U}^{op}),\overline{\mathbb{G}}\mathsf{Mod}(\mathcal{T}^{op})\Big)\ar@{=>}[urr]_{\nu}\ar[rr]_{\mathbb{T}^{\ast}\circ \overline{\textswab{F}}} & &   \mathsf{Mod}(\mathbf{\Lambda}^{op}).}$$
\end{proposition}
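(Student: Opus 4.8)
The plan is to construct the natural isomorphism $\nu$ objectwise using the family of maps $\nu_{(A,f,B)}$ already exhibited in the Note preceding the statement, and then check the two required compatibilities: that $\nu_{(A,f,B)}$ is a morphism in $\mathsf{Mod}(\mathbf{\Lambda}^{op})$ for each fixed $(A,f,B)$, and that the $\nu_{(A,f,B)}$ are natural in $(A,f,B)$. Fix $(A,f,B)\in\big(\mathsf{Mod}(\mathcal{T}),\mathbb{G}\mathsf{Mod}(\mathcal{U})\big)$. On objects $\left[\begin{smallmatrix}T&0\\M&U\end{smallmatrix}\right]\in\mathbf{\Lambda}^{op}$ both functors give, after unwinding the descriptions in the FIRST FUNCTOR and SECOND FUNCTOR paragraphs, respectively $(\mathbb{D}_{\mathcal{U}}B)(U)\amalg(\mathbb{D}_{\mathcal{T}}A)(T)$ and $\mathrm{Hom}_K(A(T)\amalg B(U),K)$; the flip-and-pair isomorphism $\big[\nu_{(A,f,B)}\big]_{\left[\begin{smallmatrix}T&0\\M&U\end{smallmatrix}\right]}(s',w')=(w',s')$, where $(w',s')(x,y)=w'(x)+s'(y)$, is an isomorphism of $K$-modules because $\mathcal{U},\mathcal{T}$ are Hom-finite, so this uses the standard fact that $\mathrm{Hom}_K(V\amalg W,K)\cong\mathrm{Hom}_K(V,K)\amalg\mathrm{Hom}_K(W,K)$ for finitely generated $K$-modules.

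The first thing I would verify is that $\nu_{(A,f,B)}:=\{[\nu_{(A,f,B)}]_{[\ast]}\}$ is natural in the $\mathbf{\Lambda}^{op}$-variable, i.e.\ that for every morphism $\left[\begin{smallmatrix}t&0\\m&u\end{smallmatrix}\right]^{op}$ of $\mathbf{\Lambda}^{op}$ the appropriate square commutes. This is where the computation really lives: on the FIRST FUNCTOR side the transition map sends $(s,w)\mapsto\big(\mathbb{D}_{\mathcal{U}}(B)(u^{op})(s),\ m\cdot s+\mathbb{D}_{\mathcal{T}}(A)(t^{op})(w)\big)$ with $m\cdot s$ given by the formula from the Note after \ref{morpduality}, namely $\big(m\cdot s\big)(x)=s\big([f_T(x)]_U(m)\big)=s(m\cdot x)$; on the SECOND FUNCTOR side the transition map is precomposition with $\left[\begin{smallmatrix}A(t)&0\\m&B(u)\end{smallmatrix}\right]$, which by the displayed computation sends $(w,s)\mapsto\big((x,y)\mapsto w(A(t)(x))+s(m\cdot x)+s(B(u)(y))\big)$. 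Chasing $(s,w)$ through flip-then-transition and through transition-then-flip, one matches the two outputs termwise: the $A(t)$-term, the $B(u)$-term, and crucially the mixed term $s(m\cdot x)$, which is precisely the content of $m\cdot s$ under $\nu$. So this boils down to the identity $\big(m\cdot s\big)(x)=s(m\cdot x)$ together with the obvious functoriality of $\mathbb{D}_{\mathcal{U}}$ and $\mathbb{D}_{\mathcal{T}}$.

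Next I would check naturality of $\nu$ in $(A,f,B)$: given $(\alpha,\beta):(A,f,B)\to(A',f',B')$, one needs $\big(\mathbb{D}_{\mathbf{\Lambda}}\circ\textswab{F}\big)(\alpha,\beta)\circ\nu_{(A',f',B')}=\nu_{(A,f,B)}\circ\big(\mathbb{T}^{\ast}\circ\overline{\textswab{F}}\circ\widehat{\Theta}\big)(\alpha,\beta)$. Using the explicit descriptions of the two induced morphisms (the map $\zeta$ sending $(s,w)\mapsto(s\circ\beta_U,\,w\circ\alpha_T)$ on the first side and the map $\delta$ sending $h\mapsto h\circ(\alpha_T\amalg\beta_U)$ on the second side), both composites send $(s,w)$ to the functional $(x,y)\mapsto w(\alpha_T(x))+s(\beta_U(y))$, so they agree. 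Finally, since each $\nu_{(A,f,B)}$ is a componentwise isomorphism, $\nu$ is a natural isomorphism, which gives the claimed commutativity up to $\nu$. The main obstacle is purely bookkeeping: keeping the numerous layers of identifications ($\textswab{F}$, $\widehat{\Theta}=\overline{\mathbb{G}}\mathbb{D}_{\mathcal{T}}(f)\circ\Psi_B$, $\overline{\textswab{F}}$, $\mathbb{T}^{\ast}$) straight and confirming that the mixed term produced by $\widehat\Theta$ through $\Psi_B$ and $\mathbb{S}_{B,U}$ is exactly $s\mapsto s(m\cdot x)$; once the formula $\big[[\overline f]_{U}(s)\big]_T(m)(x)=s([f_T(x)]_U(m))$ from the Note after \ref{morpduality} is in hand, everything collapses to elementary termwise comparisons, so no genuinely hard step remains.
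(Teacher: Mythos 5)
Your proposal is correct and follows essentially the same route as the paper's proof: construct $\nu$ componentwise from the flip-and-pair map, verify it is a morphism in $\mathsf{Mod}(\mathbf{\Lambda}^{op})$ by reducing the naturality square along $\left[\begin{smallmatrix}t&0\\m&u\end{smallmatrix}\right]^{op}$ to the identity $(m\cdot s)(x)=s(m\cdot x)$, and then verify naturality in $(A,f,B)$ by showing both composites of $\zeta$, $\delta$, and $\nu$ send $(s,w)$ to $(x,y)\mapsto w(\alpha_T(x))+s(\beta_U(y))$. (One minor note: the identification $\mathrm{Hom}_K(V\amalg W,K)\cong\mathrm{Hom}_K(V,K)\amalg\mathrm{Hom}_K(W,K)$ holds for arbitrary $K$-modules, so Hom-finiteness is not needed for that particular step, only for the definition of the dualities $\mathbb{D}_{\mathcal{U}}$, $\mathbb{D}_{\mathcal{T}}$.)
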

\begin{proof}
Let $(A,f,B)$ be an object in $\Big(\mathsf{Mod}(\mathcal{T}), \mathbb{G}(\mathsf{Mod}(\mathcal{U})\Big)$. First, let us see  that $\nu_{(A,f,B)}:\Big(\mathbb{T}^{\ast}\circ \overline{\textswab{F}}\circ \widehat{\Theta}\Big)(A,f,B)\longrightarrow \Big(\mathbb{D}_{\mathbf{\Lambda}}\circ \textswab{F}\Big)(A,f,B),$ is a morphism in
$\mathsf{Mod}(\mathbf{\Lambda}^{op}).$
Indeed, for $\left[\begin{smallmatrix}
T& 0 \\
M& U\\
\end{smallmatrix} \right ]\in \mathbf{\Lambda}^{op}$ we have that 

\begin{align*}
\Big(\mathbb{T}^{\ast}\circ \overline{\textswab{F}}\circ \widehat{\Theta}\Big)(A,f,B) \Big(
\left[\begin{smallmatrix}
T & 0 \\
M & U \\
\end{smallmatrix} \right ]\Big) & =
\mathbb{T}^{\ast}\Big(\mathbb{D}_{\mathcal{U}}(B)\amalg_{\overline{f}}\mathbb{D}_{\mathcal{T}}(A)\Big)\Big(\left[\begin{smallmatrix}
T& 0 \\
M& U\\
\end{smallmatrix} \right ]\Big) \\
& =\Big(\mathbb{D}_{\mathcal{U}}(B)\amalg_{\overline{f}}\mathbb{D}_{\mathcal{T}}(A)\Big)\Big(\left[\begin{smallmatrix}
U& 0 \\
\overline{M} & T \\
\end{smallmatrix} \right ]\Big)\\
& = \mathrm{Hom}_{K}(B(U),K)\amalg \mathrm{Hom}_{K}(A(T) ,K).
\end{align*}
and
\begin{align*}
\Big(\Big(\mathbb{D}_{\mathbf{\Lambda}}\circ \textswab{F}\Big)(A,f,B)\Big)(\left[\begin{smallmatrix}
T & 0 \\
M & U \\
\end{smallmatrix} \right ]) & =\Big(\mathbb{D}_{\mathbf{\Lambda}}(A\amalg_{f}B)\Big)\Big(
\left[\begin{smallmatrix}
T & 0 \\
M & U \\
\end{smallmatrix} \right ]\Big)\\
& =\mathrm{Hom}_{K}(A(T)\amalg B(U),K).
\end{align*}
Let $\left[\begin{smallmatrix}
t& 0 \\
m & u \\
\end{smallmatrix} \right ]^{op}:\left[\begin{smallmatrix}
T'& 0 \\
M & U' \\
\end{smallmatrix} \right ]\longrightarrow \left[\begin{smallmatrix}
T& 0 \\
M & U \\
\end{smallmatrix} \right ]$ be a morphism in $\mathbf{\Lambda}^{op}$ with $m\in M(U',T)$.\\
We have to show that the following diagram commutes in $\mathbf{Ab}$
$$\xymatrix{\mathrm{Hom}_{K}(B(U'),K)\amalg \mathrm{Hom}_{K}(A(T') ,K)\ar[rrr]^{\Big[\nu_{(A,f,B)}\Big]_{\left[\begin{smallmatrix}
T'& 0 \\
M& U'\\
\end{smallmatrix} \right]}}\ar[d]^{\Big(\mathbb{D}_{\mathcal{U}}(B)\amalg_{\overline{f}}\mathbb{D}_{\mathcal{T}}(A)\Big)\Big({{\left[\begin{smallmatrix}
u^{op}& 0 \\
m & t^{op}\\
\end{smallmatrix} \right]}} \Big)} && &  \mathrm{Hom}_{K}(A(T')\amalg B(U'),K)\ar[d]_{\mathbb{D}_{\mathbf{\Lambda}}(A\amalg_{f}B)\Big({{\left[\begin{smallmatrix}
t& 0 \\
m& u\\
\end{smallmatrix} \right]^{op}}} \Big)} \\
\mathrm{Hom}_{K}(B(U),K)\amalg \mathrm{Hom}_{K}(A(T) ,K)\ar[rrr]_{\Big[\nu_{(A,f,B)}\Big]_{\left[\begin{smallmatrix}
T& 0 \\
M& U\\
\end{smallmatrix} \right]}} & & &  \mathrm{Hom}_{K}(A(T)\amalg B(U),K)}$$
Indeed, let $(s,w)\in \mathrm{Hom}_{K}(B(U'),K)\amalg \mathrm{Hom}_{K}(A(T') ,K)$. Then 
$$\Big(\mathbb{D}_{\mathcal{U}}(B)\amalg_{\overline{f}}\mathbb{D}_{\mathcal{T}}(A)\Big)\Big({{\left[\begin{smallmatrix}
u^{op}& 0 \\
m & t^{op}\\
\end{smallmatrix} \right]}} \Big)\left[\begin{smallmatrix}
s\\
\\
w\\
\end{smallmatrix} \right]=\left [\begin{smallmatrix}
\mathbb{D}_{\mathcal{U}}(B)(u^{op})(s) \\
m\cdot s+ \mathbb{D}_{\mathcal{T}}(A)(t^{op})(w) \\
\end{smallmatrix} \right].$$
Then for $(x,y)\in A(T)\amalg B(U)$ we have that
\begin{align*}
& \Big(\Big[\nu_{(A,f,B)}\Big]_{\left[\begin{smallmatrix}
T& 0 \\
M& U\\
\end{smallmatrix} \right]}\Big(\left [\begin{smallmatrix}
\mathbb{D}_{\mathcal{U}}(B)(u^{op})(s) \\
m\cdot s+ \mathbb{D}_{\mathcal{T}}(A)(t^{op})(w) \\
\end{smallmatrix} \right]\Big)\Big)(x,y)\\
& = \Big(m\cdot s+ \mathbb{D}_{\mathcal{T}}(A)(t^{op})(w)\Big)(x)+\Big(\mathbb{D}_{\mathcal{U}}(B)(u^{op})(s)\Big)(y)
\end{align*}
On the other hand, $\mathbb{D}_{\mathbf{\Lambda}}(A\amalg_{f} B)\Big(\left[\begin{smallmatrix}
 t & 0 \\
m & u \\
\end{smallmatrix} \right]^{op}\Big):=\mathrm{Hom}_{K}\Big(\left[\begin{smallmatrix}
 A(t )& 0 \\
m & B(u) \\
\end{smallmatrix} \right],K\Big).$
Hence, for $(s,w)\in \mathrm{Hom}_{K}(B(U'),K)\amalg \mathrm{Hom}_{K}(A(T') ,K)$ we get that 
\begin{align*}
\Big(\!\mathbb{D}_{\mathbf{\Lambda}}(A\amalg_{f}B)\!\big({{\left[\begin{smallmatrix}
t& 0 \\
m& u\\
\end{smallmatrix} \right]^{op}}} \big)\!\!\circ\!\!
\Big[\nu_{(A,f,B)}\Big]_{\left[\begin{smallmatrix}
T'& 0 \\
M& U'\\
\end{smallmatrix} \right]}\Big)(s,w) & =\Big(\mathbb{D}_{\mathbf{\Lambda}}(A\amalg_{f}B)\Big)\Big({{\left[\begin{smallmatrix}
t& 0 \\
m& u\\
\end{smallmatrix} \right]^{op}}} \Big)(w,s)\\
& \! = \mathrm{Hom}_{K}\Big(\left[\begin{smallmatrix}
 A(t )& 0 \\
m & B(u) \\
\end{smallmatrix} \right],K\Big)\Big((w,s)\Big)\\
& =(w,s)\circ \left[\begin{smallmatrix}
 A(t )& 0 \\
m & B(u) \\
\end{smallmatrix} \right]
\end{align*}
Then for $(x,y)\in A(T)\amalg B(U)$, we have that
\begin{align*}
\Big((w,s)\circ \left[\begin{smallmatrix}
 A(t )& 0 \\
m & B(u) \\
\end{smallmatrix} \right]\Big)\Big(\left[\begin{smallmatrix}
 x \\
 \\
y \\
\end{smallmatrix} \right]\Big) & =(w,s)\Big(\left[\begin{smallmatrix}
A(t)(x) \\
 \\
m\cdot x+ B(u)(y )\\
\end{smallmatrix} \right]\Big)\\
& =w(A(t)(x))+s(m\cdot x+B(u)(y))\\
& =w(A(t)(x))+s(m\cdot x)+s(B(u)(y))
\end{align*}
Then
\begin{align*}
\Big[\nu_{(A,f,B)}\Big]_{\left[\begin{smallmatrix}
T& 0 \\
M& U\\
\end{smallmatrix} \right]}\Big(\left [\begin{smallmatrix}
\mathbb{D}_{\mathcal{U}}(B)(u^{op})(s) \\
m\cdot s+ \mathbb{D}_{\mathcal{T}}(A)(t^{op})(w) \\
\end{smallmatrix} \right]\Big) & =\Big(\mathrm{Hom}_{K}\Big(\left[\begin{smallmatrix}
 A(t )& 0 \\
m & B(u) \\
\end{smallmatrix} \right],K\Big)\Big)\Big((w,s)\Big).
\end{align*}
Proving that the required diagram commutes and thus $\nu_{(A,f,B)}:\Big(\mathbb{T}^{\ast}\circ \overline{\textswab{F}}\circ \widehat{\Theta}\Big)(A,f,B)\longrightarrow \Big(\mathbb{D}_{\mathbf{\Lambda}}\circ \textswab{F}\Big)(A,f,B),$ is a morphism in
$\mathsf{Mod}(\mathbf{\Lambda}^{op}).$ Now, let $(\alpha,\beta):(A,f,B)\longrightarrow (A',f',B')$ be  a morphism in $\Big(\mathsf{Mod}(\mathcal{T}), \mathbb{G}(\mathsf{Mod}(\mathcal{U})\Big)$. Let us see that the following diagram commutes in $\mathsf{Mod}(\mathbf{\Lambda}^{op})$
$$\xymatrix{
\Big(\mathbb{T}^{\ast}\circ \overline{\textswab{F}}\circ \widehat{\Theta}\Big)(A',f',B')\ar[rrr]^{\nu_{(A',f',B')}}\ar[d]_{\Big(\mathbb{T}^{\ast}\circ \overline{\textswab{F}}\circ \widehat{\Theta}\Big)(\alpha,\beta)} && & 
\Big(\mathbb{D}_{\mathbf{\Lambda}}\circ \textswab{F}\Big)(A',f',B')\ar[d]^{\Big(\mathbb{D}_{\mathbf{\Lambda}}\circ \textswab{F}\Big)(\alpha,\beta)}\\
\Big(\mathbb{T}^{\ast}\circ \overline{\textswab{F}}\circ \widehat{\Theta}\Big)(A,f,B)\ar[rrr]^{\nu_{(A,f,B)}} & & & \Big(\mathbb{D}_{\mathbf{\Lambda}}\circ \textswab{F}\Big)(A,f,B).}$$
We have to show that for $\left[\begin{smallmatrix}
T& 0 \\
M& U\\
\end{smallmatrix} \right ]\in \mathbf{\Lambda}^{op}$ the following diagram commutes
$$\xymatrix{\mathrm{Hom}_{K}(B'(U),K)\amalg \mathrm{Hom}_{K}(A'(T),K) \ar[rrr]^(.55){\Big[\nu_{(A',f',B')}\Big]_{\left[\begin{smallmatrix}
T& 0 \\
M& U\\
\end{smallmatrix} \right]}}\ar[d]^{\Big[
\zeta \Big]_{\left[\begin{smallmatrix}
T& 0 \\
M& U\\
\end{smallmatrix} \right]}} && & \mathrm{Hom}_{K}(A'(T)\amalg B'(U),K)\ar[d]^{\Big[ \delta \Big]_{\left[\begin{smallmatrix}
T& 0 \\
M& U\\
\end{smallmatrix} \right]}}\\
\mathrm{Hom}_{K}(B(U),K)\amalg \mathrm{Hom}_{K}(A(T),K)\ar[rrr]_(.6){\Big[\nu_{(A,f,B)}\Big]_{\left[\begin{smallmatrix}
T& 0 \\
M& U\\
\end{smallmatrix} \right]}} & & &  \mathrm{Hom}_{K}(A(T)\amalg B(U),K).}$$
Indeed, let $(s,w)\in \mathrm{Hom}_{K}(B'(U),K)\amalg \mathrm{Hom}_{K}(A'(T),K)$. Thus, we get 
\begin{align*}
\Big(\Big[\nu_{(A,f,B)}\Big]_{\left[\begin{smallmatrix}
T& 0 \\
M& U\\
\end{smallmatrix} \right]}\circ 
\Big[ \zeta \Big]_{\left[\begin{smallmatrix}
T& 0 \\
M& U\\
\end{smallmatrix} \right]}\Big)(s,w)=\Big[\nu_{(A,f,B)}\Big]_{\left[\begin{smallmatrix}
T& 0 \\
M& U\\
\end{smallmatrix} \right]}\Big((s\circ \beta_{U},w\circ \alpha_{T})\Big).
\end{align*}
where
$\Big[\nu_{(A,f,B)}\Big]_{\left[\begin{smallmatrix}
T& 0 \\
M& U\\
\end{smallmatrix} \right]}\Big((s\circ \beta_{U},w\circ \alpha_{T})\Big):A(T)\amalg B(U)\longrightarrow K$ is defined as follows:
$\Big(\Big[\nu_{(A,f,B)}\Big]_{\left[\begin{smallmatrix}
T& 0 \\
M& U\\
\end{smallmatrix} \right]}\Big((s\circ \beta_{U},w\circ \alpha_{T})\Big)\Big)(x,y)=w(\alpha_{T}(x))+s(\beta_{U}(y))$ for every $(x,y)\in A(T)\amalg B(U)$.\\
On the other side, $
\Big(\Big[\delta \Big]_{\left[\begin{smallmatrix}
T& 0 \\
M& U\\
\end{smallmatrix} \right]}\circ 
\Big[\nu_{(A',f',B')}\Big]_{\left[\begin{smallmatrix}
T& 0 \\
M& U\\
\end{smallmatrix} \right]}\Big)(s,w)=\Big(\Big[\nu_{(A',f',B')}\Big]_{\left[\begin{smallmatrix}
T& 0 \\
M& U\\
\end{smallmatrix} \right]}(s,w)\Big)\circ (\alpha_{T}\amalg \beta_{U})
$ and $\Big(\Big[\nu_{(A',f',B')}\Big]_{\left[\begin{smallmatrix}
T& 0 \\
M& U\\
\end{smallmatrix} \right]}(s,w)\Big)\circ (\alpha_{T}\amalg \beta_{U}):A(T)\amalg B(U)\longrightarrow K$
is defined as follows: for $(x,y)\in A(T)\amalg B(U)$ we have  that 
%%%DOnde pongo el & es de donde se empieza a alinear
\begin{align*}
 \Big(\Big(\Big[\nu_{(A',f',B')}\Big]_{\left[\begin{smallmatrix}
T& 0 \\
M& U\\
\end{smallmatrix} \right]}(s,w)\Big)\circ  (\alpha_{T}\amalg  & \beta_{U})\Big)  (x,y)= \\
& =\Big(\Big(\Big[\nu_{(A',f',B')}\Big]_{\left[\begin{smallmatrix}
T& 0 \\
M& U\\
\end{smallmatrix} \right]}(s,w)\Big)(\alpha_{T}(x),\beta_{U}(y))\\
& =(w,s)(\alpha_{T}(x),\beta_{U}(y))\\
& =w(\alpha_{T}(x))+s(\beta_{U}(y)).
\end{align*}
Proving that the required diagrama commutes. Therefore
$$\nu:=\!\!\Big \{\nu_{(A,f,B)}\!\!:\!\!\Big(\mathbb{T}^{\ast}\circ \overline{\textswab{F}}\circ \widehat{\Theta}\Big)(A,f,B)\longrightarrow \Big(\mathbb{D}_{\mathbf{\Lambda}}\circ \textswab{F}\Big)(A,f,B)\Big \}_{(A,f,B)\in \Big(\mathsf{Mod}(\mathcal{T}),\mathbb{G}\mathsf{Mod}(\mathcal{U})\Big)}$$
defines an isomorphism $\nu:\mathbb{T}^{\ast}\circ \overline{\textswab{F}}\circ \widehat{\Theta}\longrightarrow \mathbb{D}_{\mathbf{\Lambda}}\circ \textswab{F}$.
\end{proof}

%%%%%%%%%%%%%%%%%%%%%%%%%%%%%%%
%%%%%%%%%%%%%%%%%%%%%%%%%%%%%%%
%%%%%%%%%%%%%%%%%%%%%%%%%%%%%%%
%%%%%%%%%%%%%%%%%%%%%%%%%%%%%%
%%%%%%%%%%%%%%%%%%%%%%%%%%%%%%%
%%%%%%%%%%%%%%%%%%%%%%%%%%%%%%%
%%%%%%%%%%%%%%%%%%%%%%%%%%%%%%%%
%%%%%%%%%%%%%%%%%%%%%%%%%%%%%%%%
%%%%%%%%%%%%%%%%%%%%%%%%%%%%%%
%%%%%%%%%%%%%%%%%%%%%%%%%%%%%%%%
%%%%%%%%%%%%%%%%%%%%%%%%%%%%%%%
%%%%%%%%%%%%%%%%%%%%%%%%%%%%%%%
%%%%%%%%%%%%%%%%%%%%%%%%%%%%%%%
%%%%%%%%%%%%%%%%%%%%%%%%%%%%%%
%%%%%%%%%%%%%%%%%%%%%%%%%%%%%%%
%%%%%%%%%%%%%%%%%%%%%%%%%%%%%%%
%%%%%%%%%%%%%%%%%%%%%%%%%%%%%%%%
%%%%%%%%%%%%%%%%%%%%%%%%%%%%%%%%
%%%%%%%%%%%%%%%%%%%%%%%%%%%%%%
%%%%%%%%%%%%%%%%%%%%%%%%%%%%%%%%

%%% recollements%%%%%%%%%%%%%%%%%%%

%\section{Dualizing Varieties and adjoints}

\section{An adjoint pair}
In this section we will see that the functor $\mathbb{G}$ has a left adjoint $\mathbb{F}$ and we will prove that there is an isomorphism of comma categories 
$\Big( \mathbb{F}(\mathsf{Mod}(\mathcal{T})),\mathsf{Mod}(\mathcal{U})\Big) \simeq \Big( \mathsf{Mod}(\mathcal{T}), \mathbb{G}(\mathsf{Mod}(\mathcal{U}))\Big)$ (see \ref{equicoma2}).\\
Firts, let us consider the contravariant functor $E:\mathcal{T}\longrightarrow \mathsf{Mod}(\mathcal{U})$ defined in  \ref{dosfuntores}. By \cite[Theorem 6.3]{Popescu} in page 101, there exists a unique functor (up to isomorphism of functors) $\mathbb{F}:\mathsf{Mod}(\mathcal{T})\longrightarrow \mathsf{Mod}(\mathcal{U})$  which commutes with direct limits and such that
\begin{enumerate}
\item [(a)] $\mathbb{F}\circ Y\simeq E$, where $Y:\mathcal{T}\longrightarrow \mathsf{Mod}(\mathcal{T})$ is the Yoneda functor.

\item [(b)] $\mathbb{F}$ has a right adjoint.
\end{enumerate}
By checking the proof in \cite[Theorem 6.3]{Popescu}, it can be see that the adjoint of $\mathbb{F}$ is our functor $\mathbb{G}:\mathsf{Mod}(\mathcal{U})\longrightarrow \mathsf{Mod}(\mathcal{T})$. That is, we have that the pair $(\mathbb{F},\mathbb{G})$ is an adjoint pair. That is, for every $A\in \mathsf{Mod}(\mathcal{T})$ and $B\in \mathsf{Mod}(\mathcal{U})$ there exists a natural isomorphism
$$\varphi_{A,B}:\mathrm{Hom}_{\mathsf{Mod}(\mathcal{U})}(\mathbb{F}(A),B)\longrightarrow \mathrm{Hom}_{\mathsf{Mod}(\mathcal{T})}(A,\mathbb{G}(B)).$$
In the same way, considering the contravariant functor $\overline{E}:\mathcal{U}^{op}\longrightarrow \mathsf{Mod}(\mathcal{T}^{op})$ defined in  \ref{otrosdosfun}, there exists a unique functor (up to isomorphism of functors) $\overline{\mathbb{F}}:\mathsf{Mod}(\mathcal{U}^{op})\longrightarrow \mathsf{Mod}(\mathcal{T}^{op})$  which commutes with direct limits and such that
\begin{enumerate}
\item [(a)] $\overline{\mathbb{F}}\circ Y\simeq E$, where $\overline{Y}:\mathcal{U}^{op}\longrightarrow \mathsf{Mod}(\mathcal{U}^{op})$ is the Yoneda functor.

\item [(b)] $\overline{\mathbb{F}}$ has a right adjoint.
\end{enumerate}
Moreover, the adjoint of $\overline{\mathbb{F}}$ is our functor $\overline{\mathbb{G}}:\mathsf{Mod}(\mathcal{T}^{op})\longrightarrow \mathsf{Mod}(\mathcal{U}^{op})$. That is, we have that the pair $(\overline{\mathbb{F}},\overline{\mathbb{G}})$ is an adjoint pair. That is, for every $A\in \mathsf{Mod}(\mathcal{U}^{op})$ and $B\in \mathsf{Mod}(\mathcal{T}^{op})$ there exists a natural isomorphism
$$\overline{\varphi}_{A,B}:\mathrm{Hom}_{\mathsf{Mod}(\mathcal{T}^{op})}(\overline{\mathbb{F}}(A),B)\longrightarrow \mathrm{Hom}_{\mathsf{Mod}(\mathcal{U}^{op})}(A,\overline{\mathbb{G}}(B)).$$
Now, consider $\mathbb{D}_{\mathcal{U}^{op}}:\mathsf{Mod}(\mathcal{U}^{op})\longrightarrow \mathsf{Mod}(\mathcal{U})$ and  $\mathbb{D}_{\mathcal{T}}:\mathsf{Mod}(\mathcal{T})\longrightarrow \mathsf{Mod}(\mathcal{T}^{op})$. Therefore we get
$$\mathbb{D}_{\mathcal{T}}\circ \mathbb{G}\circ \mathbb{D}_{\mathcal{U}^{op}}:\mathsf{Mod}(\mathcal{U}^{op})\longrightarrow \mathsf{Mod}(\mathcal{T}^{op}).$$

\begin{proposition}
Let $\mathcal{U}$ and $\mathcal{T}$ be Hom-finite $K$-categories.
There exist  natural isomorphisms
$$\Psi:\overline{\mathbb{F}} \longrightarrow \mathbb{D}_{\mathcal{T}}\circ \mathbb{G}\circ \mathbb{D}_{\mathcal{U}^{op}},\quad \Xi:\mathbb{F} \longrightarrow \mathbb{D}_{\mathcal{U}^{op}}\circ \overline{\mathbb{G}}\circ \mathbb{D}_{\mathcal{T}}.$$
\end{proposition}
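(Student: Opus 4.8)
The plan is to prove the isomorphism $\Xi:\mathbb{F}\longrightarrow\mathbb{D}_{\mathcal{U}^{op}}\circ\overline{\mathbb{G}}\circ\mathbb{D}_{\mathcal{T}}$ in detail and to obtain $\Psi$ by the symmetric argument, interchanging the roles of $(\mathcal{T},\mathbb{F},\mathbb{G},M)$ and $(\mathcal{U}^{op},\overline{\mathbb{F}},\overline{\mathbb{G}},\overline{M})$ and using $\overline{M}_{U}=M_{U}$. First I would record an explicit coend (tensor) description of $\mathbb{F}$ and $\overline{\mathbb{F}}$. Since $\mathbb{F}$ commutes with direct limits, has a right adjoint and satisfies $\mathbb{F}\circ Y\simeq E$ with $E(T)=M_{T}$ (see \ref{dosfuntores}), the universal property of $\mathsf{Mod}(\mathcal{T})$ quoted from \cite{Popescu} identifies $\mathbb{F}$ with the left Kan extension of $E$ along the Yoneda embedding; concretely, for $A\in\mathsf{Mod}(\mathcal{T})$ and $U\in\mathcal{U}$ one gets $\mathbb{F}(A)(U)\cong\int^{T\in\mathcal{T}}A(T)\otimes_{\mathbb{Z}}M(U,T)$, the functoriality in $U$ being supplied by $E'$. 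Likewise $\overline{\mathbb{F}}(A')(T)\cong\int^{U\in\mathcal{U}}A'(U)\otimes_{\mathbb{Z}}M(U,T)$ for $A'\in\mathsf{Mod}(\mathcal{U}^{op})$ (see \ref{otrosdosfun}).

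The key technical step is a tensor--hom--duality identity: for $N\in\mathsf{Mod}(\mathcal{T}^{op})$ and $A\in\mathsf{Mod}(\mathcal{T})$ there is an isomorphism $\mathrm{Hom}_{\mathsf{Mod}(\mathcal{T}^{op})}\big(N,\mathbb{D}_{\mathcal{T}}(A)\big)\cong\mathrm{Hom}_{K}\big(\int^{T}N(T)\otimes_{\mathbb{Z}}A(T),K\big)$, natural in both variables; it follows by applying the $\mathrm{Hom}_{K}(-,K)$--$\otimes$ adjunction objectwise, observing that a natural family $N(T)\to\mathrm{Hom}_{K}(A(T),K)$ is exactly a dinatural pairing $N(T)\otimes A(T)\to K$, hence a map out of the coend, and that $\mathrm{Hom}_{K}(-,K)$ carries coends to ends. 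Taking $N=M_{U}=\overline{M}_{U}$ this computes $\overline{\mathbb{G}}\big(\mathbb{D}_{\mathcal{T}}(A)\big)(U)=\mathrm{Hom}_{\mathsf{Mod}(\mathcal{T}^{op})}\big(M_{U},\mathbb{D}_{\mathcal{T}}(A)\big)\cong\mathrm{Hom}_{K}\big(\mathbb{F}(A)(U),K\big)$, naturally in $A$ and in $U\in\mathcal{U}^{op}$, and applying $\mathbb{D}_{\mathcal{U}^{op}}=\mathrm{Hom}_{K}(-,K)$ once more gives $\big(\mathbb{D}_{\mathcal{U}^{op}}\circ\overline{\mathbb{G}}\circ\mathbb{D}_{\mathcal{T}}\big)(A)(U)\cong\mathrm{Hom}_{K}\big(\mathrm{Hom}_{K}(\mathbb{F}(A)(U),K),K\big)$.

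Finally I would define $\Xi_{A}$ objectwise through the canonical evaluation morphisms $\mathbb{F}(A)(U)\to\mathrm{Hom}_{K}(\mathrm{Hom}_{K}(\mathbb{F}(A)(U),K),K)$; the work then is to verify that these are compatible with the structure maps of the $\mathcal{U}$-modules and natural in $A$, so that they assemble into a morphism of functors, and that it is invertible. The naturality and variance bookkeeping is routine and entirely parallel to that carried out in \ref{cuadrocasicon}; the one non-formal point, which I expect to be the main obstacle, is the biduality step: the evaluation map $V\to V^{\ast\ast}$ is an isomorphism exactly on $K$-reflexive modules, so the argument yields the natural isomorphism once the values $\mathbb{F}(A)(U)$ are reflexive over $K$ (in particular on finitely presented modules, where $\mathbb{D}_{\mathcal{T}}$ and $\mathbb{D}_{\mathcal{U}^{op}}$ are honest dualities). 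One may also phrase the whole thing via uniqueness of adjoints --- $\overline{\mathbb{F}}$ and $\mathbb{D}_{\mathcal{T}}\circ\mathbb{G}\circ\mathbb{D}_{\mathcal{U}^{op}}$ are both seen to be left adjoint to $\overline{\mathbb{G}}$ using the same tensor--hom--duality identity to rewrite the hom-set of the composite --- but the reflexivity input is needed there as well. The isomorphism $\Psi$ is then obtained from the identical computation with the coend description of $\overline{\mathbb{F}}$ in place of that of $\mathbb{F}$.
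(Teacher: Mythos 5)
Your argument takes a genuinely different route from the paper's. The paper constructs $\Psi$ directly on the representable $\mathcal{U}^{op}$-modules $\mathrm{Hom}_{\mathcal{U}}(-,U)$ via the pointwise evaluation formula $\bigl(\Psi_{\mathrm{Hom}_{\mathcal{U}}(-,U)}\bigr)_T(m)(\eta)=\eta_U(m)(1_U)$, verifies naturality by hand, and then invokes Mitchell's extension theorems (\cite[Thm.~5.4, Cor.~5.5]{MitBook}) to prolong the isomorphism from projectives to all of $\mathsf{Mod}(\mathcal{U}^{op})$. You instead pass through a coend/Kan-extension description of $\mathbb{F}$, use the tensor-hom-$K$-duality identity $\mathrm{Hom}_{\mathsf{Mod}(\mathcal{T}^{op})}\bigl(N,\mathbb{D}_{\mathcal{T}}(A)\bigr)\cong\mathrm{Hom}_K\bigl(\int^{T}N(T)\otimes A(T),K\bigr)$ to identify $\overline{\mathbb{G}}\mathbb{D}_{\mathcal{T}}(A)(U)\cong\mathrm{Hom}_K(\mathbb{F}(A)(U),K)$, and conclude that the composite is the double $K$-dual of $\mathbb{F}$, so that $\Xi$ is the canonical biduality morphism. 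What your route buys is an explicit computation of the composite functor at every $A$ rather than only on projectives, and it lays bare the actual hypothesis needed: $\Xi_A$ is an isomorphism exactly when the $K$-modules $\mathbb{F}(A)(U)$ are reflexive, which on representables reduces to reflexivity of $M(U,T)$. That is precisely the (unstated) assumption behind the paper's assertion that $\Psi_{\mathrm{Hom}_{\mathcal{U}}(-,U)}$ is already an isomorphism, and behind the validity of extending along projectives --- since the composite $\mathbb{D}_{\mathcal{T}}\circ\mathbb{G}\circ\mathbb{D}_{\mathcal{U}^{op}}$ is not obviously colimit-preserving on all of $\mathsf{Mod}(\mathcal{U}^{op})$. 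In the contexts where the proposition is actually used (\ref{corcambia}, \ref{resringebien}, dualizing-variety hypotheses with $M_T\in\mathsf{mod}(\mathcal{U})$) these finiteness conditions hold, so both proofs land in the same place; you are simply more explicit about where they are needed.
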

\begin{proof}
We will prove just the first isomorphism, since the second is analogous.\\
Let us see that there exists an isomorphism of $\mathcal{T}^{op}$-modules
$$\Psi_{\mathrm{Hom}_{\mathcal{U}}(-,U)}: M_{U}=\overline{\mathbb{F}}(\mathrm{Hom}_{\mathcal{U}}(-,U))\longrightarrow  \Big(\mathbb{D}_{\mathcal{T}}\circ \mathbb{G}\circ \mathbb{D}_{\mathcal{U}^{op}}\Big)(\mathrm{Hom}_{\mathcal{U}}(-,U)).$$
Let $T\in \mathcal{T}^{op}$ be, we want 
$$\Big[\Psi_{\mathrm{Hom}_{\mathcal{U}}(-,U)}\Big]_{T}: M(U,T)\longrightarrow  
\mathrm{Hom}_{K}\Big(\mathrm{Hom}_{\mathsf{Mod}(\mathcal{U})}(M_{T},\mathbb{D}_{\mathcal{U}^{op}}(\mathrm{Hom}_{\mathcal{U}}(-,U))),K\Big).$$

Therefore, for $m\in M(U,T)$ we define
$$\Big[\Psi_{\mathrm{Hom}_{\mathcal{U}}(-,U)}\Big]_{T}(m):\mathrm{Hom}_{\mathsf{Mod}(\mathcal{U})}\Big(M_{T},\mathbb{D}_{\mathcal{U}^{op}}(\mathrm{Hom}_{\mathcal{U}}(-,U))\Big)\longrightarrow K,$$
as follows: 
$$\Big(\Big[\Psi_{\mathrm{Hom}_{\mathcal{U}}(-,U)}\Big]_{T}(m)\Big)(\eta):=\Big(\Big[\mathbb{D}_{\mathcal{U}}(\eta)\circ \Gamma_{\mathrm{Hom}_{\mathcal{U}}(-,U)}\Big]_{U}(1_{U})\Big)(m)=\Big(\eta_{U}(m)\Big)(1_{U})$$ for every $\eta: M_{T}\longrightarrow \mathbb{D}_{\mathcal{U}^{op}}(\mathrm{Hom}_{\mathcal{U}}(-,U))$.\\
Let $t^{op}:T'\longrightarrow T$ be a morphism in $\mathcal{T}^{op}$.
We assert that the following diagram commutes
$$\xymatrix{M(U,T')\ar[rrr]^(.4){[\Psi_{\mathrm{Hom}_{\mathcal{U}}(-,U)}]_{T'}}\ar[d] & & & \Big((\mathbb{D}_{\mathcal{T}}\circ \mathbb{G}\circ \mathbb{D}_{\mathcal{U}^{op}})(\mathrm{Hom}_{\mathcal{U}}(-,U))\Big)(T')\ar[d]\\
M(U,T)\ar[rrr]_(.4){[\Psi_{\mathrm{Hom}_{\mathcal{U}}(-,U)}]_{T}} & &  &\Big((\mathbb{D}_{\mathcal{T}}\circ \mathbb{G}\circ \mathbb{D}_{\mathcal{U}^{op}})(\mathrm{Hom}_{\mathcal{U}}(-,U))\Big)(T)}$$
Indeed, let $B:=\mathrm{Hom}_{\mathcal{U}}(-,U)$ and $m\in M(U,T')$ we have that

\begin{align*}
& \Big(\Big((\mathbb{D}_{\mathcal{T}}\circ \mathbb{G}\circ \mathbb{D}_{\mathcal{U}^{op}})(B)\Big)(t^{op})\Big)([\Psi_{\mathrm{Hom}_{\mathcal{U}}(-,U)}]_{T'}(m)):=\\
& = [\Psi_{\mathrm{Hom}_{\mathcal{U}}(-,U)}]_{T'}(m)\circ \mathrm{Hom}_{\mathsf{Mod}(\mathcal{U}^{op})}(\overline{t},\mathbb{D}_{\mathcal{U}^{op}}(B)).
\end{align*}
For $\delta:M_{T}\longrightarrow \mathbb{D}_{\mathcal{U}^{op}}(B)$,  we get
$$\mathrm{Hom}_{\mathsf{Mod}(\mathcal{U}^{op})}(\overline{t},\mathbb{D}_{\mathcal{U}^{op}}(B))(\delta)=\delta\circ \overline{t}:M_{T'}\longrightarrow \mathbb{D}_{\mathcal{U}^{op}}(B)$$
Then 
\begin{align*}
[\Psi_{\mathrm{Hom}_{\mathcal{U}}(-,U)}]_{T'}(m)\Big(\mathrm{Hom}_{\mathsf{Mod}(\mathcal{U}^{op})}(\overline{t},\mathbb{D}_{\mathcal{U}^{op}}(B))(\delta)\Big) & =\Big([\Psi_{\mathrm{Hom}_{\mathcal{U}}(-,U)}]_{T'}(m)\Big)(\delta\circ \overline{t})\\
& =\Big([\delta\circ \overline{t}]_{U}(m)\Big)(1_{U})\\
& = \Big( [\delta]_{U}([\overline{t}]_{U}(m))\Big)(1_{U})\\
& =\Big([\delta]_{U}(M(1_{U}\otimes t)(m))\Big)(1_{U})\\
& =\Big([\delta]_{U}(M_{U}(t)(m))\Big)(1_{U})\\
& =\Big([\Psi_{\mathrm{Hom}_{\mathcal{U}}(-,U)}]_{T}
(M_{U}(t)(m))\Big)(\delta)
\end{align*}
Hence, the required diagram is commutative.\\
Then 
$$\Psi_{\mathrm{Hom}_{\mathcal{U}}(-,U)}: M_{U}=\overline{\mathbb{F}}(\mathrm{Hom}_{\mathcal{U}}(-,U))\longrightarrow  \Big(\mathbb{D}_{\mathcal{T}}\circ \mathbb{G}\circ \mathbb{D}_{\mathcal{U}^{op}}\Big)(\mathrm{Hom}_{\mathcal{U}}(-,U)),$$
is an isomorphism of $\mathcal{T}^{op}$-modules.

Now, we have to show that if
 $\mathrm{Hom}_{\mathcal{U}}(-,u):\mathrm{Hom}_{\mathcal{U}}(-,U)\longrightarrow \mathrm{Hom}_{\mathcal{U}}(-,U')$  is a morphism in $\mathsf{Mod}(\mathcal{U}^{op})$, then for each $T\in\mathcal{T}^{op}$ the following diagram commutes
$$\xymatrix{M(U,T)=\overline{\mathbb{F}}(\mathrm{Hom}_{\mathcal{U}}(-,U))(T)\ar[d]_{M(u\otimes 1_{T})}\ar[rr]^(.45){[\Psi_{\mathrm{Hom}_{\mathcal{U}}(-,U)}]_{T}} & & 
 \Big((\mathbb{D}_{\mathcal{T}}\circ \mathbb{G}\circ \mathbb{D}_{\mathcal{U}^{op}})(\mathrm{Hom}_{\mathcal{U}}(-,U))\Big)(T)\ar[d]^{L}\\
 M(U',T)=\overline{\mathbb{F}}(\mathrm{Hom}_{\mathcal{U}}(-,U'))(T)\ar[rr]^(.45){[\Psi_{\mathrm{Hom}_{\mathcal{U}}(-,U')}]_{T}} & & \Big((\mathbb{D}_{\mathcal{T}}\circ \mathbb{G}\circ \mathbb{D}_{\mathcal{U}^{op}})(\mathrm{Hom}_{\mathcal{U}}(-,U'))\Big)(T)}$$
 where $L=\Big[(\mathbb{D}_{\mathcal{T}}\circ \mathbb{G}\circ \mathbb{D}_{\mathcal{U}^{op}})(\mathrm{Hom}_{\mathcal{U}}(-,u))\Big]_{T}$.\\
For $m\in M(U,T)$ we  obtain that
$$L\Big(\Big[\Psi_{\mathrm{Hom}_{\mathcal{U}}(-,U)}\Big]_{T}(m)\Big)=\Big(\Big[\Psi_{\mathrm{Hom}_{\mathcal{U}}(-,U)}\Big]_{T}(m)\Big)\circ \Big( \mathrm{Hom}_{\mathsf{Mod}(\mathcal{U})}\Big(M_{T},\mathbb{D}_{\mathcal{U}^{op}}(\mathrm{Hom}_{\mathcal{U}}(-,u))\Big)\Big).$$
For  $\eta: M_{T}\longrightarrow \mathbb{D}_{\mathcal{U}^{op}}(\mathrm{Hom}_{\mathcal{U}}(-,U'))$, we have that
$\mathbb{D}_{\mathcal{U}^{op}}(\mathrm{Hom}_{\mathcal{U}}(-,u))\circ \eta: M_{T}\longrightarrow \mathbb{D}_{\mathcal{U}^{op}}(\mathrm{Hom}_{\mathcal{U}}(-,U)).$
Therefore,
\begin{align*}
& \Big(\Big(\Big[\Psi_{\mathrm{Hom}_{\mathcal{U}}(-,U)}\Big]_{T}(m)\Big)\circ \Big( \mathrm{Hom}_{\mathsf{Mod}(\mathcal{U})}\Big(M_{T},\mathbb{D}_{\mathcal{U}^{op}}(\mathrm{Hom}_{\mathcal{U}}(-,u))\Big)\Big)\Big)(\eta)=\\
&=\Big(\Big[\Psi_{\mathrm{Hom}_{\mathcal{U}}(-,U)}\Big]_{T}(m)\Big)\Big( \mathrm{Hom}_{\mathsf{Mod}(\mathcal{U})}\Big(M_{T},\mathbb{D}_{\mathcal{U}^{op}}(\mathrm{Hom}_{\mathcal{U}}(-,u))\Big)(\eta)\Big)=\\
& =\Big(\Big[\Psi_{\mathrm{Hom}_{\mathcal{U}}(-,U)}\Big]_{T}(m)\Big)(\mathbb{D}_{\mathcal{U}^{op}}(\mathrm{Hom}_{\mathcal{U}}(-,u))\circ \eta)\\
 &=\Big(\Big[\mathbb{D}_{\mathcal{U}^{op}}(\mathrm{Hom}_{\mathcal{U}}(-,u))\circ \eta\Big]_{U}(m)\Big)(1_{U})\\
& = \Big(\Big[\mathbb{D}_{\mathcal{U}^{op}}(\mathrm{Hom}_{\mathcal{U}}(-,u))\Big]_{U}([\eta ]_{U}(m))\Big)(1_{U})\\
& = \Big([\eta]_{U}(m)\Big)(u\circ 1_{U})=\Big([\eta]_{U}(m)\Big)(u).
\end{align*}
On the other hand, let $m':=M(u\otimes 1_{T})(m)\in M(U',T)$.  Then for $\eta: M_{T}\longrightarrow \mathbb{D}_{\mathcal{U}^{op}}(\mathrm{Hom}_{\mathcal{U}}(-,U'))$
we have that
\begin{align*}
\Big[\Psi_{\mathrm{Hom}_{\mathcal{U}}(-,U')}\Big]_{T}(m')(\eta)=\eta_{U'}(m')(1_{U'}).
\end{align*}
Since $\eta_{T}: M_{T}\longrightarrow \mathbb{D}_{\mathcal{U}^{op}}(\mathrm{Hom}_{\mathcal{U}}(-,U'))$ is a morphism of $\mathcal{U}$-modules we have that
$$\Big(\mathrm{Hom}_{K}(\mathrm{Hom}_{\mathcal{U}}(u,U'),K)\Big)(\eta_{U}(m))=\eta_{U'}(m').$$
Therefore we get that $\eta_{U'}(m')(1_{U'})=\eta_{U}(m)(u)$. Proving that the required diagram commutes. Therefore, we have an isomorphism
$$\Psi:\overline{\mathbb{F}}|_{\mathrm{Proj}(\mathsf{Mod}(\mathcal{U}^{op}))}\longrightarrow (\mathbb{D}_{\mathcal{T}}\circ \mathbb{G}\circ \mathbb{D}_{\mathcal{U}^{op}})|_{\mathrm{Proj}(\mathsf{Mod}(\mathcal{U}^{op}))}.$$
By \cite[Theorem 5.4]{MitBook}  and \cite[Corollary 5.5]{MitBook}, we extend $\Psi$  to an isomorphism
$$\Psi:\overline{\mathbb{F}} \longrightarrow \mathbb{D}_{\mathcal{T}}\circ \mathbb{G}\circ \mathbb{D}_{\mathcal{U}^{op}}.$$
\end{proof}

\begin{corollary}\label{corcambia}
Let $\mathcal{U}$ and $\mathcal{T}$ be Hom-finite $K$-categories
\begin{enumerate}
\item [(i)] For every $B\in \mathsf{Mod}(\mathcal{U}^{op})$ there exists an isomorphism $\mathbb{D}_{\mathcal{T}^{op}}\overline{\mathbb{F}}(B)\simeq \mathbb{G}\mathbb{D}_{\mathcal{U}^{op}}(B).$

\item[(ii)] For every $A\in \mathsf{Mod}(\mathcal{T})$ there exists an isomorphism $\mathbb{D}_{\mathcal{U}}\mathbb{F}(A)\simeq \overline{\mathbb{G}}\mathbb{D}_{\mathcal{T}}(A).$

\end{enumerate}
\end{corollary}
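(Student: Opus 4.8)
The plan is to read off both statements from the natural isomorphisms
$\Psi\colon\overline{\mathbb F}\xrightarrow{\sim}\mathbb D_{\mathcal T}\circ\mathbb G\circ\mathbb D_{\mathcal U^{op}}$ and
$\Xi\colon\mathbb F\xrightarrow{\sim}\mathbb D_{\mathcal U^{op}}\circ\overline{\mathbb G}\circ\mathbb D_{\mathcal T}$
supplied by the preceding proposition. For (i) I would apply the contravariant functor $\mathbb D_{\mathcal T^{op}}\colon\mathsf{Mod}(\mathcal T^{op})\to\mathsf{Mod}(\mathcal T)$ to the component $\Psi_B\colon\overline{\mathbb F}(B)\xrightarrow{\sim}\mathbb D_{\mathcal T}\bigl(\mathbb G\mathbb D_{\mathcal U^{op}}(B)\bigr)$, obtaining $\mathbb D_{\mathcal T^{op}}\overline{\mathbb F}(B)\simeq\mathbb D_{\mathcal T^{op}}\mathbb D_{\mathcal T}\bigl(\mathbb G\mathbb D_{\mathcal U^{op}}(B)\bigr)$, and then collapse the two consecutive duals to land on $\mathbb G\mathbb D_{\mathcal U^{op}}(B)$. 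Statement (ii) is the mirror image: apply $\mathbb D_{\mathcal U}$ to $\Xi_A\colon\mathbb F(A)\xrightarrow{\sim}\mathbb D_{\mathcal U^{op}}\bigl(\overline{\mathbb G}\mathbb D_{\mathcal T}(A)\bigr)$ and cancel $\mathbb D_{\mathcal U}\circ\mathbb D_{\mathcal U^{op}}$. In both cases I would also record that the isomorphism is natural in $B$, resp.\ $A$, so that one actually gets isomorphisms of functors $\mathbb D_{\mathcal T^{op}}\circ\overline{\mathbb F}\simeq\mathbb G\circ\mathbb D_{\mathcal U^{op}}$ and $\mathbb D_{\mathcal U}\circ\mathbb F\simeq\overline{\mathbb G}\circ\mathbb D_{\mathcal T}$.

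The only delicate point is the collapsing of consecutive duals: $\mathbb D_{\mathcal C^{op}}\circ\mathbb D_{\mathcal C}$ is naturally isomorphic to the identity on the full subcategory $(\mathcal C,\mathsf{mod}(K))$ of functors with finite-dimensional values, but not on all of $\mathsf{Mod}(\mathcal C)$. So the step that needs justification is that $\mathbb G\mathbb D_{\mathcal U^{op}}(B)$ and $\overline{\mathbb G}\mathbb D_{\mathcal T}(A)$ are levelwise finite-dimensional; for the finitely presented modules treated in Section 6 this is automatic, which is the reason the statement is recorded here. A clean way to avoid the issue and prove the corollary verbatim as stated is to bypass double duals: since $\overline{\mathbb F}$ commutes with direct limits and $\overline{\mathbb F}\bigl(\mathcal U(-,U)\bigr)=M_U$, the co-Yoneda expansion of $B$ gives the coend description $\overline{\mathbb F}(B)(T)\cong B\otimes_{\mathcal U}M_T$, and hence
$$\mathbb D_{\mathcal T^{op}}\overline{\mathbb F}(B)(T)=\mathrm{Hom}_K\!\bigl(B\otimes_{\mathcal U}M_T,\,K\bigr)\cong\mathrm{Hom}_{\mathsf{Mod}(\mathcal U)}\!\bigl(M_T,\,\mathbb D_{\mathcal U^{op}}(B)\bigr)=\mathbb G\mathbb D_{\mathcal U^{op}}(B)(T),$$
where the middle isomorphism is the standard adjunction identifying the $K$-dual of a tensor product over $\mathcal U$ with a Hom into the pointwise dual. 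Checking naturality of this chain in $T$ and in $B$ finishes (i), and (ii) follows by running the same argument in the ``barred'' situation obtained by replacing $(\mathcal U,\mathcal T,M)$ with $(\mathcal T^{op},\mathcal U^{op},\overline M)$.

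I expect the main obstacle to be purely bookkeeping: keeping track of which incarnation of $\mathbb D$ (the genuine duality on $\mathrm{mod}$ versus the pointwise $K$-dual defined on all of $\mathsf{Mod}$) is in play, and making sure the cancellation of duals is legitimate in the generality claimed — either by verifying the finiteness above or by the coend-and-adjunction computation. No genuinely new idea beyond the preceding proposition is required.
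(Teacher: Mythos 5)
The paper gives no written proof of this corollary, the intended argument evidently being your first one: apply $\mathbb{D}_{\mathcal{T}^{op}}$ to $\Psi_B$ (resp.\ $\mathbb{D}_{\mathcal{U}}$ to $\Xi_A$) and cancel the double dual. Your scruple about that cancellation is well founded: $\mathbb{D}_{\mathcal{T}^{op}}\circ\mathbb{D}_{\mathcal{T}}$ is the identity only on $(\mathcal{T},\mathrm{mod}(K))$, and it is not given that $\mathbb{G}\mathbb{D}_{\mathcal{U}^{op}}(B)$ lands there for arbitrary $B\in\mathsf{Mod}(\mathcal{U}^{op})$; in fact the proof of the preceding proposition already carries the same tacit finiteness, since it uses the biduality map $\Delta_U$ as an isomorphism, which requires $M(U,T)\in\mathrm{mod}(K)$. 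Wherever the corollary is actually invoked --- notably in Lemma \ref{resringebien}, where its argument is $\mathbb{D}_{\mathcal{U}}(B)$ for $B\in\mathrm{mod}(\mathcal{U})$ --- the modules involved are levelwise finite-dimensional, so the gap is harmless in practice. Your coend computation is a genuinely different and cleaner route: from $\overline{\mathbb{F}}$ preserving colimits and sending $\mathrm{Hom}_{\mathcal{U}}(-,U)$ to $M_U$, co-Yoneda gives $\overline{\mathbb{F}}(B)(T)\cong B\otimes_{\mathcal{U}}M_T$, and the tensor-Hom adjunction over the small category $\mathcal{U}$ then yields $\mathrm{Hom}_K(B\otimes_{\mathcal{U}}M_T,K)\cong\mathrm{Hom}_{\mathsf{Mod}(\mathcal{U})}\big(M_T,\mathbb{D}_{\mathcal{U}^{op}}(B)\big)=\mathbb{G}\mathbb{D}_{\mathcal{U}^{op}}(B)(T)$, naturally in $T$ and $B$, with the mirrored argument for (ii). That proof never forms a double dual, bypasses the preceding proposition entirely, and is valid on all of $\mathsf{Mod}(\mathcal{U}^{op})$, so it proves the corollary verbatim as stated; the paper's route has the virtue of recording $\Psi$ in its own right, but yours is shorter and unconditionally correct.
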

\begin{proof}
\end{proof}
Since for every $A\in \mathsf{Mod}(\mathcal{T})$ and $B\in \mathsf{Mod}(\mathcal{U})$ there exists a natural isomorphism
$$\varphi_{A,B}:\mathrm{Hom}_{\mathsf{Mod}(\mathcal{U})}(\mathbb{F}(A),B)\longrightarrow \mathrm{Hom}_{\mathsf{Mod}(\mathcal{T})}(A,\mathbb{G}(B)),$$
we have the comma category $ \Big(\mathbb{F}(\mathsf{Mod}(\mathcal{T})),\mathsf{Mod}(\mathcal{U})\Big)$
whose objects are the triples $ (A,g,B)$ with $A\in \mathsf{Mod}(\mathcal{T}), B\in \mathsf{Mod}(\mathcal{U})$
and $ g:\mathbb F(A)\longrightarrow B $ a morphism of $ \mathcal{U} $-modules.
A morphism between two objects $ (A,g,B) $ and $ (A',g',B') $ is a pairs of morphism $(\alpha,\beta) $ where $\alpha:A\longrightarrow A'$ is a morphism of $\mathcal{T}$-modules and
$\beta:B\longrightarrow B'$ is a morphism of $\mathcal{U}$-modules and such that the diagram

\[
\begin{diagram}
\node{\mathbb F(A)} \arrow{e,t}{\mathbb F(\alpha)}\arrow{s,l}{g}\node{ \mathbb F(A')}\arrow{s,r}{g'}\\
\node{B} \arrow{e,b}{\beta}\node{B'}
\end{diagram}
\]
commutes.\\
Now,  we define a functor  $H: \Big( \mathbb{F}(\mathsf{Mod}(\mathcal{T})),\mathsf{Mod}(\mathcal{U})\Big) \longrightarrow \Big( \mathsf{Mod}(\mathcal{T}), \mathbb{G}(\mathsf{Mod}(\mathcal{U}))\Big)$ by  $H(A,g,B)=(A,\varphi_{A,B}(g),B)$ on objects and $H((\alpha, \beta))=(\alpha, \beta)$  on morphisms. Clearly the functor
 $H': \Big( \mathsf{Mod}(\mathcal{T}), \mathbb{G}(\mathsf{Mod}(\mathcal{U}))\Big)\longrightarrow \Big(\mathbb{F}(\mathsf{Mod}(\mathcal{T})),\mathsf{Mod}(\mathcal{U})\Big) $,
 given by $H'(A,g,B)=(A,\varphi_{A,B}^{-1}(g),B)$ on objects and  $H'((\alpha, \beta))=(\alpha, \beta)$ on morphisms is an inverse of $H$. Then, we have the following

\begin{proposition}\label{equicoma2}
Let $\mathcal{U}$ and $\mathcal{T}$ be Hom-finite $K$-categories.
\begin{enumerate}
\item [(a)]There exists an isomorphism 
$$H: \Big( \mathbb{F}(\mathsf{Mod}(\mathcal{T})),\mathsf{Mod}(\mathcal{U})\Big) \longrightarrow \Big( \mathsf{Mod}(\mathcal{T}), \mathbb{G}(\mathsf{Mod}(\mathcal{U}))\Big),$$

\item [(b)] There exists an isomorphism  
$$\overline{H}: \Big(\overline{\mathbb{F}}(\mathsf{Mod}(\mathcal{U}^{op})),\mathsf{Mod}(\mathcal{T}^{op})\Big) \longrightarrow \Big( \mathsf{Mod}(\mathcal{U}^{op}), \overline{\mathbb{G}}(\mathsf{Mod}(\mathcal{T}^{op}))\Big).$$
\end{enumerate}
\end{proposition}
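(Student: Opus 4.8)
The plan is to verify directly that the two functors $H$ and $H'$ described just above the statement are well defined and mutually inverse; part (b) then follows verbatim after replacing the adjoint pair $(\mathbb{F},\mathbb{G})$ by $(\overline{\mathbb{F}},\overline{\mathbb{G}})$ and $\varphi$ by $\overline{\varphi}$. The only non-formal point is that $H$ carries a morphism of $\big(\mathbb{F}(\mathsf{Mod}(\mathcal{T})),\mathsf{Mod}(\mathcal{U})\big)$ to a morphism of $\big(\mathsf{Mod}(\mathcal{T}),\mathbb{G}(\mathsf{Mod}(\mathcal{U}))\big)$; everything else is immediate because $H$ and $H'$ act as the identity on the underlying pairs $(\alpha,\beta)$.

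First I would record the naturality of the adjunction isomorphism $\varphi_{A,B}\colon \Hom_{\mathsf{Mod}(\mathcal{U})}(\mathbb{F}(A),B)\to \Hom_{\mathsf{Mod}(\mathcal{T})}(A,\mathbb{G}(B))$ in the two explicit forms that will be used: for $\alpha\colon A\to A'$ in $\mathsf{Mod}(\mathcal{T})$ and $h\colon\mathbb{F}(A')\to B$ one has $\varphi_{A,B}(h\circ\mathbb{F}(\alpha))=\varphi_{A',B}(h)\circ\alpha$, and for $\beta\colon B\to B'$ in $\mathsf{Mod}(\mathcal{U})$ and $h\colon\mathbb{F}(A)\to B$ one has $\varphi_{A,B'}(\beta\circ h)=\mathbb{G}(\beta)\circ\varphi_{A,B}(h)$. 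These are exactly the two faces of the naturality square of $\varphi$, and the same two identities hold for $\varphi^{-1}$ since it is again natural.

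Next, take a morphism $(\alpha,\beta)\colon(A,g,B)\to(A',g',B')$ in $\big(\mathbb{F}(\mathsf{Mod}(\mathcal{T})),\mathsf{Mod}(\mathcal{U})\big)$, so that $g'\circ\mathbb{F}(\alpha)=\beta\circ g$. To see that $(\alpha,\beta)$ is a morphism $(A,\varphi_{A,B}(g),B)\to(A',\varphi_{A',B'}(g'),B')$ in $\big(\mathsf{Mod}(\mathcal{T}),\mathbb{G}(\mathsf{Mod}(\mathcal{U}))\big)$ I must check $\varphi_{A',B'}(g')\circ\alpha=\mathbb{G}(\beta)\circ\varphi_{A,B}(g)$. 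Using naturality in the first variable, $\varphi_{A',B'}(g')\circ\alpha=\varphi_{A,B'}(g'\circ\mathbb{F}(\alpha))$; substituting $g'\circ\mathbb{F}(\alpha)=\beta\circ g$ and applying naturality in the second variable yields $\varphi_{A,B'}(\beta\circ g)=\mathbb{G}(\beta)\circ\varphi_{A,B}(g)$, as desired. Running the identical computation with $\varphi^{-1}$ shows $H'$ sends morphisms to morphisms as well. Since $H$ and $H'$ act on morphisms by the identity pair $(\alpha,\beta)$, they trivially preserve identities and composition, hence are functors; on objects $H'H(A,g,B)=(A,\varphi_{A,B}^{-1}\varphi_{A,B}(g),B)=(A,g,B)$ and symmetrically $HH'=\mathrm{id}$, while on morphisms both composites are the identity. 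Thus $H$ is an isomorphism of categories, which is (a). For (b) one repeats the argument with the adjoint pair $(\overline{\mathbb{F}},\overline{\mathbb{G}})$ and the natural isomorphism $\overline{\varphi}_{A,B}$ in place of $\varphi_{A,B}$, producing the inverse pair $\overline{H},\overline{H}'$.

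The proof has no genuine obstacle; the single point requiring care is matching the two naturality squares of $\varphi$ to the correct variable, which is why I would spell them out at the outset. The rest is a formal consequence of the fact that the underlying maps of $H$ and $H'$ leave the pair $(\alpha,\beta)$ untouched.
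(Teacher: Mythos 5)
Your argument is correct and follows exactly the route the paper intends: it constructs the same $H$ and $H'$ (defined in the text immediately preceding the proposition) and checks them to be mutually inverse functors. The only content the paper suppresses with the word ``clearly'' is that $H$ carries morphisms to morphisms, and your two naturality identities for $\varphi_{A,B}$ are precisely the computation needed to fill that in.
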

\begin{proof}
\end{proof}
Now, the following theorem characterizes the projective objects in the category  $\Big( \mathsf{Mod}(\mathcal{T}), \mathbb{G}\mathsf{Mod}(\mathcal{U})\Big)$.
\begin{proposition}\label{projecinmaps}
Consider the projective $\mathbf \Lambda$-module, $P:=\mathrm{Hom}_{\mathbf{\Lambda}}\Big(\left[
\begin{smallmatrix}
T&0 \\
M&U
\end{smallmatrix} \right],-\Big):\Lambda \rightarrow \mathbf{Ab}$ and the morphism of $\mathcal T$-modules 
$$g:\mathrm{Hom}_{\mathcal{T}}(T,-)\longrightarrow \mathbb{G}\Big( M_T\amalg \mathrm{Hom}_{\mathcal{U}}(U,-)\Big )$$
given by $g:=\Big \{[g]_{T'}: \mathrm{Hom}_{\mathcal{T}}(T,T')\longrightarrow \mathrm{Hom}_{\mathcal {U}}\big(M_{T'},M_{T}\amalg \mathrm{Hom}_{\mathcal{U}}(U,-)\big)\Big \}_{T'\in\mathcal {T}}$, with  $[g]_{T'}(t):=\left[
\begin{smallmatrix}
\overline{t} \\
0
\end{smallmatrix} \right]:M_{T'}\rightarrow M_{T}\amalg \mathrm{Hom}_{\mathcal{U}}(U,-)$ for all $t\in \mathrm{Hom}_{\mathcal{T}}(T,T')$, where for $U'\in \mathcal{U}$ we have that
$\Big[\left[
\begin{smallmatrix}
\overline{t} \\
0
\end{smallmatrix} \right]\Big]_{U'}:M(U',T')\rightarrow M(U',T)\amalg \mathrm{Hom}_{\mathcal{U}}(U,U')$ is such that  for $m\in M(U',T)$ we get 
$\Big[\left[
\begin{smallmatrix}
\overline{t} \\
0
\end{smallmatrix} \right]\Big]_{U'}(m):=(M(1_{U}\otimes t^{op})(m),0)=(m\bullet t,0).$ Then
$$P\cong \big(\mathrm{Hom}_{\mathcal{T}}(T,-)\big)\underset{g}\amalg \big(M_T\amalg \mathrm{Hom}_{\mathcal{U}}(U,-)\big).$$
\end{proposition}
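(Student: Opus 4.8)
The plan is to identify the triple $\textswab{F}^{-1}(P)$ and recognize it as $\big(\mathrm{Hom}_{\mathcal{T}}(T,-),\,g,\,M_{T}\amalg\mathrm{Hom}_{\mathcal{U}}(U,-)\big)$. Concretely, I would apply Lemma~\ref{lemma3} to the $\mathbf{\Lambda}$-module $C:=P$: this gives an isomorphism $P\cong P_{1}\amalg_{f}P_{2}$ in $\mathsf{Mod}(\mathbf{\Lambda})$, where $P_{1}=\mathbb{I}_{1}(P)=P\circ I_{1}$, $P_{2}=\mathbb{I}_{2}(P)=P\circ I_{2}$, and $f:P_{1}\longrightarrow\mathbb{G}(P_{2})$ is the canonical morphism of Lemma~\ref{lemmorcom}. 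So it suffices to produce isomorphisms $P_{1}\cong\mathrm{Hom}_{\mathcal{T}}(T,-)$ in $\mathsf{Mod}(\mathcal{T})$ and $P_{2}\cong M_{T}\amalg\mathrm{Hom}_{\mathcal{U}}(U,-)$ in $\mathsf{Mod}(\mathcal{U})$ which transport $f$ to $g$; then functoriality of $\textswab{F}$ (Theorem~\ref{equivalence1}) yields $P\cong P_{1}\amalg_{f}P_{2}\cong\big(\mathrm{Hom}_{\mathcal{T}}(T,-)\big)\underset{g}\amalg\big(M_{T}\amalg\mathrm{Hom}_{\mathcal{U}}(U,-)\big)$.

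First I would compute $P_{1}$ and $P_{2}$ straight from Definition~\ref{defitrinagularmat}. For $T'\in\mathcal{T}$,
$$P_{1}(T')=\mathsf{Hom}_{\mathbf{\Lambda}}\Big(\left[\begin{smallmatrix}T&0\\M&U\end{smallmatrix}\right],\left[\begin{smallmatrix}T'&0\\M&0\end{smallmatrix}\right]\Big)=\left[\begin{smallmatrix}\mathsf{Hom}_{\mathcal{T}}(T,T')&0\\M(0,T)&\mathsf{Hom}_{\mathcal{U}}(U,0)\end{smallmatrix}\right],$$
and since $\mathcal{U},\mathcal{T}$ are additive we have a zero object with $\mathsf{Hom}_{\mathcal{U}}(U,0)=0$, while $M$ additive gives $M(0,T)=0$; hence $P_{1}(T')$ is naturally identified with $\mathsf{Hom}_{\mathcal{T}}(T,T')$, so $P_{1}\cong\mathrm{Hom}_{\mathcal{T}}(T,-)$. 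Likewise for $U'\in\mathcal{U}$,
$$P_{2}(U')=\mathsf{Hom}_{\mathbf{\Lambda}}\Big(\left[\begin{smallmatrix}T&0\\M&U\end{smallmatrix}\right],\left[\begin{smallmatrix}0&0\\M&U'\end{smallmatrix}\right]\Big)=\left[\begin{smallmatrix}\mathsf{Hom}_{\mathcal{T}}(T,0)&0\\M(U',T)&\mathsf{Hom}_{\mathcal{U}}(U,U')\end{smallmatrix}\right],$$
and $\mathsf{Hom}_{\mathcal{T}}(T,0)=0$, so this is $M_{T}(U')\amalg\mathsf{Hom}_{\mathcal{U}}(U,U')$ naturally in $U'$, i.e. $P_{2}\cong M_{T}\amalg\mathrm{Hom}_{\mathcal{U}}(U,-)$.

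Next I would check that $f$ becomes $g$ under these identifications. By Lemma~\ref{lemmorcom} and Note~\ref{notaC1C2}, an element $t\in P_{1}(T')\cong\mathsf{Hom}_{\mathcal{T}}(T,T')$ corresponds to $\left[\begin{smallmatrix}t&0\\0&0\end{smallmatrix}\right]$, and for $U'\in\mathcal{U}$, $m\in M_{T'}(U')=M(U',T')$ one has $[f_{T'}(t)]_{U'}(m)=P(\overline{m})\big(\left[\begin{smallmatrix}t&0\\0&0\end{smallmatrix}\right]\big)=\overline{m}\circ\left[\begin{smallmatrix}t&0\\0&0\end{smallmatrix}\right]$ with $\overline{m}=\left[\begin{smallmatrix}0&0\\m&0\end{smallmatrix}\right]$; the composition rule of Definition~\ref{defitrinagularmat} gives $\overline{m}\circ\left[\begin{smallmatrix}t&0\\0&0\end{smallmatrix}\right]=\left[\begin{smallmatrix}0&0\\m\bullet t&0\end{smallmatrix}\right]$, which under the identification of $P_{2}(U')$ above is the pair $(m\bullet t,0)$. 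This is precisely $\big[[g]_{T'}(t)\big]_{U'}(m)$, so $f=g$ after the identifications, and the claimed isomorphism follows.

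The computations are routine once this framework is in place; the one point needing a little care is that the componentwise isomorphisms $P_{1}\cong\mathrm{Hom}_{\mathcal{T}}(T,-)$ and $P_{2}\cong M_{T}\amalg\mathrm{Hom}_{\mathcal{U}}(U,-)$ genuinely assemble into an isomorphism of objects of the comma category $\big(\mathsf{Mod}(\mathcal{T}),\mathbb{G}\mathsf{Mod}(\mathcal{U})\big)$, and that is exactly the verification $f=g$ of the third paragraph — all the remaining naturality is already packaged in Lemmas~\ref{Tlemma3} and~\ref{lemma3}. So I do not expect a real obstacle here; the substance is just unwinding the definitions of $I_{1},I_{2}$ and of the canonical morphism in Lemma~\ref{lemmorcom}.
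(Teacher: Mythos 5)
Your proposal is correct and follows essentially the same route as the paper's own proof: compute $P_1 = P\circ I_1$ and $P_2 = P\circ I_2$ to identify them with $\mathrm{Hom}_{\mathcal{T}}(T,-)$ and $M_T\amalg\mathrm{Hom}_{\mathcal{U}}(U,-)$, then check via Lemma~\ref{lemmorcom}/Note~\ref{notaC1C2} that the canonical structure map $f$ agrees with $g$ (the calculation $\overline{m}\circ\left[\begin{smallmatrix}t&0\\0&0\end{smallmatrix}\right]=\left[\begin{smallmatrix}0&0\\m\bullet t&0\end{smallmatrix}\right]$), and invoke the equivalence $\textswab{F}$. The paper phrases it by directly exhibiting natural isomorphisms $\alpha$ and $\beta$ and verifying the comma-category square, but the substance is identical.
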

\begin{proof}
We are going to use the notation of \ref{lemmorcom} and remark \ref{notaC1C2}. Firstly, we define a morphism of $\mathcal{T}$-modules $\alpha: \mathrm{Hom}_{\mathcal{T}}(T,-)\longrightarrow 
P_{1}$. Indeed, for $T'\in \mathcal{T}$ we  define
$$\alpha_{T'}:\mathrm{Hom}_{\mathcal{T}}(T,T')\longrightarrow  P_{1}(T')=
\mathrm{Hom}_{\mathbf{\Lambda}}\Big(\left[
\begin{smallmatrix}
T&0 \\
M&U
\end{smallmatrix} \right],  \left[
\begin{smallmatrix}
T'&0 \\
M&0
\end{smallmatrix} \right] \Big)$$
as $\alpha_{T'}(t)= \left[
\begin{smallmatrix}
t & 0 \\
0 & 0
\end{smallmatrix} \right].$
It is straighforward that $\alpha$ is a morphism of $\mathcal{T}$-modules.\\
Secondly,  we define a morphism of $\mathcal{U}$-modules
$\beta:M_{T}\amalg \mathrm{Hom}_{\mathcal{U}}(U,-)\longrightarrow P_{2}$ as follows. For $U'\in \mathcal{U}$
$$\beta_{U'}:M(U',T)\amalg \mathrm{Hom}_{\mathcal{U}}(U,U')\longrightarrow  P_{2}(U')=
\mathrm{Hom}_{\mathbf{\Lambda}}\Big(\left[
\begin{smallmatrix}
T&0 \\
M&U
\end{smallmatrix} \right],  \left[
\begin{smallmatrix}
0 &0 \\
M& U'
\end{smallmatrix} \right] \Big)$$
is defined as
$\beta_{U'}(m,u)= \left[
\begin{smallmatrix}
0 & 0 \\
m & u
\end{smallmatrix} \right].$\\
It is straighforward that $\beta$ is a morphism of $\mathcal{U}$-modules.\\
Finally, we have to show that for each $T'\in \mathcal{T}$ the following diagram commutes
$$\xymatrix{\mathrm{Hom}(T,T')\ar[rr]^{[\alpha]_{T'}}\ar[d]_{[g]_{T'}} & & {\mathrm{Hom}_{\mathbf{\Lambda}}\Big(\left[
\begin{smallmatrix}
T&0 \\
M&U
\end{smallmatrix} \right],  \left[
\begin{smallmatrix}
T'&0 \\
M&0
\end{smallmatrix} \right] \Big)}\ar[d]^{f_{T'}}\\
\mathrm{Hom}_{\mathcal {U}}\big(M_{T'},M_{T}\amalg \mathrm{Hom}_{\mathcal{U}}(U,-)\big)\ar[rr]^{[\mathbb{G}(\beta)]_{T'}} & & 
\mathrm{Hom}_{\mathcal {U}}\big(M_{T'},P_{2}\big)}
$$
Indeed, let $t:T\longrightarrow T'$, then
$[\alpha]_{T'}(t):\left[
\begin{smallmatrix}
T&0 \\
M&U
\end{smallmatrix} \right]\longrightarrow \left[
\begin{smallmatrix}
T'&0 \\
M&0
\end{smallmatrix} \right]$. Then, for $U'\in \mathcal{U}$ we have that 
$$\Big[[f]_{T'}\Big([\alpha]_{T'}(t)\Big)\Big]_{U'}:M(U',T')\longrightarrow P_{2}(U')=\mathrm{Hom}_{\mathbf{\Lambda}}\Big(\left[
\begin{smallmatrix}
T&0 \\
M&U
\end{smallmatrix} \right],  \left[
\begin{smallmatrix}
0 &0 \\
M& U'
\end{smallmatrix} \right] \Big)$$ 
is defined as 
$$\Big[[f]_{T'}\Big([\alpha]_{T'}(t)\Big)\Big]_{U'}(m)=P(\overline{m})([\alpha]_{T'}(t))=\overline{m}\circ ([\alpha]_{T'}(t))=
\left[
\begin{smallmatrix}
0&0 \\
m& 0
\end{smallmatrix} \right]
\left[
\begin{smallmatrix}
t &0 \\
0 & 0
\end{smallmatrix} \right]=\left[
\begin{smallmatrix}
0&0 \\
m\bullet t & 0
\end{smallmatrix} \right].$$ for $m\in M(U',T')$, where $\overline{m}=\left[
\begin{smallmatrix}
0&0 \\
m&0
\end{smallmatrix} \right]:\left[
\begin{smallmatrix}
T'&0 \\
M&0
\end{smallmatrix} \right]\longrightarrow \left[
\begin{smallmatrix}
0&0 \\
M&U'
\end{smallmatrix} \right].$
On the other hand, we get
$[g]_{T'}(t):=\left[
\begin{smallmatrix}
\overline{t} \\
0
\end{smallmatrix} \right]:M_{T'}\rightarrow M_{T}\amalg \mathrm{Hom}_{\mathcal{U}}(U,-)$
and for $U'\in \mathcal{U}$ we have that 
$$\Big[\beta \circ \Big(\left[
\begin{smallmatrix}
\overline{t} \\
0
\end{smallmatrix} \right]\Big)\Big]_{U'}:M(U',T')\longrightarrow P_{2}(U')=\mathrm{Hom}_{\mathbf{\Lambda}}\Big(\left[
\begin{smallmatrix}
T&0 \\
M&U
\end{smallmatrix} \right],  \left[
\begin{smallmatrix}
0 &0 \\
M& U'
\end{smallmatrix} \right] \Big)$$ 
is defined as 
$\Big[\beta \circ \Big(\left[
\begin{smallmatrix}
\overline{t} \\
0
\end{smallmatrix} \right]\Big)\Big]_{U'}(m)=\beta_{U'}(m\bullet t,0)=\left[
\begin{smallmatrix}
0&0 \\
m\bullet t &0
\end{smallmatrix} \right]$
for $m\in M(U',T')$. Therefore the required diagram commutes. Since $\alpha$  and $\beta$ are isomorphism we get that
$$(\alpha,\beta):\Big(\mathrm{Hom}_{\mathcal{T}}(T,-),g, M_T\amalg \mathrm{Hom}_{\mathcal{U}}(U,-)\Big)\longrightarrow (P_{1},f,P_{2})$$ is an isomorphism. Therefore  we get 
$$P\cong P_{1}\amalg_{f} P_{2}\cong  \big(\mathrm{Hom}_{\mathcal{T}}(T,-)\big)\underset{g}\amalg \big(M_T\amalg \mathrm{Hom}_{\mathcal{U}}(U,-)\big).$$
\end{proof}

Recall that if $\mathcal{C}$ is a Hom-finite $K$-category. We denote by $\mathsf{mod}(\mathcal{C})$ the full subcategory of $\mathsf{Mod}(\mathcal{C})$ whose objects are the finitely presented functors.That is, $M\in \mathsf{mod}(\mathcal{C})$ if and only if, there exists an exact sequence in $\mathsf{Mod}(\mathcal{C})$
$$\xymatrix{P_{1}\ar[r] & P_{0}\ar[r] & M\ar[r] & 0,}$$
where $P_{1}$ and $P_{0}$ are finitely generated projective $\mathcal{C}$-modules. Since $\mathsf{mod}(\mathcal{C})\subseteq (\mathcal{C},\mathrm{mod}(K))$,  we have that $A\in \mathsf{mod}(\mathcal{T})$ if and only if there exists an exact sequence
$$\xymatrix{\mathrm{Hom}_{\mathcal{T}}(T_{1},-)\ar[r]^{\alpha'} & \mathrm{Hom}_{\mathcal{T}}(T_{0},-)\ar[r]^(.7){\alpha} &  A\ar[r] & 0,}$$
with $T_{1},T_{0}\in \mathcal{T}$.

\begin{proposition}\label{exacsedos}
A sequence of maps 
$$\xymatrix{0 \ar[r] & (A,f,B)\ar[r]^{(\alpha,\beta)} & (A',f',B')\ar[r]^{(\alpha',\beta')} & (A'',f'',B'')\ar[r] & 0}$$ is exact  in
 $\Big( \mathsf{Mod}(\mathcal{T}), \mathbb{G}\mathsf{Mod}(\mathcal{U})\Big)$ if and only if the following are exact sequences
 $\xymatrix{0\ar[r] & A\ar[r]^{\alpha} & A'\ar[r]^{\alpha} & A\ar[r] & 0},\quad$
$\xymatrix{0\ar[r] & B\ar[r]^{\beta} & B'\ar[r]^{\beta} & B''\ar[r] & 0}$ 
in $\mathsf{Mod}(\mathcal{T})$ and $\mathsf{Mod}(\mathcal{U})$ respectively.
 \end{proposition}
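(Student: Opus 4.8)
The plan is to reduce the statement to the equivalence $\textswab{F}$ of Theorem \ref{equivalence1} together with the pointwise description of exactness in functor categories. Since $\textswab{F}\colon\big(\mathsf{Mod}(\mathcal{T}),\mathbb{G}\mathsf{Mod}(\mathcal{U})\big)\longrightarrow\mathrm{Mod}(\mathbf{\Lambda})$ is an equivalence of abelian categories (the comma category being abelian precisely because it is equivalent to $\mathrm{Mod}(\mathbf{\Lambda})$), it preserves and reflects exact sequences. Hence the displayed sequence is exact in $\big(\mathsf{Mod}(\mathcal{T}),\mathbb{G}\mathsf{Mod}(\mathcal{U})\big)$ if and only if its image $0\to A\amalg_{f}B\xrightarrow{\alpha\amalg\beta}A'\amalg_{f'}B'\xrightarrow{\alpha'\amalg\beta'}A''\amalg_{f''}B''\to 0$ is exact in $\mathrm{Mod}(\mathbf{\Lambda})$, where I use that $\textswab{F}(\alpha,\beta)=\alpha\amalg\beta$ and $\textswab{F}(A,f,B)=A\amalg_{f}B$.

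Next I would evaluate pointwise. By property (1) of $\mathrm{Mod}(\mathcal{C})$ recalled in Subsection 2.1, the sequence of $\mathbf{\Lambda}$-modules above is exact exactly when, for every object $\left[\begin{smallmatrix}T&0\\M&U\end{smallmatrix}\right]$ of $\mathbf{\Lambda}$, the evaluated sequence of abelian groups is exact. By the definition of $A\amalg_{f}B$ and of the natural transformation $\alpha\amalg\beta$, this evaluated sequence is $0\to A(T)\amalg B(U)\to A'(T)\amalg B'(U)\to A''(T)\amalg B''(U)\to 0$, with arrows $\alpha_{T}\amalg\beta_{U}$ and $\alpha'_{T}\amalg\beta'_{U}$; in particular the arrows are ``diagonal'', carrying $(x,y)$ to $(\alpha_{T}(x),\beta_{U}(y))$, with no cross term, because the terms $m\cdot x$ only enter the action of $A\amalg_{f}B$ on morphisms of $\mathbf{\Lambda}$, not the morphisms coming from $\big(\mathsf{Mod}(\mathcal{T}),\mathbb{G}\mathsf{Mod}(\mathcal{U})\big)$.

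Then I would invoke the elementary fact that a finite coproduct in $\mathbf{Ab}$ is a biproduct, so kernels and images split along the summands: a sequence $X_{1}\amalg Y_{1}\xrightarrow{a\amalg b}X_{2}\amalg Y_{2}\xrightarrow{a'\amalg b'}X_{3}\amalg Y_{3}$ is exact if and only if both $X_{1}\xrightarrow{a}X_{2}\xrightarrow{a'}X_{3}$ and $Y_{1}\xrightarrow{b}Y_{2}\xrightarrow{b'}Y_{3}$ are exact, and likewise $a\amalg b$ is mono (resp. epi) iff $a$ and $b$ are. Applying this objectwise, exactness of the $\mathbf{\Lambda}$-module sequence is equivalent to the exactness of $0\to A(T)\to A'(T)\to A''(T)\to 0$ for every $T\in\mathcal{T}$ together with the exactness of $0\to B(U)\to B'(U)\to B''(U)\to 0$ for every $U\in\mathcal{U}$. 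Using property (1) once more, now for $\mathsf{Mod}(\mathcal{T})$ and for $\mathsf{Mod}(\mathcal{U})$, these two families of pointwise conditions are exactly the exactness of $0\to A\to A'\to A''\to 0$ in $\mathsf{Mod}(\mathcal{T})$ and of $0\to B\to B'\to B''\to 0$ in $\mathsf{Mod}(\mathcal{U})$, which is the assertion.

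I do not expect a genuine obstacle here: the proposition is a bookkeeping consequence of Theorem \ref{equivalence1} and the pointwise criterion for exactness. The only step deserving an explicit line is the identification, in the second paragraph, of $\textswab{F}(\alpha,\beta)$ evaluated at $\left[\begin{smallmatrix}T&0\\M&U\end{smallmatrix}\right]$ with the diagonal map $\alpha_{T}\amalg\beta_{U}$, so that the summand-wise decomposition of the third paragraph applies; this is immediate from the construction of $\textswab{F}$ in Section \ref{section2}. Alternatively one could bypass $\mathbf{\Lambda}$ and argue directly that the two projections $(A,f,B)\mapsto A$ and $(A,f,B)\mapsto B$ are exact and jointly reflect exactness, but establishing exactness of these projections again rests on the computation of kernels and cokernels in the comma category, which is precisely what Theorem \ref{equivalence1} packages.
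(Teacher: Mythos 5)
Your proof is correct. Note that the paper itself records only ``It is straightforward'' for this proposition, so there is no written argument in the paper to compare against; your reduction is one reasonable way to supply the missing details, routing exactness through the equivalence $\textswab{F}$, then evaluating pointwise and using the biproduct decomposition of $A(T)\amalg B(U)$ under the diagonal maps $\alpha_{T}\amalg\beta_{U}$ and $\alpha'_{T}\amalg\beta'_{U}$.

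One remark on economy rather than correctness: your appeal to $\textswab{F}$ is sound but slightly heavier than necessary. Once one knows (or defines) that the comma category is abelian with kernels and cokernels computed componentwise (which holds because $\mathbb{G}$, being a right adjoint, is left exact, so the kernel of $(\alpha,\beta)$ is simply $(\ker\alpha,\ker\beta)$ with the induced map into $\mathbb{G}(\ker\beta)$, and the cokernel is $(\mathrm{coker}\,\alpha,\mathrm{coker}\,\beta)$ since the square with $\mathbb{G}(\mathrm{coker}\,\beta)$ commutes by the universal property of cokernels), the two projection functors $(A,f,B)\mapsto A$ and $(A,f,B)\mapsto B$ are exact and jointly reflect exactness, and the statement follows without passing through $\mathrm{Mod}(\mathbf{\Lambda})$ or invoking the pointwise criterion twice. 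You in fact mention this alternative in your last sentence, but say it "rests on" Theorem \ref{equivalence1}; it actually does not — the componentwise computation of kernels and cokernels in the comma category can be checked directly and is independent of the matrix-category equivalence. Either way, your argument as written is valid, and the identification of $\textswab{F}(\alpha,\beta)$ evaluated at an object with the diagonal map $\alpha_{T}\amalg\beta_{U}$ is the correct observation that makes the summand-wise splitting go through.
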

\begin{proof}
It is straightforward.
\end{proof}

\begin{proposition}\label{epiintocateg}
Let $(A,f,B)$ be an object in $\Big( \mathsf{Mod}(\mathcal{T}), \mathbb{G}\mathsf{Mod}(\mathcal{U})\Big)$ and
let $\alpha:\mathrm{Hom}_{\mathcal{T}}(T,-)\longrightarrow A$ an epimorphism in $\mathsf{Mod}(\mathcal{T})$  and 
$\beta:\mathrm{Hom}_{\mathcal{U}}(U,-)\longrightarrow B$ an epimorphism in $\mathsf{Mod}(\mathcal{U})$. Then there exists an epimorphism in $\mathsf{Mod}\Big(\left[
\begin{smallmatrix}
\mathcal{T}&0 \\
M& \mathcal{U}
\end{smallmatrix} \right]\Big)$
$$\Gamma: P=\mathrm{Hom}_{\mathbf{\Lambda}}\Big(\left[
\begin{smallmatrix}
T&0 \\
M&U
\end{smallmatrix} \right],-\Big)\longrightarrow \textswab{F}(A,f,B).$$ 
\end{proposition}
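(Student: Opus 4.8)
The plan is to build $\Gamma$ directly using the characterization of the projective $\mathbf{\Lambda}$-module $P$ obtained in Proposition \ref{projecinmaps}, together with the equivalence $\textswab{F}$ of Theorem \ref{equivalence1} and the description of morphisms in the comma category. By Proposition \ref{projecinmaps} we have an isomorphism
$$P\cong \big(\mathrm{Hom}_{\mathcal{T}}(T,-)\big)\underset{g}\amalg \big(M_T\amalg \mathrm{Hom}_{\mathcal{U}}(U,-)\big),$$
so $P$ corresponds under $\textswab{F}$ to the object $\big(\mathrm{Hom}_{\mathcal{T}}(T,-),\, g,\, M_T\amalg \mathrm{Hom}_{\mathcal{U}}(U,-)\big)$ of $\big(\mathsf{Mod}(\mathcal{T}),\mathbb{G}\mathsf{Mod}(\mathcal{U})\big)$. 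Since $\textswab{F}$ is full and faithful (Lemma \ref{Tlemma3}), to give $\Gamma:P\to\textswab{F}(A,f,B)$ it suffices to give a morphism
$$(\widehat\alpha,\widehat\beta):\big(\mathrm{Hom}_{\mathcal{T}}(T,-),\, g,\, M_T\amalg \mathrm{Hom}_{\mathcal{U}}(U,-)\big)\longrightarrow (A,f,B)$$
in the comma category, and then set $\Gamma:=\textswab{F}(\widehat\alpha,\widehat\beta)$ composed with the isomorphism of Proposition \ref{projecinmaps}.

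First I would take $\widehat\alpha:=\alpha:\mathrm{Hom}_{\mathcal{T}}(T,-)\to A$, the given epimorphism. For the second component, note $M_T\amalg \mathrm{Hom}_{\mathcal{U}}(U,-)$ is a coproduct in $\mathsf{Mod}(\mathcal{U})$, so a morphism out of it is a pair. On the summand $\mathrm{Hom}_{\mathcal{U}}(U,-)$ use $\beta:\mathrm{Hom}_{\mathcal{U}}(U,-)\to B$. On the summand $M_T$ use the morphism of $\mathcal{U}$-modules $f_T(x_0):M_T\to B$ associated to a chosen preimage $x_0\in A(T)$ of a distinguished generator — more precisely, since $\alpha$ is induced by Yoneda from an element of $A(T)$, let $x_0\in A(T)$ be the image of $1_T$ under $\alpha_T$, and set the $M_T$-component to be $[f_T(x_0)]_{-}:M_T\to B$ (this is a $\mathcal{U}$-module morphism by the construction in Proposition \ref{Tlemma1}/\ref{lemmorcom}). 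Define $\widehat\beta:=\big[\,[f_T(x_0)]_{-},\ \beta\,\big]:M_T\amalg\mathrm{Hom}_{\mathcal{U}}(U,-)\to B$. The key verification is that the square
$$\xymatrix{\mathrm{Hom}_{\mathcal{T}}(T,-)\ar[r]^(.35){\alpha}\ar[d]_{g} & A\ar[d]^{f}\\ \mathbb{G}\big(M_T\amalg\mathrm{Hom}_{\mathcal{U}}(U,-)\big)\ar[r]^(.6){\mathbb{G}(\widehat\beta)} & \mathbb{G}(B)}$$
commutes in $\mathsf{Mod}(\mathcal{T})$; this reduces, via the explicit formula $[g]_{T'}(t)=\left[\begin{smallmatrix}\bar t\\ 0\end{smallmatrix}\right]$ and the naturality relation $(m\bullet t)\cdot x = m\cdot(t\ast x)$ from Proposition \ref{Tlemma1}(a), to the identity $[f_{T'}(A(t)(x_0))]_{U'}(m) = [\widehat\beta]_{U'}\big((m\bullet t,0)\big)$ for $t\in\mathrm{Hom}_{\mathcal{T}}(T,T')$ and $m\in M(U',T)$ — both sides equal $[f_T(x_0)]_{U'}(m\bullet t)$, using that $m\bullet t = M(1_{U'}\otimes t^{op})(m)$ and the definition of $\widehat\beta$ on the $M_T$-summand.

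Finally I would check surjectivity. By Proposition \ref{exacsedos}, a morphism $(\widehat\alpha,\widehat\beta)$ in the comma category is an epimorphism iff $\widehat\alpha$ and $\widehat\beta$ are epimorphisms in $\mathsf{Mod}(\mathcal{T})$ and $\mathsf{Mod}(\mathcal{U})$ respectively. We have $\widehat\alpha=\alpha$ epi by hypothesis; and $\widehat\beta$ is epi because it is a morphism out of a coproduct one of whose components is the epimorphism $\beta:\mathrm{Hom}_{\mathcal{U}}(U,-)\to B$, so its image already contains the image of $\beta$, which is all of $B$. Hence $(\widehat\alpha,\widehat\beta)$ is an epimorphism in $\big(\mathsf{Mod}(\mathcal{T}),\mathbb{G}\mathsf{Mod}(\mathcal{U})\big)$, and transporting along the equivalence $\textswab{F}$ yields the desired epimorphism $\Gamma:P\to\textswab{F}(A,f,B)$ in $\mathsf{Mod}(\mathbf{\Lambda})$. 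The main obstacle I anticipate is purely bookkeeping: correctly identifying the element $x_0\in A(T)$ that makes the left-hand square commute and keeping the index conventions $m\bullet t$ versus $t\ast x$ straight; once the distinguished preimage is fixed, commutativity is a direct application of Proposition \ref{Tlemma1}.
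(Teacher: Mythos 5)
Your proposal is correct and follows essentially the same route as the paper's proof: you take $\widehat\alpha=\alpha$, define the $M_T$-component of $\widehat\beta$ to be $f_T(\alpha_T(1_T))$ (the paper calls this $\rho$), verify commutativity of the square via Proposition~\ref{Tlemma1}(a) and the explicit formula for $g$, and conclude surjectivity from Proposition~\ref{exacsedos}. No meaningful difference.
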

\begin{proof}
Let us construct an epimorphism
$(\theta,\psi): \Big(\mathrm{Hom}_{\mathcal{T}}(T,-),g, M_T\amalg \mathrm{Hom}_{\mathcal{U}}(U,-)\Big)\longrightarrow (A,f,B).$
That is, we want morphisms $\theta: \mathrm{Hom}_{\mathcal{T}}(T,-)\longrightarrow A$ and $\psi:  M_T\amalg \mathrm{Hom}_{\mathcal{U}}(U,-)\longrightarrow B$ such that the following diagram commutes
$$\xymatrix{\mathrm{Hom}_{\mathcal{T}}(T,-)\ar[rr]^{\theta}\ar[d]_{g} & & A\ar[d]^{f}\\
\mathbb{G}\Big( M_T\amalg \mathrm{Hom}_{\mathcal{U}}(U,-)\Big )\ar[rr]^{\mathbb{G}(\psi)} & & \mathbb{G}(B).}$$
First, we will construct $\psi: M_{T}\amalg \mathrm{Hom}_{\mathcal{U}}(U,-)\longrightarrow B.$
We  have $\rho:=f_{T}\Big(\alpha_{T}(1_{T})\Big):M_{T}\longrightarrow B$.
Therefore, we define
$$\psi:=(\rho,\beta):M_{T}\amalg \mathrm{Hom}_{\mathcal{U}}(U,-)\longrightarrow B$$
We note that for each $U'\in \mathcal{U}$ we have that
 $$[\psi]_{U'}(m,u)=\rho_{U'}(m)+\beta_{U'}(u)=\Big[f_{T}\Big(\alpha_{T}(1_{T})\Big)\Big ]_{U'}(m)+\beta_{U'}(u)$$ for 
every $(m,u)\in M(U',T)\amalg \mathrm{Hom}_{\mathcal{U}}(U,U')$.
Now, since $\beta$ is an epimorphism we get that $\psi$ is an 
epimorphism.\\
We define $\theta:=\alpha$. Let us check that the following diagramm commutes for each $T'\in \mathcal{T}$
$$\xymatrix{\mathrm{Hom}(T,T')\ar[rr]^{[\alpha]_{T'}}\ar[d]_{[g]_{T'}} & & A(T')\ar[d]^{f_{T'}}\\
\mathrm{Hom}_{\mathsf{Mod}(\mathcal {U})}\big(M_{T'},M_{T}\amalg \mathrm{Hom}_{\mathcal{U}}(U,-)\big)\ar[rr]^{[\mathbb{G}(\psi)]_{T'}} & & 
\mathrm{Hom}_{\mathsf{Mod}(\mathcal {U})}\big(M_{T'},B\big)}
$$
Indeed, for $t:T\longrightarrow T'$ we have 
$[g]_{T'}(t)=\left[
\begin{smallmatrix}
\overline{t} \\
0
\end{smallmatrix} \right]:M_{T'}\rightarrow M_{T}\amalg \mathrm{Hom}_{\mathcal{U}}(U,-)$ and 
for $U'\in \mathcal{U}$ we get $\Big[[\mathbb{G}(\psi)]_{T'}\Big([g]_{T'}(t)\Big)\Big]_{U}=[\psi]_{U'}\circ \Big[\left[
\begin{smallmatrix}
\overline{t} \\
0
\end{smallmatrix} \right]\Big]_{U'}:M(U',T')\longrightarrow B(U').$
Then, for $m\in M(U',T')$ we obtain that

\begin{align*}
\Big([\psi]_{U'}\circ \Big[\left[
\begin{smallmatrix}
\overline{t} \\
0
\end{smallmatrix} \right]\Big]_{U'}\Big)(m)=[\psi]_{U'}\Big(
\Big[\left[
\begin{smallmatrix}
\overline{t} \\
0
\end{smallmatrix} \right]\Big]_{U'}(m)\Big)
& =[\psi]_{U'}(m\bullet t,0)\\
& =\Big[f_{T}\Big(\alpha_{T}(1_{T})\Big)\Big ]_{U'}(m\bullet  t).
\end{align*}
On the other hand, since $\alpha:\mathrm{Hom}_{\mathcal{T}}(T,-)\longrightarrow A$ and $f:A\longrightarrow \mathbb{G}(B)$ are morphisms of $\mathcal{T}$-modules we have the following commutative diagram in $\mathbf{Ab}$
$$\xymatrix{\mathrm{Hom}_{\mathcal{T}}(T,T)\ar[r]^{\alpha_{T}}\ar[d]_{\mathrm{Hom}_{\mathcal{T}}(T,t)} & A(T)\ar[rr]^{f_{T}}\ar[d]^{A(t)} & & \mathrm{Hom}_{\mathsf{Mod}(\mathcal{U})}(M_{T},B)\ar[d]^{\mathrm{Hom}_{\mathsf{Mod}(\mathcal{U})}(\overline{t},B)}\\
\mathrm{Hom}_{\mathcal{T}}(T,T')\ar[r]^{\alpha_{T'}} & A(T')\ar[rr]^{f_{T'}} &  &\mathrm{Hom}_{\mathsf{Mod}(\mathcal{U})}(M_{T'},B).}$$
Therefore, we get that $\mathrm{Hom}_{\mathcal{T}}(T,t)(1_{T})=t$. 
In this way we obtain that
\begin{align*}
f_{T'}([\alpha]_{T'}(t))=f_{T'}\Big([\alpha]_{T'}\big(\mathrm{Hom}_{\mathcal{T}}(T,t)(1_{T})\big)\Big)& =\mathrm{Hom}_{\mathsf{Mod}(\mathcal{U})}(\overline{t},B)\Big(f_{T}\Big(\alpha_{T}(1_{T})\Big)\Big)\\
& =f_{T}\Big(\alpha_{T}(1_{T})\Big)\circ \overline{t}.
\end{align*}
Then,  for $U'\in \mathcal{U}$ and $m\in M(U',T')$  we have that
\begin{align*}
\Big(\Big[f_{T'}([\alpha]_{T'}(t))\Big]_{U'}\Big)(m) &=\Big(\Big[f_{T}\Big(\alpha_{T}(1_{T})\Big)\Big]_{U'}\circ \Big[\overline{t}\Big]_{U'}\Big)(m)\\
&=\Big[f_{T}\Big(\alpha_{T}(1_{T})\Big)\Big]_{U'}\Big(\Big[\overline{t}\Big]_{U'}(m)\Big)\\
& = \Big[f_{T}\Big(\alpha_{T}(1_{T})\Big)\Big]_{U'}(m\bullet t)
\end{align*}
Proving that the required diagramm commutes.\\
Now, since $\alpha$ and $\beta$ are epimorphisms, it follows by \ref{exacsedos} that 
$$\textswab{F}(\alpha,\psi):\textswab{F}\Big(\mathrm{Hom}_{\mathcal{T}}(T,-),g, M_T\amalg \mathrm{Hom}_{\mathcal{U}}(U,-)\Big)\longrightarrow \textswab{F}(A,f,B)$$ 
is an epimorphism.
\end{proof}
%$\textswab{F}:\Big( \mathsf{Mod}(\mathcal{T}), \mathbb{G}\mathsf{Mod}(\mathcal{U})\Big) \longrightarrow 
%\mathrm{Mod}(\mathbf{\Lambda})$

\begin{lemma}\label{fpresented1}
Let $\mathcal{U}$ and $\mathcal{T}$ be Hom-finite $K$-categories $M\in \mathsf{Mod}(\mathcal{U}\otimes \mathcal{T}^{op})$ and $\mathbb{F}$ the left adjoint to $\mathbb{G}$. Then,
\begin{itemize}
\item[(i)]$\mathbb{F}(\mathrm{Hom}_{\mathcal{T}}(T,-))\cong M_{T}$.

\item[(ii)] Assume $M_{T}\in\mathsf{mod}(\mathcal U)$ for all $T\in\mathcal{T}$. Then  $A\in\mathrm{mod}(\mathcal{T})$ implies
 $\mathbb{F}(A)\in\mathsf{mod}(\mathcal{U})$.
\end{itemize} 
\end{lemma}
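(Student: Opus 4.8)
The plan is to read off (i) from the construction of $\mathbb{F}$, and to deduce (ii) by applying $\mathbb{F}$ to a finite projective presentation of $A$ together with the right exactness of $\mathbb{F}$.

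For (i): recall that $\mathbb{F}$ was constructed, via \cite[Theorem 6.3]{Popescu}, precisely so that $\mathbb{F}\circ Y\simeq E$, where $Y:\mathcal{T}\to\mathsf{Mod}(\mathcal{T})$ is the Yoneda functor, $Y(T)=\mathrm{Hom}_{\mathcal{T}}(T,-)$, and $E$ is the functor of Proposition \ref{dosfuntores}, with $E(T)=M_{T}$. Evaluating the natural isomorphism $\mathbb{F}\circ Y\simeq E$ at an object $T\in\mathcal{T}$ gives $\mathbb{F}(\mathrm{Hom}_{\mathcal{T}}(T,-))\cong M_{T}$ at once, which is exactly (i); the isomorphism is in fact natural in $T$, though only its object-wise form is needed here.

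For (ii): the key point is that $\mathbb{F}$, being left adjoint to $\mathbb{G}$, preserves all colimits, in particular cokernels, and is therefore right exact. Let $A\in\mathsf{mod}(\mathcal{T})$; as recalled before Proposition \ref{exacsedos}, there is an exact sequence $\mathrm{Hom}_{\mathcal{T}}(T_{1},-)\xrightarrow{\alpha'}\mathrm{Hom}_{\mathcal{T}}(T_{0},-)\xrightarrow{\alpha}A\to 0$ with $T_{0},T_{1}\in\mathcal{T}$. Applying $\mathbb{F}$ and invoking (i) yields an exact sequence $M_{T_{1}}\xrightarrow{\gamma}M_{T_{0}}\to\mathbb{F}(A)\to 0$ in $\mathsf{Mod}(\mathcal{U})$, so $\mathbb{F}(A)\cong\mathrm{Coker}(\gamma)$ is a cokernel of a morphism between the two objects $M_{T_{0}},M_{T_{1}}$, which belong to $\mathsf{mod}(\mathcal{U})$ by hypothesis.

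It then remains to note the routine fact that $\mathsf{mod}(\mathcal{U})$ is closed under cokernels: given $h:N_{1}\to N_{0}$ with $N_{0},N_{1}\in\mathsf{mod}(\mathcal{U})$, choose a presentation $Q_{1}\to Q_{0}\xrightarrow{\pi}N_{0}\to 0$ by finitely generated projective $\mathcal{U}$-modules and an epimorphism $\rho:Q\to N_{1}$ with $Q$ finitely generated projective, lift $h\circ\rho$ along $\pi$ to a morphism $Q\to Q_{0}$, and combine it with $Q_{1}\to Q_{0}$ to obtain an exact sequence $Q_{1}\oplus Q\to Q_{0}\to\mathrm{Coker}(h)\to 0$; hence $\mathrm{Coker}(h)\in\mathsf{mod}(\mathcal{U})$. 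Applying this with $h=\gamma$ gives $\mathbb{F}(A)\in\mathsf{mod}(\mathcal{U})$, which proves (ii). I do not anticipate any genuine obstacle; the one point deserving care is the right exactness of $\mathbb{F}$, which is automatic from the existence of the right adjoint $\mathbb{G}$.
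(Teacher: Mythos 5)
Your proof is correct and follows essentially the same route as the paper: part (i) by evaluating the natural isomorphism $\mathbb{F}\circ Y\simeq E$ at $T$, and part (ii) by applying the right-exact functor $\mathbb{F}$ to a finite projective presentation of $A$ and using (i). The only difference is cosmetic: where the paper cites \cite[4.2(b)]{AusM1} for the fact that cokernels of maps between finitely presented $\mathcal{U}$-modules are finitely presented, you prove that fact directly; both are valid.
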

\begin{proof}
\begin{enumerate}
\item [(i)] Since $\mathbb{F}\circ Y\simeq E$ where $Y:\mathcal{T}\longrightarrow 
\mathsf{Mod}(\mathcal{T})$ is the Yoneda functor, we get that
$M_{T}=E(T)\simeq (\mathbb{F}\circ Y)(E)=\mathbb{F}(\mathrm{Hom}_{\mathcal{T}}(T,-))$.

\item [(ii)]
Suppose that $A\in\mathrm{mod}(\mathcal{T})$. Then, there exists an exact sequence
$$\xymatrix{\mathrm{Hom}_{\mathcal{T}}(T_{1},-)\ar[r] & 
\mathrm{Hom}_{\mathcal{T}}(T_{0},-)\ar[r] & A\ar[r] & 0}$$
Since $(\mathbb{F},\mathbb{G})$ is an adjoint pair we have that
$\mathbb{F}$ is right exact. Then we get
$$\xymatrix{M_{T_{1}}\ar[r] & 
M_{T_{0}}\ar[r] & \mathbb{F}(A)\ar[r] & 0}$$
Since $M_{T_{0}},M_{T_{1}}\in \mathsf{mod}(\mathcal{U})$, by \cite[4.2 (b)]{AusM1}, we get that $\mathbb{F}(A)\in \mathsf{mod}(\mathcal{U})$.
\end{enumerate}
\end{proof}

\begin{lemma}\label{fpresented2}
Let $\mathcal{U}$ and $\mathcal{T}$ be Hom-finite $K$-categories $\overline{M}\in \mathsf{Mod}(\mathcal{T}^{op}\otimes \mathcal{U})$ 
and $\overline{\mathbb{F}}$ the left adjoint to $\overline{\mathbb{G}}$. Then
\begin{itemize}
\item[(i)]$\overline{\mathbb{F}}(\mathrm{Hom}_{\mathcal{U}}(-,U))\cong M_{U}$

\item[(ii)] Assume $M_{U}\in\mathsf{mod}(\mathcal{T}^{op})$ for all $U \in\mathcal{U}$. Then  $B\in\mathrm{mod}(\mathcal{U}^{op})$ implies
 $\overline{\mathbb{F}}(B)\in\mathsf{mod}(\mathcal{T}^{op})$.
\end{itemize} 
\end{lemma}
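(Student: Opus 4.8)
The plan is to mirror exactly the proof of Lemma \ref{fpresented1}, since \ref{fpresented2} is its statement with the roles of $\mathcal{T}$ and $\mathcal{U}$ interchanged and with opposite categories in place of the original ones. The only thing to keep in mind is the bookkeeping: the functor $\overline{\mathbb{F}}:\mathsf{Mod}(\mathcal{U}^{op})\longrightarrow \mathsf{Mod}(\mathcal{T}^{op})$ was characterized (via \cite[Theorem 6.3]{Popescu}) as the unique colimit-preserving functor with $\overline{\mathbb{F}}\circ \overline{Y}\simeq \overline{E}$, where $\overline{Y}:\mathcal{U}^{op}\longrightarrow \mathsf{Mod}(\mathcal{U}^{op})$ is the Yoneda functor and $\overline{E}:\mathcal{U}^{op}\longrightarrow \mathsf{Mod}(\mathcal{T}^{op})$ is the functor from \ref{otrosdosfun}, which on objects sends $U$ to $\overline{M}_{U}=M_{U}$.

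First I would prove (i). Applying the natural isomorphism $\overline{\mathbb{F}}\circ \overline{Y}\simeq \overline{E}$ at the object $U\in\mathcal{U}^{op}$ gives
$$\overline{\mathbb{F}}(\mathrm{Hom}_{\mathcal{U}}(-,U))=\overline{\mathbb{F}}(\overline{Y}(U))\cong \overline{E}(U)=\overline{M}_{U}=M_{U},$$
which is exactly the claim; here I use the identification $\overline{M}_{U}=M_{U}$ noted right after \ref{otrosdosfun}. (Recall that, as an object of $\mathsf{Mod}(\mathcal{U}^{op})$, $\overline{Y}(U)=\mathrm{Hom}_{\mathcal{U}^{op}}(U,-)=\mathrm{Hom}_{\mathcal{U}}(-,U)$.)

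Next I would prove (ii). Suppose $B\in\mathrm{mod}(\mathcal{U}^{op})$. By the description of finitely presented functors recalled before Proposition \ref{exacsedos}, there is an exact sequence in $\mathsf{Mod}(\mathcal{U}^{op})$
$$\xymatrix{\mathrm{Hom}_{\mathcal{U}}(-,U_{1})\ar[r] & \mathrm{Hom}_{\mathcal{U}}(-,U_{0})\ar[r] & B\ar[r] & 0}$$
with $U_{1},U_{0}\in\mathcal{U}$. Since $(\overline{\mathbb{F}},\overline{\mathbb{G}})$ is an adjoint pair, $\overline{\mathbb{F}}$ is right exact (it is a left adjoint), so applying it yields an exact sequence
$$\xymatrix{M_{U_{1}}\ar[r] & M_{U_{0}}\ar[r] & \overline{\mathbb{F}}(B)\ar[r] & 0}$$
using part (i) to identify $\overline{\mathbb{F}}(\mathrm{Hom}_{\mathcal{U}}(-,U_{i}))\cong M_{U_{i}}$. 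By hypothesis $M_{U_{0}},M_{U_{1}}\in\mathsf{mod}(\mathcal{T}^{op})$, and since $\mathsf{mod}(\mathcal{T}^{op})$ is closed under cokernels inside $\mathsf{Mod}(\mathcal{T}^{op})$ (this is \cite[4.2 (b)]{AusM1}), we conclude $\overline{\mathbb{F}}(B)\in\mathsf{mod}(\mathcal{T}^{op})$. There is essentially no obstacle here; the only mild subtlety is making sure the opposite-category conventions are applied consistently when reading off the projective presentation of $B$ and when identifying $\overline{\mathbb{F}}$ on representables, but both were set up carefully earlier in the paper, so this is routine.
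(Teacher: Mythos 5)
Your proof is correct and is exactly what the paper intends: the paper's own proof of Lemma \ref{fpresented2} simply says ``The same proof as in \ref{fpresented1},'' and you have spelled out that adaptation faithfully (using $\overline{\mathbb{F}}\circ\overline{Y}\simeq\overline{E}$ for (i), and right exactness of the left adjoint plus closure of finitely presented functors under cokernels for (ii)).
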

\begin{proof}
The same proof as in \ref{fpresented1}.
\end{proof}

\section{Dualizing Varieties}
The pourpose of this section is to show that under certain conditions on $\mathcal{U}$ and $\mathcal{T}$, we can restric the diagram in the theorem \ref{cuadrocasicon} to the category of finitely presented modules and as a consecuence we will have that $\mathbf{\Lambda}:=\left[\begin{smallmatrix}
\mathcal{T} &0 \\
M&\mathcal{U}
\end{smallmatrix} \right]$ is dualizing.\\
In order to give a description for the category $\mathrm{mod}(\mathbf{\Lambda})$ inside $\Big( \mathsf{Mod}(\mathcal{T}), \mathbb{G}\mathsf{Mod}(\mathcal{U})\Big)$,  we will give conditions
on the functor $M$  and on the categories  $\mathcal U$ and $ \mathcal T$  to obtain a subcategory $\mathcal X\subset \Big( \mathsf{Mod}(\mathcal{T}), \mathbb{G}\mathsf{Mod}(\mathcal{U})\Big)$ for which the restriction functor $\textswab{F}|_{\mathcal X}:\mathcal X\rightarrow \mathrm{Mod}(\mathbf{\Lambda})$  induces a equivalence of subcategories between $\mathcal X$ and $\mathrm{mod}(\mathbf{\Lambda})$.\\

%%%%%%%%%%%%%%%%%%%%%%%%%%%%%%%%%%%%%%%%%%%%%%%%%%%%%%%%%%%%%%%%%%%%%%%%%%%%%%%%%%%%%%%%%%%%%%%%%%%%%%%%%%%%%%%%%%%%%%%%%%%%%%%%%%%%%%%%%% Aca se empieza a utilizar el producto tensorial sobre R
\begin{note}
From now on, we will assume the $\mathcal{U}$ and $\mathcal{T}$ are dualizing $K$-varieties and in this case we are going to take $M\in \mathrm{Mod}(\mathcal{U}\otimes_{K}\mathcal{T}^{op})$ (see \ref{productensorR}).
\end{note}

We recall that a Hom-finite $K$-category $\mathcal{U}$ is a dualizing $K$-variety if the duality
$\mathbb{D}_{\mathcal{U}}:(\mathcal{C}, \mathsf{mod}(K))\longrightarrow (\mathcal{C}^{op}, \mathsf{mod}(K))$ restricts to a duality
$$\mathbb{D}_{\mathcal{U}}:\mathsf{mod}(\mathcal{U})\longrightarrow \mathsf{mod}(\mathcal{U}^{op}).$$
Assume that $\mathcal U$  and $\mathcal T$ are dualizing varietes  and that $M_T\in \mathrm{mod}(\mathcal U)$, for all $T\in\mathcal T$ and that
$M_U\in \mathrm{mod}(\mathcal T ^{op})$. Then $\mathrm{mod}(\mathcal U)$ and  $\mathrm{mod}(\mathcal T)$  are abelian categories, and  by Lemma
\ref{fpresented1}  the restriction $\mathbb{F}^{*}:=\mathbb{F}|_{\mathrm{mod}(\mathcal{T})}:\mathrm{mod}(\mathcal{T})\rightarrow  \mathrm{Mod}(\mathcal{U})$ has image in $\mathrm{mod}(\mathcal{U})$. Similarly by \ref{fpresented2} we have a functor 
 $\overline{\mathbb{F}}^{*}:=\overline{\mathbb{F}}|_{\mathrm{mod}(\mathcal{U}^{op})}:\mathrm{mod}(\mathcal{U}^{op})\rightarrow  \mathrm{mod}(\mathcal{T}^{op})$.

\begin{lemma}\label{resringebien}
Let $\mathcal{U}$ and $\mathcal{T}$ be dualizing $K$-varieties and
$M\in \mathrm{Mod}(\mathcal{U}\otimes_{K}\mathcal{T}^{op})$. Assume that $M_{T}\in\mathsf{mod}(\mathcal{U})$ for all $T\in\mathcal{T}$ and $M_{U}\in\mathsf{mod}(\mathcal{T}^{op})$ for all $U\in\mathcal{U}$. Then for all $B\in \mathsf{mod}(\mathcal{U})$ we get that $\mathbb{G}(B)\in \mathsf{mod}(\mathcal{T})$. That is if
$\mathbb{G}^{\ast}:=\mathbb{G}|_{\mathsf{mod}(\mathcal{U})}$ we have that
$$\mathbb{G}^{\ast}:\mathsf{mod}(\mathcal{U})\longrightarrow \mathsf{mod}(\mathcal{T}).$$
In the same way we have if
$\overline{\mathbb{G}}^{\ast}:=\overline{\mathbb{G}}|_{\mathsf{mod}(\mathcal{T}^{op})}$ we have that
$$\overline{\mathbb{G}}^{\ast}:\mathsf{mod}(\mathcal{T}^{op})\longrightarrow \mathsf{mod}(\mathcal{U}^{op}).$$
\end{lemma}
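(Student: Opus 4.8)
The plan is to use the isomorphism established in Corollary \ref{corcambia}, which identifies $\mathbb{G}$ applied to a module with a composite of $\overline{\mathbb{F}}$ and the two dualities. Concretely, by Corollary \ref{corcambia}(i), for every $B'\in\mathsf{Mod}(\mathcal{U}^{op})$ we have $\mathbb{D}_{\mathcal{T}^{op}}\overline{\mathbb{F}}(B')\simeq \mathbb{G}\mathbb{D}_{\mathcal{U}^{op}}(B')$. So I would start with $B\in\mathsf{mod}(\mathcal{U})$, set $B':=\mathbb{D}_{\mathcal{U}}(B)\in\mathsf{mod}(\mathcal{U}^{op})$ (using that $\mathcal{U}$ is a dualizing variety, so $\mathbb{D}_{\mathcal{U}}$ restricts to a duality $\mathsf{mod}(\mathcal{U})\to\mathsf{mod}(\mathcal{U}^{op})$), and observe that $\mathbb{D}_{\mathcal{U}^{op}}(B')=\mathbb{D}_{\mathcal{U}^{op}}\mathbb{D}_{\mathcal{U}}(B)\simeq B$, again by the duality. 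Hence $\mathbb{G}(B)\simeq \mathbb{G}\mathbb{D}_{\mathcal{U}^{op}}(B')\simeq \mathbb{D}_{\mathcal{T}^{op}}\overline{\mathbb{F}}(B')$.

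The next step is to show the right-hand side lies in $\mathsf{mod}(\mathcal{T})$. By hypothesis $M_{U}\in\mathsf{mod}(\mathcal{T}^{op})$ for all $U\in\mathcal{U}$, so Lemma \ref{fpresented2}(ii) applies and gives $\overline{\mathbb{F}}(B')\in\mathsf{mod}(\mathcal{T}^{op})$, since $B'\in\mathsf{mod}(\mathcal{U}^{op})$. Finally, because $\mathcal{T}$ is a dualizing variety, the duality $\mathbb{D}_{\mathcal{T}}:(\mathcal{T}^{op},\mathsf{mod}(K))\to(\mathcal{T},\mathsf{mod}(K))$ (here written $\mathbb{D}_{\mathcal{T}^{op}}$, going from $\mathcal{T}^{op}$-modules to $\mathcal{T}$-modules) restricts to a duality $\mathsf{mod}(\mathcal{T}^{op})\to\mathsf{mod}(\mathcal{T})$, so $\mathbb{D}_{\mathcal{T}^{op}}\overline{\mathbb{F}}(B')\in\mathsf{mod}(\mathcal{T})$. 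Chaining the isomorphisms, $\mathbb{G}(B)\in\mathsf{mod}(\mathcal{T})$, which proves that $\mathbb{G}^{\ast}:=\mathbb{G}|_{\mathsf{mod}(\mathcal{U})}$ takes values in $\mathsf{mod}(\mathcal{T})$.

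For the second assertion about $\overline{\mathbb{G}}^{\ast}$, I would run the symmetric argument: apply Corollary \ref{corcambia}(ii), which states $\mathbb{D}_{\mathcal{U}}\mathbb{F}(A)\simeq\overline{\mathbb{G}}\mathbb{D}_{\mathcal{T}}(A)$ for $A\in\mathsf{Mod}(\mathcal{T})$. Given $C\in\mathsf{mod}(\mathcal{T}^{op})$, set $A:=\mathbb{D}_{\mathcal{T}^{op}}(C)\in\mathsf{mod}(\mathcal{T})$; then $\mathbb{D}_{\mathcal{T}}(A)\simeq C$, and $\overline{\mathbb{G}}(C)\simeq\mathbb{D}_{\mathcal{U}}\mathbb{F}(A)$. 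Now use the hypothesis $M_{T}\in\mathsf{mod}(\mathcal{U})$ together with Lemma \ref{fpresented1}(ii) to get $\mathbb{F}(A)\in\mathsf{mod}(\mathcal{U})$, and then the duality for the dualizing variety $\mathcal{U}$ to conclude $\mathbb{D}_{\mathcal{U}}\mathbb{F}(A)\in\mathsf{mod}(\mathcal{U}^{op})$, hence $\overline{\mathbb{G}}(C)\in\mathsf{mod}(\mathcal{U}^{op})$.

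I do not expect any serious obstacle here; the lemma is essentially a bookkeeping exercise combining Corollary \ref{corcambia}, Lemmas \ref{fpresented1} and \ref{fpresented2}, and the defining property of dualizing varieties. The only point requiring a little care is keeping the variances straight—making sure that $\mathbb{D}_{\mathcal{U}^{op}}\mathbb{D}_{\mathcal{U}}\simeq \mathrm{id}$ on $\mathsf{mod}(\mathcal{U})$ and likewise for $\mathcal{T}$, which is exactly the statement that $\mathbb{D}$ is a duality (an equivalence with its opposite), available since $\mathcal{U}$ and $\mathcal{T}$ are dualizing. An alternative, more hands-on route would bypass Corollary \ref{corcambia} entirely: take a finite presentation $\mathrm{Hom}_{\mathcal{U}}(U_1,-)\to\mathrm{Hom}_{\mathcal{U}}(U_0,-)\to B\to 0$, apply $\mathbb{G}$ (which is left exact as a right adjoint), and compare $\mathbb{G}(\mathrm{Hom}_{\mathcal{U}}(U_i,-))$ with $\mathbb{G}(B)$; but identifying $\mathbb{G}$ of a representable is itself most cleanly done through the duality identities, so the first approach is the cleaner one to write down.
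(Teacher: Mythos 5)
Your argument coincides with the paper's: both prove the first assertion by writing $B\simeq\mathbb{D}_{\mathcal{U}^{op}}\mathbb{D}_{\mathcal{U}}(B)$, invoking Corollary \ref{corcambia}(i) to rewrite $\mathbb{G}(B)$ as $\mathbb{D}_{\mathcal{T}^{op}}\overline{\mathbb{F}}(\mathbb{D}_{\mathcal{U}}(B))$, applying Lemma \ref{fpresented2}(ii) to land in $\mathsf{mod}(\mathcal{T}^{op})$, and finishing with the duality for $\mathcal{T}$. Your symmetric treatment of $\overline{\mathbb{G}}^{\ast}$ via Corollary \ref{corcambia}(ii) and Lemma \ref{fpresented1}(ii) is exactly what the paper leaves implicit, so the proof is correct and follows the same route.
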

\begin{proof}
Since $\mathcal{U}$ is dualizing we have that
$B\simeq \mathbb{D}_{\mathcal{U}^{op}}\mathbb{D}_{\mathcal{U}}(B)$ with $\mathbb{D}_{\mathcal{U}}(B)\in \mathsf{mod}(\mathcal{U}^{op})$. Then by \ref{corcambia}(i), we have that
$\mathbb{G}(B)\simeq (\mathbb{G}\mathbb{D}_{\mathcal{U}^{op}})\mathbb{D}_{\mathcal{U}}(B)\simeq (\mathbb{D}_{\mathcal{T}^{op}}\overline{\mathbb{F}})\mathbb{D}_{\mathcal{U}}(B).$\\
Since $\mathbb{D}_{\mathcal{U}}(B)\in \mathsf{mod}(\mathcal{U}^{op})$ we have by lemma \ref{fpresented2}, that $\overline{\mathbb{F}}(\mathbb{D}_{\mathcal{U}}(B))\in \mathsf{mod}(\mathcal{T}^{op})$.
Since $\mathcal{T}$ is dualizing we get that $\mathbb{G}(B)\simeq
\mathbb{D}_{\mathcal{T}^{op}}(\overline{\mathbb{F}}(\mathbb{D}_{\mathcal{U}}(B))\in \mathsf{mod}(\mathcal{T})$.
\end{proof}
We denote by $\Big(\!\mathrm{mod}(\mathcal T), \mathbb{G}^{*}\mathrm{mod}(\mathcal U)\!\Big)$ the full subcategory of $ \Big(\!\mathrm{Mod}(\mathcal{T}), \mathbb{G}\mathrm{Mod}(\mathcal U)\!\Big)$ whose objects are the morphisms of $\mathcal{T}$-modules $A\xrightarrow{f} \mathbb{G}(B)$ for which $A\in \mathrm{mod}(\mathcal{T})$ and $B\in \mathrm{mod}(\mathcal{U})$. Similarly, we denote by $\Big(\mathbb{F}^{*}(\mathrm{mod}(\mathcal{T})), \mathrm{mod}(\mathcal U)\Big)$ the full subcategory of $ \Big(\mathbb{F}(\mathrm{Mod}(\mathcal{T})), \mathrm{Mod}(\mathcal{U})\Big)$
 whose objects are the morphisms of $\mathcal{U}$-modules $\mathbb{F}(A)\xrightarrow{g} B$ for which $A\in \mathrm{mod}(\mathcal{T})$ and $B\in \mathrm{mod}(\mathcal{U})$. Now,  we can restrict the morphism given in \ref{equicoma2} to subcategories.
 
\begin{proposition}
Let $\mathcal{U}$ and $\mathcal{T}$ be dualizing $K$-varieties
and assume that $M\in \mathrm{Mod}(\mathcal{U}\otimes_{K}\mathcal{T}^{op})$ satisfies that $M_{T}\in \mathsf{mod}(\mathcal{U})$ and  $M_{U}\in \mathsf{mod}(\mathcal{T}^{op})$ for all $T\in \mathcal{T}$ and $U\in \mathcal{U}^{op}$. Then, there exists equivalences
$$\Big(\mathbb{F}^{*}(\mathrm{mod}(\mathcal{T})), \mathrm{mod}(\mathcal{U})\Big)\longrightarrow \Big(\mathrm{mod}(\mathcal{T}), \mathbb{G}^{*}(\mathrm{mod}(\mathcal{U}))\Big),$$
$$\Big(\overline{\mathbb{F}}^{*}(\mathrm{mod}(\mathcal{U}^{op})), \mathrm{mod}(\mathcal{T}^{op})\Big)\longrightarrow \Big(\mathrm{mod}(\mathcal{U}^{op}), \overline{\mathbb{G}}^{*}(\mathrm{mod}(\mathcal{U}^{op}))\Big) .$$
 \end{proposition}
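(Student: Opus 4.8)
The plan is to deduce both equivalences by restricting the isomorphism of categories $H$ (respectively $\overline{H}$) constructed in Proposition \ref{equicoma2}, once it is checked that the relevant functors behave well on finitely presented objects. The point is that $H$ and $H'$ only transport the structure morphism of a triple through the adjunction isomorphism $\varphi_{A,B}$ and leave the entries $A$ and $B$ untouched, so they automatically respect any condition imposed solely on $A$ and $B$.

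First I would record the well-definedness of the restricted functors. Under the hypotheses $M_{T}\in\mathsf{mod}(\mathcal{U})$ and $M_{U}\in\mathsf{mod}(\mathcal{T}^{op})$, Lemma \ref{fpresented1}(ii) shows $\mathbb{F}$ carries $\mathrm{mod}(\mathcal{T})$ into $\mathrm{mod}(\mathcal{U})$, so $\mathbb{F}^{*}=\mathbb{F}|_{\mathrm{mod}(\mathcal{T})}$ has image in $\mathrm{mod}(\mathcal{U})$, and dually Lemma \ref{fpresented2}(ii) does the same for $\overline{\mathbb{F}}^{*}$. Moreover Lemma \ref{resringebien} shows $\mathbb{G}$ carries $\mathrm{mod}(\mathcal{U})$ into $\mathrm{mod}(\mathcal{T})$ and $\overline{\mathbb{G}}$ carries $\mathrm{mod}(\mathcal{T}^{op})$ into $\mathrm{mod}(\mathcal{U}^{op})$. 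Hence the categories $\big(\mathrm{mod}(\mathcal{T}), \mathbb{G}^{*}\mathrm{mod}(\mathcal{U})\big)$ and $\big(\mathbb{F}^{*}(\mathrm{mod}(\mathcal{T})), \mathrm{mod}(\mathcal{U})\big)$ appearing in the statement really are defined, and each is the full subcategory of $\big(\mathsf{Mod}(\mathcal{T}), \mathbb{G}\mathsf{Mod}(\mathcal{U})\big)$, respectively $\big(\mathbb{F}(\mathsf{Mod}(\mathcal{T})), \mathsf{Mod}(\mathcal{U})\big)$, cut out by the conditions $A\in\mathrm{mod}(\mathcal{T})$ and $B\in\mathrm{mod}(\mathcal{U})$ on the outer entries of a triple; the analogous statement holds in the barred setting.

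Next I would invoke Proposition \ref{equicoma2}: $H$ sends $(A,g,B)$ with $g\colon\mathbb{F}(A)\to B$ to $(A,\varphi_{A,B}(g),B)$ and is the identity on morphisms, and $H'$ is its inverse, sending $(A,f,B)$ to $(A,\varphi^{-1}_{A,B}(f),B)$. Since neither $H$ nor $H'$ alters $A$ or $B$, $H$ maps $\big(\mathbb{F}^{*}(\mathrm{mod}(\mathcal{T})), \mathrm{mod}(\mathcal{U})\big)$ into $\big(\mathrm{mod}(\mathcal{T}), \mathbb{G}^{*}\mathrm{mod}(\mathcal{U})\big)$ and $H'$ maps the latter back into the former; being still mutually inverse and full and faithful on these full subcategories, the restriction of $H$ is an isomorphism of categories, in particular an equivalence. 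The second equivalence follows identically from $\overline{H}$ of Proposition \ref{equicoma2}(b) together with Lemmas \ref{fpresented2} and \ref{resringebien}. The argument is essentially formal, and the only step needing genuine care — which I would carry out first — is the verification that $\mathbb{F}^{*},\mathbb{G}^{*},\overline{\mathbb{F}}^{*},\overline{\mathbb{G}}^{*}$ land in the finitely presented subcategories, as this is exactly where the hypotheses on $M$ and the dualizing assumption on $\mathcal{U}$ and $\mathcal{T}$ are used.
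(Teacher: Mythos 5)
Your proposal is correct and matches the paper's approach: the paper's proof consists of the single line ``It follows from \ref{equicoma2},'' relying on the preceding discussion (Lemmas \ref{fpresented1}, \ref{fpresented2} and Lemma \ref{resringebien}) to justify that $\mathbb{F}^{*}$, $\mathbb{G}^{*}$, $\overline{\mathbb{F}}^{*}$, $\overline{\mathbb{G}}^{*}$ land in the finitely presented subcategories, exactly as you do. You simply make explicit the observation — left implicit in the paper — that $H$ and $H'$ fix the outer entries $A$ and $B$ of each triple and therefore restrict to mutually inverse isomorphisms between the full subcategories.
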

 \begin{proof}
 It follows from \ref{equicoma2}.
 \end{proof}

\begin{proposition}
Let $\mathcal{U}$ and $\mathcal{T}$ dualizing varieties and $M\in \mathsf{Mod}(\mathcal{U}\otimes_{K} \mathcal{T}^{op})$. Assume that $M_{T}\in \mathsf{mod}(\mathcal{U})$ and  $M_{U}\in \mathsf{mod}(\mathcal{T}^{op})$ for all $T\in \mathcal{T}$ and $U\in \mathcal{U}^{op}$. Then we get the functor $\textswab{F}|_{( \mathsf{mod}(\mathcal{T}), \mathbb{G}\mathsf{mod}(\mathcal{U}))}
:\Big( \mathsf{mod}(\mathcal{T}), \mathbb{G}\mathsf{mod}(\mathcal{U})\Big) \longrightarrow 
\mathrm{mod}(\mathbf{\Lambda})$ which is an equivalence.
\end{proposition}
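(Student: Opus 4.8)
The key input is that $\textswab{F}\colon\big(\mathsf{Mod}(\mathcal{T}),\mathbb{G}\mathsf{Mod}(\mathcal{U})\big)\to\mathrm{Mod}(\mathbf{\Lambda})$ is already an equivalence of abelian categories (Theorem \ref{equivalence1}; note $(\mathsf{Mod}(\mathcal{T}),\mathbb{G}\mathsf{Mod}(\mathcal{U}))$ is abelian because $\mathbb{G}$, being a right adjoint, is left exact), hence exact. Its restriction to any full subcategory is therefore automatically fully faithful, so the whole task reduces to checking that $\textswab{F}$ carries $\big(\mathsf{mod}(\mathcal{T}),\mathbb{G}\mathsf{mod}(\mathcal{U})\big)$ \emph{onto} $\mathrm{mod}(\mathbf{\Lambda})$ (up to isomorphism). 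First I would record two preliminary facts. Since $\mathcal{T}$ and $\mathcal{U}$ are dualizing, $\mathsf{mod}(\mathcal{T})$ and $\mathsf{mod}(\mathcal{U})$ are abelian subcategories of $\mathsf{Mod}(\mathcal{T})$, $\mathsf{Mod}(\mathcal{U})$ closed under kernels and cokernels; combining this with Proposition \ref{exacsedos} shows that $\big(\mathsf{mod}(\mathcal{T}),\mathbb{G}\mathsf{mod}(\mathcal{U})\big)$ is an abelian subcategory of $\big(\mathsf{Mod}(\mathcal{T}),\mathbb{G}\mathsf{Mod}(\mathcal{U})\big)$ closed under kernels and cokernels. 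Secondly, for each $T_0\in\mathcal{T}$, $U_0\in\mathcal{U}$ the object $\big(\mathrm{Hom}_{\mathcal{T}}(T_0,-),g,M_{T_0}\amalg\mathrm{Hom}_{\mathcal{U}}(U_0,-)\big)$ of Proposition \ref{projecinmaps} genuinely lies in $\big(\mathsf{mod}(\mathcal{T}),\mathbb{G}\mathsf{mod}(\mathcal{U})\big)$ \emph{precisely} because the hypothesis $M_{T_0}\in\mathsf{mod}(\mathcal{U})$ forces $M_{T_0}\amalg\mathrm{Hom}_{\mathcal{U}}(U_0,-)\in\mathsf{mod}(\mathcal{U})$; call these objects the \emph{admissible projectives}, and note $\textswab{F}$ sends them to the finitely generated projective $\mathbf{\Lambda}$-modules $\mathrm{Hom}_{\mathbf{\Lambda}}\big(\left[\begin{smallmatrix}T_0&0\\M&U_0\end{smallmatrix}\right],-\big)$.

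\textbf{Inclusion $\textswab{F}\big(\mathsf{mod}(\mathcal{T}),\mathbb{G}\mathsf{mod}(\mathcal{U})\big)\subseteq\mathrm{mod}(\mathbf{\Lambda})$.}
Given $(A,f,B)$ with $A\in\mathsf{mod}(\mathcal{T})$, $B\in\mathsf{mod}(\mathcal{U})$, choose epimorphisms $\mathrm{Hom}_{\mathcal{T}}(T_0,-)\twoheadrightarrow A$ and $\mathrm{Hom}_{\mathcal{U}}(U_0,-)\twoheadrightarrow B$ (possible since $\mathcal{T},\mathcal{U}$ are Krull--Schmidt with finite coproducts). Proposition \ref{epiintocateg} then produces an epimorphism onto $(A,f,B)$ from an admissible projective; by the remarks above its kernel again lies in the abelian subcategory $\big(\mathsf{mod}(\mathcal{T}),\mathbb{G}\mathsf{mod}(\mathcal{U})\big)$. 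Applying \ref{epiintocateg} once more to that kernel and then the exact functor $\textswab{F}$ yields an exact sequence $Q_1\to Q_0\to\textswab{F}(A,f,B)\to 0$ with $Q_0,Q_1$ finitely generated projective $\mathbf{\Lambda}$-modules, i.e. $\textswab{F}(A,f,B)\in\mathrm{mod}(\mathbf{\Lambda})$.

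\textbf{Inclusion $\mathrm{mod}(\mathbf{\Lambda})\subseteq\textswab{F}\big(\mathsf{mod}(\mathcal{T}),\mathbb{G}\mathsf{mod}(\mathcal{U})\big)$.}
Let $C\in\mathrm{mod}(\mathbf{\Lambda})$. By Lemma \ref{lemma3}, $C\cong C_1\amalg_f C_2=\textswab{F}(C_1,f,C_2)$ with $C_j=\mathbb{I}_j(C)$, so it suffices to see $C_1\in\mathsf{mod}(\mathcal{T})$ and $C_2\in\mathsf{mod}(\mathcal{U})$. Pick a presentation $P^1\to P^0\to C\to 0$ by finitely generated projective $\mathbf{\Lambda}$-modules; each $P^i$ is a summand of a finite direct sum of representables $\mathrm{Hom}_{\mathbf{\Lambda}}\big(\left[\begin{smallmatrix}T_k&0\\M&U_k\end{smallmatrix}\right],-\big)$. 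The restriction functors $\mathbb{I}_1,\mathbb{I}_2$ are exact and additive, and (either by direct evaluation, using additivity of $M$, or via Proposition \ref{projecinmaps} together with $\mathbb{I}_1(A\amalg_f B)=A$ and $\mathbb{I}_2(A\amalg_f B)=B$) one has $\mathbb{I}_1\big(\mathrm{Hom}_{\mathbf{\Lambda}}(\left[\begin{smallmatrix}T_k&0\\M&U_k\end{smallmatrix}\right],-)\big)\cong\mathrm{Hom}_{\mathcal{T}}(T_k,-)$ and $\mathbb{I}_2\big(\mathrm{Hom}_{\mathbf{\Lambda}}(\left[\begin{smallmatrix}T_k&0\\M&U_k\end{smallmatrix}\right],-)\big)\cong M_{T_k}\amalg\mathrm{Hom}_{\mathcal{U}}(U_k,-)$. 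Hence $\mathbb{I}_1(P^i)$ is finitely generated projective over $\mathcal{T}$ and $\mathbb{I}_2(P^i)\in\mathsf{mod}(\mathcal{U})$ (again using $M_{T_k}\in\mathsf{mod}(\mathcal{U})$). Applying $\mathbb{I}_1$ and $\mathbb{I}_2$ to the presentation and using that $\mathsf{mod}(\mathcal{T})$, $\mathsf{mod}(\mathcal{U})$ are closed under cokernels, we get $C_1\in\mathsf{mod}(\mathcal{T})$, $C_2\in\mathsf{mod}(\mathcal{U})$. Together with full faithfulness of $\textswab{F}$, the two inclusions give the equivalence.

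\textbf{Where the work is.} There is no deep obstacle; the argument is a careful tracking of finite presentability through $\textswab{F}$, $\mathbb{I}_1$, $\mathbb{I}_2$ and the description of the (admissible) projectives in \ref{projecinmaps}--\ref{epiintocateg}. The two hypotheses do exactly the expected work: $M_T\in\mathsf{mod}(\mathcal{U})$ is what keeps the syzygy module $\mathrm{Ker}\,\psi$ (first inclusion) and $\mathbb{I}_2$ of a projective (second inclusion) finitely presented, and the dualizing assumption is what makes $\mathsf{mod}(\mathcal{T})$, $\mathsf{mod}(\mathcal{U})$ abelian so that kernels and cokernels can be taken inside them; the companion condition $M_U\in\mathsf{mod}(\mathcal{T}^{op})$ is the mirror needed to run the identical argument for $\overline{\mathbf{\Lambda}}\cong\mathbf{\Lambda}^{op}$, which, via Proposition \ref{cuadrocasicon}, is what will yield that $\mathbf{\Lambda}$ is dualizing.
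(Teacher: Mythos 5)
Your proposal is correct and follows essentially the same line of argument as the paper: full faithfulness is inherited from the big equivalence \ref{equivalence1}, the inclusion $\textswab{F}\bigl(\mathsf{mod}(\mathcal{T}),\mathbb{G}\mathsf{mod}(\mathcal{U})\bigr)\subseteq\mathrm{mod}(\mathbf{\Lambda})$ is obtained by two applications of \ref{epiintocateg} together with closure of $\mathsf{mod}(\mathcal{T})$, $\mathsf{mod}(\mathcal{U})$ under kernels and the hypothesis $M_{T}\in\mathsf{mod}(\mathcal{U})$, and density is proved by pulling a finite projective presentation of a $\mathbf{\Lambda}$-module back along $\mathbb{I}_{1}$, $\mathbb{I}_{2}$ (equivalently, via \ref{exacsedos}) and using \cite[4.2]{AusM1}. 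The only cosmetic difference is that you phrase the density step explicitly through the restriction functors $\mathbb{I}_{i}$, whereas the paper states the termwise exactness via \ref{exacsedos}; these are the same computation.
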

\begin{proof}
By \ref{resringebien} we have that $ \mathbb{G}\mathsf{mod}(\mathcal{U})\subseteq \mathsf{mod}(\mathcal{T})$.\\
Let $(A,f,B)\in \Big( \mathsf{mod}(\mathcal{T}), \mathbb{G}\mathsf{mod}(\mathcal{U})\Big)$. Then $A\in \mathrm{mod}(\mathcal{T})$ and $B\in \mathrm{mod}(\mathcal{U})$. Hence, there exists exact sequences
$$(\ast):\xymatrix{\mathrm{Hom}_{\mathcal{T}}(T_{1},-)\ar[r]^{\alpha'} & \mathrm{Hom}_{\mathcal{T}}(T_{0},-)\ar[r]^(.7){\alpha} &  A\ar[r] & 0}$$
$$(\ast\ast):\xymatrix{\mathrm{Hom}_{\mathcal{U}}(U_{1},-)\ar[r]^{\beta'} & \mathrm{Hom}_{\mathcal{U}}(U_{0},-)\ar[r]^(.7){\beta} &  B\ar[r] & 0}$$
By \ref{epiintocateg}, we can construct an epimorphism
$$(\alpha,\psi): \Big(\mathrm{Hom}_{\mathcal{T}}(T_{0},-),g, M_{T_{0}}\amalg \mathrm{Hom}_{\mathcal{U}}(U_{0},-)\Big)\longrightarrow (A,f,B)$$
in $\Big( \mathsf{Mod}(\mathcal{T}), \mathbb{G}\mathsf{Mod}(\mathcal{U})\Big)$.

%%%%%%%%%%%%%%%%%%%%%
%%%%%%%%%%%%%%%%%%%%
Now, let $i:K\longrightarrow \mathrm{Hom}_{\mathcal{U}}(U_{0},-)$ the kernel of $\beta:\mathrm{Hom}_{\mathcal{U}}(U_{0},-)\longrightarrow B$. Then we have the following exact sequence
$$\xymatrix{0\ar[r] & M_{T_{0}}\amalg K\ar[r]^(.4){\delta} & M_{T_{0}}\amalg \mathrm{Hom}_{\mathcal{U}}(U_{0},-)\ar[rr]^(.6){\psi=(\rho,\beta)} & & B\ar[r] & 0}$$
where $\delta=\left[
\begin{smallmatrix}
1_{M_{T_{0}}} & 0 \\
0 & i
\end{smallmatrix} \right]$. 
Let $j:L\longrightarrow \mathrm{Hom}_{\mathcal{U}}(T_{0},-)$ the kernel of $\alpha:\mathrm{Hom}_{\mathcal{U}}(T_{0},-)\longrightarrow A$.
We can define $\gamma:L\longrightarrow \mathbb{G}(M_{T_{0}}\amalg K)$ as follows: for each $T\in\mathcal{T}$ we  define
$$\gamma_{T}:L(T)\longrightarrow \mathrm{Hom}_{\mathsf{Mod}(\mathcal{U})}(M_{T}, M_{T_{0}}\amalg K)$$
For $x\in L(T)$ we have $t:=[j]_{T}(x):T_{0}\longrightarrow T$, then we have $\overline{t}:M_{T}\longrightarrow M_{T_{0}}$, then we define
$$\gamma_{T}(x):=\left[
\begin{smallmatrix}
\overline{t} \\
0
\end{smallmatrix} \right]:M_{T}\longrightarrow M_{T_{0}}\amalg K$$
We assert that the following diagram is commutative

$$\xymatrix{L\ar[rr]^{j}\ar[d]^{\gamma} & & \mathrm{Hom}_{\mathcal{T}}(T_{0},-)\ar[d]^{g}\\
\mathbb{G}(M_{T_{0}}\amalg K)\ar[rr]^{\mathbb{G}(\delta)} & &\mathbb{G}(M_{T_{0}}\amalg \mathrm{Hom}_{\mathcal{U}}(U_{0},-)}$$
For $T\in \mathcal{T}$

$$\xymatrix{L(T)\ar[r]^{[j]_{T}}\ar[d]^{[\gamma]_{T}} & \mathrm{Hom}_{\mathcal{T}}(T_{0},T)\ar[d]^{[g]_{T}}\\
\mathrm{Hom}_{\mathsf{Mod}(\mathcal{U})}(M_{T}, M_{T_{0}}\amalg K)\ar[r]^(.4){[\mathbb{G}(\delta)]_{T}} & \mathrm{Hom}_{\mathsf{Mod}(\mathcal{U})}(M_{T}, M_{T_{0}}\amalg \mathrm{Hom}_{\mathcal{U}}(U_{0},-))}$$

Indeed, we have that 
$[g]_{T}(t)=\left[
\begin{smallmatrix}
\overline{t} \\
0
\end{smallmatrix} \right]$. On the other side
$[\gamma]_{T}(x)=\left[
\begin{smallmatrix}
\overline{t} \\
0
\end{smallmatrix} \right]$ and then
$$[\mathbb{G}(\delta)]\left[
\begin{smallmatrix}
\overline{t} \\
0
\end{smallmatrix} \right]=\delta\circ \left[
\begin{smallmatrix}
\overline{t} \\
0
\end{smallmatrix} \right]=
\left[
\begin{smallmatrix}
1_{M_{T_{0}}} &0 \\
0  & i
\end{smallmatrix} \right]\left[
\begin{smallmatrix}
\overline{t} \\
0
\end{smallmatrix} \right]=\left[
\begin{smallmatrix}
\overline{t} \\
0
\end{smallmatrix} \right]$$
By \ref{exacsedos}, we have  an exact sequence
$$\xymatrix{0 \ar[r] & (L,\gamma, M_{T_{0}}\amalg K)\ar[r]^(.35){(j,\delta)} & (\mathrm{Hom}_{\mathcal{T}}(T_{0},-),g, M_{T_{0}}\amalg \mathrm{Hom}_{\mathcal{U}}(U_{0},-))\ar `[dl] `[l] `[lld] |{(\alpha,\psi)} `[l] [rlld]\\
& (A,f,B)\ar[r] & 0 &  }$$

Now, since $M_{T_{0}}\in \mathsf{mod}(\mathcal{U})$ and there exists an epimorphism  $\mathrm{Hom}_{\mathcal{U}}(U_{1},-)\longrightarrow K$ (see exact sequence $(\ast\ast)$, we conclude that $M_{T_{0}}\amalg K\in \mathsf{mod}(\mathcal{U})$. Hence, there exists an epimorphism $\nu:\mathrm{Hom}_{\mathcal{U}}(U,-)\longrightarrow M_{T_{0}}\amalg K$ for some $U\in\mathcal{U}$.  Considering the exact sequence $(\ast)$ we have an epimorphism $\mathrm{Hom}_{\mathcal{T}}(T_{1},-)\longrightarrow L$. Then we can apply  \ref{epiintocateg} to 
$(L,\gamma, M_{T_{0}}\amalg K)$. Therefore we can construct an epimorphism
$$Q=\mathrm{Hom}_{\mathbf{\Lambda}}\Big(\left[
\begin{smallmatrix}
T_{1}&0 \\
M&U
\end{smallmatrix} \right],-\Big)\longrightarrow \textswab{F}(L ,\gamma, M_{T_{0}}\amalg K).$$ 
Therefore we can construct an exact sequence
$$\mathrm{Hom}_{\mathbf{\Lambda}}\Big(\left[
\begin{smallmatrix}
T_{1} &0 \\
M&U
\end{smallmatrix} \right],-\Big)\longrightarrow \mathrm{Hom}_{\mathbf{\Lambda}}\Big(\left[
\begin{smallmatrix}
T_{0} &0 \\
M& U_{0}
\end{smallmatrix} \right],-\Big)\longrightarrow  \textswab{F}(A ,f,B)\longrightarrow 0$$
Proving that  $\textswab{F}(A ,f,B)\in \mathsf{mod}(\mathbf{\Lambda})$.\\
Now, since $\textswab{F}$ is full and faithfull, we hava that is restriction is also full and faithfull.\\
Now, let us see that  $\textswab{F}|_{( \mathsf{mod}(\mathcal{T}), \mathbb{G}\mathsf{mod}(\mathcal{U}))}
:\Big( \mathsf{mod}(\mathcal{T}), \mathbb{G}\mathsf{mod}(\mathcal{U})\Big) \longrightarrow 
\mathrm{mod}(\mathbf{\Lambda})$ is dense.\\
Let us consider an exact sequence $\xymatrix{Q\ar[r]^{F} & P\ar[r]^{G} & M\ar[r] & 0}$ where $M\simeq \textswab{F}(M_{1},f,M_{2})$ for some $A\in\mathrm{Mod}(\mathcal{T})$, $B\in \mathrm{Mod}(\mathcal{U})$ and $Q=\mathrm{Hom}_{\mathbf{\Lambda}}\Big(\left[
\begin{smallmatrix}
T_{1} &0 \\
M& U_{1}
\end{smallmatrix} \right],-\Big)$ and $P=\mathrm{Hom}_{\mathbf{\Lambda}}\Big(\left[
\begin{smallmatrix}
T_{0} &0 \\
M& U_{0}
\end{smallmatrix} \right],-\Big)$.
Then we have exact sequence

$$\xymatrix{ & \big(\mathrm{Hom}_{\mathcal{T}}(T_{1},-)\big)\underset{g_{1}}\amalg \big(M_{T_{1}}\amalg \mathrm{Hom}_{\mathcal{U}}(U_{1},-)\big) \ar `[ld] `[] `[ld] |{} `[] [ldr] & \\
& \big(\mathrm{Hom}_{\mathcal{T}}(T_{0},-)\big)\underset{g_{0}}\amalg \big(M_{T_{0}}\amalg \mathrm{Hom}_{\mathcal{U}}(U_{0},-)\big)\ar[r] & (M_{1},f,M_{2})\ar[r] &0.   }$$

By \ref{exacsedos}, we have the following exact sequences
$$\xymatrix{\mathrm{Hom}_{\mathcal{T}}(T_{1},-)\ar[r] & \mathrm{Hom}_{\mathcal{T}}(T_{0},-)\ar[r] & M_{1}\ar[r] & 0}$$
$$\xymatrix{M_{T_{1}}\amalg \mathrm{Hom}_{\mathcal{U}}(U_{1},-)\ar[r] & M_{T_{0}}\amalg \mathrm{Hom}_{\mathcal{U}}(U_{0},-)\ar[r] &  M_{2}\ar[r] & 0}$$
Now, since $M_{T_{1}}, M_{T_{2}}\in \mathsf{mod}(\mathcal{U})$, we conclude by \cite[4.2 (a), 4.2(d)]{AusM1}  that  $M_{T_{1}}\amalg \mathrm{Hom}_{\mathcal{U}}(U_{1},-),  M_{T_{0}}\amalg \mathrm{Hom}_{\mathcal{U}}(U_{0},-)\in 
\mathsf{mod}(\mathcal{U})$. By \cite[4.2 (b)]{AusM1}, we have that $M_{2}\in \mathsf{mod}(\mathcal{U})$. By the above exact sequence we have that $M_{1}\in \mathsf{mod}(\mathcal{T})$.
Proving that $(M_{1},f,M_{2})\in \Big( \mathsf{mod}(\mathcal{T}), \mathbb{G}\mathsf{mod}(\mathcal{U})\Big)$ and $M\simeq \textswab{F}(M_{1},f,M_{2})$. Proving that   $\textswab{F}|_{( \mathsf{mod}(\mathcal{T}), \mathbb{G}\mathsf{mod}(\mathcal{U}))}
:\Big( \mathsf{mod}(\mathcal{T}), \mathbb{G}\mathsf{mod}(\mathcal{U})\Big) \longrightarrow 
\mathrm{mod}(\mathbf{\Lambda})$ is dense. Therefore it is an equivalence.
\end{proof}

\begin{proposition}
Consider the functor given in \ref{morpduality}:
$$\widehat{\Theta}:\Big(\mathsf{Mod}(\mathcal{T}), \mathbb{G}(\mathsf{Mod}(\mathcal{U})\Big) \longrightarrow \Big(\mathsf{Mod}(\mathcal{U}^{op}), \overline{\mathbb{G}}(\mathsf{Mod}(\mathcal{T}^{op}))\Big).$$
Suppose that $\mathcal{U}$ and $\mathcal{T}$ are dualizing $K$-varieties and that $M\in \mathrm{Mod}(\mathcal{U}\otimes_{K}\mathcal{T}^{op})$ satisfies that $M_{T}\in \mathsf{mod}(\mathcal{U})$ and  $M_{U}\in \mathsf{mod}(\mathcal{T}^{op})$ for all $T\in \mathcal{T}$ and $U\in \mathcal{U}^{op}$. Then, we get a functor
$$\widehat{\Theta}^{\ast}:=\widehat{\Theta}|_{(\mathsf{mod}(\mathcal{T}), \mathbb{G}(\mathsf{mod}(\mathcal{U}))}:\Big(\mathsf{mod}(\mathcal{T}), \mathbb{G}(\mathsf{mod}(\mathcal{U})\Big) \longrightarrow \Big(\mathsf{mod}(\mathcal{U}^{op}), \overline{\mathbb{G}}(\mathsf{mod}(\mathcal{T}^{op}))\Big).$$
\end{proposition}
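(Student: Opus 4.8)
The plan is to read off the explicit action of $\widehat{\Theta}$ from Proposition \ref{morpduality} and then simply check that, under the dualizing hypotheses, it sends the subcategory $\big(\mathsf{mod}(\mathcal{T}),\mathbb{G}\mathsf{mod}(\mathcal{U})\big)$ into $\big(\mathsf{mod}(\mathcal{U}^{op}),\overline{\mathbb{G}}\mathsf{mod}(\mathcal{T}^{op})\big)$. Recall that on an object $(A,f,B)$ with $f\colon A\to\mathbb{G}(B)$ one has
$$\widehat{\Theta}(A,f,B)=\big(\mathbb{D}_{\mathcal{U}}(B),\,\overline{f},\,\mathbb{D}_{\mathcal{T}}A\big),\qquad \overline{f}=\overline{\mathbb{G}}(\mathbb{D}_{\mathcal{T}}(f))\circ\Psi_{B}\colon\mathbb{D}_{\mathcal{U}}(B)\longrightarrow\overline{\mathbb{G}}(\mathbb{D}_{\mathcal{T}}A),$$
and that on a morphism $(\alpha,\beta)$ it acts by $\widehat{\Theta}(\alpha,\beta)=\big(\mathbb{D}_{\mathcal{U}}(\beta),\mathbb{D}_{\mathcal{T}}(\alpha)\big)$. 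So the content of the proposition is entirely a matter of showing that the two ``coordinate'' functors $\mathbb{D}_{\mathcal{U}}$ and $\mathbb{D}_{\mathcal{T}}$ preserve finite presentation.

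First I would take $(A,f,B)$ in $\big(\mathsf{mod}(\mathcal{T}),\mathbb{G}\mathsf{mod}(\mathcal{U})\big)$, so that $A\in\mathsf{mod}(\mathcal{T})$ and $B\in\mathsf{mod}(\mathcal{U})$ (and, by Lemma \ref{resringebien}, $\mathbb{G}(B)\in\mathsf{mod}(\mathcal{T})$, so that the triple does live in the claimed subcategory in the first place). Since $\mathcal{T}$ is a dualizing $K$-variety, the functor $\mathbb{D}_{\mathcal{T}}$ of Definition \ref{dualizinvar} restricts to a duality $\mathsf{mod}(\mathcal{T})\to\mathsf{mod}(\mathcal{T}^{op})$, hence $\mathbb{D}_{\mathcal{T}}A\in\mathsf{mod}(\mathcal{T}^{op})$; likewise, $\mathcal{U}$ dualizing gives $\mathbb{D}_{\mathcal{U}}(B)\in\mathsf{mod}(\mathcal{U}^{op})$. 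By the second assertion of Lemma \ref{resringebien}, $\overline{\mathbb{G}}^{\ast}$ maps $\mathsf{mod}(\mathcal{T}^{op})$ into $\mathsf{mod}(\mathcal{U}^{op})$, so the codomain $\overline{\mathbb{G}}(\mathbb{D}_{\mathcal{T}}A)$ of $\overline{f}$ is again finitely presented; this makes $\widehat{\Theta}(A,f,B)=\big(\mathbb{D}_{\mathcal{U}}(B),\overline{f},\mathbb{D}_{\mathcal{T}}A\big)$ a genuine object of $\big(\mathsf{mod}(\mathcal{U}^{op}),\overline{\mathbb{G}}\mathsf{mod}(\mathcal{T}^{op})\big)$. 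On morphisms, if $(\alpha,\beta)$ is a morphism in $\big(\mathsf{mod}(\mathcal{T}),\mathbb{G}\mathsf{mod}(\mathcal{U})\big)$ then $\alpha\in\mathsf{mod}(\mathcal{T})$ and $\beta\in\mathsf{mod}(\mathcal{U})$, so $\mathbb{D}_{\mathcal{T}}(\alpha)$ and $\mathbb{D}_{\mathcal{U}}(\beta)$ are morphisms between finitely presented functors and $\widehat{\Theta}(\alpha,\beta)$ lies in the restricted comma category. Functoriality of $\widehat{\Theta}^{\ast}$ is inherited verbatim from that of $\widehat{\Theta}$ (Proposition \ref{morpduality}), since restricting a functor to full subcategories of its source and target yields a functor.

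There is no real obstacle here: the only place the hypotheses are used is in invoking that $\mathbb{D}_{\mathcal{U}}$, $\mathbb{D}_{\mathcal{T}}$ and $\overline{\mathbb{G}}$ all preserve the respective subcategories of finitely presented functors, and both of these facts have already been established — the first by the definition of a dualizing variety, the second in Lemma \ref{resringebien}. Thus the proof is a short assembly of those observations.
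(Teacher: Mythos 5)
Your proof is correct and takes essentially the same approach as the paper: read off the explicit action of $\widehat{\Theta}$ on a triple $(A,f,B)$, use the dualizing hypotheses to conclude $\mathbb{D}_{\mathcal{T}}A\in\mathsf{mod}(\mathcal{T}^{op})$ and $\mathbb{D}_{\mathcal{U}}(B)\in\mathsf{mod}(\mathcal{U}^{op})$, and invoke Lemma \ref{resringebien} to get $\overline{\mathbb{G}}(\mathbb{D}_{\mathcal{T}}A)\in\mathsf{mod}(\mathcal{U}^{op})$. The extra remarks about morphisms and inherited functoriality are a harmless elaboration on what the paper leaves implicit.
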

\begin{proof}
We recall that for $f:A\longrightarrow \mathbb{G}(B)$ a morphism of $\mathcal{T}$-modules, we have that
$$\overline{f}=\widehat{\Theta}(A,f,B):=\Theta(A,f,B)\circ \Psi_{B}:\mathbb{D}_{\mathcal{U}}(B)\longrightarrow \overline{\mathbb{G}}(\mathbb{D}_{\mathcal{T}}A).$$
Now, if $A\in \mathsf{mod}(\mathcal{T})$ we have that $\mathbb{D}_{\mathcal{T}}(A)\in \mathsf{mod}(\mathcal{T}^{op})$ and by \ref{resringebien} we get that $\overline{\mathbb{G}}(\mathbb{D}_{\mathcal{T}}A)\in \mathsf{mod}(\mathcal{U}^{op})$. Since $\mathcal{U}$ is dualizing we get that $\mathbb{D}_{\mathcal{U}}(B)\in \mathsf{mod}(\mathcal{U}^{op})$ if $B\in \mathsf{mod}(\mathcal{U})$. Therefore 
$\widehat{\Theta}(A,f,B)\in \Big(\mathsf{mod}(\mathcal{U}^{op}), \overline{\mathbb{G}}(\mathsf{mod}(\mathcal{T}^{op}))\Big)$
if $(A,f,B)\in \Big(\mathsf{mod}(\mathcal{T}), \mathbb{G}(\mathsf{mod}(\mathcal{U})\Big)$.
\end{proof}

\begin{Remark} \label{findimM}
Let $\mathcal{U}$ and $\mathcal{T}$ be $K$-categories and $M\in \mathrm{Mod}(\mathcal{U}\otimes_{K}\mathcal{T}^{op})$.
\begin{enumerate}
\item [(a)] For each $U\in \mathcal{U}$ and $T\in \mathcal{T}$ there exists ring morphisms 
$$\varphi:K\longrightarrow   \mathrm{End}_{\mathcal{U}}(U)\quad
k\mapsto k1_{U},\quad \psi:K\longrightarrow \mathrm{End}_{\mathcal{T}}(T), \quad
k\mapsto k1_{T},$$
such that the structure of $K$-vector spaces on $\mathrm{End}_{\mathcal{U}}(U)$ and $\mathrm{End}_{\mathcal{T}}(T)$ induced by $\varphi$ and $\psi$ respectively, is the same as the one given by the fact that $\mathcal{U}$ and $\mathcal{T}$ are $K$-categories. Since   $M(U,T)$ is an  $\mathrm{End}_{\mathcal{U}}(U)$-$\mathrm{End}_{\mathcal{T}}(T)$ bimodule, it is easy to show that
the structure of $K$-vector space on $M(U,T)$ induced by $\mathrm{End}_{\mathcal{U}}(U)$ via $\varphi$ is the same as the induced by $\mathrm{End}_{\mathcal{T}}(T)$ via $\psi$ (this is because $k1_{U}\otimes 1_{T}=1_{U}\otimes k1_{T}$).
\item [(b)] Suppose that $M_{T}\in \mathsf{mod}(\mathcal{U})$ and $\mathcal{U}$ is Hom-finite, then $M(U,T)$ is a finite dimensional $K$-vector space.
\end{enumerate}
\end{Remark}

\begin{lemma}\label{matrisartinal}Let $\mathcal{U}$ and $\mathcal{T}$ be Hom-finite $K$-categories and suppose  that $M\in \mathrm{Mod}(\mathcal{U}\otimes_{K}\mathcal{T}^{op})$ satisfies that $M_{T}\in \mathsf{mod}(\mathcal{U})$ $\forall$ $T\in \mathcal{T}$. Then
$\Gamma=\mathsf{End}_{\mathbf{\Lambda}}\left (\left[ \begin{smallmatrix}
T & 0 \\
M & U
\end{smallmatrix} \right]  \right)  := \left[ \begin{smallmatrix}
\mathsf{Hom}_{\mathcal{T}}(T,T) & 0 \\
M(U,T) & \mathsf{Hom}_{\mathcal{U}}(U,U)
\end{smallmatrix} \right]$ is an artin $K$-algebra. 
\end{lemma}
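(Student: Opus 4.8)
\textbf{Proof plan for Lemma \ref{matrisartinal}.} The goal is to show that the endomorphism ring $\Gamma=\left[\begin{smallmatrix}\mathsf{Hom}_{\mathcal{T}}(T,T)&0\\ M(U,T)&\mathsf{Hom}_{\mathcal{U}}(U,U)\end{smallmatrix}\right]$ is an artin $K$-algebra. The strategy is to exhibit $\Gamma$ as a classical triangular matrix ring $\left[\begin{smallmatrix}T'&0\\ {}_{T'}N_{U'}&U'\end{smallmatrix}\right]$ in the sense of ring theory, where $T'=\mathsf{End}_{\mathcal{T}}(T)$, $U'=\mathsf{End}_{\mathcal{U}}(U)$, and $N=M(U,T)$, and then invoke the well-known criterion (cf.\ \cite[Theorem 2.1, p.\ 72]{AusBook}) that such a ring is an artin $R$-algebra whenever $T'$ and $U'$ are artin $R$-algebras and $N$ is finitely generated as an $R$-module acting centrally. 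Here I would take $R=K$.

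First I would check that the multiplication on $\Gamma$ coming from the composition in $\mathbf{\Lambda}$ (Definition \ref{defitrinagularmat}) is exactly the usual triangular matrix multiplication: the $(1,1)$ and $(2,2)$ entries compose in $\mathcal{T}$ and $\mathcal{U}$ respectively, while the off-diagonal entry transforms by $m_2\bullet t_1+u_2\bullet m_1$, which is precisely the bimodule action of $\mathsf{End}_{\mathcal{T}}(T)$ on the right and $\mathsf{End}_{\mathcal{U}}(U)$ on the left on $N=M(U,T)$ (this bimodule structure was already noted in Remark \ref{findimM}(a)). Next I would record that $\mathsf{End}_{\mathcal{T}}(T)$ and $\mathsf{End}_{\mathcal{U}}(U)$ are artin $K$-algebras: since $\mathcal{T}$ and $\mathcal{U}$ are Hom-finite $K$-categories, these endomorphism rings are finite-dimensional $K$-algebras, hence artinian, and $K$ being a (commutative artinian, indeed a field) ring acts centrally on them via the ring maps $\psi$ and $\varphi$ of Remark \ref{findimM}(a).

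The remaining point is the bimodule $N=M(U,T)$: by hypothesis $M_T\in\mathsf{mod}(\mathcal{U})$ and $\mathcal{U}$ is Hom-finite, so by Remark \ref{findimM}(b) $N$ is a finite-dimensional $K$-vector space, in particular finitely generated over $K$. Moreover by Remark \ref{findimM}(a) the left $\mathsf{End}_{\mathcal{U}}(U)$-module structure and the right $\mathsf{End}_{\mathcal{T}}(T)$-module structure on $N$ induce one and the same $K$-vector-space structure, and $K$ acts centrally (the image of $K$ lands in the center of each endomorphism ring). Thus all the hypotheses of the triangular-matrix criterion are met, and $\Gamma$ is an artin $K$-algebra.

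I do not expect a genuine obstacle here; the statement is essentially a bookkeeping reduction to the classical ring-theoretic fact. The only mild care needed is to make the identification of $\Gamma$ with a standard triangular matrix ring precise — i.e.\ verifying that the $\bullet$-actions in $\mathbf{\Lambda}$ really are the bimodule actions and that the $K$-structures agree — but both of these are exactly the content of Remark \ref{findimM}, so the argument is short. If one prefers to avoid quoting \cite{AusBook}, one can alternatively argue directly that $\Gamma$ is a finite-dimensional $K$-algebra (it has $K$-dimension $\dim_K\mathsf{End}_{\mathcal{T}}(T)+\dim_K M(U,T)+\dim_K\mathsf{End}_{\mathcal{U}}(U)<\infty$) and hence an artin $K$-algebra; this is perhaps the cleanest route and sidesteps the triangular-ring criterion entirely.
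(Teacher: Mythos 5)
Your proposal is correct and follows essentially the same route as the paper: both reduce $\Gamma$ to a classical triangular matrix ring and invoke the criterion of \cite[Proposition 2.1, p.\ 72]{AusBook}, using Remark \ref{findimM} to verify that $\mathsf{End}_{\mathcal{T}}(T)$, $\mathsf{End}_{\mathcal{U}}(U)$ are artin $K$-algebras and that $M(U,T)$ is finitely generated over $K$ with $K$ acting centrally. Your closing observation that one could bypass the triangular-ring criterion entirely by noting $\dim_K\Gamma<\infty$ is a valid and indeed shorter alternative, though the paper does not take it.
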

\begin{proof}
We have the ring morphism $\varphi:K\longrightarrow \mathrm{Hom}_{\mathcal{T}}(T,T)$ given by $\varphi(\lambda)=\lambda 1_{T}$  which made $\mathrm{End}_{\mathcal{T}}(T)$ into an artin $K$-algebra.\\
Similarly, $\psi:K\longrightarrow \mathrm{Hom}_{\mathcal{U}}(U,U)$ given by $\psi(\lambda)=\lambda 1_{U}$ is a ring morphism which made $\mathrm{End}_{\mathcal{U}}(U)$  into an artin $K$-algebra. \\
We note that the structure of vector spaces induced to the rings of endomorphisms is the same as the given by the definition that $\mathcal{U}$ and $\mathcal{T}$ are Hom-finite\\
By  \ref{findimM}(b), we have that $M(U,T)$ is finitely generated $K$-module. Then by \cite[Proposition 2.1]{AusBook} on page 72, we have that $\Gamma$ is an artin $K$-algebra via the morphism
$$\Phi:K \longrightarrow   \left[ \begin{smallmatrix}
\mathsf{Hom}_{\mathcal{T}}(T,T) & 0 \\
M(U,T) & \mathsf{Hom}_{\mathcal{U}}(U,U)
\end{smallmatrix} \right]$$
given by 
$$\Phi(\lambda)=\left[ \begin{smallmatrix}
\varphi(\lambda) & 0 \\
0 & \psi(\lambda)
\end{smallmatrix} \right]=\left[ \begin{smallmatrix}
\lambda 1_{T} & 0 \\
0 & \lambda 1_{U}
\end{smallmatrix} \right].$$
We note that
$$\lambda\cdot  \left[ \begin{smallmatrix}
 t & 0 \\
m & u
\end{smallmatrix} \right]:=\left[ \begin{smallmatrix}
\lambda 1_{T} & 0 \\
0 & \lambda 1_{U}
\end{smallmatrix} \right]\left[ \begin{smallmatrix}
t & 0 \\
m & u
\end{smallmatrix} \right]=\left[ \begin{smallmatrix}
(\lambda 1_{T})\circ  t & 0 \\
(\lambda 1_{U})\bullet m & (\lambda 1_{U})\circ u
\end{smallmatrix} \right]=\left[ \begin{smallmatrix}
1_{T}\circ (\lambda t) & 0 \\
(\lambda 1_{U})\bullet m & 1_{U}\circ (\lambda u)
\end{smallmatrix} \right]=\left[ \begin{smallmatrix}
\lambda t & 0 \\
\lambda m & \lambda  u
\end{smallmatrix} \right]$$
where  $t\in \mathrm{End}_{\mathcal{T}}(T)$ and $u\in \mathrm{End}_{\mathcal{U}}(U)$.\\
\end{proof}

\begin{lemma}
Let $\mathcal{U}$ and $\mathcal{T}$ be Hom-finite $K$-categories and suppose that $M\in \mathrm{Mod}(\mathcal{U}\otimes_{K}\mathcal{T}^{op})$ satisfies that $M_{T}\in \mathsf{mod}(\mathcal{U})$ for all $T\in \mathcal{T}$. Then
$\mathbf{\Lambda}=\left[ \begin{smallmatrix}
\mathcal{T} & 0 \\ M & \mathcal{U}
\end{smallmatrix}\right]$ is a Hom-finite $K$-category.
\end{lemma}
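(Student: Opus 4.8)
The plan is to verify the two things the statement requires: that $\mathbf{\Lambda}$ carries a $K$-category structure, and that its Hom-spaces are finitely generated over $K$.

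First I would record that, by Definition \ref{defitrinagularmat}, each space $\mathsf{Hom}_{\mathbf{\Lambda}}\!\big(\left[\begin{smallmatrix} T & 0 \\ M & U\end{smallmatrix}\right],\left[\begin{smallmatrix} T' & 0 \\ M & U'\end{smallmatrix}\right]\big)$ is, as an abelian group, the external direct sum $\mathsf{Hom}_{\mathcal{T}}(T,T')\oplus M(U',T)\oplus \mathsf{Hom}_{\mathcal{U}}(U,U')$. I would put on it the $K$-module structure coming from the three summands: the outer two are $K$-modules because $\mathcal{T}$ and $\mathcal{U}$ are $K$-categories, and $M(U',T)$ is a $K$-module because, $\mathcal{U}\otimes_K\mathcal{T}^{op}$ being a $K$-category, the functor $M$ is $K$-linear on morphism spaces, so $M(U',T)$ inherits a $K$-structure from $\mathrm{End}_{\mathcal{U}\otimes_K\mathcal{T}^{op}}(U',T)$; Remark \ref{findimM}(a) guarantees this coincides with the structures induced from $\mathrm{End}_{\mathcal{U}}(U')$ and from $\mathrm{End}_{\mathcal{T}}(T)$, so there is no ambiguity. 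Then I would check that the composition rule of Definition \ref{defitrinagularmat} is $K$-bilinear: the corner entries $t_2\circ t_1$ and $u_2\circ u_1$ are $K$-bilinear since $\mathcal{T}$, $\mathcal{U}$ are $K$-categories, and the lower-left entry $m_2\bullet t_1+u_2\bullet m_1=M(1_{U''}\otimes t_1^{op})(m_2)+M(u_2\otimes 1_T)(m_1)$ is $K$-bilinear in its arguments because $M$ is $K$-linear on morphism spaces and the maps $t_1\mapsto 1_{U''}\otimes t_1^{op}$, $u_2\mapsto u_2\otimes 1_T$ are $K$-linear, the sum being formed in the $K$-module $M(U'',T)$. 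Combined with the associativity and units already proved in Lemma \ref{asociatmat}, this makes $\mathbf{\Lambda}$ a $K$-category.

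For Hom-finiteness I would argue that $\mathsf{Hom}_{\mathcal{T}}(T,T')$ and $\mathsf{Hom}_{\mathcal{U}}(U,U')$ are finitely generated $K$-modules by the Hom-finiteness of $\mathcal{T}$ and $\mathcal{U}$, while $M(U',T)=M_T(U')$ is finitely generated over $K$: indeed $M_T\in\mathsf{mod}(\mathcal{U})$ by hypothesis and $\mathcal{U}$ is Hom-finite, so Remark \ref{findimM}(b) applies and $M(U',T)$ is even a finite-dimensional $K$-vector space. A finite direct sum of finitely generated $K$-modules is finitely generated, which is exactly the Hom-finiteness of $\mathbf{\Lambda}$. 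The argument is essentially bookkeeping; the one substantive ingredient is the finiteness of the bimodule values $M(U',T)$, which is where the assumption $M_T\in\mathsf{mod}(\mathcal{U})$ is used through Remark \ref{findimM}(b). The only point needing a little care is that the $K$-action on $M(U',T)$ appearing in the matrix multiplication is well defined, and that is precisely the content of Remark \ref{findimM}(a); so I do not expect a genuine obstacle here.
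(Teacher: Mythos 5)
Your proof is correct and follows essentially the same route as the paper's: decompose the Hom-space as the direct sum $\mathsf{Hom}_{\mathcal{T}}(T,T')\oplus M(U',T)\oplus\mathsf{Hom}_{\mathcal{U}}(U,U')$, observe the $K$-action is coordinate-wise (the paper derives this by viewing the Hom-space as a module over the $K$-algebra $A=\mathsf{End}_{\mathbf{\Lambda}}\big(\left[\begin{smallmatrix}T'&0\\M&U'\end{smallmatrix}\right]\big)$ from Lemma \ref{matrisartinal}, while you impose it directly and check $K$-bilinearity of composition), and conclude from the Hom-finiteness of $\mathcal{T}$, $\mathcal{U}$ together with Remark \ref{findimM}(b). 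Your explicit check of $K$-bilinearity of the composition is a small point the paper leaves implicit, but the substance and the ingredients used are the same.
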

\begin{proof}
Let us consider
$$\mathsf{ Hom}_{\mathbf{\Lambda}}\left (\left[ \begin{smallmatrix}
T & 0 \\
M & U
\end{smallmatrix} \right] ,  \left[ \begin{smallmatrix}
T' & 0 \\
M & U'
\end{smallmatrix} \right]  \right)  := \left[ \begin{smallmatrix}
\mathsf{Hom}_{\mathcal{T}}(T,T') & 0 \\
M(U',T) & \mathsf{Hom}_{\mathcal{U}}(U,U')
\end{smallmatrix} \right],$$
and $A:=\mathsf{End}_{\mathbf{\Lambda}}\left (\left[ \begin{smallmatrix}
T' & 0 \\
M & U'
\end{smallmatrix} \right]  \right)$. 
We have that $\mathsf{ Hom}_{\mathbf{\Lambda}}\left (\left[ \begin{smallmatrix}
T & 0 \\
M & U
\end{smallmatrix} \right] ,  \left[ \begin{smallmatrix}
T' & 0 \\
M & U'
\end{smallmatrix} \right]  \right) $ is a lef $A$-module.
Therefore  $\mathsf{ Hom}_{\mathbf{\Lambda}}\left (\left[ \begin{smallmatrix}
T & 0 \\
M & U
\end{smallmatrix} \right] ,  \left[ \begin{smallmatrix}
T' & 0 \\
M & U'
\end{smallmatrix} \right]  \right) $ is a $K$-vector space as follows:\\
Let $\left[ \begin{smallmatrix}
 t & 0 \\
m & u
\end{smallmatrix} \right]\in \mathsf{ Hom}_{\mathbf{\Lambda}}\left (\left[ \begin{smallmatrix}
T & 0 \\
M & U
\end{smallmatrix} \right] ,  \left[ \begin{smallmatrix}
T' & 0 \\
M & U'
\end{smallmatrix} \right]  \right)$ and $\lambda\in K$, then

$$\lambda\cdot  \left[ \begin{smallmatrix}
 t & 0 \\
m & u
\end{smallmatrix} \right]:=\left[ \begin{smallmatrix}
 \lambda1_{T'} & 0 \\
0 & \lambda 1_{U'}
\end{smallmatrix} \right]\circ 
\left[ \begin{smallmatrix}
 t & 0 \\
m & u
\end{smallmatrix} \right]=\left[ \begin{smallmatrix}
 \lambda1_{T'}\circ t & 0 \\
(\lambda 1_{U'})\bullet m& (\lambda 1_{U'})\circ u
\end{smallmatrix} \right]=\left[ \begin{smallmatrix}
 1_{T'}\circ \lambda t & 0 \\
\lambda  m & 1_{U'}\circ \lambda u
\end{smallmatrix} \right]=\left[ \begin{smallmatrix}
 \lambda t & 0 \\
\lambda m & \lambda u
\end{smallmatrix} \right]$$
Since $\mathrm{Hom}_{\mathcal{U}}(U,U')$, $\mathrm{Hom}_{\mathcal{T}}(T,T')$,  and $M(U',T)$ are finite dimensional $K$-vector spaces we conclude that 
$$\mathsf{ Hom}_{\mathbf{\Lambda}}\left (\left[ \begin{smallmatrix}
T & 0 \\
M & U
\end{smallmatrix} \right] ,  \left[ \begin{smallmatrix}
T' & 0 \\
M & U'
\end{smallmatrix} \right]  \right)  := \left[ \begin{smallmatrix}
\mathsf{Hom}_{\mathcal{T}}(T,T') & 0 \\
M(U',T) & \mathsf{Hom}_{\mathcal{U}}(U,U')
\end{smallmatrix} \right]$$
is a finite dimensional $K$-vector space.
\end{proof}

\begin{proposition}\label{spliidempomat}
Assume that $\mathcal{T}$ and $\mathcal{U}$ are additive categories with splitting idempotents. Then $\mathbf{\Lambda}$ is an additive category with splitting idempotents.
\end{proposition}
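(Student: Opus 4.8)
The plan is to verify directly that $\mathbf{\Lambda}$ is additive and that every idempotent in $\mathbf{\Lambda}$ splits. Additivity is already essentially in hand: we saw right after Definition~\ref{defitrinagularmat} that $\mathbf{\Lambda}$ is preadditive, and Proposition~\ref{fincopromat} gives finite coproducts when $\mathcal{U}$ and $\mathcal{T}$ have them; so the only real content is the splitting of idempotents. An idempotent endomorphism of $\left[\begin{smallmatrix} T & 0 \\ M & U \end{smallmatrix}\right]$ is a matrix $e=\left[\begin{smallmatrix} t & 0 \\ m & u \end{smallmatrix}\right]$ with $e\circ e = e$; using the composition rule from Definition~\ref{defitrinagularmat}, this is equivalent to the three conditions $t\circ t = t$ in $\mathcal{T}$, $u\circ u = u$ in $\mathcal{U}$, and $m\bullet t + u\bullet m = m$ in $M(U,T)$.

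First I would use that $\mathcal{T}$ and $\mathcal{U}$ have splitting idempotents to factor $t$ and $u$: there are objects $T_0\in\mathcal{T}$, $U_0\in\mathcal{U}$ and morphisms $t = \iota_T\circ \pi_T$ with $\pi_T\circ\iota_T = 1_{T_0}$, and likewise $u=\iota_U\circ\pi_U$ with $\pi_U\circ\iota_U = 1_{U_0}$. The natural candidate splitting object for $e$ is $\left[\begin{smallmatrix} T_0 & 0 \\ M & U_0 \end{smallmatrix}\right]$, and the natural candidate maps are $\pi := \left[\begin{smallmatrix} \pi_T & 0 \\ ? & \pi_U \end{smallmatrix}\right]$ and $\iota := \left[\begin{smallmatrix} \iota_T & 0 \\ ? & \iota_U \end{smallmatrix}\right]$, where the lower-left entries have to be chosen in $M(U_0,T)$ and $M(U,T_0)$ respectively so that $\pi\circ\iota = 1_{\left[\begin{smallmatrix} T_0 & 0 \\ M & U_0 \end{smallmatrix}\right]}$ and $\iota\circ\pi = e$. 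The plan is to set the lower-left entry of $\iota$ to be $m$ suitably "transported" to $M(U,T_0)$ — concretely $u\bullet m\bullet \iota_T$ or $\iota_U^{?}\bullet m$ type expressions, using the $\bullet$-actions — and take the lower-left entry of $\pi$ to be $0$, then compute; one then checks $\iota\circ\pi = e$ reduces exactly to the relation $m\bullet t + u\bullet m = m$, and $\pi\circ\iota = 1$ reduces to $\pi_T\iota_T=1_{T_0}$, $\pi_U\iota_U=1_{U_0}$ together with a vanishing of the $M$-entry that follows from $\pi_T\iota_T=1$.

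The main obstacle is bookkeeping the $\bullet$-actions correctly so that the lower-left entries work out: the identities $m\bullet(t_2\circ t_1)=(m\bullet t_2)\bullet t_1$, $(u_2\circ u_1)\bullet m = u_2\bullet(u_1\bullet m)$, and the compatibility $u\bullet(m\bullet t)=(u\bullet m)\bullet t$ established in Lemma~\ref{asociatmat} (items (a)–(e)) will all be needed, and one must be careful that the chosen entry of $\iota$ lands in the correct group $M(U,T_0)$. A clean way to organize this, which I would adopt, is to note that $e$ gives an idempotent in the artin-algebra-like endomorphism set, but since we are not assuming Hom-finiteness here it is cleaner to argue directly. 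Alternatively — and this may be the slickest route — one observes that $\textswab{F}$ from Theorem~\ref{equivalence1} is an equivalence $\big(\mathsf{Mod}(\mathcal{T}),\mathbb{G}\mathsf{Mod}(\mathcal{U})\big)\simeq\mathsf{Mod}(\mathbf{\Lambda})$, that $\mathfrak p(\mathbf{\Lambda})$ (finitely generated projectives) sits inside $\mathsf{Mod}(\mathbf{\Lambda})$ as a full subcategory closed under summands, and that $\mathbf{\Lambda}$ embeds into $\mathfrak p(\mathbf{\Lambda})$ via the Yoneda functor $P$; but to conclude splitting inside $\mathbf{\Lambda}$ itself one still needs the image of $P$ to be closed under the relevant summands, which is precisely the direct computation above. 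So in the write-up I would present the direct matrix computation: exhibit $T_0$, $U_0$, $\iota$, $\pi$, verify $\pi\circ\iota=1$ and $\iota\circ\pi=e$ using Lemma~\ref{asociatmat}, and conclude.
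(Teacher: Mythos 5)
Your overall method --- exhibit a retract of the idempotent $e := \left[\begin{smallmatrix}t & 0 \\ m & u\end{smallmatrix}\right]$ directly from chosen splittings $t=\iota_T\pi_T$, $u=\iota_U\pi_U$ --- is valid and genuinely different from the paper's. The paper instead constructs the \emph{kernel} of $e$: it takes $\mu := \ker t$, $\nu := \ker u$, shows $\left[\begin{smallmatrix}\mu & 0 \\ -m\bullet\mu & \nu\end{smallmatrix}\right]$ is $\ker e$, and verifies the universal property (along the way it also needs the splitting of $1_U - u$). Since an additive category has split idempotents iff every idempotent has a kernel, both routes work; yours is arguably shorter because it avoids the universal-property check.

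However, the concrete choice you outline does not close. If $\pi$ has lower-left entry $0$, then the lower-left entry of $\iota\circ\pi$ is $m'\bullet\pi_T$, and since $\pi_T\circ t = \pi_T$ one always has $(m'\bullet\pi_T)\bullet t = m'\bullet\pi_T$; so $m'\bullet\pi_T=m$ would force $m\bullet t = m$, i.e.\ $u\bullet m = 0$, which fails in general. You need nonzero lower-left entries in \emph{both} $\iota$ and $\pi$. A clean choice is to pre/post-compose the "split matrices" with $e$:
$$\iota := e\circ\left[\begin{smallmatrix}\iota_T & 0 \\ 0 & \iota_U\end{smallmatrix}\right] = \left[\begin{smallmatrix}\iota_T & 0 \\ m\bullet\iota_T & \iota_U\end{smallmatrix}\right], \qquad \pi := \left[\begin{smallmatrix}\pi_T & 0 \\ 0 & \pi_U\end{smallmatrix}\right]\circ e = \left[\begin{smallmatrix}\pi_T & 0 \\ \pi_U\bullet m & \pi_U\end{smallmatrix}\right].$$
Then $\iota\circ\pi = e\left[\begin{smallmatrix}t & 0 \\ 0 & u\end{smallmatrix}\right]e = e$ (the last step uses $m\bullet t + u\bullet m = m$ together with $t^2=t$, $u^2=u$), and
$$\pi\circ\iota = \left[\begin{smallmatrix}\pi_T & 0 \\ 0 & \pi_U\end{smallmatrix}\right]e^2\left[\begin{smallmatrix}\iota_T & 0 \\ 0 & \iota_U\end{smallmatrix}\right] = \left[\begin{smallmatrix}1_{T_0} & 0 \\ \pi_U\bullet m\bullet\iota_T & 1_{U_0}\end{smallmatrix}\right].$$
The needed vanishing $\pi_U\bullet m\bullet\iota_T = 0$ does \emph{not} "follow from $\pi_T\iota_T=1$" as you suggest; it uses the idempotent relation. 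From $m = m\bullet t + u\bullet m$ and $t^2=t$ one gets $(u\bullet m)\bullet t = m\bullet t - m\bullet t^2 = 0$, whence $u\bullet m\bullet t = 0$ by Lemma~\ref{asociatmat}(e), and then $\pi_U\bullet m\bullet\iota_T = (\pi_U\iota_U\pi_U)\bullet m\bullet(\iota_T\pi_T\iota_T) = \pi_U\bullet(u\bullet m\bullet t)\bullet\iota_T = 0$. With these corrections your direct calculation goes through and gives a streamlined alternative to the paper's kernel argument.
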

\begin{proof}
Let $\left[ \begin{smallmatrix}
 t & 0 \\
 m & u
\end{smallmatrix} \right]:\left[ \begin{smallmatrix}
 T & 0 \\
M & U
\end{smallmatrix} \right]\longrightarrow \left[ \begin{smallmatrix}
 T & 0 \\
M & U
\end{smallmatrix} \right]$ be an idempoten morphism with $t\in \mathrm{Hom}_{\mathcal{T}}(T,T)$ and $u\in \mathrm{Hom}_{\mathcal{U}}(U,U)$ and $m\in M(U,T)$. Then
$$\left[ \begin{smallmatrix}
 t & 0 \\
 m & u
\end{smallmatrix} \right]=
\left[ \begin{smallmatrix}
 t & 0 \\
 m & u
\end{smallmatrix} \right]\left[ \begin{smallmatrix}
 t & 0 \\
 m & u
\end{smallmatrix} \right]=\left[ \begin{smallmatrix}
 t^{2} & 0 \\
m\bullet t +u\bullet m  & u^{2}
\end{smallmatrix} \right].$$
Then $m=m\bullet t+u\bullet m=M(1_{U}\otimes t^{op})(m)+M(u\otimes 1_{T})(m)$, $t^{2}=t$ and $u^{2}=u$.\\
Let $\mu:L\longrightarrow T$ be the kernel of $t:T\longrightarrow T$ and $\nu:K\longrightarrow U$ the kernel of $u:U\longrightarrow U$ (they exists because $\mathcal{U}$ and $\mathcal{T}$ are with split idempotents).
Then $0=t\mu$ and therefore
$0=M(1_{U}\otimes(t\mu)^{op})=M(1_{U}\otimes\mu^{op})\circ M(1_{U}\otimes t^{op})$. Hence, $0=\Big(M(1_{U}\otimes\mu^{op})\circ M(1_{U}\otimes t^{op})\Big)(m)=(m\bullet t)\bullet \mu$. Therefore
$$m\bullet \mu=(m\bullet t+u\bullet m)\bullet \mu=(m\bullet t)\bullet \mu+(u\bullet m)\bullet \mu=(u\bullet m)\bullet \mu\in M(U,L).$$
We claim that $\left[ \begin{smallmatrix}
 \mu & 0 \\
-m\bullet \mu & \nu
\end{smallmatrix} \right]:\left[ \begin{smallmatrix}
 L & 0 \\
M & K
\end{smallmatrix} \right]\longrightarrow \left[ \begin{smallmatrix}
 T & 0 \\
M & U
\end{smallmatrix} \right]$ is the kernel of $\left[ \begin{smallmatrix}
 t & 0 \\
m & u
\end{smallmatrix} \right]:\left[ \begin{smallmatrix}
 T & 0 \\
M & U
\end{smallmatrix} \right]\longrightarrow \left[ \begin{smallmatrix}
 T & 0 \\
M & U
\end{smallmatrix} \right]$.\\
Indeed,
\begin{enumerate}
\item [(a)] First we note that
\begin{align*}
\left[ \begin{smallmatrix}
 t & 0 \\
m & u
\end{smallmatrix} \right]
\left[ \begin{smallmatrix}
 \mu & 0 \\
-m\bullet \mu & \nu
\end{smallmatrix} \right]=\left[ \begin{smallmatrix}
 t\mu & 0 \\
m\bullet\mu +u\bullet (-m\bullet \mu)& u\nu
\end{smallmatrix} \right] & =\left[ \begin{smallmatrix}
 t\mu & 0 \\
m\bullet\mu - u\bullet (m\bullet \mu)& u\nu
\end{smallmatrix} \right]\\
& =  \left[ \begin{smallmatrix}
 t\mu & 0 \\
m\bullet\mu -(u\bullet m)\bullet \mu& u\nu
\end{smallmatrix} \right]\\
& = \left[ \begin{smallmatrix}
0 & 0 \\
0 & 0
\end{smallmatrix} \right]
\end{align*}

\item [(b)] Consider $\left[ \begin{smallmatrix}
 \alpha & 0 \\
n & \beta
\end{smallmatrix} \right]:\left[ \begin{smallmatrix}
T' & 0 \\
M & U'
\end{smallmatrix} \right]\longrightarrow \left[ \begin{smallmatrix}
 T & 0 \\
M & U
\end{smallmatrix} \right]$ with $n\in M(U,T')$ and $\alpha:T'\longrightarrow T$ and $\beta:U'\longrightarrow U$, such that $$
\left[ \begin{smallmatrix}
 0 & 0 \\
0 & 0
\end{smallmatrix} \right]=
\left[ \begin{smallmatrix}
 t & 0 \\
m & u
\end{smallmatrix} \right]\left[ \begin{smallmatrix}
 \alpha & 0 \\
n & \beta
\end{smallmatrix} \right]=\left[ \begin{smallmatrix}
 t \alpha & 0 \\
m\bullet \alpha+u\bullet n &  u\beta
\end{smallmatrix} \right]$$
Then $m\bullet \alpha=-u\bullet n\in M(U,T')$.\\
Consider $\nu:K\longrightarrow U$ then $\nu\bullet (m\bullet \alpha)=\nu\bullet (-u\bullet n)\in M(K,T')$.
We want a morphism
$\left[ \begin{smallmatrix}
\alpha' & 0 \\
m' & \beta'
\end{smallmatrix} \right]:\left[ \begin{smallmatrix}
T' & 0 \\
M & U'
\end{smallmatrix} \right]\longrightarrow \left[ \begin{smallmatrix}
 L & 0 \\
M & K
\end{smallmatrix} \right]$ with $m'\in M(K,T')$,  such that
$$\left[ \begin{smallmatrix}
\alpha & 0 \\
n & \beta
\end{smallmatrix} \right]=\left[ \begin{smallmatrix}
\mu & 0 \\
- (m\bullet \mu) & \nu
\end{smallmatrix} \right]\left[ \begin{smallmatrix}
\alpha' & 0 \\
m' & \beta'
\end{smallmatrix} \right]=\left[ \begin{smallmatrix}
\mu \alpha' & 0 \\
(- (m\bullet \mu))\bullet \alpha'+\nu\bullet m' & \nu\beta'
\end{smallmatrix} \right]$$
For this, consider $\nu':K'\longrightarrow U$ the kernel of $1_{U}-u$. 
Since idempotens split, we have that 
$U\simeq K'\oplus K$ with $\nu:K\longrightarrow U$ and $\nu':K'\longrightarrow U$ the natural inclusions. In particular, there exists $p:U\longrightarrow K$ and $p':U\longrightarrow K'$ such that
$$1_{U}=\nu p+\nu' p',\quad p\nu=1_{K},\quad p'\nu'=1_{K'}.$$
It can be seen that  $u=\nu' p'$.\\
From this we have that $n=\nu\bullet (p\bullet n)+\nu'\bullet (p'\bullet n).$
We set $m':=p\bullet n \in M(K,T')$.
Then we have that 
$$(- (m\bullet \mu))\bullet \alpha'=-(m\bullet (\mu \circ \alpha'))=-(m\bullet \alpha)= u\bullet n.$$
On the other hand
$$\nu\bullet m'=\nu\bullet(p\bullet n)=n-\nu'\bullet (p'\bullet n)=
n-(\nu'\circ p')\bullet=n-u\bullet n$$
Therefore 
$$\Big((- (m\bullet \mu))\bullet \alpha'\Big)+\nu\bullet m'=n$$
and $\alpha=\mu\alpha'$ and $\beta=\nu \beta'$. Proving that 
$\left[ \begin{smallmatrix}
\alpha' & 0 \\
m' & \beta'
\end{smallmatrix} \right]$ is the required morphism.

\end{enumerate}
Uniqueness.
Suppose that $\left[ \begin{smallmatrix}
\alpha'' & 0 \\
m'' & \beta''
\end{smallmatrix} \right]$ is such that 
$$\left[ \begin{smallmatrix}
\mu \alpha' & 0 \\
(- (m\bullet \mu))\bullet \alpha'+\nu\bullet m' & \nu\beta'
\end{smallmatrix} \right]=\left[ \begin{smallmatrix}
\mu \alpha'' & 0 \\
(- (m\bullet \mu))\bullet \alpha''+\nu\bullet m'' & \nu\beta''
\end{smallmatrix} \right]=\left[ \begin{smallmatrix}
\alpha & 0 \\
n & \beta
\end{smallmatrix} \right].$$
From this we have that $\alpha'=\alpha''$ and $\beta'=\beta''$  and then $\nu \bullet m'=\nu \bullet m''$. Composing with $p:U\longrightarrow K$ we have that
$$m'=1_{K}\bullet m'=(p\circ \nu)\bullet  m'=p\bullet (\nu\bullet m')=p\bullet (\nu \bullet m'')=(p\circ \nu )\bullet m''=1_{K}\bullet m''=m''.$$
Proving the uniqueness. Therefore $\mathbf{\Lambda}$ is a category with splitting idempotents.
\end{proof}

\begin{proposition}\label{homfinikrulsc}
Let $\mathcal{U}$ and $\mathcal{T}$ be Hom-finite $K$-categories which are Krull-Schmidt and $M\in \mathrm{Mod}(\mathcal{U}\otimes_{K}\mathcal{T}^{op})$ satisfies that $M_{T}\in \mathsf{mod}(\mathcal{U})$ for all $T\in \mathcal{T}$. 
\begin{enumerate}
\item [(a)] Then
$\mathbf{\Lambda}=\left[ \begin{smallmatrix}
\mathcal{T} & 0 \\ M & \mathcal{U}
\end{smallmatrix}\right]$ is a Hom-finite $K$-category and Krull-Schmidt.

\item [(b)] Then $\mathsf{mod}(\mathbf{\Lambda})$ is  Hom-finite $K$-category and Krull-Schmidt.

\end{enumerate}
\end{proposition}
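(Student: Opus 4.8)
\textbf{Proof plan for Proposition \ref{homfinikrulsc}.}
The strategy is to reduce both statements to facts already established in the excerpt, using the identification $\mathrm{Mod}(\mathbf{\Lambda})\simeq\big(\mathsf{Mod}(\mathcal{T}),\mathbb{G}\mathsf{Mod}(\mathcal{U})\big)$ from Theorem \ref{equivalence1} together with the structural lemmas proved just above. For part (a), I would first invoke the lemma that $\mathbf{\Lambda}$ is Hom-finite $K$-category (the one immediately preceding), which requires exactly the hypothesis $M_{T}\in\mathsf{mod}(\mathcal{U})$ for all $T$; this is already done, so only the Krull-Schmidt assertion needs work. To get Krull-Schmidt, recall that a Hom-finite $K$-category in which idempotents split is automatically Krull-Schmidt, because then each endomorphism ring $\mathsf{End}_{\mathbf{\Lambda}}\big(\!\left[\begin{smallmatrix}T&0\\M&U\end{smallmatrix}\right]\!\big)$ is a finite-dimensional $K$-algebra (this is Lemma \ref{matrisartinal}, giving that it is an artin $K$-algebra), hence semiperfect, so every object decomposes into a finite direct sum of objects with local endomorphism rings. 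Thus the two ingredients are: (i) idempotents split in $\mathbf{\Lambda}$, which follows from Proposition \ref{spliidempomat} since $\mathcal{U}$ and $\mathcal{T}$, being Krull-Schmidt, have splitting idempotents; and (ii) Hom-finiteness, already in hand. I would also need to check that objects decompose finitely, which follows since each object $\left[\begin{smallmatrix}T&0\\M&U\end{smallmatrix}\right]$ corresponds under $I_{1},I_{2}$ to finite data coming from the Krull-Schmidt decompositions of $T$ in $\mathcal{T}$ and $U$ in $\mathcal{U}$; alternatively, and more cleanly, one can just cite the standard fact (e.g.\ Krause, or \cite[p.337]{AusVarietyI}-style arguments) that Hom-finite $K$-linear + idempotent-complete $\Rightarrow$ Krull-Schmidt.

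For part (b), the plan is to first establish that $\mathsf{mod}(\mathbf{\Lambda})$ is Hom-finite: since $\mathsf{mod}(\mathbf{\Lambda})$ is a full subcategory of $\mathsf{Mod}(\mathbf{\Lambda})$ and $\mathbf{\Lambda}$ is Hom-finite, for finitely presented $C,C'$ with presentation $P_1\to P_0\to C\to 0$ the group $\mathrm{Hom}_{\mathbf{\Lambda}}(C,C')$ embeds in $\mathrm{Hom}_{\mathbf{\Lambda}}(P_0,C')$, which in turn is a subquotient of a finite direct sum of values $C'\big(\!\left[\begin{smallmatrix}T&0\\M&U\end{smallmatrix}\right]\!\big)$; each such value lies in $\mathsf{mod}(K)$ because $C'\in\mathsf{mod}(\mathbf{\Lambda})\subseteq(\mathbf{\Lambda},\mathsf{mod}(K))$, so $\mathrm{Hom}_{\mathbf{\Lambda}}(C,C')$ is a finitely generated $K$-module. (Here I use that $\mathbf{\Lambda}$ has finite coproducts by Proposition \ref{fincopromat}, so $P_0$ is representable and the Yoneda lemma applies.) Then, for the Krull-Schmidt property of $\mathsf{mod}(\mathbf{\Lambda})$, I would argue that $\mathsf{mod}(\mathbf{\Lambda})$ is idempotent-complete: $\mathbf{\Lambda}$ has pseudokernels (inherited from $\mathcal{T},\mathcal{U}$ being dualizing, or more directly from the fact that under the hypotheses $\mathbf{\Lambda}$ is dualizing, established in the surrounding results), so $\mathsf{mod}(\mathbf{\Lambda})$ is abelian, and in particular idempotents split there. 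A Hom-finite $K$-linear idempotent-complete additive category is Krull-Schmidt (endomorphism rings are finite-dimensional $K$-algebras, hence semiperfect), which finishes (b).

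The main obstacle, such as it is, is purely bookkeeping: making sure the chain ``Hom-finite $+$ split idempotents $\Rightarrow$ semiperfect endomorphism rings $\Rightarrow$ Krull-Schmidt'' is invoked with the right reference and that all hypotheses feeding into Proposition \ref{spliidempomat}, Lemma \ref{matrisartinal}, and the Hom-finiteness lemma are actually available here — namely $\mathcal{U},\mathcal{T}$ Hom-finite $K$-categories, Krull-Schmidt (hence with splitting idempotents, since in a Krull-Schmidt category idempotents split), and $M_{T}\in\mathsf{mod}(\mathcal{U})$ for all $T$. One small point to be careful about: for part (b) I should not assume $\mathcal{U},\mathcal{T}$ dualizing unless needed; if only Krull-Schmidt and Hom-finite are assumed, idempotent-completeness of $\mathsf{mod}(\mathbf{\Lambda})$ still follows because a retract of a finitely presented functor over a Hom-finite Krull-Schmidt category with split idempotents is again finitely presented (split the idempotent on a projective presentation). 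I would phrase (b)'s proof so as to use only what (a) provides, keeping the argument self-contained. No heavy computation is required; the whole proof is a short assembly of the preceding lemmas.
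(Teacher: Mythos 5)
Your plan for part (a) matches the paper's proof step for step: invoke the preceding lemma for Hom-finiteness, Lemma \ref{matrisartinal} to get artin (hence semiperfect) endomorphism rings, Proposition \ref{spliidempomat} together with \ref{fincopromat} for split idempotents in $\mathbf{\Lambda}$, and then the standard criterion (Krause's \cite[Cor.\ 4.4]{Krause}) to conclude Krull-Schmidt. For part (b) the paper simply cites \cite[Prop.\ 2.4]{MOSS}, whereas you sketch a direct argument; that is fine, but one step in your sketch is slightly under-justified. You assert that idempotent-completeness of $\mathsf{mod}(\mathbf{\Lambda})$ follows by "splitting the idempotent on a projective presentation" — an idempotent on a finitely presented $M$ does not obviously lift to an idempotent on $P_0$ in a finite presentation $P_1\to P_0\to M\to 0$. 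The clean justification is that finitely presented functors are exactly the compact objects of $\mathrm{Mod}(\mathbf{\Lambda})$ and compact objects are closed under retracts; alternatively, cite the referenced \cite[Prop.\ 2.4]{MOSS} as the paper does. With that fix your proposal is complete, and aside from the direct-vs.-cited treatment of (b) it is the same argument as the paper's.
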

\begin{proof}
\begin{enumerate}
\item [(a)]
By \ref{matrisartinal}, we have that 
$\Gamma=\mathsf{End}_{\mathbf{\Lambda}}\left (\left[ \begin{smallmatrix}
T & 0 \\
M & U
\end{smallmatrix} \right]  \right)  := \left[ \begin{smallmatrix}
\mathsf{Hom}_{\mathcal{T}}(T,T) & 0 \\
M(U,T) & \mathsf{Hom}_{\mathcal{U}}(U,U)
\end{smallmatrix} \right]$ is an artin $K$-algebra for every  $\left[ \begin{smallmatrix}
T & 0 \\
M & U
\end{smallmatrix} \right] \in \mathbf{\Lambda}$ and therefore semiperfect.\\
Since $\mathcal{U}$ and $\mathcal{T}$ are Krull-Schmidt, we have that  $\mathcal{U}$ and $\mathcal{T}$ are with splitting idempotents (see \cite[Corollary 4.4]{Krause}).
By \ref{spliidempomat} and \ref{fincopromat}, we have that $\mathbf{\Lambda}$ is an additive category with splitting idempotents.
Therefore $\mathbf{\Lambda}$ is Krull-Schmidt (see \cite[Corollary 4.4]{Krause}).

\item [(b)] It follows from item (a) and \cite[Proposition 2.4]{MOSS}.
\end{enumerate}
\end{proof}
Finally we have the following result that give us that $\mathbf{\Lambda}=\left[ \begin{smallmatrix}
\mathcal{T} & 0 \\
M & \mathcal{U}
\end{smallmatrix} \right]$ is a dualizing $K$-variety.

\begin{proposition}\label{matricesdualizan}
Suppose that $\mathcal{U}$ and $\mathcal{T}$ are dualizing $K$-varieties,  $M\in \mathrm{Mod}(\mathcal{U}\otimes_{K}\mathcal{T}^{op})$ satisfies that $M_{T}\in \mathsf{mod}(\mathcal{U})$ and  $M_{U}\in \mathsf{mod}(\mathcal{T}^{op})$ for all $T\in \mathcal{T}$ and $U\in \mathcal{U}^{op}$. Consider the contravariant functors 
$$\mathbb{T}^{\ast}\circ \overline{\textswab{F}}\circ \widehat{\Theta}^{\ast},\,\,\,\,\,\, \mathbb{D}_{\mathbf{\Lambda}}\circ \textswab{F}:\Big(\mathsf{mod}(\mathcal{T}), \mathbb{G}(\mathsf{mod}(\mathcal{U})\Big)\longrightarrow \mathsf{mod}(\mathbf{\Lambda}^{op}).$$
Then, there exists an isomorphism $\nu:\mathbb{T}^{\ast}\circ \overline{\textswab{F}}\circ \widehat{\Theta}^{\ast}\longrightarrow \mathbb{D}_{\mathbf{\Lambda}}\circ \textswab{F},$ such that the following diagram is commutative up to  the isomorphism $\nu$
$$\xymatrix{
\Big(\mathsf{mod}(\mathcal{T}),\mathbb{G}\mathsf{mod}(\mathcal{U})\Big)\ar[rrr]^{\textswab{F}|_{(\mathsf{mod}(\mathcal{T}),\mathbb{G}\mathsf{mod}(\mathcal{U}))}}\ar[d]_{\widehat{\Theta}|_{(\mathsf{mod}(\mathcal{T}),\mathbb{G}\mathsf{mod}(\mathcal{U}))}} & &  & \mathsf{mod}(\mathbf{\Lambda})\ar[d]^{(\mathbb{D}_{\mathbf{\Lambda}})|_{\mathsf{mod}(\mathbf{\Lambda})}}\\
\Big(\mathsf{mod}(\mathcal{U}^{op}),\overline{\mathbb{G}}\mathsf{mod}(\mathcal{T}^{op})\Big)\ar@{=>}[urrr]_{\nu}\ar[rrr]_(.6){(\mathbb{T}^{\ast}\circ \overline{\textswab{F}})_{(\mathsf{mod}(\mathcal{U}^{op}),\overline{\mathbb{G}}\mathsf{mod}(\mathcal{T}^{op}))}}& &  &   \mathsf{mod}(\mathbf{\Lambda}^{op}).}$$
In particular $\mathbf{\Lambda}$ is dualizing $K$-variety.
\end{proposition}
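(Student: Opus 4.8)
The plan is to assemble the proposition from the pieces already established in the paper. The commutativity of the square up to the isomorphism $\nu$ is really just the restriction to finitely presented objects of the square in Proposition \ref{cuadrocasicon}, so almost all of the work has been done. First I would observe that under the standing hypotheses we have the equivalence $\textswab{F}|_{(\mathsf{mod}(\mathcal{T}),\mathbb{G}\mathsf{mod}(\mathcal{U}))}\colon\big(\mathsf{mod}(\mathcal{T}),\mathbb{G}\mathsf{mod}(\mathcal{U})\big)\to\mathrm{mod}(\mathbf{\Lambda})$ proved in the preceding proposition, together with its analogue $\overline{\textswab{F}}|_{(\mathsf{mod}(\mathcal{U}^{op}),\overline{\mathbb{G}}\mathsf{mod}(\mathcal{T}^{op}))}\colon\big(\mathsf{mod}(\mathcal{U}^{op}),\overline{\mathbb{G}}\mathsf{mod}(\mathcal{T}^{op})\big)\to\mathrm{mod}(\overline{\mathbf{\Lambda}})$ obtained by symmetry. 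Since $\mathbb{T}\colon\mathbf{\Lambda}^{op}\to\overline{\mathbf{\Lambda}}$ is an isomorphism of categories (Proposition \ref{isotrivial}), $\mathbb{T}^{\ast}$ is an isomorphism $\mathsf{Mod}(\overline{\mathbf{\Lambda}})\to\mathsf{Mod}(\mathbf{\Lambda}^{op})$ which clearly restricts to $\mathsf{mod}(\overline{\mathbf{\Lambda}})\to\mathsf{mod}(\mathbf{\Lambda}^{op})$; hence $\mathbb{T}^{\ast}\circ\overline{\textswab{F}}$ restricts to the lower horizontal arrow of the diagram.

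Next I would check that the three remaining functors in the square all land in the finitely presented subcategories. For $\widehat{\Theta}^{\ast}$ this is exactly the content of the proposition just before this one: $\widehat{\Theta}$ restricts to $\big(\mathsf{mod}(\mathcal{T}),\mathbb{G}\mathsf{mod}(\mathcal{U})\big)\to\big(\mathsf{mod}(\mathcal{U}^{op}),\overline{\mathbb{G}}\mathsf{mod}(\mathcal{T}^{op})\big)$, using that $\mathbb{D}_{\mathcal{T}}$ preserves finitely presented modules (dualizing), that $\overline{\mathbb{G}}$ sends $\mathsf{mod}(\mathcal{T}^{op})$ to $\mathsf{mod}(\mathcal{U}^{op})$ by Lemma \ref{resringebien}, and that $\mathbb{D}_{\mathcal{U}}$ preserves finite presentation. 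For $\textswab{F}$ the target $\mathrm{mod}(\mathbf{\Lambda})$ is correct by the previous proposition. Finally $\mathbb{D}_{\mathbf{\Lambda}}\colon\mathsf{Mod}(\mathbf{\Lambda})\to\mathsf{Mod}(\mathbf{\Lambda}^{op})$ restricts to $\mathsf{mod}(\mathbf{\Lambda})\to\mathsf{mod}(\mathbf{\Lambda}^{op})$: here I would invoke Lemma \ref{matrisartinal} and the two lemmas after it, which give that $\mathbf{\Lambda}$ is a Hom-finite $K$-category whose endomorphism rings are artin $K$-algebras, so $\mathbb{D}_{\mathbf{\Lambda}}$ is the standard $K$-duality on $(\mathbf{\Lambda},\mathsf{mod}(K))$; one then argues, exactly as in the definition of a dualizing variety, that a finite presentation $P_1\to P_0\to N\to 0$ in $\mathrm{mod}(\mathbf{\Lambda})$ dualizes to an exact sequence $0\to \mathbb{D}_{\mathbf{\Lambda}}N\to \mathbb{D}_{\mathbf{\Lambda}}P_0\to \mathbb{D}_{\mathbf{\Lambda}}P_1$ and that $\mathbb{D}_{\mathbf{\Lambda}}$ of a finitely generated projective $\mathbf{\Lambda}$-module is finitely presented over $\mathbf{\Lambda}^{op}$ — but this last point is most cleanly seen precisely by the commutativity $\nu$, since $\mathbb{D}_{\mathbf{\Lambda}}\circ\textswab{F}\cong\mathbb{T}^{\ast}\circ\overline{\textswab{F}}\circ\widehat{\Theta}$ and the right-hand side visibly has finitely presented image once the previous paragraph is in place. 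So in fact the cleanest order is: first establish $\widehat{\Theta}^{\ast}$, then note $\mathbb{T}^{\ast}\circ\overline{\textswab{F}}$ restricts, conclude from Proposition \ref{cuadrocasicon} that $\mathbb{D}_{\mathbf{\Lambda}}\circ\textswab{F}$ has image in $\mathsf{mod}(\mathbf{\Lambda}^{op})$, and only then observe this forces $\mathbb{D}_{\mathbf{\Lambda}}$ itself to restrict (since $\textswab{F}$ is essentially surjective onto $\mathrm{mod}(\mathbf{\Lambda})$).

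With all four functors known to restrict, the natural isomorphism $\nu$ of Proposition \ref{cuadrocasicon} restricts verbatim to a natural isomorphism between the restricted functors, giving the displayed commutative square. To conclude that $\mathbf{\Lambda}$ is a dualizing $K$-variety I would argue as follows: $\widehat{\Theta}^{\ast}$ is a duality because, on objects, it is built from $\mathbb{D}_{\mathcal{U}}$ and $\overline{\mathbb{G}}\circ\mathbb{D}_{\mathcal{T}}$; more precisely, the functor $\Theta$ composes $\mathbb{D}_{\mathcal{T}}$ (a duality $\mathsf{mod}(\mathcal{T})\leftrightarrow\mathsf{mod}(\mathcal{T}^{op})$, $\mathcal T$ being dualizing) with $\overline{\mathbb{G}}$, and together with the comma-category equivalences $H$, $\overline H$ of Proposition \ref{equicoma2} and Corollary \ref{corcambia} one sees $\widehat{\Theta}^{\ast}$ has a quasi-inverse of the same shape (built from $\mathbb{D}_{\mathcal{U}^{op}}$, $\mathbb{D}_{\mathcal{T}^{op}}$, $\mathbb{F}$, $\overline{\mathbb{F}}$ restricted to finitely presented modules via Lemmas \ref{fpresented1}, \ref{fpresented2}). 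Since $\textswab{F}|$ and $\mathbb{T}^{\ast}\circ\overline{\textswab{F}}|$ are equivalences and $\widehat{\Theta}^{\ast}$ is a duality, the commuting square shows $\mathbb{D}_{\mathbf{\Lambda}}$ restricts to a duality $\mathrm{mod}(\mathbf{\Lambda})\to\mathrm{mod}(\mathbf{\Lambda}^{op})$; as $\mathbf{\Lambda}$ is a Hom-finite $K$-variety by Lemma \ref{matrisartinal} and the lemma after it, this is exactly the statement that $\mathbf{\Lambda}$ is dualizing. The main obstacle I anticipate is purely bookkeeping: verifying that $\widehat{\Theta}^{\ast}$ really is a duality (not merely a functor with the right source and target) requires chasing the quasi-inverse through the chain $\Psi$, $\Theta$, $H$, $\overline H$, Corollary \ref{corcambia}, and the $\mathbb{F}$/$\overline{\mathbb{F}}$ lemmas, and keeping the opposite-category bookkeeping straight; none of it is deep, but it is where an error would most likely hide.
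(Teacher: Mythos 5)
Your restriction argument matches the paper: restrict the square of Proposition \ref{cuadrocasicon} using the preceding propositions (the restricted $\textswab{F}$, the restricted $\widehat{\Theta}$, Lemma \ref{resringebien}), and then use $\textswab{F}$'s essential surjectivity onto $\mathrm{mod}(\mathbf{\Lambda})$ and the isomorphism $\nu$ to force $\mathbb{D}_{\mathbf{\Lambda}}(\mathsf{mod}(\mathbf{\Lambda}))\subseteq\mathsf{mod}(\mathbf{\Lambda}^{op})$. Up to that point you and the paper are doing the same thing.

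Where you genuinely diverge is in the final ``hence $\mathbf{\Lambda}$ is dualizing'' step. You propose to establish that $\widehat{\Theta}^{\ast}$ is itself a duality (by exhibiting a quasi-inverse built from $\mathbb{D}_{\mathcal{U}^{op}}$, $\mathbb{D}_{\mathcal{T}^{op}}$, $\mathbb{F}$, $\overline{\mathbb{F}}$ via $\Psi$, $\Theta$, Corollary \ref{corcambia}, and the comma-category isomorphisms of Proposition \ref{equicoma2}), and then transport this across the two equivalences $\textswab{F}|$ and $\mathbb{T}^{\ast}\circ\overline{\textswab{F}}|$. The paper avoids all of that: it simply applies the same restriction argument to $\mathbf{\Lambda}^{op}$ (the whole construction is symmetric under passing to $\overline{\mathbf{\Lambda}}$ and $\overline{M}$), obtaining $(\mathbb{D}_{\mathbf{\Lambda}^{op}})|_{\mathsf{mod}(\mathbf{\Lambda}^{op})}\colon\mathsf{mod}(\mathbf{\Lambda}^{op})\to\mathsf{mod}(\mathbf{\Lambda})$, and then observes that $\mathbb{D}_{\mathbf{\Lambda}}$ and $\mathbb{D}_{\mathbf{\Lambda}^{op}}$ are already mutually quasi-inverse on the ambient categories $(\mathbf{\Lambda},\mathsf{mod}(K))$ and $(\mathbf{\Lambda}^{op},\mathsf{mod}(K))$ (this is the standard $K$-duality for a Hom-finite $K$-category, available once Lemma \ref{matrisartinal} and its companions are in place), so the two restricted functors remain mutually quasi-inverse. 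This buys a much shorter argument: one never has to verify $\widehat{\Theta}^{\ast}$ is an anti-equivalence, which is exactly the bookkeeping you yourself flag as the likely place to hide an error. Your route is salvageable but is left as a sketch precisely at its load-bearing joint; I would recommend replacing that step with the symmetry-plus-ambient-quasi-inverse argument.
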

\begin{proof}
By \ref{cuadrocasicon},  we have the diagram and by the previous results, we can restric the diagram. Therefore the image of $(\mathbb{D}_{\mathbf{\Lambda}})|_{\mathsf{mod}(\mathbf{\Lambda})}$ is contained in $\mathsf{mod}(\mathbf{\Lambda}^{op})$, then we  have the functor
$$(\mathbb{D}_{\mathbf{\Lambda}})|_{\mathsf{mod}(\mathbf{\Lambda})}:\mathsf{mod}(\mathbf{\Lambda})\longrightarrow
\mathsf{mod}(\mathbf{\Lambda}^{op})$$
In the same way we have that
$$(\mathbb{D}_{\mathbf{\Lambda}^{op}})|_{\mathsf{mod}(\mathbf{\Lambda}^{op})}:\mathsf{mod}(\mathbf{\Lambda}^{op})\longrightarrow
\mathsf{mod}(\mathbf{\Lambda})$$
and since $\mathbb{D}_{\mathbf{\Lambda}^{op}}$ and $\mathbb{D}_{\mathbf{\Lambda}}$ are quasi inverse from each other, we have that the previos are quasi inverse. Therefore $\mathbf{\Lambda}$ is dualizing.

\end{proof}

\begin{proposition}\label{modtieneseq}
Suppose that $\mathcal{U}$ and $\mathcal{T}$ are dualizing $K$-varieties, $M\in \mathrm{Mod}(\mathcal{U}\otimes_{K}\mathcal{T}^{op})$ satisfies that $M_{T}\in \mathsf{mod}(\mathcal{U})$ and  $M_{U}\in \mathsf{mod}(\mathcal{T}^{op})$ for all $T\in \mathcal{T}$ and $U\in \mathcal{U}^{op}$. Then there are almost split sequences in $\mathrm{mod}(\mathbf{\Lambda})$.
\end{proposition}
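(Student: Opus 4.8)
The plan is to reduce the statement to the existence theorem for almost split sequences in dualizing varieties, which was recalled in Section~2.2. Indeed, by \ref{matricesdualizan} the hypotheses guarantee that $\mathbf{\Lambda}=\left[\begin{smallmatrix} \mathcal{T} & 0 \\ M & \mathcal{U}\end{smallmatrix}\right]$ is a dualizing $K$-variety. By Auslander--Reiten's theory of dualizing $R$-varieties (see \cite{AusVarietyI}), every dualizing variety $\mathcal{C}$ has the property that $\mathrm{mod}(\mathcal{C})$ has almost split sequences; more precisely, the duality $\mathbb{D}_{\mathbf{\Lambda}}:\mathrm{mod}(\mathbf{\Lambda})\to \mathrm{mod}(\mathbf{\Lambda}^{op})$ together with the fact that $\mathrm{mod}(\mathbf{\Lambda})$ is an abelian, Hom-finite, Krull--Schmidt category with enough projectives and enough injectives yields the existence of almost split sequences via the usual construction (transpose composed with duality).

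Concretely, I would first record that $\mathbf{\Lambda}$ satisfies all the standing hypotheses needed: it is a Hom-finite $K$-category which is Krull--Schmidt by \ref{homfinikrulsc}, it has pseudokernels (since $\mathcal{U}$ and $\mathcal{T}$ do, being dualizing, and one checks this passes to $\mathbf{\Lambda}$ using the description of $\mathrm{Mod}(\mathbf{\Lambda})$ as the comma category and \ref{exacsedos}), hence $\mathrm{mod}(\mathbf{\Lambda})$ is abelian. Then $\mathbf{\Lambda}$ being dualizing gives that $\mathrm{mod}(\mathbf{\Lambda})$ has enough injectives (as noted right after Definition~\ref{dualizinvar}) and, being Krull--Schmidt over a dualizing variety, that finitely presented functors have projective covers. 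These are exactly the ingredients in Auslander--Reiten's existence theorem.

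The second step is simply to invoke that existence theorem: for a dualizing $K$-variety $\mathcal{C}$, $\mathrm{mod}(\mathcal{C})$ has almost split sequences; equivalently, for every non-projective indecomposable $C\in\mathrm{mod}(\mathbf{\Lambda})$ there is an almost split sequence ending at $C$, obtained as $\mathbb{D}_{\mathbf{\Lambda}}\mathrm{Tr}(C)$, and dually for every non-injective indecomposable there is one starting at it. Since we have already verified $\mathbf{\Lambda}$ is dualizing in \ref{matricesdualizan}, this applies verbatim.

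The main obstacle, such as it is, is making sure that $\mathbf{\Lambda}$ genuinely meets the technical prerequisites of the cited existence theorem rather than just being ``dualizing'' on the nose --- in particular that $\mathrm{mod}(\mathbf{\Lambda})$ is abelian, i.e.\ that $\mathbf{\Lambda}$ has pseudokernels. This can be handled by transporting pseudokernels from $\mathcal{T}$ and $\mathcal{U}$ through the equivalence $\Big(\mathsf{mod}(\mathcal{T}),\mathbb{G}\mathsf{mod}(\mathcal{U})\Big)\simeq \mathrm{mod}(\mathbf{\Lambda})$ of the previous proposition, using that kernels and cokernels in the comma category are computed componentwise by \ref{exacsedos}, together with the right-exactness of $\mathbb{F}$ and the fact that $\mathbb{G}^{*}$ preserves finitely presented modules (Lemma~\ref{resringebien}). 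Once that is in place, the conclusion is immediate from the dualizing property established in \ref{matricesdualizan} and the Auslander--Reiten existence result for dualizing varieties.
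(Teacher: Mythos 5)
Your proposal is correct and follows the same route as the paper: the paper's proof simply combines Proposition~\ref{matricesdualizan} (establishing that $\mathbf{\Lambda}$ is a dualizing $K$-variety) with the Auslander--Reiten existence theorem for almost split sequences over dualizing varieties, cited there from Reiten's lecture notes. Your additional remarks about pseudokernels, Krull--Schmidt, and projective covers merely unpack the standing hypotheses of that cited theorem, so they amount to a more detailed account of the same argument rather than a different one.
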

\begin{proof}
If follows from \ref{matricesdualizan} and \cite[Theorem 7.1.3]{Reiten}.
\end{proof}

\section{Examples and Applications}
In this section we make some applications to splitting torsion pairs which are in relation with tilting theory (see \cite{Assem}) and path categories which are studied in \cite{RingelTame} and we give a generalization of the one-point extension algebra.\\
First, we recall the construction of the so called one-point extension algebra. Given a finite dimensional $K$-algebra $\Lambda:=KQ/I$. Let $i$ a source in $Q$ and $\overline{e}_{i}$ the corresponding idempotent in $\Lambda$. Since there are no nontrivial paths ending in $i$, we have $\overline{e}_{i}\Lambda \overline{e}_{i}\simeq K$ and  $\overline{e}_{i}\Lambda(1- \overline{e}_{i})=0$. If $Q'$ denote the quiver that we obtain by removing the vertix $i$ and $I'$ denote the relations in $I$ removing the ones which start in $i$, then
$(1-\overline{e}_{i})\Lambda (1-\overline{e}_{i})\simeq KQ'/I'$. So $\Lambda=KQ/I$ is obtained from $\Lambda':=KQ'/I'$ by adding one vertex $i$, together with arrows and relations starting in $i$. Then
$\Lambda:=\left[ \begin{smallmatrix}
K & 0 \\
(1-\overline{e}_{i})\Lambda \overline{e}_{i} & \Lambda'
\end{smallmatrix} \right].$ So $\Lambda$ is the one-point extension of $\Lambda'$.\\
Now, in order to give a generalization of the previous construction, we consider the following setting. Let $\mathcal{C}$ be a Krull-Schmidt category and $(\mathcal{U}, \mathcal{T})$ a pair of additive full subcategories of $\mathcal{C}$. It is said that $(\mathcal{U}, \mathcal{T})$ is a $\textbf{splitting torsion pair}$  if 
\begin{itemize}
\item[(i)] For all $X\in \mathrm{ind}(\mathcal{C})$, then either $X\in \mathcal{U}$ or $X\in \mathcal{T}$.
\item[(ii)] $\mathrm{Hom}_{\mathcal{C}}(X,-)|_{\mathcal{T}}=0$ for all $X\in\mathcal{U}$.
\end{itemize}
We get the following result that tell us that we can obtain a category as extension of two subcategories.

\begin{proposition}
Let $(\mathcal{U}, \mathcal{T})$ be  a splitting torsion pair. Then we have a equivalence of categories  
$$\mathcal{C} \cong \left[ \begin{matrix}
\mathcal{T} & 0 \\
\widehat{\mathbbm{Hom}} & \mathcal{U}
\end{matrix} \right].$$    
Here without danger to cause confusion $\widehat{\mathbbm{Hom}}$ denotes the restriction of 
$\widehat{\mathbbm{Hom}}:\mathcal{C}\otimes\mathcal{C}^{op}\rightarrow \mathbf{Ab}$  to the subcategory $\mathcal{U}\otimes\mathcal{T}^{op}$
of $\mathcal{C}\otimes\mathcal{C}^{op}$.
\end{proposition}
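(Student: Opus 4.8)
The plan is to construct an explicit $K$-linear functor $\Phi\colon\mathbf{\Lambda}\to\mathcal{C}$, where $\mathbf{\Lambda}=\left[\begin{smallmatrix}\mathcal{T}&0\\ M&\mathcal{U}\end{smallmatrix}\right]$ with $M:=\widehat{\mathbbm{Hom}}|_{\mathcal{U}\otimes\mathcal{T}^{op}}$, and then to check that it is full, faithful and dense. The first step is to make $M$ explicit: since $\widehat{\mathbbm{Hom}}$ is biadditive, its restriction to $\mathcal{U}\otimes\mathcal{T}^{op}$ indeed lies in $\mathsf{Mod}(\mathcal{U}\otimes\mathcal{T}^{op})$, and by Proposition~\ref{funhomext}(ii) one has $M(U,T)=\mathrm{Hom}_{\mathcal{C}}(T,U)$ for $U\in\mathcal{U}$, $T\in\mathcal{T}$. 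Unwinding the functoriality of $\widehat{\mathbbm{Hom}}$, the operations of Definition~\ref{defitrinagularmat} become ordinary composition in $\mathcal{C}$: for $u\colon U'\to U''$ and $m\in M(U',T)$ one gets $u\bullet m=M(u\otimes 1_T)(m)=u\circ m$, and for $t\colon T\to T'$ and $m\in M(U,T')$ one gets $m\bullet t=M(1_U\otimes t^{op})(m)=m\circ t$. Consequently the composition law of $\mathbf{\Lambda}$ is exactly the multiplication of $2\times2$ matrices of morphisms of $\mathcal{C}$.

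Next I would define $\Phi$ on objects by $\Phi\left(\left[\begin{smallmatrix}T&0\\ M&U\end{smallmatrix}\right]\right):=T\oplus U$, the biproduct in $\mathcal{C}$ (which exists because $\mathcal{T}$ and $\mathcal{U}$ are additive subcategories of $\mathcal{C}$), and on a morphism $\left[\begin{smallmatrix}t&0\\ m&u\end{smallmatrix}\right]$, with $t\colon T\to T'$, $u\colon U\to U'$ and $m\in M(U',T)=\mathrm{Hom}_{\mathcal{C}}(T,U')$, by the morphism $T\oplus U\to T'\oplus U'$ in $\mathcal{C}$ whose matrix is $\left[\begin{smallmatrix}t&0\\ m&u\end{smallmatrix}\right]$, the entry $U\to T'$ being $0$. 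This is well defined by the universal property of biproducts, it preserves identities and is additive, and by the previous paragraph it preserves composition; hence $\Phi$ is a $K$-linear functor.

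Then I would verify that $\Phi$ is fully faithful. Fixing two objects and decomposing the biproduct, $\mathrm{Hom}_{\mathcal{C}}(T\oplus U,\,T'\oplus U')$ is the set of $2\times2$ matrices with entries in $\mathrm{Hom}_{\mathcal{C}}(T,T')$, $\mathrm{Hom}_{\mathcal{C}}(U,T')$, $\mathrm{Hom}_{\mathcal{C}}(T,U')$ and $\mathrm{Hom}_{\mathcal{C}}(U,U')$. Condition~(ii) of a splitting torsion pair gives $\mathrm{Hom}_{\mathcal{C}}(U,T')=0$, so this set equals $\left[\begin{smallmatrix}\mathrm{Hom}_{\mathcal{T}}(T,T')&0\\ M(U',T)&\mathrm{Hom}_{\mathcal{U}}(U,U')\end{smallmatrix}\right]$, which is precisely $\mathrm{Hom}_{\mathbf{\Lambda}}\left(\left[\begin{smallmatrix}T&0\\ M&U\end{smallmatrix}\right],\left[\begin{smallmatrix}T'&0\\ M&U'\end{smallmatrix}\right]\right)$; under this identification $\Phi$ acts as the identity on morphisms, so it is full and faithful. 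For density, take $X\in\mathcal{C}$; since $\mathcal{C}$ is Krull--Schmidt, $X\cong\bigoplus_{i=1}^{n}X_i$ with each $X_i$ indecomposable, and by condition~(i) each $X_i$ lies in $\mathcal{U}$ or in $\mathcal{T}$. Grouping the summands and using that $\mathcal{T}$ and $\mathcal{U}$ are closed under finite biproducts, we obtain $T_X\in\mathcal{T}$ and $U_X\in\mathcal{U}$ with $X\cong T_X\oplus U_X=\Phi\left(\left[\begin{smallmatrix}T_X&0\\ M&U_X\end{smallmatrix}\right]\right)$. Hence $\Phi$ is dense, and therefore an equivalence $\mathbf{\Lambda}\simeq\mathcal{C}$.

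The only delicate point I anticipate is the bookkeeping in the first paragraph: matching the abstract $\bullet$-operations of Definition~\ref{defitrinagularmat} with composition in $\mathcal{C}$ and getting the variance conventions ($\mathcal{T}$ versus $\mathcal{T}^{op}$, and which corner of the Hom-matrix carries the off-diagonal group) to line up, so that the vanishing $\mathrm{Hom}_{\mathcal{C}}(U,T')=0$ coming from condition~(ii) lands exactly in the $0$-entry of $\mathrm{Hom}_{\mathbf{\Lambda}}$. Once these identifications are pinned down, the functor $\Phi$ is forced and the remaining verifications are routine.
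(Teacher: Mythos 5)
Your proof is correct and follows essentially the same idea as the paper's: identify $M(U,T)=\mathrm{Hom}_{\mathcal C}(T,U)$ and the $\bullet$-operations with composition in $\mathcal C$, then exploit the biproduct decomposition and the vanishing $\mathrm{Hom}_{\mathcal C}(U,T')=0$ supplied by the torsion pair to match the Hom-groups. The only presentational difference is the direction of the functor: the paper builds $H\colon\mathcal C\to\mathbf{\Lambda}$ by choosing for each $X$ a decomposition $X=X_1\oplus X_2$ with $X_1\in\mathcal T$, $X_2\in\mathcal U$, whereas you build the quasi-inverse $\Phi\colon\mathbf{\Lambda}\to\mathcal C$, $\Phi\bigl(\left[\begin{smallmatrix}T&0\\ M&U\end{smallmatrix}\right]\bigr)=T\oplus U$, which is canonically defined and avoids the (harmless, but choice-dependent) selection of a decomposition; your approach also makes the fully-faithful check a direct matrix computation rather than a verification that the chosen splittings are compatible with morphisms. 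Both arguments hinge on the same two facts and buy the same theorem; yours is marginally cleaner to write down.
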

\begin{proof}
Let $X\in\mathcal{C}$. Then $X$ decomposes as $X=X_1\oplus X_2$ with $X_1\in\mathcal{T}$ and $X_2\in\mathcal{U}$. We define a functor $H:\mathcal{C} \rightarrow \left[ \begin{smallmatrix}
\mathcal{T} & 0 \\
\widehat{\mathbbm{Hom}} & \mathcal{U}
\end{smallmatrix} \right] $ in objects by $H(X)=\left[ \begin{smallmatrix}
 X_1 & 0 \\
\widehat{\mathbbm{Hom}} & X_2
\end{smallmatrix} \right] $.  Let $X,Y\in\mathcal{C}$ . Then $X=X_1\oplus X_2$ and $Y=Y_1\oplus Y_2$ with $X_1,Y_1\in\mathcal{T}$ and $X_2,Y_2\in\mathcal{U}$. Let
$f\in \mathrm{Hom}_{\mathcal{C}}(X,Y)$. Since $\mathrm{Hom}_{\mathcal{C}}(X_2,Y_1)=0$ and $\widehat{\mathbbm{Hom}}(X_1,Y_2) \cong \mathrm{Hom}_{\mathcal{C}}(X_2,Y_1)$, 
we have an isomorphism of abelian groups 
$$\mathrm{Hom}_{\mathcal{ C}}(X,Y)\cong \mathrm{Hom}_{\mathcal{C}}(X_1,Y_1)\oplus \widehat{\mathbbm{Hom}}(Y_2,X_1)\oplus \mathrm{Hom}_{\mathcal{C}}(X_2,Y_2)\oplus 0$$
 $$f\mapsto (f_{11},f_{21}, f_{22}, 0).$$ Thus, we get an isomorphism
\[
H: \mathrm{Hom}_{\mathcal{C}}(X,Y)\rightarrow \left[ \begin{smallmatrix}
 \mathrm{Hom}_{\mathcal{T}}(X_1,Y_1) & 0 \\
\widehat{\mathbbm{Hom}}(Y_2,X_1) & \mathrm{Hom}_{\mathcal{U}}(X_2,Y_2)
\end{smallmatrix} \right],  f\mapsto \left[ \begin{smallmatrix}
 f_{11} & 0 \\
f_{21} & f_{22}
\end{smallmatrix} \right].
\]
That is, we have an isomorphism $H:\mathrm{Hom}_{\mathcal{ C}}(X,Y)\rightarrow \left[ \begin{smallmatrix}
\mathcal{T} & 0 \\
\widehat{\mathbbm{Hom}} & \mathcal{U}
\end{smallmatrix} \right] \left (H(X),H(Y)\right)$.
\end{proof}
As an application of the last result, we consider $Q =(Q_1,Q_0)$ be a quiver. Recall that the \textbf{path category} $KQ$ is an additive category, with indecomposable objects the vertices, and given $ a,b\in Q_{0} $, the set of the maps
$\mathrm{Hom}_{KQ}(a,b)$  is given by the $K$-vector space with basis the set of all paths from $ a $ to $ b $. The composition of maps is induced from the usual composition of paths.
 Let $U=\lbrace x\in Q_0| x \text{ is a sink  }\rbrace $ and let $T=Q_0-U$, and consider $\mathcal{U}=\mathrm{add} \ U$ and $\mathcal{T}=\mathrm{add} \ T$. We consider the triangular matrix category $ \left[ \begin{smallmatrix}
\mathcal{T} & 0 \\ \mathsf{Hom}_{KQ}& \mathcal{U}
\end{smallmatrix}\right]$.
Then we have a equivalence of categories 
$$KQ\cong \left[ \begin{matrix}
\mathcal{T} & 0 \\
\widehat{\mathbbm{Hom}} & \mathcal{U}
\end{matrix} \right].$$    
%$$\textbf{QUIZA PONER DIBUJO DE UN CARCAJ INFINITO}$$
\bigskip
As a concrete example, consider the following quiver  $Q=(Q_0,Q_1)$ with set of vertices $Q_0=\{u_i,t_i:i\in \mathbb Z\}$. As above, if   $U=\lbrace u_i: i\in\mathbb Z \rbrace $ and  $T=\lbrace t_i: i\in\mathbb Z \rbrace $, and we consider $\mathcal{U}=\mathrm{add} \ U$ and $\mathcal{T}=\mathrm{add} \ T$, then we have an equivalence of categories  $KQ\cong \left[ \begin{matrix}
\mathcal{T} & 0 \\
\widehat{\mathbbm{Hom}} & \mathcal{U}
\end{matrix} \right].$   

\[
\begin{diagram}
\node{\cdots}
 \node{u_{i-1}}
   \node{u_i}
    \node{u_{i+1}}
     \node{\cdots}\\
  \node{\cdots}\arrow{ne}{}\arrow{e}{}\arrow{n}{}
 \node{t_{i-1}}\arrow{ne}{}\arrow{e}{}\arrow{n}{}
   \node{t_i}\arrow{ne}{}\arrow{e}{}\arrow{n}{}
    \node{t_{i+1}}\arrow{ne}{}\arrow{e}{}\arrow{n}{}
     \node{\cdots}\arrow{n}{}
      \end{diagram}
\]

\begin{Remark}
It is worth to mention that the previous construction works when we consider quivers $Q$ with relations $I=\langle \rho_{i}\mid i\rangle$, since in this case the path category $KQ/I$ is and additive and Krull-Scmidt $K$-category.
\end{Remark}
As last application, we have the following result that give us a way to construct dualizing varieties from others.

\begin{proposition}\label{derivedfunc}
Let $\mathcal{C}$ be a dualizing $K$-variety with duality
$\mathbb{D}_{\mathcal{C}}:\mathrm{mod}(\mathcal{C})\longrightarrow \mathrm{mod}(\mathcal{C}^{op})$. Then the following statements hold.

\begin{enumerate}
\item [(a)] The triangular matrix category $\left[ \begin{matrix}
\mathcal{C} & 0 \\
\widehat{\mathbbm{{Hom}}} & \mathcal{C}
\end{matrix} \right] $ is dualizing.

\item [(b)] Let $\mathcal{C}$ be an abelian category with enough projectives and let $n\geq 1$. Suppose that $\mathrm{Ext}_{\mathcal{C}}^{n}(-,C)\in \mathrm{mod}(\mathcal{C}^{op})$ and $\mathrm{Ext}_{\mathcal{C}}^{n}(C,-)\in \mathrm{mod}(\mathcal{C})$ for every $C\in \mathcal{C}$. Then the triangular matrix category $\left[ \begin{matrix}
\mathcal{C} & 0 \\
\widehat{{\mathbbm{Ext}}^n} & \mathcal{C}
\end{matrix} \right] $ is dualizing.\\
Moreover, $\mathbb{D}_{\mathcal{C}^{op}}\overline{\mathbb{F}}(B)\simeq L^{n}(\mathbb{D}_{\mathcal{C}^{op}}(B))$  if $\mathbb{D}_{\mathcal{C}^{op}}(B)\in \mathrm{mod}(\mathcal{C}^{op})$ is right exact, where $L^{n}\mathbb{D}_{\mathcal{C}^{op}}(B)$ denotes the $n$-$th$ left derived functor of $\mathbb{D}_{\mathcal{C}^{op}}(B)$. Similarly 
$\mathbb{D}_{\mathcal{C}}\mathbb{F}(A)\simeq L^{n}(\mathbb{D}_{\mathcal{C}}(A))$ if $\mathbb{D}_{\mathcal{C}}(A)\in \mathrm{mod}(\mathcal{C})$ is right exact. %where $R^{n}$ denotes the $n$-$th$ right derived functor of $\mathbb{D}_{\mathcal{C}}(A)$.

\item [(c)]Suppose that $\mathcal{C}$ is an abelian category with enough projectives. Then the triangular matrix category $\left[ \begin{matrix}
\mathcal{C} & 0 \\
\widehat{{\mathbbm{Ext}}^1} & \mathcal{C}
\end{matrix} \right] $ is dualizing. Moreover $\mathbb{D}_{\mathcal{C}^{op}}\overline{\mathbb{F}}(B)\simeq L^{1}(\mathbb{D}_{\mathcal{C}^{op}}(B))$ if $\mathbb{D}_{\mathcal{C}^{op}}(B)\in \mathrm{mod}(\mathcal{C}^{op})$ is right exact and $\mathbb{D}_{\mathcal{C}}\mathbb{F}(A)\simeq L^{1}(\mathbb{D}_{\mathcal{C}}(A))$ if $\mathbb{D}_{\mathcal{C}}(A)\in \mathrm{mod}(\mathcal{C})$ is right exact.
\end{enumerate}
\end{proposition}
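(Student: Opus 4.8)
\textbf{Proof plan for Proposition \ref{derivedfunc}.}
The plan is to reduce everything to the main dualizing theorem \ref{matricesdualizan}, whose hypothesis is: $\mathcal U$ and $\mathcal T$ are dualizing $K$-varieties and $M\in\mathrm{Mod}(\mathcal U\otimes_K\mathcal T^{op})$ satisfies $M_T\in\mathrm{mod}(\mathcal U)$ and $M_U\in\mathrm{mod}(\mathcal T^{op})$ for all $T,U$. In each of (a), (b), (c) we take $\mathcal U=\mathcal T=\mathcal C$, so the only thing to verify is that the bimodule $M$ in question has finitely presented "rows'' and "columns''. First I would recall from Proposition \ref{funhomext} that $\widehat{\mathbbm{Hom}}$ (resp. $\widehat{\mathbbm{Ext}^n}$) is a genuine object of $\mathrm{Mod}(\mathcal C\otimes\mathcal C^{op})$, and that its associated functors $\widehat{\mathbbm{Hom}}_Y$, $\widehat{\mathbbm{Ext}^n}_Y$ are isomorphic to $\mathrm{Hom}_{\mathcal C}(Y,-)$, $\mathrm{Ext}^n_{\mathcal C}(Y,-)$ respectively. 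Unwinding the definition of $M_T$ and $M_U$ from Proposition \ref{dosfuntores} in this setting, one checks that for $M=\widehat{\mathbbm{Hom}}$ the column $M_T$ is (up to isomorphism) $\mathrm{Hom}_{\mathcal C}(T,-)$ and the row $M_U$ is $\mathrm{Hom}_{\mathcal C}(-,U)=\mathrm{Hom}_{\mathcal C^{op}}(U,-)$; for $M=\widehat{\mathbbm{Ext}^n}$ they are $\mathrm{Ext}^n_{\mathcal C}(T,-)$ and $\mathrm{Ext}^n_{\mathcal C}(-,U)$.

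For part (a): representable functors $\mathrm{Hom}_{\mathcal C}(T,-)$ are finitely generated projective, hence in $\mathrm{mod}(\mathcal C)$, and likewise $\mathrm{Hom}_{\mathcal C}(-,U)\in\mathrm{mod}(\mathcal C^{op})$. So the hypotheses of \ref{matricesdualizan} are automatic and (a) follows immediately. For part (b): the hypotheses $\mathrm{Ext}^n_{\mathcal C}(-,C)\in\mathrm{mod}(\mathcal C^{op})$ and $\mathrm{Ext}^n_{\mathcal C}(C,-)\in\mathrm{mod}(\mathcal C)$ for all $C$ are exactly what is needed to say $M_T=\mathrm{Ext}^n_{\mathcal C}(T,-)\in\mathrm{mod}(\mathcal C)$ and $M_U=\mathrm{Ext}^n_{\mathcal C}(-,U)\in\mathrm{mod}(\mathcal C^{op})$, so again \ref{matricesdualizan} applies and the matrix category is dualizing. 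For part (c), I would note that when $\mathcal C$ is abelian with enough projectives, the functors $\mathrm{Ext}^1_{\mathcal C}(C,-)$ and $\mathrm{Ext}^1_{\mathcal C}(-,C)$ are finitely presented: taking a projective presentation $P_1\to P_0\to C\to 0$, the functor $\mathrm{Ext}^1_{\mathcal C}(C,-)$ is a cokernel of a map between the representable functors $\mathrm{Hom}(P_0,-)\to\mathrm{Hom}(P_1,-)$ modulo the image of $\mathrm{Hom}(C,-)$, hence lies in $\mathrm{mod}(\mathcal C)$; dually for $\mathrm{Ext}^1_{\mathcal C}(-,C)\in\mathrm{mod}(\mathcal C^{op})$. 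Thus (c) is the special case $n=1$ of (b) with the finiteness hypothesis verified for free.

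For the "moreover'' clauses of (b) and (c): by Corollary \ref{corcambia}(i) we have $\mathbb D_{\mathcal C^{op}}\overline{\mathbb F}(B)\simeq \mathbb G\,\mathbb D_{\mathcal C^{op}}(B)$. Now $\mathbb G$ here is the unique colimit-preserving functor with $\mathbb G\circ Y\simeq E$, and one identifies it: since $M=\widehat{\mathbbm{Ext}^n}$, the functor $E:\mathcal C\to\mathrm{Mod}(\mathcal C)^{op}$ sends $C\mapsto\mathrm{Ext}^n_{\mathcal C}(C,-)$, i.e. $E$ is (a shift of) the $n$-th derived functor construction. Concretely, writing a finitely presented right-exact $N:=\mathbb D_{\mathcal C^{op}}(B)$ with a projective presentation and resolving, the value $\big(\mathbb G(N)\big)$ agrees with $L^n N$; I would make this precise by comparing, on representables, the two sides using the resolution of $B$ (equivalently, of $N$) and the fact that $L^nN$ is computed by the same projective resolution that computes $\mathrm{Ext}^n_{\mathcal C}(-,-)$, then extending to all of $\mathrm{mod}(\mathcal C^{op})$ by right-exactness and the five lemma as in the proof of the isomorphism $\Psi$ in Section 5. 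The symmetric statement $\mathbb D_{\mathcal C}\mathbb F(A)\simeq L^n\mathbb D_{\mathcal C}(A)$ follows from Corollary \ref{corcambia}(ii) in the same way.

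The main obstacle I anticipate is the bookkeeping in the "moreover'' part: one must carefully match up the several dualities $\mathbb D_{\mathcal C}$, $\mathbb D_{\mathcal C^{op}}$ with the adjoint pair $(\overline{\mathbb F},\overline{\mathbb G})$ and show that the unique colimit-preserving extension of $E(-)=\mathrm{Ext}^n_{\mathcal C}(-,-)$ really is the derived functor $L^n(-)$ applied to right-exact functors, rather than something merely isomorphic on representables but differing globally. The rest — verifying the finite presentation of $\mathrm{Ext}^1$ in (c) and the triviality of the hypotheses in (a) — is routine once the identifications of $M_T$, $M_U$ via Proposition \ref{funhomext} are in place.
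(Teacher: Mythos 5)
Your treatment of part (a) and the first half of part (b) agrees with the paper: identify $M_T$ with $\mathrm{Ext}^n_{\mathcal C}(T,-)$ and $M_U$ with $\mathrm{Ext}^n_{\mathcal C}(-,U)$ via Proposition \ref{funhomext} and feed the hypotheses into Theorem \ref{matricesdualizan}. The overall architecture for (c) is also the same. However, there are two genuine gaps.

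First, in part (c), for the claim $\mathrm{Ext}^1_{\mathcal C}(-,C)\in\mathrm{mod}(\mathcal C^{op})$ you say ``dually,'' but the dual argument needs a short exact sequence $0\to C\to I\to L\to 0$ with $I$ injective, i.e.\ enough injectives. The hypothesis only grants enough projectives. The paper supplies the missing step explicitly: since $\mathcal C$ is a dualizing variety, it has enough injectives, which is what licenses the dual construction. Without invoking that, ``dually'' does not follow. (Your description of the covariant case also slightly misstates the presentation: from $0\to K\to P\to C\to 0$ one exhibits $\mathrm{Ext}^1_{\mathcal C}(C,-)$ as the cokernel of $(P,-)\to(K,-)$, both representable; the phrase ``cokernel of $\mathrm{Hom}(P_0,-)\to\mathrm{Hom}(P_1,-)$ modulo the image of $\mathrm{Hom}(C,-)$'' mixes the kernel and cokernel roles.)

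Second, and more substantially, the ``moreover'' clause is not established by your sketch. You write that $\mathbb G$ is the unique colimit-preserving functor with $\mathbb G\circ Y\simeq E$, but that characterizes $\mathbb F$, its \emph{left} adjoint; $\mathbb G$ preserves limits and satisfies $\mathbb G(B)(T)=\mathrm{Hom}_{\mathsf{Mod}(\mathcal U)}(M_T,B)$. With the correct $\mathbb G$, Corollary \ref{corcambia}(i) gives
\[
\mathbb D_{\mathcal C^{op}}\overline{\mathbb F}(B)(C)\;\cong\;\mathrm{Hom}_{\mathsf{Mod}(\mathcal C)}\bigl(\mathrm{Ext}^n_{\mathcal C}(C,-),\,\mathbb D_{\mathcal C^{op}}(B)\bigr),
\]
and what remains is precisely to identify this Hom group with $L^n(\mathbb D_{\mathcal C^{op}}(B))(C)$ when $\mathbb D_{\mathcal C^{op}}(B)$ is right exact. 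That identification is nontrivial: it is Theorem~1.4 of Peschke and Van~der~Linden (\cite{Pesh}), which the paper cites directly. Your plan to ``compare on representables and extend by right exactness'' does not clearly produce it, in part because $\mathrm{Ext}^n_{\mathcal C}(C,-)$ is not representable for $n\ge1$ and Hom \emph{into} representables has no simple Yoneda description. You correctly flagged this as the main obstacle; the resolution is to appeal to the cited theorem rather than reprove it.
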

\begin{proof}
\begin{enumerate}
\item [(a)]  By \ref{funhomext} we have that $\widehat{\mathbbm{Hom}}_{C}=\widehat{\mathbbm{Hom}_{\mathcal{C}\otimes \mathcal{C}^{op}}}(-,C)\cong \mathrm{Hom}_{\mathcal C}(C,-)$ and $\widehat{\mathbbm{Hom}}_{C'}=\widehat{\mathbbm{Hom}_{\mathcal{C}\otimes \mathcal{C}^{op}}}(C',-)\cong \mathrm{Hom}_{\mathcal C}(-,C')$. The result follows from \ref{matricesdualizan}.

\item [(b)] By \ref{funhomext} we have that $\widehat{\mathbbm{Ext}_{C}^{n}}=\widehat{\mathbbm{Ext}_{\mathcal{C}\otimes \mathcal{C}^{op}}^{n}}(-,C)\cong \mathrm{Ext}_{\mathcal C}^{n}(C,-)$ and $\widehat{\mathbbm{Ext}_{C'}^{n}}=\widehat{\mathbbm{Ext}_{\mathcal{C}\otimes \mathcal{C}^{op}}^{n}}(C',-)\cong \mathrm{Ext}_{\mathcal C}^{n}(-,C')$. The result follows from \ref{matricesdualizan}.\\
On the other hand, by \ref{corcambia}(i), we have an isomorphism
$\mathbb{D}_{\mathcal{C}^{op}}\overline{\mathbb{F}}(B)(C)\cong
\mathrm{Hom}_{\mathrm{Mod}(\mathcal{C})}\Big(\mathrm{Ext}_{\mathcal{C}}^{n}(C,-),\mathbb{D}_{\mathcal{C}^{op}}(B)\Big)
\cong  L^{n}(\mathbb{D}_{\mathcal{C}^{op}}(B))$ (the last isomorphism is by \cite[Theorem 1.4]{Pesh}). The other isomorphism is analogous.

\item [(c)] Suppose that $\mathcal{C}$ is an abelian category with enough projectives. Since $\mathcal{C}$ is dualizing we have that  $\mathcal{C}$ has enough injectives. For every $C\in \mathcal{C}$ there exists an exact sequence $\xymatrix{0\ar[r] & K\ar[r]  & P\ar[r] & C\ar[r] & 0}$ with $P$ projective. Then we get the exact sequence in $\mathrm{Mod}(\mathcal{C})$ 
$$0\rightarrow \mathrm{Hom}_{\mathcal{C}}(C,-)\rightarrow
\mathrm{Hom}_{\mathcal{C}}(P,-)\rightarrow \mathrm{Hom}_{\mathcal{C}}(K,-)\rightarrow \mathrm{Ext}^{1}_{\mathcal{C}}(C,-)\rightarrow 0.$$
It follows that $\mathrm{Ext}^{1}_{\mathcal{C}}(C,-)\in \mathrm{mod}(\mathcal{C})$. Similarly, for $C\in \mathcal{C}$  get the an exact sequence in $\mathrm{Mod}(\mathcal{C}^{op})$ 
$$0\rightarrow \mathrm{Hom}_{\mathcal{C}}(-,C)\rightarrow
\mathrm{Hom}_{\mathcal{C}}(-,I)\rightarrow \mathrm{Hom}_{\mathcal{C}}(-,L)\rightarrow \mathrm{Ext}^{1}_{\mathcal{C}}(-,C)\rightarrow 0,$$ where $I$ is injective. Hence, the result follows from item (b).
\end{enumerate}
\end{proof}

\begin{corollary}
Let $A$ be an artin algebra and consider $\mathcal C=\mathrm{mod} (A)$. Then the triangular matrix categories $\left[ \begin{matrix}
\mathcal{C} & 0 \\
\widehat{\mathbbm{{Hom}}} & \mathcal{C}
\end{matrix} \right] $  and $\left[ \begin{matrix}
\mathcal{C} & 0 \\
\widehat{{\mathbbm{Ext}}^1} & \mathcal{C}
\end{matrix} \right] $ are dualizing.
\end{corollary}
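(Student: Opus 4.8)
The plan is to obtain the Corollary as a direct specialization of Proposition \ref{derivedfunc}, so the only real work is to check that $\mathcal{C}=\mathrm{mod}(A)$ meets the hypotheses imposed there: that $\mathcal{C}$ is a dualizing $K$-variety, and, for the second matrix category, that $\mathcal{C}$ is moreover an abelian category with enough projectives.

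First I would fix the ground ring: since $A$ is an artin algebra there is a commutative artin ring $R$ (for instance the center of $A$) making $A$ an artin $R$-algebra, and I take $K=R$. Then I would recall the classical fact, due to Auslander and Reiten (see \cite{AusVarietyI}), that $\mathcal{C}=\mathrm{mod}(A)$ is a dualizing $R$-variety in the sense of Definition \ref{dualizinvar}: it is skeletally small, idempotents split in it because it is abelian, it is $\mathrm{Hom}$-finite over $R$ since each $\mathrm{Hom}_{A}(M,N)$ is a finitely generated $R$-module, and the duality $D=\mathrm{Hom}_{R}(-,I(R/\mathrm{rad}\,R))$ on the functor categories $(\mathcal{C},\mathrm{mod}(R))$ and $(\mathcal{C}^{op},\mathrm{mod}(R))$ restricts to a duality between $\mathrm{mod}(\mathcal{C})$ and $\mathrm{mod}(\mathcal{C}^{op})$. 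I would also observe that $\mathrm{mod}(A)$ is an abelian category with enough projectives, the projective objects being precisely the finitely generated projective $A$-modules.

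With these verifications in hand, the first claim is immediate from Proposition \ref{derivedfunc}(a) applied to $\mathcal{C}=\mathrm{mod}(A)$, which yields that $\left[\begin{smallmatrix}\mathcal{C}&0\\\widehat{\mathbbm{Hom}}&\mathcal{C}\end{smallmatrix}\right]$ is dualizing. For the second claim I would invoke Proposition \ref{derivedfunc}(c): since $\mathcal{C}=\mathrm{mod}(A)$ is a dualizing variety that is in addition abelian with enough projectives, that part of the proposition already supplies the needed finiteness of $\mathrm{Ext}^{1}_{\mathcal{C}}(C,-)$ and $\mathrm{Ext}^{1}_{\mathcal{C}}(-,C)$ for all $C$ and concludes that $\left[\begin{smallmatrix}\mathcal{C}&0\\\widehat{\mathbbm{Ext}^{1}}&\mathcal{C}\end{smallmatrix}\right]$ is dualizing, so nothing further is required.

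The only step that calls for any attention — and it is the natural candidate for the "main obstacle", even though it is entirely standard — is confirming that $\mathrm{mod}(A)$ is a dualizing $R$-variety exactly as formalized in Definition \ref{dualizinvar}, rather than merely possessing the familiar duality $D\colon\mathrm{mod}(A)\to\mathrm{mod}(A^{op})$; once this is quoted from \cite{AusVarietyI}, both assertions of the Corollary follow at once from Proposition \ref{derivedfunc}.
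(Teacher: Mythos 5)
Your proposal is correct and takes exactly the route the paper intends: the corollary is a direct specialization of Proposition \ref{derivedfunc}, and the only content is verifying that $\mathrm{mod}(A)$ is a dualizing variety (the Auslander--Reiten classical result) and that it is abelian with enough projectives, both of which you supply. One caveat worth noting, which is really a subtlety in the paper's own setup rather than in your argument: Sections 4--6 define the duality via $\mathrm{Hom}_K(-,K)$, which is a duality on $\mathrm{mod}(K)$ only when $K$ is a field (or more generally a self-injective commutative artin ring), whereas your choice $K=R=$ center of $A$ may not have this property; the cleanest reading is that either the paper implicitly assumes a base field, in which case $A$ should be taken finite-dimensional over $K$, or the duality in Section 4 should be replaced by $\mathrm{Hom}_R(-,I(R/\mathrm{rad}\,R))$ throughout, as you in fact quote from Auslander--Reiten.
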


\footnotesize

\vskip3mm \noindent Alicia Le\'on Galeana:\\ Facultad de Ciencias, Universidad  Aut\'onoma del Estado de M\'exico\\
T\'oluca, M\'exico.\\
{\tt alicialg@hotmail.com}

\vskip3mm \noindent Martin Ort\'iz Morales:\\ Facultad de Ciencias, Universidad  Aut\'onoma del Estado de M\'exico\\
T\'oluca, M\'exico.\\
{\tt mortizmo@uaemex.mx}

\vskip3mm \noindent Valente Santiago Vargas:\\ Departamento de Matem\'aticas, Facultad de Ciencias, Universidad Nacional Aut\'onoma de M\'exico\\
Circuito Exterior, Ciudad Universitaria,
C.P. 04510, M\'exico, D.F. MEXICO.\\ {\tt valente.santiago.v@gmail.com}


\begin{thebibliography}{ABPRS}

\bibitem{Assem} I. Assem, D. Simson, A. Skowr\'onski. {\it{Elements of the Representation Theory of Associative Algebras, Volume 1: Techniques of Representation Theory}}, London Mathematical Society Student Texts 65, Cambridge University Press, 2006.


\bibitem{AusCoheren} M. Auslander. {\it{Coherent functors.}} Proc. Conf. Categorical Algebra (la Jolla, Calif, 1965), 189-231, Springer. New York, (1966).

\bibitem{AusQueen} M. Auslander. {\it{Representation dimension of artin algebras.}} Queen Mary College Mathematics Notes, 1971.
Republished in Selected works of Maurice Auslander: Part 1. Amer. Math. Soc., Providence (1999).

\bibitem{AusM1}  M. Auslander.  {\it{Representation Theory of Artin Algebras I.}} Comm. in Algebra  1(3), 177-268 (1974).

\bibitem{AusM2} M.  Auslander M.  {\it{Representation Theory of Artin Algebras II.}} Comm. in Algebra  1 (4), 269-310 (1974).

\bibitem{AusVarietyI} M.  Auslander, I. Reiten, {\it{Stable equivalence of dualizing $R$-varietes}}. Adv. in Math.12, No.3, 306-366 (1974).

\bibitem{AusVarietyII} M.  Auslander, I. Reiten. {\it{Stable equivalence of dualizing $R$-varietes II: Hereditary dualizing $R$-varieties.}} Adv. in Math. 17 No. 2, 93-121 (1975).

\bibitem{AusVarietyIII} M.  Auslander, I. Reiten. {\it{Stable equivalence of dualizing $R$-varietes III: Dualizing $R$-varieties stably equivalent to hereditary dualizing $R$-varieties.}}  Adv. in Math. 17 No. 2, 122-142 (1975).

\bibitem{AusVarietyIV} M.  Auslander, I. Reiten, {\it{Stable equivalence of dualizing $R$-varietes IV: Higher global dimension}}. Adv. in Math. 17 No. 2, 143-166 (1975).

\bibitem{AusVarietyV} M.  Auslander, I. Reiten. {\it{Stable equivalence of dualizing $R$-varietes V: Artin Algebras stably equivalent to hereditary algebras.}} Adv. in Math. 17 No. 2, 167-195 (1975).

\bibitem{AusM3} M. Auslander, I. Reiten.  {\it{Representation Theory of Artin Algebras III: Almost split sequences.}} Comm. in Algebra 3(3), 239-294 (1975).
 
\bibitem{AusM4} M. Auslander, I. Reiten.  {\it{Representation Theory of Artin Algebras IV: Invariants given by almost split sequences}}. Comm. in Algebra 5 (5), 443-518 (1977).

\bibitem{AusM5} M. Auslander, I. Reiten.  {\it{Representation Theory of Artin Algebras V. Methods for computing almost split sequences and irreducible morphisms.}} Comm. in Algebra 5 (5), 519-554 (1977).

\bibitem{AusM6} M. Auslander, I. Reiten.  {\it{Representation Theory of Artin Algebras VI. A functorial approach to almost split sequences.}} Comm. in Algebra 6 (3), 257-300 (1978).

\bibitem{AusPlatRei} M. Auslander, M. I. Platzeck, I, Reiten. {\it{Coxeter functors without diagrams.}} Trans. Amer. Math. Soc. 250, 1-46 (1979).

\bibitem{AusFunapproac} M. Auslander. {\it{A functorial approach to representation theory.}} Representations of algebras (ICRA III, Puebla 1980) 105-179. Lectures Notes in Math. 944, Springer Berling New York, (1986).

\bibitem{AusBook} M. Auslander, I. Reiten, S. Smal\o. {\it{Representation theory of artin algebras.}} Studies in Advanced Mathematics 36, Cambridge University Press (1995).

%\bibitem{AusDTR} M. Auslander, I. Reiten. {\it{Dtr-periodic modules and functors.}} Representation theory of algebras (ICRA VII,  Cocoyoc, 1994) 39-50. CMS Conf. Proc. 18. Amer. Math. Soc., Providence, RI, 1996.

\bibitem{Rune} R. H. Bak. {\it{Dualizable and semiflat objects in abstract module categories.}} Arxiv: 1607.02609v3

%\bibitem{BarotLen} M. Barot, H. Lenzing. {\it{One point extensions and derived equivalence. }} J. Algebra, 264 (1), 1-5, (2003).

%\bibitem{Hermann} H. Brune. {\it{On the global dimension of the functor category $(\mathrm{mod}(R)^{op},\mathrm{Ab})$ and a theorem of Kulikov.}} Journal of Pure and Applied Algebra 28, 31-39, (1983).

\bibitem{Chase} S.U. Chase. {\it{A generalization of the ring of triangular matrices.}} Nagoya Math. J. 18, 13-25 (1961).

\bibitem{DlabRingel} V. Dlab, C.M. Ringel. {\it{Representations of graphs and algebras.}} Memoirs of the A.M.S. No. 173 (1976).

\bibitem{Fields} K.L. Fields. {\it{On the global dimension of residue rings.}} Pac. J. Math. 32, 345-349 (1970).

\bibitem{Robert} R. M. Fossum, P. A. Griffith, I. Reiten. {\it{Trivial Extensions of Abelian Categories}}, Lecture Notes in Mathematics. No. 456, Springer-Verlag, Berlin, Heidelberg, New York. (1975)

\bibitem{Gordon} R. Gordon, E. L.  Green. {\it{Modules with cores and amalgamations of indecomposable modules.}} Memoirs of the A.M.S. No. 187 (1978).

\bibitem{Green1} E. L. Green. {\it{The representation theory of tensor algebras.}} J. Algebra, 34, 136-171 (1975).

\bibitem{Green2} E. L. Green. {\it{On the representation theory of rings in matrix form.}} Pacific Journal of Mathematics. Vol. 100, No. 1, 123-138 (1982).

\bibitem{Haghany} A. Haghany, K. Varadarajan. {\it{Study of formal triangular matrix rings.}} Comm. in algebra 27 (11), 5507-5525 (1999).

\bibitem{Happel} D. Happel. {\it{The Coxeter polynomial for a one-point extension algebra.}} J. Algebra, 321, 2028-2041 (2009).

\bibitem{HaradaI} M. Harada. {\it{On perfect categories I.}} Osaka J. Math. 10, 329-341 (1973).

\bibitem{HaradaII} M. Harada. {\it{On perfect categories II: Hereditary categories.}} Osaka J. Math. 10, 343-355 (1973).

\bibitem{HaradaIII} M. Harada. {\it{On perfect categories III: Hereditary and QF-3 Categories.}} Osaka J. Math. 10, 357-367 (1973).

\bibitem{HaradaIV} M. Harada. {\it{On perfect categories IV :Quasi-Frobenius Categories.}} Osaka J. Math. 10, 585-596 (1973).

\bibitem{Hart} R. Hartshorne. {\it{Coherent functors.}} Adv. in Math. 140 (1), 44-94, (1998).

\bibitem{Holm} H. Holm. {\it{Modules with Cosupport and Injective Functors.}}  Algebr Represent Theor  13, 543-560 (2010).

%\bibitem{HoshiMiya} M. Hoshino, J. Miyachi. {\it{Tame triangular matrix algebras over selfinjective algebras.}} Tsukuba J. Math. 1, 383-391 (1987).

\bibitem{Hu} F. Hu. {\it{Hochschild and ordinary cohomology rings of small categories.}} Adv. in Math. 219 No. 6, 1872-1893 (2008).

\bibitem{Kelly} G. M. Kelly. {\it{On the radical of a category.}} J. Aust. Math. Soc. 4  299-307 (1964).

\bibitem{Krause} H. Krause. {\it{Krull-Schmidt categories and projective covers.}} Expo. Math. 33, 535?549 (2015).

\bibitem{MVOM} R. Mart\'inez-Villa, M. Ort\'iz-Morales. {\it{ Tilting Theory and Functor Categories III: The Maps Category.}} Inter. Journal of Algebra. Vol. 5 (11), 529-561 (2011).

\bibitem{MOSS} O. Mendoza, M. Ort\'iz, C. S\'aenz, V. Santiago. {\it{A generalization of the theory of standardly stratified algebras: Standardly stratified ringoids.}}  Arxiv: 1803.01217.

\bibitem{Michelena} S. Michelena, M.I. Platzeck. {\it{Hochschild cohomology of triangular matrix algebras.}} J. Algebra 233 (2), 502-525 (2000).

\bibitem{MitBook} B. Mitchell. {\it{Theory of categories.}} Pure and Appl. Math., Vol 17,  Academic Press, New York  (1965).

%Theory of categories, Pure and Appl. Math., vol. 17, Academic Press, New York, 1965. MR 202787
%Esta ref la saque del articulo: Dana May Latch.  On derived functors of lim. Transactions of the american mathematical society 
%181 (1973).
%esta la saque de mathscinet, buscando el MR: 202787:
%Mitchell, Barry Theory of categories. Pure and Applied Mathematics, Vol. XVII Academic Press, New York-London 1965 xi+273 pp.

\bibitem{Mitchelring} B. Mitchell. {\it{Rings with several objects.}}
Adv. in Math.  Vol. 8, 1-161 (1972).

\bibitem{Oberst} U. Oberst, H. R\"ohrl. {\it{Flat and coherent functors.}} J. Algebra 14, 91-105 (1970).

\bibitem{Palmer} I. Palm\'er, J.E. Ross. {\it{Explicit formulae for the global homological dimension of the trivial extensions of rings.}} J. Algebra 27, 380-413 (1973).

\bibitem{Pesh} G. Peschke, T. V. Linden. {\it{The Yoneda isomorphism commutes with homology.}} J. Pure Appl. Algebra Vol. 220, Issue 2, 495-517 (2016).

\bibitem{Popescu}  N. Popescu. {\it{Abelian categories with applications to rings and modules.}} London Mathematical Society Monographs No. 3.  Academic Press,  London, New York, N.Y (1973) MR 0340375.

\bibitem{Reiten} I. Reiten. {\it{The use of almost split sequences in the representation theory of artin algebras,}} Lecture Notes in Math. Vol. 944, Springer, Berling, 1982, pp. 29-104.

\bibitem{RingelTame}  C. M. Ringel. {\it{Tame Algebras and Integral Quadratic Forms,}} Lecture Notes in Mathematics, Springer-Verlag, Berlin Heidelberg (1984).

\bibitem{RingScmid} C. M. Ringel, M. Schmidmeier. {\it{The Auslander Reiten translation in submodule categories.}} Trans. Amer. Math. Soc. 360 (2) , 691-716 (2008).

\bibitem{Schroer} J. Schr\"oer. {\it{On the Infinite Radical of a Module Category.}} Proc. London Math. Soc. (3) 81, 651-674, (2000).

\bibitem{Simson} D. Simson. {\it{Pure Semisimple Categories and Rings of Finite Representation Type.}} J. Algebra 48, 290-296 (1977).

\bibitem{Bo} B. Stenstr\"om. {\it{ Purity in functor categories.}} J. Algebra 8, 352-361 (1968).

\bibitem{Stovi} J. \v{S}\v{t}ov\'i\v{c}ek. {\it{Telescope Conjecture, Idempotent Ideals and the Transfinite Radical.}}  Trans. Amer. Math. Soc. 362, No. 3, 1475-1489 (2010).

\bibitem{Roos} R. Street. {\it{Ideals, Radicals, and Structure of Additive categories.}} Appl. Cat. Structures, 3, 139-149 (1995).

\bibitem{tameskow} A. Skowro\'nski.{\it{Tame triangular matrix algebras over Nakayama algebras.}} J. London Math. Soc.  34, 245-264 (1986).

\bibitem{Hong} H. Ying, S. Zhang. {\it{Representation dimensions of triangular matrix algebras.}} Linear Algebra Appl.  Vol .438 (5), 2004-2017 (2013).

%\bibitem{Zhang} P.  Zhang. {\it{Monomorphism categories, cotilting theory and Gorenstein-projective modules.}} J. Algebra, 339, 181-202, (2011).

\bibitem{Bin} B. Zhu. {\it{Triangular matrix algebras over quasi-hereditary algebras.}} Tsukuba J. Math. Vol. 25 No.1, 1-11, (2001).

\end{thebibliography}
\end{document}